\DeclareMathOperator*{\argmax}{arg\,max} 
\DeclareMathOperator*{\argmin}{arg\,min}
\pgfplotsset{compat=1.15}
\newcommand{\cmark}{\ding{51}}%
\newcommand{\xmark}{\ding{55}}%
\newcommand{\black}{\color{black}}
\numberwithin{equation}{section}
\theoremstyle{plain}
\newtheorem{theorem}{Theorem}[section] 
\newtheorem{lemma}[theorem]{Lemma}
\newtheorem{definition}[theorem]{Definition}
\newtheorem{proposition}[theorem]{Proposition}
\newtheorem{corollary}[theorem]{Corollary}
\newtheorem{model}{Model}
\newcites{proofs}{References}
\begin{document}

\begin{frontmatter}
\title{Wasserstein Generative Adversarial Networks are Minimax Optimal Distribution Estimators
}
\runtitle{Wasserstein GANs are Minimax Optimal Distribution Estimators}

\begin{aug}
    \author[A,C]{\fnms{Arthur} \snm{Stéphanovitch }\ead[label=e1]{stephanovitch@dma.ens.fr}}
    ,
    \\
    \author[A]{\fnms{Eddie} \snm{Aamari}\ead[label=e2]{eddie.aamari@ens.fr}}
    \and
    \author[B]{\fnms{Clément} \snm{Levrard}\ead[label=e3]{clement.levrard@univ-rennes1.fr}}
    \address[A]{\'Ecole Normale Sup\'erieure, Université PSL, CNRS\\
D\'epartement de Math\'ematiques et Applications\\
F-75005 Paris, France\\
    \printead{e1,e2}
    }
    \address[C]{Université Paris Cité, Sorbonne Université, CNRS\\
Laboratoire de Probabilités, Statistique et Modélisation\\
Paris, France
    }

    \address[B]{Université de Rennes, CNRS \\
    Institut de recherche mathématique de Rennes \\ F-35000 Rennes, France\\
    \printead{e3}
    }
\end{aug}

\begin{abstract}
We provide non asymptotic rates of convergence of the Wasserstein Generative Adversarial networks (WGAN) estimator. We build neural networks classes representing the generators and discriminators which yield a GAN that achieves the minimax optimal rate for estimating a certain probability measure $\mu$ with support in $\mathbb{R}^p$. The probability $\mu$ is considered to be the push forward of the Lebesgue measure on the $d$-dimensional torus $\mathbb{T}^d$ by a map $g^\star:\mathbb{T}^d\rightarrow \mathbb{R}^p$ of smoothness $\beta+1$. Measuring the error with the $\gamma$-Hölder Integral Probability Metric (IPM), we obtain up to logarithmic factors, the minimax optimal rate $O(n^{-\frac{\beta+\gamma}{2\beta +d}}\vee n^{-\frac{1}{2}})$ where $n$ is the sample size, $\beta$ determines the smoothness of the target measure $\mu$, $\gamma$ is the smoothness of the IPM ($\gamma=1$ is the Wasserstein case) and $d\leq p$ is the intrinsic dimension of $\mu$. In the process, we derive a sharp interpolation inequality between  Hölder IPMs. This novel result of theory of functions spaces generalizes classical interpolation inequalities to the case where the measures involved have densities on different manifolds.
\end{abstract}

\begin{keyword}[class=MSC]
\kwd[Primary ]{62G05}
\kwd[; secondary ]{62E17}
\end{keyword}

\begin{keyword}
\kwd{Minimax rate}
\kwd{generative model}
\kwd{distribution estimation}
\kwd{manifold}
\kwd{interpolation inequality}
\end{keyword}

\end{frontmatter}

\startcontents[mainsections]

\section{Introduction} 
Let $X_1,...,X_n$ be i.i.d. random points drawn from a probability measure $\mu$ with support in $\mathbb{R}^p$. The inference of $\mu$ is a fundamental problem in statistics and machine learning, for which numerous methods have been developed~\citep{tsybakov2004introduction}. Recent years have witnessed the advent of generative methodologies based on Generative Adversarial Networks \citep{GANs}, with outstanding achievements in the fields of image~\citep{karras2021}, video \citep{vondrick2016generating}, and text generation \citep{SeqGANs}. In this paper we focus on the Wasserstein GAN (WGAN) approach of \citep{arjovsky2017wasserstein}, which uses the $1$-Wasserstein distance as an alternative to the Jensen-Shannon divergence implemented in traditional GAN. Over the years, WGAN and their derivatives have gained popularity in the machine learning community. They are today considered as one of the most successful generative techniques, achieving state-of-the art results in difficult problems \citep{karras2021} while improving the stability and getting rid of mode collapse \citep{gulrajani2017improved}. Although WGANs have shown excellent properties
in numerous empirical studies reported in the machine learning literature (\cite{liu2019wasserstein}, \cite{luo2018eeg}, \cite{stanczuk2021wasserstein}), many of
their theoretical properties remain to be studied.

The present work establishes the minimax optimality of the WGAN estimator. To set the scene and notation, the generative problem is to use the data $X_1,...,X_n$ to learn $\mu$ and, simultaneously, be able to sample from a distribution close to it. Aiming to solve this problem, a generative adversarial network consists of a class of generator functions $\mathcal{G}$ and a  class of discriminators $\mathcal{D}$. Given a known easy-to-sample
distribution $\nu$ on a latent space $\mathcal{Z}$, the generator $ \mathcal{G}\ni g:\mathcal{Z}\rightarrow \mathbb{R}^p$ approximates $\mu$ by trying to minimize over $\mathcal{G}$ a certain Integral Probability Metric (IPM) \citep{IPMsMuller}: 
\begin{equation}\label{eq:IPM}
d_{ \mathcal{D}}(\mu,g_{\# \nu}):=\sup \limits_{D\in \mathcal{D}} \mathbb{E}_\mu[D(X)]-\mathbb{E}_\nu[D(g(Z))],
\end{equation}
where $g_{\# \nu}$ stands for the pushforward measure of $\nu$ by $g$. The goal of the discriminator $\mathcal{D}\ni D:\mathbb{R}^p\rightarrow \mathbb{R}$
is to distinguish between the true distribution and the fake one $g_{\# \nu}$, by maximizing over $\mathcal{D}$ the quantity
$$
L(g,D):=\mathbb{E}_\mu[D(X)]-\mathbb{E}_\nu[D(g(Z))].
$$
The Generative Adversarial Networks min-max problem can then be written as 
\begin{equation}\label{WGANs-theo}   
\inf \limits_{g\in \mathcal{G}}\ \sup \limits_{D\in \mathcal{D}}\ \mathbb{E}_\mu[D(X)]-\mathbb{E}_\nu[D(g(Z))].
\end{equation}
One can see the class $\mathcal{D}$ as a subset of a larger class $\mathcal{F}$, with  $d_\mathcal{F}$ being a metric over distributions. Various types of classes $\mathcal{F}$ have been used in the GAN literature. This includes Lipschitz continuous function  (WGAN, 
~\citep{arjovsky2017wasserstein}), Sobolev
functions (Sobolev GAN, \citep{mroueh2017sobolev}) and reproducing kernel Hilbert space
(MMD GAN, \citep{li2017mmd}). These discrepancies are to be contrasted with the more historically used ones in classical non parametric density estimation such as
$L^p$ distance, Hellinger distance and Kullback-Leibler divergence \citep{tsybakov2004introduction}, which are only applicable when the model is dominated. In the present paper, we work in a general setting where the target measure $\mu$ can have a low dimensional structure which makes these usual discrepancy measures irrelevant. We choose the discriminative class $\mathcal{F}$ to be the Hölder class $\mathcal{H}^\gamma_1(\mathbb{R}^p,\mathbb{R})$ corresponding to the unit ball  of functions of smoothness $\gamma\geq 1$. Note that the case $\gamma=1$ is equivalent to the Wasserstein case when $\mu$ has compact support.

Optimal rates of estimation of a measure $\mu$ with support in $\mathbb{R}^p$ are known to decay exponentially as the ambient dimension $p$ increases. To overcome this curse of dimensionality, some low-dimensional structural
assumptions on $\mu$ need to be imposed.  In this work, we suppose that there exists a map $g^\star \in \mathcal{H}^{\beta +1}_K(\mathbb{T}^d,\mathbb{R}^p)$ with  $\mathbb{T}^d=\mathbb{R}^d/\mathbb{Z}^d$   the $d$-dimensional torus, such that $\mu=g^\star_{\# U}$, with $U\sim \mathcal{U}([0,1]^d)$ a uniform random variable on the cube. In particular, there exist $Y_1,...,Y_n$ i.i.d. such that $Y_i\sim \mathcal{U}([0,1]^d)$ and $g^\star(Y_i)=X_i$. Note that the $Y_i$ are unknown, we only have access to the $X_i$.  A similar low-dimensional-assumption is made in \cite{schreuder2021statistical} and \cite{chae2022rates} where the map $g^\star$ belongs to $\mathcal{H}^{\beta +1}_K([0,1]^d,\mathbb{R}^p)$.    In this context, the inference problem consists in trying to find an estimator $\hat{\mu}$ of $\mu=g^\star_{\# U}$ based on the sample $(X_i)_{i=1,...,n}$, such that the expected error $$\mathbb{E}_{X_i\sim g^\star_{\# U}}[d_{\mathcal{H}_1^\gamma}(g^\star_{\# U},\hat{\mu}(X_1,...,X_n))],$$ is as small as possible. The estimator $\hat{\mu}$ is said to be minimax optimal if there is no estimator that achieves a better rate of convergence uniformly over the model. Formally, it means that there exists a constant $C>0$ independent of $n$ such that
\begin{align*}
&\sup \limits_{g^\star \in \mathcal{H}^{\beta +1}_K}\mathbb{E}_{X_i\sim g^\star_{\# U}}[d_{\mathcal{H}_1^\gamma}(g^\star_{\# U},\hat{\mu}(X_1,...,X_n))] \\
&\leq C \inf \limits_{\hat{T}} \sup \limits_{g^\star \in \mathcal{H}^{\beta +1}_K}\mathbb{E}_{X_i\sim g^\star_{\# U}}[d_{\mathcal{H}_1^\gamma}(g^\star_{\# U},\hat{T}(X_1,...,X_n))].
\end{align*}
In \cite{divol2021measure}, the author provides a (non-generative) minimax estimator of densities in the manifold setting for the Wasserstein distance. It uses a local polynomial manifold estimator from \citep{aamari2017nonasymptotic} coupled with a kernel density estimator. However, the local polynomial estimator is very computationally expensive and therefore cannot  be used in high dimension. In \cite{tang2022minimax}, the author also provide a minimax estimator of densities in the manifold setting but for the distance $d_{\mathcal{H}^\gamma_1}$. The estimator uses a regularization of the empirical measure $\mu_n=\frac{1}{n}\sum \limits_{i=1}^n \delta_{X_i}$ with a manifold estimator coupled with a truncated wavelet regularization. This estimator is purely theoretical and would be extremely computationally expensive to be implemented in practice. Therefore, the existence of a tractable minimax estimator is still an open question and is crucial to be answered in order to provide efficient tools for concrete applications.

As the Wasserstein GAN \citep{arjovsky2017wasserstein} approach has been proven to be easily implementable and provide state-of-the-art results in various field,
we focus in this paper on the GAN estimator 
\begin{equation}\label{WGANS}
    \hat{g} \in \argmin \limits_{g\in \mathcal{G}} \sup \limits_{D\in \mathcal{D}} \frac{1}{n} \sum \limits_{i=1}^n D(X_i)-D(g(U_i))
\end{equation}
for $U_i\sim \mathcal{U}([0,1]^d)$ i.i.d., $\mathcal{G}\subset \mathcal{H}^{\beta+1}_K(\mathbb{T}^d,\mathbb{R}^p)$ and $\mathcal{D} \subset \mathcal{H}^{\gamma}_1(\mathbb{R}^p,\mathbb{R})$ with $\gamma \in[1,\beta+1]$. The estimator $\hat{g}$ is to be understood as an empirical approximation of the solution of \eqref{WGANs-theo} based on the data $X_1,...,X_n$. The probability measure $\hat{g}_{\# U}$ is then naturally our estimator of the target $g^\star_{\# U}$. Recall that as  $\mathcal{D}\subset \mathcal{H}^\gamma$, the case $\gamma=1$ corresponds to the classical WGAN estimator. One of the strengths of the GAN estimator \eqref{WGANS} is that it carries out both support and density estimations at the same time, which allows in particular  to avoid the use of any manifold estimator like in \cite{divol2021measure} and \cite{tang2022minimax}. 

 \subsection*{Main contributions}
We build tractable classes of neural networks $\mathcal{G}$ and $\mathcal{D}$, such that
\begin{equation}\label{eq:minimaxrate}
\sup \limits_{g^\star \in \mathcal{H}_K^{\beta +1}}\mathbb{E}_{X_i\sim g^\star_{\# U}}[d_{\mathcal{H}_1^\gamma}(g^\star_{\# U},\hat{g}_{\# U})]\leq C_1(\log n)^{C_2} \left( n^{-\frac{\beta+\gamma}{2\beta +d}}\vee n^{-\frac{1}{2}}\right),
\end{equation}
with $C_1,C_2>0$ some constants independent of $n$.
This rate has been proven to be minimax optimal in \cite{tang2022minimax} up to the logarithmic factors. To the best of our knowledge, this is the first result showing that the GAN estimator attains minimax rates for the Wasserstein/Hölder distances. This result improves, in particular, on the rates obtained in  \cite{chen2020distribution}, 
 \cite{schreuder2021statistical} and  \cite{chae2022rates}. Minimax convergence rates of the vanilla version of the GAN \citep{GANs}, have recently been obtained  by \cite{belomestny2023rates} for the Jensen–Shannon divergence. Their result only deals with the full dimensional setting  $d=p$, as the Jensen–Shannon divergence is only non-trivial when the measures to be compared are not singular with respect to each other.

We obtain the minimax rate of convergence \eqref{eq:minimaxrate} over measures $\mu=g^\star_{\# U}$ that have a density with respect to the volume measure of an unknown submanifold. In the process, minimax rates are also obtained for two intermediate statistical models:
\begin{itemize}
\item (Model~\ref{model1}, Theorem~\ref{theo:minimaxfirstgan}) We first treat a general low dimensional setting where the target measure
may have atoms and its support is not necessarily a manifold. In this case, the class of discriminators $\mathcal{D}$ used is theoretical, meaning it is not computable in practice. This model is studied to discuss the limitation of the assumptions of the general case.
\item (Model \ref{model3}, Proposition~\ref{prop:minimaxfulldimgamma})  We prove minimax rates in the full dimensional setting $p=d$, where the target measure has a density with respect to the $p$-dimensional Lebesgue measure. This case is treated to understand in the easier setting of full dimension, how additional assumptions can help us obtain a tractable estimator. 
\item (Model \ref{model2}, Theorem~\ref{theo:boundhallgammas}) Adapting the method developed in the full dimensional case to the submanifold case, we propose a tractable GAN estimator achieving minimax rates for all $\gamma\in [1,\beta+1]$ simultaneously.
\end{itemize}
The main result is proven using a new  interpolation inequality (Theorem~\ref{theo:theineq}) that bounds the distance $d_{\mathcal{H}_1^\gamma}(g_{\# U},g^\star_{\# U})$ by the quantity $d_{\mathcal{H}_1^{\beta+1}}(g_{\# U},g^\star_{\# U})^{\frac{\beta+\gamma}{2\beta+1}}$ up to logarithmic factors. A detailed overview of the main results is provided in Section \ref{sec:overview}.

\subsection*{Organization of the paper} In Section \ref{sec:preliminaries}, we set the notation, properly state our main results, and compare our contributions to the existing works. In Section \ref{sec:theoreticalGAN}, we introduce premilinary tools that we will use to show the minimax optimality of the GAN. In Section \ref{sec:tractable}, we gain some insights by treating the general low dimensional setting and the full dimensional setting of Models \ref{model1} and \ref{model3}. Finally, in Section \ref{sec:themanifoldcase}, we treat the low dimensional submanifold setting of Model~\ref{model2}. Technical points of the proofs are gathered in the supplementary material.

\section{Overview of the main results and comparison with other works}\label{sec:preliminaries}
\subsection{Notation}
Let $n \geq 1$ be the sample size, $p\geq 2$ be the dimension of the ambient space, $d\in \{1,...,p\}$ the intrinsic dimension of the data, $\beta\geq 1$ the regularity of the data, $\gamma \in [1,\beta+1]$ the regularity of the discriminator and $K>1$ a bounding constant for several parameters to come.

We write $\langle \cdot, \cdot \rangle$ the canonical dot product on $\mathbb{R}^p$, $\|x\|$ the Euclidean norm of a vector $x \in \mathbb{R}^p$ and $B^p(0,K)$ the $p$-dimensional ball of radius $K$. Let $\mathbb{N}_0=\{0,1,...,\}$ be the set of non-negative integers. For $\eta >0$, $\mathcal{X},\mathcal{Y}$ two subsets of Euclidean spaces and $f=(f_1,...,f_p)\in C^{\lfloor \eta \rfloor}(\mathcal{X},\mathcal{Y})$ the set of $\lfloor \eta \rfloor:=\max \{k\in \mathbb{N}_0 |\ k\leq \eta\}$  times differentiable functions, denote $\partial^\nu f_i = \frac{\partial^{|\nu|}f_i}{\partial x_1^{\nu_1}...\partial x_d^{\nu_d}}$ the partial differential operator  for any multi-index $\nu = (\nu_1,...,\nu_d)\in \mathbb{N}_0^d$ with $|\nu|:=\nu_1+...+\nu_d\leq \lfloor \eta \rfloor$. Write $\|f_i\|_{\eta-\lfloor \eta \rfloor}=\sup \limits_{x\neq y} \frac{f_i(x)-f_i(y)}{\min\{1,\|x-y\|^{\eta - \lfloor \eta \rfloor}\}}$ and let 
\begin{align*}
\mathcal{H}^\eta_K(\mathcal{X},\mathcal{Y})=\Big\{& f  \in C^{\lfloor \eta \rfloor}(\mathcal{X},\mathcal{Y}) \ \big| \max \limits_{i} \sum \limits_{|\nu|\leq \lfloor \eta \rfloor} \|\partial^\nu f_i\|_{L^\infty(\mathcal{X},\mathcal{Y})} + \sum \limits_{|\nu|  = \lfloor \eta \rfloor} \|\partial^\nu f_i\|_{\eta-\lfloor \eta \rfloor}   \leq K\Big\}
\end{align*}
denote the ball of radius $K$ of the Hölder space $\mathcal{H}^\eta(\mathcal{X},\mathcal{Y})$, the set of functions $f:\mathcal{X}\rightarrow \mathcal{Y}$ of regularity $\eta$. To lighten the notation we will write $\mathcal{H}_1^\gamma$ instead of $\mathcal{H}^\gamma_1(\mathbb{R}^p,\mathbb{R})$ and $\mathcal{H}^{\beta+1}_K$ instead of $\mathcal{H}^{\beta+1}_K(\mathbb{T}^d,\mathbb{R}^p)$ when the context is clear. For a uniform random variable $U\sim \mathcal{U}([0,1]^d)$ on the $d$-dimensional unit cube and $g\in \mathcal{H}^{\beta+1}_K(\mathbb{T}^d,\mathbb{R}^p)$, we denote by $g_{\# U}$ the push forward measure of $U$ by $g$, i.e. $\forall f \in \mathcal{H}_1^0(\mathbb{R}^p,\mathbb{R})$, 
$$
\int_{\mathbb{R}^p}f(x)dg_{\# U}(x):=\int_{[0,1]^d}f(g(u))d\lambda^d(u),
$$
with $\lambda^d$ being the $d$-dimensional Lebesgue measure. 

For a $d$-dimensional submanifold $\mathcal{M}$ of $\mathbb{R}^p$ and $\epsilon>0$, we denote by $\mathcal{M}^\epsilon:=\{x\in \mathbb{R}^p|\ d(x,\mathcal{M})< \epsilon\}$ the set of points such that their distance to the manifold $d(x,\mathcal{M}):=\inf_{y\in \mathcal{M}}\|x-y\|$, is less than $\epsilon$. The reach $r_{\mathcal{M}}$ \citep{federer1959} of $\mathcal{M}$ corresponds to the largest $\epsilon\geq0$ such that its orthogonal projection $\pi_\mathcal{M}$ is well defined from $\mathcal{M}^\epsilon$ to $\mathcal{M}$.
 The projection on $\mathcal{T}_{x}(\mathcal{M})$ the tangent space at the point $x\in \mathcal{M}$, will be denoted by $P_{\mathcal{T}_{x}(\mathcal{M})}$.
We will write $\int_{\mathcal{M}}f(x)d\lambda_{\mathcal{M}}(x)$ the integral of a function $f:\mathcal{M}\rightarrow \mathbb{R}$ with respect to the volume measure on $\mathcal{M}$, i.e. the $d$-dimensional Hausdorff measure.

Given a differentiable map $f:\mathbb{R}^k\rightarrow \mathbb{R}^l$ we denote by $\nabla f$ its differential and by $\|\nabla f(x)\|$ its operator norm at  $x$. We denote by $(\nabla f(x))^\top$ its transpose matrix. If $k\leq l$, $\lambda_{\min}((\nabla f(x) )^\top \nabla f(x))$ corresponds to the smallest eigenvalue of the matrix $(\nabla f(x) )^\top \nabla f(x)$.

 The distance we use to measure discrepancy between two probability measures $\mu,\nu$ on $\mathbb{R}^p$ is the Hölder Integral Probability Metric \citep{IPMsMuller} :
\begin{equation}\label{eq:holderIPM}
 d_{\mathcal{H}_1^\gamma}(\mu,\nu):= \sup \limits_{D\in \mathcal{H}^{\gamma}_1}\mathbb{E}_{X\sim \mu,Y\sim \nu}[  D(X)-D(Y)].
\end{equation}
In the case where $\gamma=1$,  the Wasserstein and the $d_{\mathcal{H}_1^1}$ distances are equivalent on compact domains. This can be shown using the dual formulation of the 1-Wasserstein distance $W_1$~\citep{villani2009optimal}: for two probability measures $\mu,\nu$ with support in the ball $B^p(0,K)$, we have 
$$d_{\mathcal{H}_1^1}(\mu,\nu)\leq W_1(\mu,\nu)\leq (2K+1)d_{\mathcal{H}_1^1}(\mu,\nu).$$ 
Therefore, in the case where $\gamma=1$ and the discriminator class $\mathcal{D}$ is a subset of $\mathcal{H}^1_1$, the GAN estimator \eqref{WGANS} is called the Wasserstein GAN estimator. We will denote  $|A|$ the cardinality of a set $A$. For $a,b\in \mathbb{R}$, $a\wedge b$ and $a\vee b$ will denote the minimum and maximum value between $a$ and $b$ respectively.
Throughout, letters $C$, $C_2$, $C_3$ ... denote constants that can vary from line to line and that only depend on $p,d,\beta$ and $K$.

\subsection{Overview of the main results}\label{sec:overview} As mentioned above, the goal of the present paper is to build a tractable GAN estimator \eqref{WGANS} that attains minimax rates for the distance $d_{\mathcal{H}_1^\gamma}$ with $\gamma\in [1,\beta+1]$. We cover a total of three different statistical models. The first two models are studied in order to get insights on how to construct a minimax and tractable estimator in the third model, as well as to highlight the key technical points to do so. Having gathered insights from these analysis, we obtain the desired result under the assumptions of the third model. 
\subsubsection{A (too) general low dimensional model}
The first model allows the target measure to have an intrinsic low dimensional structure as well as a non smooth support.
\begin{model}\label{model1}There exists $g^\star \in \mathcal{H}^{\beta+1}_K(\mathbb{T}^d,\mathbb{R}^p)$ such that the target measure $\mu$ verifies 
$\mu=g^\star_{\# U}.$
\end{model}
Supposing that $\mu$ is the push-forward measure of a low dimensional space by a smooth map is a classical assumption in the GAN literature (\cite{chae2022rates}, \cite{schreuder2021statistical}). The latent space is usually the cube $[0,1]^d$ or the whole $\mathbb{R}^d$ space when the target measure is sub-Gaussian. Like in \cite{gonzalez2021dynamics}, we use the torus $\mathbb{T}^d=\mathbb{R}^d/\mathbb{Z}^d$ for the latent space so that the target measure $g^\star_{\# U}$ is compactly supported and we can use harmonic analysis tools such as wavelets \citep{giné_nickl_2015} to describe $g^\star$.
Model~\ref{model1} is very general as it does not have assumptions on the smoothness of support of $\mu$ besides that it is compact and connected. Under these assumptions, we obtain in Section \ref{sec:theoreticalGAN} the following result.

\textbf{Theorem~\ref{theo:minimaxfirstgan}} For $\mu$ verifying the assumptions of Model~\ref{model1} and $\gamma\in [1,\beta+1]$, there exist theoretical classes of functions $\mathcal{G}$ and $\mathcal{D}_\gamma$ such that the GAN estimator $\hat{g}$ of \eqref{WGANS} verifies
\begin{align*}  
\mathbb{E}\Big[d_{\mathcal{H}_1^\gamma}(\hat{g}_{\# U},\mu)\Big] \leq  C\log(n)^{C_2}\left(n^{-\frac{\beta+\gamma}{2\beta+d}}\vee n^{-\frac{1}{2}}\right).
\end{align*}

 The rate obtained in Theorem~\ref{theo:minimaxfirstgan} has been proven to be minimax optimal in \cite{tang2022minimax} up to the logarithmic factors.   To the best of our knowledge, Theorem~\ref{theo:minimaxfirstgan} is the first result showing that the GAN is minimax for the distance $d_{\mathcal{H}_1^\gamma}$ with $\gamma\in [1,\beta+1]$. In particular this proves that the Wasserstein GAN ($\gamma=1$) is also minimax optimal.  

The minimax rate is attained in the setting of general  push forward maps which allows the target measure to have a low dimensional structure and therefore, be singular to the Lebesgue measure of the ambient space. Furthermore, the only assumption being the regularity of the map $g^\star$, it even allows the target measure $g^\star_{\# U}$ to be singular to the $d$-dimensional Hausdorff measure. Indeed, $g^\star_{\# U}$ could have some parts that have finite $k$-dimensional Hausdorff measure for any $k\in \{0,...,d\}$, as for example with $g^\star(x)=(x_1,...,x_k,0,...,0)$. On the other hand, the support of $g^\star_{\# U}$ could have some strong irregularities like corners or intersections.

However, the discriminator class $\mathcal{D}_\gamma$ we use in Theorem~\ref{theo:minimaxfirstgan} to estimate $\mu$ in Model~\ref{model1} is not tractable. To use a tractable class, we will suppose that $\mu$ has a smooth density with respect to the volume measure of a submanifold. To get some insights from a simple setting, we first treat the "full dimensional case" where the submanifold is the whole ambient space $\mathbb{R}^p$.

\subsubsection{A full dimensional density-based model}
In the full dimensional case $d=p$, we focus on measures that satisfy the following assumptions.
\begin{model}\label{model3} The target measure $\mu$ is absolutely continuous with respect to the $p$-dimensional Lebesgue measure, is compactly supported in $B^p(0,K)$ and its density $f^\star$ belongs to $\mathcal{H}^{\beta}_K(\mathbb{R}^p,\mathbb{R})$.
\end{model}
Note that the densities of push forward measures $g_{\# U}$ with $U\sim \mathcal{U}([0,1]^p)$ are discontinuous on the boundary of their support. Therefore,  measures in Model~\ref{model3} cannot be push forward of the Lebesgue measure on the cube $[0,1]^p$. In the density case, we hence use an estimator that is different from the GAN estimator \eqref{WGANS}. Given $n$ i.i.d. data $X_i\in \mathbb{R}^p$ of law $f^\star$, the  adversarial density estimator is defined as 
\begin{equation}\label{eq:gandensity}
\hat{f} \in \argmin \limits_{f\in \mathcal{F}}\ \max \limits_{D\in \mathcal{D}}\ \sum \limits_{i=1}^n D(X_i)-D(U_i)f(U_i),
\end{equation} 
for $U_i\in \mathbb{R}^p$ i.i.d. uniform random variables on the ball $B^p(0,K)$. Note that this is not the typical GAN estimator where usually the samples of candidate densities $f\in \mathcal{F}$, are generated by push forward distributions. Such a GAN-like formulation, which uses densities as generators, has already been proposed by \cite{liang2018generative} and \cite{singh2018nonparametric}. The drawback of this method is that it does not provide a direct way to sample from the approximate distribution $\hat{f}$. We do not develop on the generation from the estimator \eqref{eq:gandensity} as this estimator is only a by-product of the method we developed to solve the low dimensional case of Model~\ref{model2}. We obtain in Section \ref{sec:fulldim} the following result. 

\textbf{Proposition~\ref{prop:minimaxfulldimgamma}:} For $\mu$ verifying the assumptions of Model~\ref{model3}, there exist tractable neural network classes $\mathcal{F}$ and $\mathcal{D}$ such that the density adversarial estimator  $\hat{f}$ of \eqref{eq:gandensity}, verifies for all $\gamma\in[1,\beta]$ simultaneously:
    \begin{align*}  
\mathbb{E}\Big[d_{\mathcal{H}_1^{\gamma}}(\hat{f},\mu)\Big] \leq  C\log(n)^{C_2}\left(n^{-\frac{\beta+\gamma}{2\beta+p}}\vee n^{-\frac{1}{2}}\right).
\end{align*}

Proposition~\ref{prop:minimaxfulldimgamma} asserts that the adversarial density estimator of \ref {eq:gandensity} also attains minimax rates (up to the logarithmic factor) for the classical density estimation problem.
\subsubsection{A non-degenerate manifold model}
We now come back to the low dimensional case and define the assumptions we add to Model~\ref{model1}, so that we can use tractable classes of generators and discriminators for the GAN estimator \eqref{WGANS}.
\begin{definition}\label{defi:manifold regularity}
    Let $g \in \mathcal{H}^{\beta+1}_K(\mathbb{T}^d,\mathbb{R}^p)$.
    \begin{itemize}
        \item[i)] The map $g$ is said to verify the $K$-manifold regularity condition if it is injective and its image $\mathcal{M}_{g}$ is a $d$-dimensional submanifold with reach $r_{g}$ larger than $K^{-1}$.
        \item[ii)] The map $g$ is said to verify the $K$-density regularity condition if the differential of $g$ verifies 
        $$
    \inf \limits_{u\in[0,1]^d} \lambda_{\min}((\nabla g(u) )^\top \nabla g(u))^{1/2}\geq K^{-1}.$$
     \end{itemize}
\end{definition}
 The $K$-manifold regularity condition prevents $g(\mathbb{T}^d)$ to have self-intersections or corners (in contrast to Model \ref{model1}), and allows to define the projection onto $g(\mathbb{T}^d)$. 
On the other hand, the $K$-density regularity condition imposes the map $g$ to be a local $C^{\beta+1}$-diffeomorphism and the density of $g_{\# U}$ (with respect to the volume measure on $g(\mathbb{T}^d)$) to be bounded below and belong to $\mathcal{H}^{\beta}_{C}(g(\mathbb{T}^d),\mathbb{R})$.  
Using these definitions, we define the central model of the paper, under which we obtain a tractable and minimax estimator of measures having a low dimensional structure.

\begin{model}\label{model2}There exists $g^\star \in \mathcal{H}^{\beta+1}_K(\mathbb{T}^d,\mathbb{R}^p)$ that verifies the $K$-manifold and $K$-density regularity conditions such that the target measure $\mu$ verifies 
$\mu=g^\star_{\# U}.$
\end{model}

Model \ref{model2} includes the push-forwards of $\mathcal{U}([0,1]^d)$ via smooth enough diffeomorphisms from the canonical torus to $\mathbb{R}^p$. 
For instance, supposing $K>0$ large enough, writing $\text{Id}_{\mathbb{T}^d}$ for the canonical embedding of $\mathbb{T}^d$ to $\mathbb{R}^p$ and $r_{\mathbb{T}^d}$ the reach of the embedded torus  $\text{Id}_{\mathbb{T}^d}(\mathbb{T}^d)$, for any  $g=\text{Id}_{\mathbb{T}^d}+v$ with $v\in \mathcal{H}^{\beta+1}_{r_{\mathbb{T}^d}/4}(\mathbb{T}^d,\mathbb{R}^p)$, the measure $g_{\# U}$ satisfies the assumptions of Model~\ref{model2}.   
Using the torus $\mathbb{T}^d$ as the latent space constrains the manifold $\mathcal{M}_{g^\star}$ to have empty boundary. 
In particular, Model~\ref{model2} is non-empty only when $d<p$.
This empty boundary assumption is common for manifold-based statistical models~\cite{divol2021measure}, \cite{tang2022minimax}.

  The choice of a fixed latent space (here $\mathbb{T}^d$) allows to bypass any multi-patch support estimation step usually required in  broader settings of densities bounded below on compact submanifolds.
Though less general than arbitrary manifolds without boundary (see e.g.~\cite{tang2022minimax}), Model~\ref{model2} provides a framework in which we can construct a tractable generative method. 
  Indeed, under the assumptions of Model~\ref{model2}, we obtain in Section~ref{sec:tractablehbeta} the main result of the paper.

\textbf{Theorem~\ref{theo:boundhallgammas}} For $\mu$ verifying the assumptions of Model~\ref{model2}, there exist tractable neural network classes $\mathcal{G}$ and $\mathcal{D}$ such that the GAN estimator $\hat{g}$ of \eqref{WGANS} verifies for all $\gamma\in[1,\beta+1]$ simultaneously:
    \begin{align*}  
\mathbb{E}\Big[d_{\mathcal{H}_1^\gamma}(\hat{g}_{\# U},\mu)\Big] \leq  C \log(n)^{C_2}\left(n^{-\frac{\beta+\gamma}{2\beta+d}}\vee n^{-\frac{1}{2}}\right).
\end{align*}

To the best of our knowledge, the GAN estimator \eqref{WGANS} is the first tractable estimator attaining minimax rates up to logarithmic factors in the manifold setting. Furthermore, it is also the first one being minimax for several IPMs simultaneously. Indeed, as in the other comparable results in the literature, Theorem~\ref{theo:minimaxfirstgan} provides a minimax estimator $\hat{g}_\gamma$ that is different for each $\gamma\in[1,\beta+1]$ as it uses a discriminator class $\mathcal{D}_\gamma\subset \mathcal{H}^\gamma_1$.  In contrast, Theorem~\ref{theo:boundhallgammas} provides a single GAN estimator $\hat{g}_{\beta+1}$ computed with a class $\mathcal{D}_{\beta+1}$, that attains minimax rates for all the distances $d_{\mathcal{H}_1^\gamma}, \gamma\in[1,\beta+1]$ simultaneously, up to logarithmic factors. To obtain this uniform minimax optimality, we show an interpolation inequality that allows to compare the different IPMs. That is, we obtain in Section \ref{sec:interpolation} the following result.

\textbf{ Theorem~\ref{theo:theineq}}
 Let $g,g^\star \in \mathcal{H}^{\beta+1}_K(\mathbb{T}^d,\mathbb{R}^p)$ verifying the $K$-manifold and $K$-density regularity conditions. There exists constants $C,C_{2}>0$ such that if $d_{\mathcal{H}_1^{\beta+1}}(g_{\# U},g_{\# U}^\star)\leq C^{-1},
     $
    then for all $\epsilon\in(0,1)$ and $\gamma \in [1,\beta+1]$, we have
    \begin{align*}
        d_{\mathcal{H}_1^\gamma}(g_{\# U},g_{\# U}^\star)\leq C_2 \log(\epsilon^{-1})^4 \left(d_{\mathcal{H}_1^{\beta+1}}(g_{\# U},g_{\# U}^\star)^{\frac{\beta+\gamma}{2\beta+1}} + \epsilon\right).   
    \end{align*}

This result allows to show that if an estimator is minimax for the distance $d_{\mathcal{H}_1^{\beta+1}}$, then it will also be minimax for the distances $d_{\mathcal{H}_1^\gamma}$ with  $\gamma \in [1,\beta+1]$ up to logarithmic factors. Theorem~\ref{theo:theineq} requires the functions $g$ and $g^\star$ to verify the $K$-manifold and $K$-density regularity conditions. This is the reason why we add these extra assumptions in Model~\ref{model2} compared to Model~\ref{model1}. The proof of the Theorem strongly relies on wavelet theory~\citep{giné_nickl_2015}. All along the paper, we try to show how wavelets provide a key tool to describe Hölder regularity and therefore are perfectly suited for the estimation of measures with smoothness properties.

\subsection{Comparison with other works}\label{sec:comparison}
Let us compare more extensively our results with other works on GAN and minimax estimators. Specifically we focus on:
\begin{itemize}
    \item The settings in which the results are obtained
    \item The rates of convergence of the error up to logarithmic factors
    \item The computational tractability of the methods
\end{itemize}
\subsubsection*{Our result}
We treat two different cases: the general setting (Model~\ref{model1}) where $\mu$ is not necessarily supported on a manifold and can have atoms, and the setting where $\mu$ has a density with respect to the volume measure of a submanifold (Models \ref{model3} and \ref{model2}).

In both settings we obtain the minimax optimal rate of $O(n^{-\frac{\beta+\gamma}{2\beta +d}}\vee n^{-\frac{1}{2}})$ where $n$ is the sample size, $\beta$ determines the smoothness of the target measure $\mu$, $\gamma$ is the smoothness of the discriminator class and $d\leq p$ is the intrinsic dimension of $\mu$.
In the first setting the class $\mathcal{D}$ used for the GAN estimator \eqref{WGANS} is theoretical, meaning that it is not computable in practice. In the second setting, we provide tractable classes of neural networks which lead to a single GAN being minimax optimal for all $\gamma\in [1,\beta+1]$ simultaneously.

\subsubsection*{GAN estimators}
Let us first compare our result with the already existing rates obtained for the GAN estimator in the Hölder/Wasserstein discrimination setting.
\begin{itemize}[leftmargin=*]
\item In \cite{chen2020distribution}, assuming that the support of the target measure is convex, the authors    treat the full dimension case $p=d$ and the case where the target measure is supported on a linear subspace case. They obtain the rate $O(n^{-\frac{\gamma}{2\gamma+p}}\vee n^{-\frac{\beta+1}{2(\beta+1)+p}})$ (for the distance $d_{\mathcal{H}^\gamma_1}$ with $\gamma\geq 1$) in the full dimension case and the rate $O(n^{-\frac{1}{2+d}})$ (for the Wasserstein distance $W_1$) in the linear subspace case. These rates are unfortunately always slower than the minimax rates $O(n^{-\frac{\beta+\gamma}{2\beta+p}}\vee n^{-\frac{1}{2}})$ and $O(n^{-\frac{1}{d}})$ respectively. Their method is tractable, and  uses ReLU feedforward neural networks.

\item In \cite{schreuder2021statistical}, the authors treat the case $\beta+1=\gamma$, in noisy settings and suppose that the target measure is supported on a manifold. They provide the first result assessing the influence of the noise and of the contamination on
the error of generative Modeling. They obtain the rate $O(n^{-\frac{\beta+1}{d}}\vee n^{-\frac{1}{2}})$ which is always strictly slower than the minimax rate $O(n^{-\frac{2\beta+1}{2\beta+d}}\vee n^{-\frac{1}{2}})$  when $1<\beta+1<d/2$.  The class $\mathcal{G}$ is a set of Hölder functions   and the class $\mathcal{D}$ is taken as the whole $\mathcal{H}^\gamma_1$, which is not tractable.

\item In \cite{chae2022rates}, the authors treat the Wasserstein case $\gamma=1$. They obtain the rate $O(n^{-\frac{\beta}{2\beta+d}})$ which is always strictly slower than the minimax rate $O(n^{-\frac{\beta+1}{2\beta+d}})$. They use a set of neural networks for the class $\mathcal{G}$, but use the set of optimal discriminators in $\mathcal{H}_1^1$ between functions in $\mathcal{G}$, as the class $\mathcal{D}$. We use the same discriminator class in our first setting but this class of functions is way to expensive to compute in practice.
\end{itemize}
In a setting extending all these 3 works, our results show that the GAN estimator can be minimax for properly chosen classes $\mathcal{G}$ and $\mathcal{D}$.

\subsubsection*{Minimax rates}
Let us now compare our work with estimators that are not the GAN estimator but that attain the minimax optimal rate.
\begin{itemize}
\item In \cite{liang2021generative}, the authors treat the full dimensional case $p=d$ with Sobolev regularity instead of Hölder. They obtain the minimax rate of convergence $O(n^{-\frac{\beta+\gamma}{2\beta+p}}\vee n^{-\frac{1}{2}})$ for the Sobolev discrimination metric $d_{\mathcal{W}^\gamma_1}$, $\gamma\in \mathbb{N}_0$ with $\mathcal{W}^\gamma_1$ the unit ball  of the Sobolev space $\mathcal{W}^\gamma$. They provide the first  formalization of nonparametric
estimation under the adversarial framework. The estimator used is adversarial but is not the GAN estimator as in the GAN loss \eqref{WGANS}, it does not use samples of the generator and uses a regularization of the empirical measure. The discriminator class is the whole Sobolev class $\mathcal{W}^{\gamma}_1$, which is not computable in practice. Furthermore, they compute integrals with respect to the $p$-dimensional Lebesgue measure, which is not tractable for large ambient dimension $p$.

\item In \cite{divol2021measure}, the author treats the Wasserstein case $\gamma=1$ when the measure admits a $\beta$-regular density with respect to the volume measure of a $d$-dimensional closed (compact boundaryless) manifold. They propose the first estimator attaining the minimax rate of convergence $O(n^{-\frac{\beta+1}{2\beta+d}})$ in the manifold case. The estimator uses a  local polynomial estimator from \citep{aamari2017nonasymptotic} that is not tractable for large ambient dimension~$p$.

\item In \cite{tang2022minimax}, the authors treat the case where the measure admits an $\alpha$-regular density with respect to the volume measure of a $d$-dimensional $\beta$-regular closed submanifold ($\beta\geq \alpha-1$). They obtain the minimax rate of convergence $O(n^{-\frac{\alpha+\gamma}{2\alpha+d}}\vee n^{-\frac{\beta\gamma}{d}} \vee n^{-\frac{1}{2}})$ for $\gamma\in (0,\infty)$. In particular they cover the case 
$\gamma\in (0,1)$ which sheds light on the fact that when $\gamma$ is small, the distance $d_{\mathcal{H}^\gamma_1}$ tends to be more sensitive to the misalignment between
the supports of the measures than to the discrepancy between the densities on their respective supports. Although it is adversarial, the estimator is very different from the GAN. Indeed, for all $D\in \mathcal{H}^\gamma_1(\mathbb{R}^p,\mathbb{R})$, they compute an intermediate estimator $\hat{\mathcal{J}}(D)$ of $\mathbb{E}_{\mu}[D(X)]$, such that the quantity $\sup_{D\in \mathcal{H}^\gamma_1} \mathbb{E}_{\mu}[D(X)]- \hat{\mathcal{J}}(D)$, achieves the minimax rate. Then,  an adversarial estimator $\hat{\mu} \in \argmin_{\nu\in \mathcal{S}}\sup_{D\in \mathcal{H}^\gamma_1} \mathbb{E}_{\nu}[D(Y)]- \hat{\mathcal{J}}(D)$ is computed. As they suppose that the target measure $\mu$ belongs to the class $\mathcal{S}$ and the intermediate estimator $\hat{\mathcal{J}}(D)$ already attains the minimax rate, then naturally, $\hat{\mu}$ also attains this rate. However, the real estimation is realised through the computation of the intermediate estimator $\hat{\mathcal{J}}(D)$, not the adversarial estimator $\hat{\mu}$. To compute the estimator $\hat{\mathcal{J}}(D)$, they use a manifold estimator that is as computationally heavy as in \cite{divol2021measure}. Furthermore, the estimator computes several infima over the whole Hölder class $\mathcal{H}^{\beta}_1$, the wavelet expansion of each discriminator $D\in \mathcal{H}^\gamma_1$, and computes all their derivatives up to the order $\gamma$. Unfortunately, all these steps make the estimator not tractable.
\end{itemize}

The fact that we can obtain minimax rates for the GAN estimator \eqref{WGANS}, shows in particular that it is not necessary to regularize the empirical measure in the GAN loss like in \cite{liang2021generative} and \cite{tang2022minimax}.
Taking a discriminator class $\mathcal{D}$ much smaller than the Hölder functions, actually suffices to
implicitly regularize the measures as this class cannot distinguish well between discrete and smooth measures. In particular, it makes our estimator tractable in contrast to these three works.

See Table \ref{tab:1} for a summary of the different results discussed previously.

\newcolumntype{M}[1]{>{\centering}m{#1}}

\begin{table}[h!]
    \centering
\begin{tabular}{|M{2.3cm}| M{3.2cm}| M{2.3cm}| M{1.2cm} |M{1.2cm} |M{1.2cm}| }
 \hline
 Reference & Target measure & Distance & GAN & Optimal & Tractable \tabularnewline
 \hline\hline
 \cite{chen2020distribution} & density with convex support & $d_{\mathcal{H}^\gamma_1},$ $\gamma\geq 1$ & \cmark & \xmark & \cmark \tabularnewline
 \hline
 \cite{schreuder2021statistical} & $g^\star_{\# U}$, $g^\star:[0,1]^d\rightarrow \mathbb{R}^p$ & $d_{\mathcal{H}^\gamma_1},$ $\gamma=\beta$ & \cmark & \xmark & \xmark \tabularnewline
 \hline
  \cite{chae2022rates} & $g^\star_{\# U}$, $g^\star:[0,1]^d\rightarrow \mathbb{R}^p$ & $W_1$ & \cmark & \xmark & \xmark \tabularnewline
 \hline
  \cite{liang2021generative} & density & $d_{\mathcal{W}^\gamma_1},$ $\gamma\in \mathbb{N}_0$&\xmark & \cmark & \xmark \tabularnewline
 \hline
  \cite{divol2021measure} & density on a closed manifold & $W_p$, $p\geq 1$& \xmark  & \cmark & \xmark  \tabularnewline
 \hline
   \cite{tang2022minimax} & density on a closed manifold & $d_{\mathcal{H}^\gamma_1},$ $\gamma > 0$ & \xmark  & \cmark & \xmark  \tabularnewline
 \hline\hline
    Theorem \ref{theo:boundhallgammas} & $g^\star_{\# U}$, $g^\star:\mathbb{T}^d\rightarrow \mathbb{R}^p$ & $d_{\mathcal{H}^\gamma_1},$ $\gamma\in [1,\beta+1]$ simultaneously & \cmark  & \cmark & \cmark  \tabularnewline
 \hline
\end{tabular}
\caption{Comparison of the results on the estimation of a target measure of regularity $\beta$.}
    \label{tab:1}
\end{table}

\section{Preliminary tools}\label{sec:theoreticalGAN} This section gathers preliminary results used to build the minimax GAN estimators of the following sections. These tools are both of statistical nature~(Section~\ref{firstbound}), and of analytical nature (Section \ref{sec:wavelets}). 
\subsection{Bias-variance trade-off for GANs}\label{firstbound}In this Section, we suppose that we are in the setting of  Model~\ref{model1} (which includes the setting of  Model~\ref{model2}). We exhibit a general bound on the objective and give the required values of the approximation errors and covering numbers of the classes $\mathcal{G}$ and $\mathcal{D}$, so that this bound provides minimax rates.
\subsubsection{A general bound} Consider two closed sets of functions: a set of generators $\mathcal{G}\subset \mathcal{H}^{\beta+1}_K(\mathbb{T}^d,\mathbb{R}^p)$ and a set of discriminators $\mathcal{D} \subset \mathcal{H}^{\gamma}_1(B^p(0,K),\mathbb{R})$. The approximation error of the class $\mathcal{G}$ is defined as
\begin{equation}\label{eq:approxerrG}
    \Delta_\mathcal{G}:=\inf \limits_{g\in \mathcal{G}} d_{\mathcal{H}_1^\gamma}(g_{\# U},g^\star_{\# U}).
\end{equation}
The error $\Delta_{\mathcal{G}}$ corresponds to how well the class $\mathcal{G}$ approximates $g^\star_{\# U}$ for the cost $d_{\mathcal{H}^\gamma_1}$. It is crucial for this error to be small since $$\Delta_\mathcal{G}\leq \mathbb{E}_{X_i\sim g^\star_{\# U}}\big[d_{\mathcal{H}_1^\gamma}(\hat{g}_{\# U},g^\star_{\# U})\Big].$$
On the other hand, the approximation error of the class $\mathcal{D}$ is defined as
\begin{equation}\label{eq:approxerrD}
\Delta_\mathcal{D}:=\sup \limits_{g\in \mathcal{G}} \bigl\{ d_{\mathcal{H}_1^\gamma}(g_{\# U},g^\star_{\# U})-d_{\mathcal{D}}(g_{\# U},g^\star_{\# U})\bigl\}.
\end{equation}
It corresponds to how well the class $\mathcal{D}$ approximates the discrimination made by $\mathcal{H}^\gamma_1$ between the measures $g^\star_{\# U}$ and $g_{\# U}$, for all $g\in \mathcal{G}$. As opposed to $\Delta_{\mathcal{G}}$, it is not crucial that this error be small. Indeed the role of the class $\mathcal{D}$ is only to associate a score $d_{\mathcal{D}}(g_{\# U},g^\star_{\# U})$ to each function $g\in \mathcal{G}$, in order to find a good potential minimizer $\hat{g}_{\# U}$. However, this score does not have to be at the same scale as $d_{\mathcal{H}_1^\gamma}(g_{\# U},g^\star_{\# U}).$
For example, we show in Section \ref{sec:themanifoldcase} that under suitable assumptions on $g,g^\star$ we have
$$
d_{\mathcal{H}_1^\gamma}(g_{\# U},g^\star_{\# U})\leq d_{\mathcal{H}_1^{\beta+1}}(g_{\# U},g^\star_{\# U})^{\frac{\beta+\gamma}{2\beta+1}}
$$
up to logarithmic factors. Therefore, having a class $\mathcal{D}$ that well approximates $\mathcal{H}_1^{\beta+1}$ allows to obtain a minimax estimator although $\Delta_\mathcal{D}$ is big in this case. Nevertheless, if $\Delta_{\mathcal{D}}$ is small, it ensures that the discrimination done by the class $\mathcal{D}$ is relevant and therefore with high probability, $\hat{g}$ is a good candidate among the class $\mathcal{G}$.

Let us now define the notion of covering number which is the other feature of $\mathcal{G}$ and $\mathcal{D}$ that intervenes in the bound on the objective. For   $\epsilon>0$, a minimal $\epsilon$-covering $\mathcal{F}_\epsilon$ of a class of functions $\mathcal{F}$ is defined as: 
\begin{equation}\label{eq:coveringnum}
\mathcal{F}_\epsilon:=\argmin \{|A|\ |\ \forall f \in \mathcal{F},\exists f_\epsilon \in A, \|f-f_\epsilon\|_\infty\leq \epsilon\}.
\end{equation}
The covering numbers $|\mathcal{G}_\epsilon|$ and 
$|\mathcal{D}_\epsilon|$ characterise the sizes of the classes of functions. The larger the classes are, the most likely the estimators $\hat{g}$ and $\hat{D}_{\hat{g}}$ (see Definition \ref{defi:L}) will overfit the data $(X_i)_i$. The following result exhibits a classical bias/variance trade off between the approximation errors and covering numbers.

\begin{theorem}\label{theo:boundexpecterror} If $g^\star \in \mathcal{H}^{\beta+1}_K(\mathbb{T}^d,\mathbb{R}^p)$ and $\mathcal{G}\subset \mathcal{H}^{\beta+1}_K(\mathbb{T}^d,\mathbb{R}^p)$, $\mathcal{D} \subset \mathcal{H}^{\gamma}_1(B^p(0,K),\mathbb{R})$, then the GAN estimator \eqref{WGANS} verifies
\begin{align*}  
\mathbb{E}&\Big[ d_{\mathcal{H}_1^\gamma}(\hat{g}_{\# U},g^\star_{\# U})\Big]-\Delta_\mathcal{G}  -\Delta_\mathcal{D}\\
& \leq
C \min \limits_{\delta \in [0,1]}\Biggl\{\sqrt{\frac{(\delta+1/n)^2\log(n|\mathcal{G}_{1/n}| |\mathcal{D}_{1/n}|)}{n}}+\frac{1}{\sqrt{n}}(1+\delta^{(1-\frac{d}{2\gamma})}+\log(\delta^{-1})\mathds{1}_{\{2\gamma= d\}})  \Biggl\}.
\end{align*}
\end{theorem}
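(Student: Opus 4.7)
The plan is a bias-variance decomposition for an $M$-estimator built from a two-sided empirical process, followed by a two-scale concentration argument. Write $L(g,D)=\mathbb{E}_\mu[D(X)]-\mathbb{E}_\nu[D(g(Z))]$ and $L_n(g,D)=\frac{1}{n}\sum_{i=1}^n D(X_i)-\frac{1}{n}\sum_{i=1}^n D(g(U_i))$, and choose $g_0\in\mathcal{G}$ with $d_{\mathcal{H}_1^\gamma}(g_{0,\#U},g^\star_{\#U})\le\Delta_\mathcal{G}+1/n$. By definition of $\Delta_\mathcal{D}$ one has $d_{\mathcal{H}_1^\gamma}(\hat g_{\#U},g^\star_{\#U})\le\Delta_\mathcal{D}+\sup_{D\in\mathcal{D}}L(\hat g,D)$; inserting $L_n$, invoking the optimality $\sup_{D}L_n(\hat g,D)\le\sup_{D}L_n(g_0,D)$, and adding and subtracting $L(g_0,\cdot)$ in a short telescope yields
\begin{equation*}
\mathbb{E}\bigl[d_{\mathcal{H}_1^\gamma}(\hat g_{\#U},g^\star_{\#U})\bigr]-\Delta_\mathcal{G}-\Delta_\mathcal{D}\;\le\;2\,\mathbb{E}\sup_{g\in\mathcal{G},\,D\in\mathcal{D}}|L_n(g,D)-L(g,D)|+\tfrac{1}{n}.
\end{equation*}
The whole task is thus to bound the uniform empirical process on the right.

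I would then split $L_n-L$ into its $X$- and $U$-parts. The dominant piece, $\sup_{g,D}|\frac{1}{n}\sum_i D(g(U_i))-\mathbb{E}_U[D\circ g]|$, is viewed as an empirical process indexed by a class of $\min(\gamma,1)$-Hölder functions on the latent cube $[0,1]^d$, using $\gamma\ge 1$ and $g\in\mathcal{H}^{\beta+1}_K$ to place $D\circ g$ in $\mathcal{H}^1_C([0,1]^d,\mathbb{R})$. This latent-side reformulation is the geometric step that brings the effective entropy exponent from $p/\gamma$ down to $d/\gamma$ and explains the appearance of the intrinsic dimension $d$ in the final bound; the $X$-part is handled in the same manner since $\mu$ is supported on the $d$-thin set $g^\star([0,1]^d)$.

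The core is a two-scale concentration decomposition parametrized by $\delta$. At the coarse scale, cluster pairs $(g,D)$ into sup-norm buckets of radius $\delta+1/n$ centered at elements of the $1/n$-covers $\mathcal{G}_{1/n}\times\mathcal{D}_{1/n}$. Setting $\phi_{g,D}(X,U):=D(X)-D(g(U))$, the inequalities $\|D\|_{\mathcal{H}^\gamma_1}\le 1$ and $\gamma\ge 1$ yield $\|\phi_{g,D}-\phi_{g_c,D_c}\|_\infty=O(\delta+1/n)$ within a bucket, hence a variance proxy of order $(\delta+1/n)^2$. A per-center Bernstein inequality, a union bound over the at most $|\mathcal{G}_{1/n}||\mathcal{D}_{1/n}|$ centers, and tail integration yield the first summand $\sqrt{(\delta+1/n)^2\log(n|\mathcal{G}_{1/n}||\mathcal{D}_{1/n}|)/n}$. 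Within each bucket, Dudley chaining with $\log N(\epsilon,\mathcal{H}^\gamma_1([0,1]^d,\mathbb{R}),\|\cdot\|_\infty)\lesssim\epsilon^{-d/\gamma}$ produces
\begin{equation*}
\tfrac{1}{\sqrt{n}}\int_0^\delta\epsilon^{-d/(2\gamma)}\,d\epsilon\;\asymp\;\tfrac{1}{\sqrt{n}}\Bigl(\delta^{1-d/(2\gamma)}\mathds{1}_{\{2\gamma>d\}}+\log(\delta^{-1})\mathds{1}_{\{2\gamma=d\}}\Bigr),
\end{equation*}
while for $2\gamma<d$ the divergent entropy integral is replaced by the deterministic diameter bound $O(1/\sqrt{n})$, which the explicit leading $1$ in the stated inequality absorbs. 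Summing with the Bernstein term gives the claimed minimum over $\delta$.

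The main obstacle I foresee is the dimensional bookkeeping: a naive sup-norm covering of $\mathcal{H}^\gamma_1(B^p(0,K))$ carries the ambient exponent $p/\gamma$, and the intrinsic exponent $d/\gamma$ only surfaces after the generator is absorbed into the test function so that the empirical process is reindexed on the latent $[0,1]^d$. A secondary technical point is propagating sup-norm closeness through the composition $D\circ g$, which requires both $\gamma\ge 1$ and the uniform Lipschitz control on $g$ inherited from $\mathcal{H}^{\beta+1}_K$. Once these ingredients are in place, the rest is standard localized concentration machinery.
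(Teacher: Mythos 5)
Your opening reduction is fine and essentially matches the paper: choosing a near-minimizer $g_0\in\mathcal{G}$, using the definition of $\Delta_\mathcal{D}$, the optimality of $\hat g$ for $L_n$, and a telescope does bound $\mathbb{E}[d_{\mathcal{H}_1^\gamma}(\hat g_{\# U},g^\star_{\# U})]-\Delta_\mathcal{G}-\Delta_\mathcal{D}$ by a uniform deviation $\sup_{g,D}|L_n-L|$ (the paper's telescope via $D^\star_g,\overline D_g,\hat D_g$ is equivalent), and the reindexing of the process through $D\circ g\in\mathcal{H}^\gamma_C(\mathbb{T}^d)$ (with the $X$-part written as $D\circ g^\star(Y_i)$) is exactly how the intrinsic exponent $d/\gamma$ enters in the paper as well. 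The gap is in how you assemble the two-scale concentration; as written it does not produce the stated bound. First, the factor $(\delta+1/n)^2$ cannot be obtained the way you claim: you apply Bernstein \emph{per center} of the $1/n$-covers, but a center function $\phi_{g_c,D_c}$ has sup norm and variance of order $1$, not of order $\delta+1/n$; the bucket radius controls the increments, not the centers, and Bernstein for a fixed center knows nothing about the bucket. In the correct argument the localization is on the functions themselves: one only runs the union bound over net elements with $\|D_n\circ g_n\|_\infty\leq\delta+2/n$ (the remainder of chaining truncated at level $\delta$), and Hoeffding with range $\delta+2/n$ then yields $\sqrt{(\delta+1/n)^2\log(n|\mathcal{G}_{1/n}||\mathcal{D}_{1/n}|)/n}$.

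Second, and more seriously, your Hölder-entropy chaining is run over the wrong scales. The term $\frac{1}{\sqrt n}(1+\delta^{1-\frac{d}{2\gamma}}+\log(\delta^{-1})\mathds{1}_{\{2\gamma=d\}})$ comes from the entropy integral over the \emph{coarse} scales, $\frac{1}{\sqrt n}\int_{\delta/4}^{2}\epsilon^{-d/(2\gamma)}d\epsilon$, which is finite in every regime and is dominated by its lower endpoint when $2\gamma<d$. Your integral $\int_0^\delta\epsilon^{-d/(2\gamma)}d\epsilon$ diverges whenever $2\gamma\leq d$, so the claimed $\log(\delta^{-1})$ asymptotics at $2\gamma=d$ is not available from it, and the fallback for $2\gamma<d$ ("replace by the deterministic diameter bound $O(1/\sqrt n)$") is false: the within-bucket sup is only $O(\delta+1/n)$ deterministically, and probabilistically the supremum of the centered empirical process over a $\delta$-ball of $\mathcal{H}^\gamma_1([0,1]^d)$ is of order $n^{-\gamma/d}$, which strictly exceeds both $1/\sqrt n$ and the theorem's bound at the relevant choice $\delta\asymp n^{-\gamma/(2\beta+d)}$ when $2\gamma<d$. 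Indeed, if the fine scales were controlled by Hölder entropy you would recover the empirical-measure rate $n^{-\gamma/d}$, which is exactly what the theorem is designed to beat: below scale $\delta$ the process must be controlled by the (much smaller) covering numbers of $\mathcal{G}$ and $\mathcal{D}$, and above scale $\delta$ by the Hölder entropy. Swap the roles of your two scales accordingly — entropy integral on $[\delta/4,2]$, class nets plus Hoeffding for the $\|D\circ g\|_\infty\leq\delta$ remainder — and the rest of your argument goes through as in the paper.
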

The proof of Theorem~\ref{theo:boundexpecterror} can be found in Section \ref{sec:theo:boundexpecterror}. 
It gives a bound on the error of the estimator in terms of covering number and approximation errors of the classes $\mathcal{G}$ and $\mathcal{D}$. 
It is a WGAN version of Theorem 1 in \cite{belomestny2023rates}, designed for Jensen-Shannon GANs.  

\subsubsection{Expected growth of optimal generators and discriminators classes}
Let us discuss the values of the approximation errors and covering numbers that are required by Theorem~\ref{theo:boundexpecterror} to achieve  minimax optimality. From \citet{tang2022minimax}, we know that the minimax convergence rate is $O(n^{-\frac{\beta+\gamma}{2\beta+d}}\vee n^{-\frac{1}{2}})$. Therefore, if $\frac{\beta+\gamma}{2\beta+d}<1/2$, in light of  Theorem~\ref{theo:boundexpecterror} we need to take $\delta\geq C n^{-\frac{\gamma}{2\beta+d}}$ so that $\frac{\delta^{1-\frac{d}{2\gamma}}}{\sqrt{n}}\leq Cn^{-\frac{\beta+\gamma}{2\beta+d}}$. Then, we also need \begin{equation}\label{eq:requiredcovering}
\log(|\mathcal{G}_{1/n}|)\vee \log(|\mathcal{D}_{1/n}|)\leq Cn^{\frac{d}{2\beta+d}}
\end{equation}
so that $\delta\sqrt{\frac{\log(|\mathcal{G}_{1/n}| |\mathcal{D}_{1/n}|)}{n}}\leq C n^{-\frac{\beta+\gamma}{2\beta+d}}\vee n^{-\frac{1}{2}}$. Lastly, we need to take $\mathcal{G}$ and $\mathcal{D}$ such that
\begin{equation}\label{eq:requirederror}
\Delta_\mathcal{G}  \vee\Delta_\mathcal{D}\leq  C\left(n^{-\frac{\beta+\gamma}{2\beta+d}}\vee n^{-\frac{1}{2}}\right).
\end{equation}
On the other hand, we can prove that the covering number of a class of functions is lower bounded by its precision of approximation of the Hölder classes.
\begin{proposition}\label{prop:appvscov}
    Let be $\eta,\epsilon>0$ and $\mathcal{F}\subset L^2(\mathbb{T}^d,\mathbb{R})$ a class of functions such that
    $
    \sup \limits_{f\in \mathcal{H}^\eta_1}\ \inf \limits_{g\in \mathcal{F}} \|f-g\|_\infty\leq \epsilon
    $,
    then for all $ \theta \in(0,\epsilon/3)$ we have $$\log(|\mathcal{F}_\theta|)\geq C \epsilon^{-\frac{d}{\eta}}.$$
\end{proposition}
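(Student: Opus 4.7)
The plan is to reduce the claim to the classical Kolmogorov--Tikhomirov lower bound on the metric entropy of H\"older balls in $L^\infty$. First I would chain the two approximations: given any $f \in \mathcal{H}^\eta_1$, the hypothesis yields $g \in \mathcal{F}$ with $\|f-g\|_\infty \leq \epsilon$, and by definition of $\mathcal{F}_\theta$ there exists $h \in \mathcal{F}_\theta$ with $\|g-h\|_\infty \leq \theta$, so $\|f-h\|_\infty \leq \epsilon+\theta < 4\epsilon/3$. Hence $\mathcal{F}_\theta$ provides, in particular, a $(4\epsilon/3)$-covering of $\mathcal{H}^\eta_1$ in the $L^\infty$ norm (with centers lying in $\mathcal{F}_\theta$).

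Second, I would invoke the well-known lower bound on the metric entropy of the H\"older ball on the torus: there exists $c_{\eta,d}>0$ such that, for every $\rho$ smaller than some universal constant,
$$\log N(\rho, \mathcal{H}^\eta_1(\mathbb{T}^d,\mathbb{R}), \|\cdot\|_\infty) \geq c_{\eta,d}\, \rho^{-d/\eta}.$$
The standard construction partitions $\mathbb{T}^d$ into $N \asymp \rho^{-d/\eta}$ disjoint cubes of side $\rho^{1/\eta}$ and puts on each cube a rescaled copy of a fixed smooth compactly supported bump $\phi$: the atoms $\rho\, \phi(\cdot/\rho^{1/\eta})$ are tuned so that (i) their $L^\infty$ norm is of order $\rho$ and (ii) their $\mathcal{H}^\eta$ semi-norm is $O(1)$, using the scaling identity $a b^{-\eta}=1$ with $a=\rho,\ b=\rho^{1/\eta}$. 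Assigning independent signs $\varepsilon_k \in \{\pm 1\}$ to the $N$ bumps, one produces a family of $2^N$ elements of $\mathcal{H}^\eta_1$; a Gilbert--Varshamov argument on the sign hypercube then extracts a subfamily of size $2^{c'N}$ that is pairwise $\rho$-separated in $L^\infty$, which yields the entropy lower bound displayed above.

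Combining the two steps, one obtains
$$\log |\mathcal{F}_\theta| \;\geq\; \log N(4\epsilon/3, \mathcal{H}^\eta_1, \|\cdot\|_\infty) \;\geq\; c_{\eta,d}\, (4\epsilon/3)^{-d/\eta} \;\geq\; C\, \epsilon^{-d/\eta},$$
as claimed. The only delicate ingredient is the bump construction: one must verify that the rescaled bumps lie in $\mathcal{H}^\eta_1$ with the constant $1$ (rather than some dimensional constant, which would only affect the prefactor $C$), and that the cubes fit disjointly inside $\mathbb{T}^d$, which holds as soon as $\epsilon$ is below a constant depending only on $\eta,d$ and the support of $\phi$. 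For $\epsilon$ larger than this threshold the bound is trivial by adjusting $C$. I expect no serious obstacle beyond these routine normalisations, since the essence of the argument is purely the Kolmogorov--Tikhomirov entropy estimate together with the elementary triangle-inequality observation that a covering of a set that approximates $\mathcal{H}^\eta_1$ is itself a covering of $\mathcal{H}^\eta_1$ at a slightly coarser scale.
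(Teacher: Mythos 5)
Your proof is correct, and the reduction is sound: the chaining $f\to g\to h$ indeed shows that $\mathcal{F}_\theta$ furnishes a $(4\epsilon/3)$-cover (with arbitrary centers) of $\mathcal{H}^\eta_1$ in $\|\cdot\|_\infty$; since any $\rho$-cover has cardinality at least the maximal $(2\rho)$-packing, the Kolmogorov--Tikhomirov lower bound on Hölder entropy does the rest. You also correctly flag the two minor normalisation points (placing the bumps inside the unit Hölder ball per the paper's convention, and handling large $\epsilon$ by absorbing constants), which are routine.

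Your route is, however, genuinely different in execution from the paper's. You invoke the classical entropy bound as a black box, with the standard bump/Gilbert--Varshamov construction happening on the side. The paper instead proves the lower bound from scratch through the wavelet lens: it passes from $\|f-g\|_\infty\le\epsilon$ to control on wavelet coefficients via the embedding $\mathcal{H}^0\hookrightarrow\mathcal{B}^0_{\infty,\infty}$, quantises the coefficients on a lattice to produce $\asymp 2^{\epsilon^{-d/\eta}}$ Hölder functions, and shows the associated approximants in $\mathcal{F}$ are pairwise separated by more than $2\theta$, forcing the $\theta$-cover of $\mathcal{F}$ to be large. Both are packing arguments at heart; yours is shorter and leans on a cited reference (the paper itself acknowledges the result is known, citing DeVore--Lorentz), while the paper's version is self-contained and is deliberately framed in the wavelet formalism that drives the rest of the article's bounds on $|\mathcal{F}^{\eta,\delta}_{per}|_\epsilon$ and $|\hat{\mathcal{F}}^{\eta,\delta}_{per}|_\epsilon$. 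Either proof is acceptable; the choice is one of exposition and emphasis rather than mathematical necessity.
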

The proof of Proposition~\ref{prop:appvscov} can be found in Section \ref{sec:prop:appvscov}. From the previous proposition we see that although we need the approximation errors of the classes $\mathcal{G}$ and $\mathcal{D}$ to verify $\Delta_\mathcal{G}  \vee\Delta_\mathcal{D}\leq  Cn^{-\frac{\beta+\gamma}{2\beta+d}}\vee n^{-\frac{1}{2}}$ in order to be minimax, we also need 
\begin{equation}\label{eq:requiredbound}
\sup \limits_{g^\star\in \mathcal{H}^{\beta+1}_K}\ \inf \limits_{g\in \mathcal{G}} \|g^\star-g\|_\infty\geq C n^{-\frac{\beta+1}{2\beta+d}} \quad \text{ and } \quad \sup \limits_{D^\star \in \mathcal{H}^{\gamma}_1}\ \inf \limits_{D\in \mathcal{D}} \|D^\star-D\|_\infty\geq C n^{-\frac{\gamma}{2\beta+d}\frac{d}{p}}
\end{equation}
so that $\log(|\mathcal{G}_{1/n}|)\vee \log(|\mathcal{D}_{1/n}|)\leq Cn^{\frac{d}{2\beta+d}}$ as desired. We are going to build classes of functions verifying these properties using a wavelet approximation produced by neural networks.

\subsection{Wavelets: a key tool to describe regularity}\label{sec:wavelets} In this section, we introduce the wavelet framework \citep{giné_nickl_2015} and use it to build the classes $\mathcal{G}$ and $\mathcal{D}$ of our GAN estimator in the setting of Model~\ref{model1}. 
\subsubsection{Wavelets, Besov spaces and regularity trade-off} Let $\psi,\phi\in \mathcal{H}^{\lfloor \beta \rfloor+3}(\mathbb{R},\mathbb{R})$ be a compactly supported \emph{scaling} and \emph{wavelet} function respectively (see~\citep{daubechies1988orthonormal} on Daubechies wavelets). For ease of notation, the functions $\psi,\phi$ will be written $\psi_0,\psi_1$ respectively. Then for $j\in \mathbb{N}_0,l \in \{1,...,2^p-1\}, w \in \mathbb{Z}^p$, the family of functions 
$$\psi_{0w}(x) = \prod \limits_{i=1}^p \psi_{0}(x_i-w_i) \ \text{ , } \ \psi_{jlw}(x) = 2^{jp/2}\prod \limits_{i=1}^p \psi_{l_i}(2^{j}x_i-w_i)
$$
form an orthonormal basis of $L^2(\mathbb{R}^p,\mathbb{R})$ (with $l_i$ the $i$th digit of the base-2-decomposition of $l$).
Let $q_1,q_2\geq 1,s>0$ and $b\geq 0$ such that $\lfloor \beta \rfloor+3>s$. The Besov space $\mathcal{B}^{s,b}_{q_1,q_2}(\mathbb{R}^p,\mathbb{R})$ consists of functions $f:\mathbb{R}^p\rightarrow \mathbb{R}$ that admit a wavelet expansion in $L^2$:
$$
f(x)=\sum \limits_{w\in \mathbb{Z}^p} \alpha_f(w)\psi_{0w}(x) + \sum \limits_{j=0}^\infty \sum \limits_{l=1}^{2^p-1}\sum \limits_{w\in \mathbb{Z}^p} \alpha_f(j,l,w)\psi_{jlw}(x),
$$
equipped with the norm
\begin{align*}
\|f\|_{\mathcal{B}^{s,b}_{q_1,q_2}}= &\Biggl(\left(\sum \limits_{w\in \mathbb{Z}^p} |\alpha_f(w)|^{q_1}\right)^{q_2/q_1}\\
& +\sum \limits_{j=0}^\infty 2^{jq_2(s+p/2-p/q_1)}(1+j)^{bq_2} \sum \limits_{l=1}^{2^p-1} \Big(\sum \limits_{w\in \mathbb{Z}^p} |\alpha_f(j,l,w)|^{q_1}\Big)^{q_2/q_1}\Biggl)^{1/q_2}.
\end{align*}
Note that for $b=0$, $\mathcal{B}^{s,0}_{q_1,q_2}(\mathbb{R}^p,\mathbb{R})$ coincides with the classical Besov space $\mathcal{B}^{s}_{q_1,q_2}(\mathbb{R}^p,\mathbb{R})$~\citep{giné_nickl_2015}.
The Besov spaces can be generalized for any $s\in \mathbb{R}$ as a subspace of the space of tempered distribution $\mathcal{S}^{'}(\mathbb{R}^p)$. Indeed for $f \in \mathcal{S}^{'}(\mathbb{R}^p)$ and $C>0$, writing $\alpha_f(j,l,w)=\langle f,\psi_{jlw}\rangle$, the Besov space for $s\in \mathbb{R}$ is defined as
$$\mathcal{B}^{s,b}_{q_1,q_2}(\mathbb{R}^p,\mathbb{R},C)=\{f\in \mathcal{S}^{'}(\mathbb{R}^p) |\ \|f\|_{\mathcal{B}^{s,b}_{q_1,q_2}}\leq C\}.$$

In the following we will use intensively the connection between Hölder and Besov spaces.  
\begin{lemma}\label{lemma:inclusions} (Proposition 4.3.23 \citep{giné_nickl_2015}, (4.63)  \citep{haroske2006envelopes}) If $\alpha>0$ is a non integer, then 
$$\mathcal{H}^\alpha(\mathbb{R}^p,\mathbb{R})=\mathcal{B}^\alpha_{\infty,\infty}(\mathbb{R}^p,\mathbb{R})$$
with equivalent norms. If $\alpha\geq 0$ is an integer, then $\forall \epsilon>0$
$$\mathcal{B}^{\alpha,1+\epsilon}_{\infty,\infty}(\mathbb{R}^p,\mathbb{R}) \xhookrightarrow{} \mathcal{H}^\alpha(\mathbb{R}^p,\mathbb{R})\xhookrightarrow{} \mathcal{B}^\alpha_{\infty,\infty}(\mathbb{R}^p,\mathbb{R}),$$
where we write $A\xhookrightarrow{} B$ if the function space $A$ compactly injects in the function space $B$.
\end{lemma}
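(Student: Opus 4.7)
The plan is to exploit the cited results (Proposition 4.3.23 in \citet{giné_nickl_2015} and (4.63) in \citet{haroske2006envelopes}) as black boxes, since both are classical characterizations. For a self-contained argument, I would base everything on the wavelet characterization of Besov norms and Taylor expansion. The key input is that Daubechies wavelets of regularity $\lfloor\beta\rfloor+3>\alpha$ have more than $\lfloor\alpha\rfloor$ vanishing moments, and that $\|f\|_{\mathcal{B}^{\alpha}_{\infty,\infty}}$ is equivalent to $\sup_w|\alpha_f(w)| + \sup_{j,l,w}2^{j(\alpha+p/2)}|\alpha_f(j,l,w)|$.

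For the forward direction $\mathcal{H}^\alpha\hookrightarrow \mathcal{B}^{\alpha}_{\infty,\infty}$ (valid for every $\alpha\ge 0$), I would fix $f\in\mathcal{H}^\alpha$ and write
\[
\alpha_f(j,l,w)=\int_{\mathbb{R}^p}\bigl(f(x)-P_{jw}(x)\bigr)\psi_{jlw}(x)\,dx,
\]
where $P_{jw}$ is the Taylor polynomial of $f$ at the center of $\operatorname{supp}(\psi_{jlw})$ of degree $\lfloor\alpha\rfloor$. By the vanishing moment property of $\psi_{jlw}$ this subtraction is admissible; by the Hölder condition, $|f-P_{jw}|\lesssim 2^{-j\alpha}\|f\|_{\mathcal{H}^\alpha}$ on a set of diameter $\sim 2^{-j}$, hence $|\alpha_f(j,l,w)|\lesssim 2^{-j(\alpha+p/2)}\|f\|_{\mathcal{H}^\alpha}$. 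This gives both embeddings on the right-hand side at once.

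For the reverse direction when $\alpha$ is not an integer, I would build $f$ from its synthesis formula and bound its seminorms dyadically. Derivatives $\partial^\nu\psi_{jlw}$ at order $|\nu|\le\lfloor\alpha\rfloor$ satisfy $\|\partial^\nu\psi_{jlw}\|_\infty\lesssim 2^{j(|\nu|+p/2)}$, so the sum $\sum_j 2^{-j(\alpha-|\nu|)}$ converges strictly when $|\nu|<\alpha$. The fractional Hölder seminorm at level $\alpha-\lfloor\alpha\rfloor>0$ is controlled by splitting the sum over $j$ at $j_0\sim\log_2|x-y|^{-1}$ and using the sup-norm estimate for $j\le j_0$ and the derivative estimate for $j>j_0$; the geometric gap between $\lfloor\alpha\rfloor$ and $\alpha$ gives summability.

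The hard step is the integer case, precisely the embedding $\mathcal{B}^{\alpha,1+\epsilon}_{\infty,\infty}\hookrightarrow\mathcal{H}^\alpha$, because the naive dyadic estimate produces a divergent harmonic series $\sum_j 1$ when summing the $\lfloor\alpha\rfloor$-th derivatives. This is exactly where one exploits the logarithmic reinforcement of the Besov norm: $|\alpha_f(j,l,w)|\lesssim 2^{-j(\alpha+p/2)}(1+j)^{-(1+\epsilon)}\|f\|_{\mathcal{B}^{\alpha,1+\epsilon}_{\infty,\infty}}$, and now $\sum_j (1+j)^{-(1+\epsilon)}<\infty$ restores control of the top derivative. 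That the inclusion is strict (without the log weight one only lands in the Zygmund class $\mathcal{B}^{\alpha}_{\infty,\infty}$, which contains non-Lipschitz functions such as $x\log|x|$) is the reason a weaker inclusion is all one can hope for on the left. I would expect this logarithmic bookkeeping to be the only subtle point; the rest is routine wavelet analysis.
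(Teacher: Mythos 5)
The paper does not prove Lemma~\ref{lemma:inclusions}; it cites Proposition 4.3.23 of Gin\'e--Nickl and (4.63) of Haroske and uses the statement as a black box, so there is no in-paper argument to compare against. Your sketch is, on its own merits, a correct reconstruction of the standard wavelet proof of the \emph{continuous} embeddings: the Taylor/vanishing-moment estimate $|\alpha_f(j,l,w)|\lesssim 2^{-j(\alpha+p/2)}$ for $\mathcal{H}^\alpha\hookrightarrow\mathcal{B}^\alpha_{\infty,\infty}$, the dyadic split at $j_0\sim\log_2|x-y|^{-1}$ with the one-extra-derivative bound on the low frequencies for the non-integer converse, and the observation that the weight $(1+j)^{-(1+\epsilon)}$ turns the divergent $\sum_j 1$ at the top derivative order into a convergent $\sum_j (1+j)^{-(1+\epsilon)}$ in the integer case are exactly the right ingredients.

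Two small remarks. First, for integer $\alpha$ the Taylor polynomial you subtract should have degree $\alpha-1$ (equivalently $\lceil\alpha\rceil-1$), not $\lfloor\alpha\rfloor=\alpha$: since $\partial^\alpha f$ is only bounded, the estimate $|f-P_{jw}|\lesssim 2^{-j\alpha}$ on the support of $\psi_{jlw}$ comes from the integral remainder with the $\alpha$-th derivative, not from H\"older continuity of an $(\alpha+1)$-st order quantity that does not exist. This is a bookkeeping slip and does not affect the argument, but the uniform prescription ``degree $\lfloor\alpha\rfloor$'' is only right for non-integer $\alpha$. Second, the lemma as printed writes $\xhookrightarrow{}$ for ``compactly injects,'' while your proof (like the references it relies on) only yields bounded inclusions; on all of $\mathbb{R}^p$, and even on bounded domains when there is no gain in smoothness, such embeddings are not compact, and the paper in fact only ever uses the norm inequalities. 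There is nothing for you to fix here beyond the degree issue, but you should be aware that a literal proof of compactness is neither available nor needed.
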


The $p$-dimensional wavelet basis $(\psi_{jlw})_{jlw}$ will allow us to describe functions in $\mathcal{H}^\gamma_1(\mathbb{R}^p,\mathbb{R})$ that appear in the $d_{\mathcal{H}^\gamma_1}$ IPM. On the other hand, to describe our generators $g\in \mathcal{G}\subset \mathcal{H}^{\beta+1}_K(\mathbb{T}^d,\mathbb{R}^p)$, we will use the periodised wavelet basis (see \citet{giné_nickl_2015}). One can construct a wavelet basis on $L^2(\mathbb{T}^d)$ by using the periodised scaling and wavelet functions on $[0,1]/\mathbb{Z}$:
\begin{equation}\label{eq:periodicwav}
 \phi^{per}_j(s)=2^{j/2}\sum \limits_{k\in \mathbb{Z}}\phi(2^j(s-k)) \text{ and } \psi^{per}_j(s)=2^{j/2}\sum \limits_{k\in \mathbb{Z}}\psi(2^j(s-k)).
\end{equation}
Note that it is a  well known fact that $\sum \limits_{k\in \mathbb{Z}}\phi(s-k)=1$ (Section 4.3.4 in \cite{giné_nickl_2015}).  For ease of notation, $\phi^{per}_j$ and $\psi_j^{per}$ will be written $\psi^{per}_{j0},\psi^{per}_{j1}$. Then 
\begin{align*}
    \Big(1,\big\{\psi^{per}_{jlz}=\prod \limits_{i=1}^d \psi^{per}_{jl_i}(\cdot-2^{-j}z_i)\ | \ j\in \mathbb{N}_0,l\in\{1,...,2^d\},z \in \{0,...,2^j-1\}^d\big\}\Big),
\end{align*}
is a wavelet basis of $L^2(\mathbb{T}^d)$. We will use this basis to describe functions in $\mathcal{H}^{\beta+1}(\mathbb{T}^d,\mathbb{R}^p)$ as Lemma~\ref{lemma:inclusions} still holds for the periodised Besov spaces\\ $\mathcal{B}^{\beta+1}_{\infty,\infty}(\mathbb{T}^d,\mathbb{R}^p)$ (Section 4.3.4 in \cite{giné_nickl_2015}). Let us define an operator that allows to change the regularity of a function by modifying its wavelet coefficients.

\begin{definition}
Let $\mathcal{X}$ be either $\mathbb{R}^\eta$ or the torus $[0,1]^\eta/\mathbb{Z}^\eta$, and $s,b,\gamma,c\in \mathbb{R}$. For $f \in \mathcal{B}^{s,b}_{\infty,\infty}(\mathcal{X},\mathbb{R})$ a tempered distribution, define the tempered distribution $\Gamma^{\gamma,c}(f)\in \mathcal{B}^{s+\gamma,b+c}_{\infty,\infty}(\mathcal{X},\mathbb{R})$ by its wavelets coefficients 
$$\langle\Gamma^{\gamma,c}(f),\psi_{jlz}\rangle:= 2^{j\gamma}(1+j)^c\langle f,\psi_{jlz}\rangle.$$
\end{definition}
The tempered distribution $\Gamma^{\gamma,c}(f)$ is a "regularization" of $f$ if $\gamma<0$ or $\gamma=0$ and $c<0$. For ease of notation, we will write $\Gamma^\gamma$ instead of $\Gamma^{\gamma,0}$. A first use of this operator is in the following lemma.

\begin{lemma}\label{lemma:deltaG} For all $\eta\geq 1$, $g\in \mathcal{H}^{\eta}_K$, $\gamma\in[ 1,\eta+1]$ and $\epsilon \in (0,1)$ we have 
$$d_{\mathcal{H}_1^\gamma}(g_{\# U},g^\star_{\# U})\leq C\left(\log(\epsilon^{-1})^{1/2} \sum \limits_{i=1}^p \|\Gamma^{-(\gamma-1)}(g_i)-\Gamma^{-(\gamma-1)}(g^\star_i)\|_{L^2([0,1]^d,\mathbb{R})}+\epsilon\right).$$
\end{lemma}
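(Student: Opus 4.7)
The plan is to fix an arbitrary discriminator $D\in\mathcal{H}_1^\gamma$, rewrite $\mathbb{E}[D(g(U))]-\mathbb{E}[D(g^\star(U))]$ as a sum of $L^2$-pairings between $g_i-g_i^\star$ and weights of Hölder smoothness $\gamma-1$, and then evaluate those pairings in the periodized wavelet basis~\eqref{eq:periodicwav} with a scale truncation at $J=\lceil\log_2(\epsilon^{-1})\rceil$, trading a ``head'' term (which produces the $\Gamma^{-(\gamma-1)}$-weighted $L^2$ norm with a $\sqrt{\log(\epsilon^{-1})}$ factor) against a ``tail'' term (which produces the additive $\epsilon$).

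First, by the fundamental theorem of calculus I write $D(g(u))-D(g^\star(u))=\sum_{i=1}^p(g_i-g_i^\star)(u)\,h_i(u)$, with $h_i(u):=\int_0^1\partial_i D\bigl(g^\star(u)+t(g-g^\star)(u)\bigr)\,dt$. I would then establish a \emph{uniform} Hölder bound $h_i\in\mathcal{H}^{\gamma-1}_{C(K)}(\mathbb{T}^d,\mathbb{R})$ whose constant does not depend on $D$ or $t$, via the standard composition inequality for Hölder functions: $\partial_i D\in\mathcal{H}^{\gamma-1}_1$, the segment $g^\star+t(g-g^\star)$ lies in $\mathcal{H}^\eta_{2K}$ with $\eta\geq\gamma-1$ (using $\gamma\leq\eta+1$), and integration in $t$ preserves Hölder regularity. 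Taking the expectation in $U\sim\mathcal{U}([0,1]^d)$ turns the IPM contribution into $\sum_{i=1}^p\int_{[0,1]^d}h_i(u)(g_i-g_i^\star)(u)\,du$.

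Next, expanding both $h_i$ and $g_i-g_i^\star$ in the periodized wavelet basis of $L^2(\mathbb{T}^d)$ and using Parseval yields $\sum_{j,l,z}\alpha_{h_i}(j,l,z)\,\alpha_{g_i-g_i^\star}(j,l,z)$ (the finite scaling-function contribution is handled analogously and absorbed into the head). For the head $j\leq J$, I would pair $2^{j(\gamma-1)}\alpha_{h_i}$ against $2^{-j(\gamma-1)}\alpha_{g_i-g_i^\star}$ and apply Cauchy-Schwarz; the embedding in Lemma~\ref{lemma:inclusions} gives $|\alpha_{h_i}(j,l,z)|\leq C\,2^{-j(\gamma-1+d/2)}$ and there are $O(2^{jd})$ indices $(l,z)$ per scale, so the $\Gamma^{\gamma-1}$-weighted $\ell^2$ mass of $\alpha_{h_i}$ at scale $j$ is $O(1)$; a second Cauchy-Schwarz over $j\leq J$ then produces the factor $\sqrt{J+1}\lesssim\sqrt{\log(\epsilon^{-1})}$ times $\|\Gamma^{-(\gamma-1)}(g_i-g_i^\star)\|_{L^2}$. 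For the tail $j>J$, using $g-g^\star\in\mathcal{H}^\eta_{2K}\subset\mathcal{B}^\eta_{\infty,\infty}$ I bound $|\alpha_{h_i}|\cdot|\alpha_{g_i-g_i^\star}|\leq C\,2^{-j(\gamma-1+d/2)}\cdot C\,2^{-j(\eta+d/2)}$ and sum geometrically against the $O(2^{jd})$ positions to get $C\,2^{-J(\gamma-1+\eta)}\leq\epsilon$ by the choice of $J$ (since $\gamma-1+\eta\geq 1$). Taking the supremum over $D\in\mathcal{H}_1^\gamma$ and summing over $i$ closes the proof.

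The main obstacle is securing the uniform Hölder control $h_i\in\mathcal{H}^{\gamma-1}_{C(K)}$ with a constant independent of $D$ and $t$, especially at integer values of $\gamma-1$, where one must invoke the Besov inclusion $\mathcal{H}^{\gamma-1}\hookrightarrow\mathcal{B}^{\gamma-1}_{\infty,\infty}$ from Lemma~\ref{lemma:inclusions} rather than a direct Hölder bound, and where the composition with $g^\star+t(g-g^\star)$ may cost a logarithmic factor that can be harmlessly absorbed into $C(K)$. A minor additional care is needed in the limit case $\gamma=1$, where $\partial_i D$ is only bounded measurable; there the statement reduces to the trivial $d_{\mathcal{H}_1^1}(g_{\#U},g_{\#U}^\star)\leq\sum_i\|g_i-g_i^\star\|_{L^1}\leq C\sum_i\|g_i-g_i^\star\|_{L^2}$, with neither logarithm nor $\epsilon$ needed.
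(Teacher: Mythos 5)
Your proposal is correct and follows essentially the same route as the paper's proof: a first-order Taylor/FTC identity producing the weight $\partial_i D$ evaluated along the segment $g^\star+t(g-g^\star)$ (which by Hölder composition lies in $\mathcal{H}^{\gamma-1}_{C}$ uniformly in $D$ and $t$), followed by a periodized-wavelet expansion truncated at frequency $\log_2(\epsilon^{-1})$, a double Cauchy--Schwarz on the low frequencies yielding the factor $\log(\epsilon^{-1})^{1/2}\,\|\Gamma^{-(\gamma-1)}(g_i)-\Gamma^{-(\gamma-1)}(g^\star_i)\|_{L^2}$, and a geometric bound $C2^{-J(\eta+\gamma-1)}\leq C\epsilon$ on the tail. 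Your side remarks (absorbing the integer-$\gamma$ Besov/Hölder mismatch into the constant, and the simpler direct bound when $\gamma=1$) are consistent with how the paper treats these cases.
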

The proof of Lemma~\ref{lemma:deltaG} can be found in Section \ref{sec:lemma:deltaG}. This lemma asserts that the distance between $g_{\# U}$ and $g^\star_{\# U}$ for the norm $d_{\mathcal{H}^\gamma_1}$ is controlled by the $L^2$ distance between the $\gamma-1$ regularization of $g$ and $g^\star$. Therefore, to have a small generator error $\Delta_{\mathcal{G}}$, we do not have to control $\|g-g^\star\|_\infty$, but only the quantity $\|\Gamma^{-(\gamma-1),-2}(g)-\Gamma^{-(\gamma-1),-2}(g^\star)\|_{L^2}$.  This is a key result that will help us obtain the right bound on $\Delta_{\mathcal{G}}$. Indeed, from \eqref{eq:requirederror} we know that we need $\Delta_\mathcal{G} \leq  Cn^{-\frac{\beta+\gamma}{2\beta+d}}\vee n^{-\frac{1}{2}}$, but from \eqref{eq:requiredbound} we know that we will have at best $\inf \limits_{g\in \mathcal{G}} \|g^\star-g\|_\infty\geq C n^{-\frac{\beta+1}{2\beta+d}}$. Therefore, we need to take advantage of the $\gamma$-smoothness of the discriminator in order to obtain 
$$\Delta_\mathcal{G}=\inf \limits_{g\in \mathcal{G}}d_{\mathcal{H}_1^\gamma}(g_{\# U},g^\star_{\# U})\leq  Cn^{-\frac{\beta+\gamma}{2\beta+d}}\vee n^{-\frac{1}{2}}.$$
Let us note that in  \cite{tang2022minimax} it is supposed that the target measure $\mu$ belongs to the class $\mathcal{S}$ used in the adversarial training (see Section \ref{sec:comparison}), which allows them to get around this problem as they have $\Delta_\mathcal{S}=\inf_{\nu\in \mathcal{S}} d_{\mathcal{H}_1^\gamma}(\nu,\mu)=d_{\mathcal{H}_1^\gamma}(\mu,\mu)=0$. 
Furthermore, to utilize the $\gamma$-smoothness of the discriminator, they compute a high order estimator of the type
$$\mathcal{J}(D)=\sum \limits_{i=1}^n \sum \limits_{1\leq |j|\leq \lfloor \gamma \rfloor}\frac{1}{j!} \nabla^{j} D(X_i)\big(\hat{g} \circ \hat{q} (X_i)-X_i\big)^j$$
which makes them having to compute all the derivatives up to the order $\gamma$ of every $D\in \mathcal{H}^\gamma_1(\mathbb{R}^p,\mathbb{R})$. Lemma~\ref{lemma:deltaG} gives a simple way to utilize the $\gamma$-smoothness of the discriminator using wavelet theory and that allows us not to have to compute high order estimators.

To obtain Lemma~\ref{lemma:deltaG}, the main idea is to write $$\mathbb{E}\Big[D(g(U))-D(g^\star(U))\Big]=\mathbb{E}\Big[\int_0^1 \Big\langle\nabla D\big(g^\star(U)+t(g(U)-g^\star(U))\big),\ g(U)-g^\star(U) \Big\rangle dt \Big],$$ 
and to transfer the regularity of the discriminator $\nabla D\in \mathcal{H}^{\gamma-1}_1$ to the generators $g,g^\star$ by modifying their wavelets coefficients. This result is a particular case of Proposition~\ref{prop:Hölder} that shows that in a general setting, we can trade the regularity of the distributions in the $L^2$ scalar product. This result will be used multiple times along the paper.
\subsubsection{From GANs to wavelets and back}

Let us now discuss how we are going to use wavelet theory to construct our classes of generators and discriminators. For $\eta \in (0,\beta+1),\delta\in (0,1)$, and $C_\eta>0$ such that $\mathcal{H}^\eta_1(\mathbb{T}^d,\mathbb{R}^p)\subset \mathcal{B}^{\eta}_{\infty,\infty}(\mathbb{T}^d,\mathbb{R}^p,C_\eta)$, define
\begin{equation}\label{eq:fdeltaper}
\mathcal{F}^{\eta,\delta}_{per}:=\{f\in \mathcal{B}^{\eta}_{\infty,\infty}(\mathbb{T}^d,\mathbb{R}^p,C_\eta K) | \ \langle f_i,\psi_{jlz}^{per}\rangle_{L^2}=0,  \forall j\geq \log_2(\delta^{-1})\}
\end{equation}
the set of functions in $\mathcal{B}^{\eta}_{\infty,\infty}$ that do not have frequencies higher than $\log_2(\delta^{-1})$. 
We will later build our class of generators $\mathcal{G}$ as an approximation by neural network of the class $\mathcal{F}^{\eta,\delta}_{per}$. 
From Lemma~\ref{lemma:deltaG},  approximation error and covering numbers of the class $\mathcal{F}^{\eta,\delta}_{per}$ can be derived.
\begin{proposition}\label{prop:Fdelta}
    We have 
    $$\Delta_{\mathcal{F}^{\eta,\delta}_{per}}:=\sup \limits_{f^\star\in \mathcal{H}^\eta_K}\inf \limits_{f\in \mathcal{F}^{\eta,\delta}_{per}} d_{\mathcal{H}_1^\gamma}(f_{\# U},f^\star_{\# U})\leq  C\log(\delta^{-1})^{C_2}\delta^{\eta-1+\gamma},$$
    and $\forall \epsilon>0$,
    $$\log(|\mathcal{F}^{\eta,\delta}_{per}|_\epsilon)\leq C\delta^{-d}\log( \delta^{-1})\log(\epsilon^{-1}).$$
\end{proposition}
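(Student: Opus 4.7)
The plan is to prove the two bounds separately using wavelet analysis, taking the natural candidate $f$ to be the wavelet truncation of $f^\star$.

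\emph{Approximation error.} I would fix $f^\star \in \mathcal{H}^\eta_K$, set $J := \lceil \log_2(\delta^{-1}) \rceil$, and define $f$ componentwise as the periodised wavelet truncation of $f^\star$ keeping only the coefficients at scales $j < J$. Since zeroing high-frequency coefficients can only decrease the $\mathcal{B}^{\eta}_{\infty,\infty}$ norm (which is a supremum over levels), the truncation satisfies $f \in \mathcal{F}^{\eta,\delta}_{per}$. I would then apply Lemma~\ref{lemma:deltaG} with the choice $\epsilon = \delta^{\eta+\gamma-1}$, reducing the task to estimating $\|\Gamma^{-(\gamma-1)}(f_i - f^\star_i)\|_{L^2([0,1]^d,\mathbb{R})}$. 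Since the periodised wavelets form an orthonormal basis, Parseval combined with the Besov characterisation $|\langle f^\star_i, \psi^{per}_{jlz}\rangle| \leq C \cdot 2^{-j(\eta+d/2)}$ and the $O(2^{jd})$ count of wavelets per level yields
\[
\|\Gamma^{-(\gamma-1)}(f_i - f^\star_i)\|_{L^2}^2 \leq C \sum_{j \geq J} 2^{-2j(\gamma-1)} \cdot 2^{jd} \cdot 2^{-2j(\eta+d/2)} = C \sum_{j \geq J} 2^{-2j(\eta+\gamma-1)} \leq C\,\delta^{2(\eta+\gamma-1)},
\]
which delivers the announced $C\log(\delta^{-1})^{1/2}\delta^{\eta+\gamma-1}$.

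\emph{Covering number.} To build an $L^\infty$ $\epsilon$-net of $\mathcal{F}^{\eta,\delta}_{per}$, I would discretize each wavelet coefficient at a level-adapted precision $\theta_j := \epsilon/(C J \cdot 2^{jd/2})$. The periodised wavelets having compact support, at any point $x \in \mathbb{T}^d$ only $O(1)$ of the $\{\psi^{per}_{jlz}\}_{l,z}$ are nonzero for each level $j$, each of $L^\infty$ size at most $C \cdot 2^{jd/2}$. Hence perturbing level-$j$ coefficients by at most $\theta_j$ induces an $L^\infty$ perturbation $\leq \epsilon/J$ per level, totaling at most $\epsilon$. Since $|\alpha(j,l,z)| \leq C \cdot 2^{-j(\eta+d/2)}$, the number of grid points per coefficient is $\leq \max\{1,\, C J / \epsilon\}$. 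Combining with the total coefficient count $N \leq C\delta^{-d}$ yields
\[
\log|\mathcal{F}^{\eta,\delta}_{per}|_\epsilon \leq C\delta^{-d}\log(CJ/\epsilon) \leq C\delta^{-d}\log(\delta^{-1})\log(\epsilon^{-1}),
\]
after absorbing $\log\log(\delta^{-1}) + \log(\epsilon^{-1})$ into the announced product form.

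\emph{Main obstacle.} The subtle step is the approximation bound: a naive $L^\infty$ estimate $\|f^\star - f\|_\infty \leq C\delta^\eta$ would only deliver $\Delta_{\mathcal{F}^{\eta,\delta}_{per}} \lesssim \delta^\eta$, missing the full $\delta^{\gamma-1}$ improvement coming from the discriminator's extra smoothness. Lemma~\ref{lemma:deltaG} is exactly what lets us recover this factor, by reducing the problem to an $L^2$ tail of the regularized wavelet expansion where Parseval is sharp, rather than to an $L^\infty$ tail which would lose the crucial $\delta^{d/2}$ factor.
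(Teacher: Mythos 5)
Your proposal is correct and follows essentially the same strategy as the paper's proof: truncate the wavelet expansion of $f^\star$ at scale $\log_2(\delta^{-1})$, invoke Lemma~\ref{lemma:deltaG} to pass to the $L^2$ norm of the $\Gamma^{-(\gamma-1)}$-regularised difference, bound that tail via Parseval and the Besov coefficient decay, and cover $\mathcal{F}^{\eta,\delta}_{per}$ by discretising the $O(\delta^{-d})$ retained coefficients level-by-level. The only cosmetic differences are your choice $\epsilon=\delta^{\eta+\gamma-1}$ in Lemma~\ref{lemma:deltaG} (slightly sharper than the paper's $\epsilon=\delta$) and your $J^{-1}$ level-budget in the covering versus the paper's $(1+j)^{-2}$ weighting, both of which yield the same final bounds up to constants and logarithmic factors.
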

The proof of Proposition~\ref{prop:Fdelta} can be found in Section \ref{sec:prop:Fdelta}. Choosing $\delta= \delta_n:=n^{-\frac{1}{2\beta+d}}$ and $\eta=\beta+1$ in Proposition~\ref{prop:Fdelta}, we obtain
\begin{equation}\label{eq:suited}
\Delta_{\mathcal{F}^{\beta+1,\delta_n}_{per}}\leq  C\log(n)^{C_2}n^{-\frac{\beta+\gamma}{2\beta+d}}\quad \text{ and }\quad \log(|\mathcal{F}^{\beta+1,\delta_n}_{per}|_{1/n})\leq Cn^{\frac{d}{2\beta+d}}\log(n)^2.
\end{equation}
 From \eqref{eq:requiredcovering} and \eqref{eq:requirederror}, we see that these properties make $\mathcal{F}^{\beta+1,\delta_n}_{per}$ ideal to be our class of generators $\mathcal{G}$.

To make the method tractable, 
we do not use the class $\mathcal{F}^{\beta+1,\delta_n}_{per}$ in our estimator but rather a neural networks class that allows to approximate it. These neural networks can be either with tanh~\citep{De_Ryck_2021} or  ReQU activation functions \citep{belomestny2022simultaneous}. Fix $L>0$ and let be $\hat{\phi}:\mathbb{R}\rightarrow \mathbb{R}$ such that  
\begin{equation}\label{eq:approxipsipaper2}
\|\phi-\hat{\phi}\|_{\mathcal{H}^{\lfloor \beta \rfloor +2}} \leq C L^{-1},
\end{equation}
for $\phi$ the Daubechies scaling function. The existence of a neural network  (either with tanh or ReQU activation)    $\hat{\phi}$ satisfying~\eqref{eq:approxipsipaper2} is guaranteed by Lemmas ~\ref{lemma:Deryck} and \ref{lemma:Belo} in Section~\ref{sec:appendixnn1}. We also detail in this section how to compute $\hat{\psi}^{per}_{jlz}$ \eqref{eq:defaprroxphiper}, the approximation by neural networks of the periodised Daubechies wavelet $\psi^{per}_{jlz}$ using $\hat{\phi}$. Define the class of function $\hat{\mathcal{F}}^{\eta,\delta}_{per}$ by
\begin{align}\label{eq:hatfper}
    \hat{\mathcal{F}}^{\eta,\delta}_{per}=\Big\{f\in L^2(\mathbb{T}^d,\mathbb{R}^p)| & \ f_i=\sum \limits_{j=0}^{\log_2(\delta^{-1})} \sum \limits_{l=1}^{2^d} \sum \limits_{z\in \{0,...,2^j-1\}^d} \hat{\alpha}(j,l,w)_i \hat{\psi}^{per}_{jlz}\nonumber\\
    & \text{ with } |\hat{\alpha}(j,l,z)_i|\leq C_\eta K2^{-j(\eta+d/2)},\forall i\in \{1,...,p\} \Big\},
\end{align}
for $C_\eta$ the constant such that  $\|\cdot\|_{\mathcal{B}^\eta_{\infty,\infty}} \leq C_\eta\|\cdot\|_{\mathcal{H}^\eta}$. The class  $\hat{\mathcal{F}}^{\eta,\delta}_{per}$ is an approximation of the class  $\mathcal{F}^{\eta,\delta}_{per}$ defined in \eqref{eq:fdeltaper} using the  tanh (or ReQU)   neural network approximation $\hat{\psi}^{per}_{jlz}$ \eqref{eq:defaprroxphiper} of the periodised wavelet \eqref{eq:periodicwav}. Using propositions \ref{prop:Fdelta} we obtain in the next proposition that $\hat{\mathcal{F}}^{\eta,\delta}_{per}$ inherits the same properties as $\mathcal{F}^{\eta,\delta}_{per}$ in terms of approximation error and covering numbers. 
\begin{proposition}\label{prop:propertiesofhatper}
    If $\hat{\phi}:\mathbb{R}\rightarrow \mathbb{R}$ is such that \eqref{eq:approxipsipaper2} holds with $L \geq \delta^{-(\lfloor \beta \rfloor +1+\gamma)}$, then
        $$\Delta_{\hat{\mathcal{F}}^{\eta,\delta}_{per}}:=\sup \limits_{f^\star\in \mathcal{H}^\eta_K}\inf \limits_{f\in \hat{\mathcal{F}}^{\eta,\delta}_{per}} d_{\mathcal{H}_1^\gamma}(f_{\# U},f^\star_{\# U})\leq  C\log(\delta^{-1})^{C_2}\delta^{\eta+\gamma-1}$$
    and  $\forall \epsilon\in ( \delta^{\eta-1}L^{-1},1)$,
    $$\log(|\hat{\mathcal{F}}^{\eta,\delta}_{per}|_\epsilon)\leq C\delta^{-d}\log(\lfloor \delta^{-1}\rfloor)\log(\epsilon^{-1}).$$
\end{proposition}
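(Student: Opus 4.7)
\textbf{Proof plan for Proposition~\ref{prop:propertiesofhatper}.} The strategy is to transfer the two conclusions of Proposition~\ref{prop:Fdelta} from $\mathcal{F}^{\eta,\delta}_{per}$ to $\hat{\mathcal{F}}^{\eta,\delta}_{per}$ by controlling the cost of replacing the true periodised wavelets $\psi^{per}_{jlz}$ by their neural network surrogates $\hat{\psi}^{per}_{jlz}$.

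For the bound on $\Delta_{\hat{\mathcal{F}}^{\eta,\delta}_{per}}$, fix $f^\star\in \mathcal{H}^\eta_K$ and use Proposition~\ref{prop:Fdelta} to pick $\tilde{f}\in \mathcal{F}^{\eta,\delta}_{per}$ with $d_{\mathcal{H}_1^\gamma}(\tilde{f}_{\#U},f^\star_{\#U}) \leq C\log(\delta^{-1})^{C_2}\delta^{\eta+\gamma-1}$. Let $\alpha(j,l,z)_i=\langle \tilde{f}_i,\psi^{per}_{jlz}\rangle$ be its wavelet coefficients, which satisfy $|\alpha(j,l,z)_i|\leq C_\eta K 2^{-j(\eta+d/2)}$ since $\tilde{f}\in\mathcal{B}^{\eta}_{\infty,\infty}(\mathbb{T}^d,\mathbb{R}^p,C_\eta K)$. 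Define $\hat{f}\in \hat{\mathcal{F}}^{\eta,\delta}_{per}$ by taking the same coefficients against the surrogates $\hat{\psi}^{per}_{jlz}$. The key estimate is pointwise: from \eqref{eq:approxipsipaper2} and the definition of the periodisation~\eqref{eq:periodicwav}, combined with the compact support of $\phi$, one gets $\|\psi^{per}_{jl_i}-\hat{\psi}^{per}_{jl_i}\|_\infty \leq C\cdot 2^{j/2} L^{-1}$ and $\|\psi^{per}_{jl_i}\|_\infty\leq C\cdot 2^{j/2}$; telescoping the tensor product of $d$ factors yields $\|\psi^{per}_{jlz}-\hat{\psi}^{per}_{jlz}\|_\infty \leq C\cdot 2^{jd/2} L^{-1}$. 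At a fixed point $x\in\mathbb{T}^d$, compact support of the wavelets implies that only $O(1)$ values of $z$ contribute at each level $j$, hence
\begin{align*}
\|\hat{f}_i-\tilde{f}_i\|_\infty \leq C \sum_{j=0}^{\log_2(\delta^{-1})} 2^{-j(\eta+d/2)} \cdot 2^{jd/2} L^{-1} \leq C' L^{-1},
\end{align*}
using $\eta\geq 1$ to sum the geometric series. Since any $D\in \mathcal{H}_1^\gamma$ with $\gamma\geq 1$ is $1$-Lipschitz, one concludes $d_{\mathcal{H}_1^\gamma}(\hat{f}_{\#U},\tilde{f}_{\#U})\leq \|\hat{f}-\tilde{f}\|_\infty \leq C L^{-1}$. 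The assumption $L\geq \delta^{-(\lfloor \beta\rfloor+1+\gamma)}$ then gives $CL^{-1}\leq C\delta^{\eta+\gamma-1}$ (as $\eta\leq \beta+1\leq \lfloor \beta\rfloor+2$), and the triangle inequality closes the bound.

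For the covering number, note that a function in $\hat{\mathcal{F}}^{\eta,\delta}_{per}$ is determined by the finite family of coefficients $(\hat{\alpha}(j,l,z)_i)$ with $j\leq \log_2(\delta^{-1})$, totalling $N=O(\delta^{-d})$ real parameters. I will discretize the coefficient $\hat{\alpha}(j,l,z)_i\in [-C_\eta K 2^{-j(\eta+d/2)}, C_\eta K 2^{-j(\eta+d/2)}]$ to precision $\theta(j)=c\epsilon\, 2^{-jd/2}/\log_2(\delta^{-1})$. Using again that at a fixed $x$ only $O(1)$ wavelets at each of the $O(\log(\delta^{-1}))$ levels are nonzero, and $\|\hat{\psi}^{per}_{jlz}\|_\infty\leq C 2^{jd/2}$, the induced sup-norm error of replacing coefficients by their quantised versions is at most $\epsilon$. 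The number of quantised values per coefficient at level $j$ is of order $2^{-j\eta}/\epsilon$, so the log-cardinality of the grid is
\begin{align*}
\log|\hat{\mathcal{F}}^{\eta,\delta}_{per}|_\epsilon \leq C \sum_{j=0}^{\log_2(\delta^{-1})} 2^{jd} \log\!\big(2^{-j\eta} \epsilon^{-1} \log(\delta^{-1})\big) \leq C' \delta^{-d} \log(\lfloor \delta^{-1}\rfloor)\log(\epsilon^{-1}),
\end{align*}
giving the stated bound. The restriction $\epsilon>\delta^{\eta-1}L^{-1}$ guarantees that the chosen discretization step $\theta(j)$ is compatible with the resolution already produced by the neural network approximation of the wavelets, so that no further refinement is meaningful.

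The main technical obstacle is the first step: carefully combining the tensor-product structure of $\psi^{per}_{jlz}$ with the periodised sum, so that the $2^{j/2}$ blow-up per coordinate is exactly compensated by the coefficient decay $2^{-j(\eta+d/2)}$ and yields an $L$-dependent bound that is uniform in $j$. Once that estimate is in hand, the exponents in the hypothesis $L\geq \delta^{-(\lfloor\beta\rfloor+1+\gamma)}$ line up to recover the rate $\delta^{\eta+\gamma-1}$, and the covering argument is a standard parameter-counting on a finite-dimensional wavelet dictionary.
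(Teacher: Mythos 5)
Your proof is correct, but the approach differs noticeably from the paper's. The paper's route (following the non-periodic Proposition~\ref{prop:propertiesofhat}) is to take the candidate $f=\sum_{j\leq\log_2(\delta^{-1})}\alpha_{f^\star}(j,l,z)\hat{\psi}^{per}_{jlz}$ directly, compute the \emph{true} wavelet coefficients $\alpha_f(j,l,z)$ of this surrogate via Proposition~\ref{prop:approxhatper}, and then bound the IPM entirely in the wavelet domain (after passing $\nabla D\in\mathcal{H}^{\gamma-1}_1$ onto the coefficients as in Lemma~\ref{lemma:deltaG}); the covering bound is obtained by showing an $\epsilon$-cover of $\mathcal{F}^{\eta,\delta}_{per}$ also covers $\hat{\mathcal{F}}^{\eta,\delta}_{per}$ after a tail estimate, which is precisely where the floor $\epsilon>L^{-1}\delta^{\eta-1}$ enters. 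You instead interpolate through $\mathcal{F}^{\eta,\delta}_{per}$: apply Proposition~\ref{prop:Fdelta}, port the coefficients onto the surrogate basis, and charge the surrogate error with the blunt estimate $d_{\mathcal{H}_1^\gamma}(\hat{f}_{\#U},\tilde{f}_{\#U})\le\|\hat f-\tilde f\|_\infty\le CL^{-1}$, while you discretize the coefficient vector directly for the covering. What this buys is simplicity — no coefficient-level bookkeeping, no appeal to Proposition~\ref{prop:approxhatper}, and your covering count is actually tighter ($\delta^{-d}\log(\epsilon^{-1}\log\delta^{-1})$ rather than $\delta^{-d}\log(\delta^{-1})\log(\epsilon^{-1})$). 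What it costs is flexibility: the sup-norm shortcut does not propagate the $\gamma$-smoothness of the discriminator onto the surrogate error, so it only works because the hypothesis $L\ge\delta^{-(\lfloor\beta\rfloor+1+\gamma)}$ forces $L^{-1}\le\delta^{\eta+\gamma-1}$ outright; the paper's wavelet-domain argument degrades more gracefully if $L$ were smaller.

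Two minor points worth flagging. First, both your estimate $\|\hat f_i-\tilde f_i\|_\infty\le CL^{-1}$ and your covering count rely on only $O(1)$ indices $z$ contributing at each level $j$ at a fixed point, which needs $\hat{\psi}^{per}_{jlz}$ to be compactly supported; this is true for the ReQU surrogate and is also implicitly assumed in the paper's own Proposition~\ref{prop:approxhat} (``$supp(\hat{\psi}_{jlw})\subset\ldots$''), so it is not a gap relative to the paper, but you should state it. Second, your closing remark about the role of $\epsilon>\delta^{\eta-1}L^{-1}$ in your covering argument is a red herring: your direct discretization does not need that floor at all — the restriction in the statement is simply inherited from the paper's alternative argument and is harmless to carry along unused.
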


The proof of Proposition~\ref{prop:propertiesofhatper} can be found in Section \ref{sec:prop:propertiesofhatper}. Using this result, we have from \eqref{eq:suited} that the class $\hat{\mathcal{F}}^{\beta+1,n^{-\frac{1}{2\beta+d}}}_{per}$ is perfectly suited to be our class of generators. 

The choice of the discriminator class is however more delicate as we do not have a result such as Lemma~\ref{lemma:deltaG} enabling to exploit the $(\beta+1)$-smoothness of the generators to bound $\Delta_\mathcal{D}$ \eqref{eq:approxerrD}.
Indeed, we used Lemma~\ref{lemma:deltaG} to show that we can transfer the regularity of the discriminators to the generators in the IPM $d_{\mathcal{H}_1^\gamma}(g_{\# U},g^\star_{\# U})$, and therefore bound $\Delta_{\mathcal{G}}$ by the approximation error of the $\beta+\gamma$-smooth map $\Gamma^{-(\gamma-1)}(g^\star)$, for the $L^2$ distance. We will see in the following sections that it is not straightforward that we can do the same thing for $\Delta_{\mathcal{D}}$ (meaning giving the regularity of generators to  discriminators) as the measures $g_{\# U}$ and $g^\star_{\# U}$ do not have any regularity from the point of view of the $p$-dimensional Lebesgue measure. 

In the next section, we study two different settings to get insights on how to solve this problem. We show that we can either bypass it by taking a non tractable class $\mathcal{D}$, or add more assumptions on $g^\star$ (Model~\ref{model2}) so that the measure $g^\star_{\# U}$ has some regularity from the point of view of  the $p$-dimensional Lebesgue measure.

\section{Model generality vs. tractability: two sides models}\label{sec:tractable}
In this section we treat two different settings to get some insights on how to build a minimax and tractable GAN estimator. We first treat the general case of Model~\ref{model1} and discuss the limits of the assumptions. Then, we treat the case of Model~\ref{model3} to understand in the easier setting of full dimension, how additional assumptions can help us obtain a tractable estimator.
\subsection{Model~\ref{model1}: A tractable generator class in the general low dimensional case}\label{sec:theoreticalgan}
In this section, we fix a $\gamma\in [1,\beta+1]$ and define two classes of function $\mathcal{G}$ and $\mathcal{D}$ such that the associated GAN estimator is minimax for $d_{\mathcal{H}_1^\gamma}$ in the setting of Model~\ref{model1}.

Fix the approximation of the scaling function in \eqref{eq:approxipsipaper2} to be at precision $L^{-1}=n^{-1}$  and take as the generator class 
\begin{equation}\label{G}
    \mathcal{G}:=\hat{\mathcal{F}}^{\beta+1,n^{-\frac{1}{2\beta+d}}}_{per}.
\end{equation}
 It is a neural network architecture (with either tanh or ReQU activation) composed of a core block $\hat{\psi}^{per}$ (trained to approximate the periodised wavelet $\psi^{per}$) and a final layer that outputs a linear combination of the $O(n^d \log (n))$ blocks $\hat{\psi}^{per}(2^j\cdot-w_i)$. 
The core block $\hat{\psi}^{per}$ has width $O(n)$, depth $O(1)$, and the linear layer is of size $O(n^d\log (n))$. 
In particular, \eqref{G} is a subclass of the classical fully-connected architecture with width $O(n^{d+1} \log(n))$ and depth $O(1)$.
  We detail in Section \ref{sec:appendixnnopti} how to implement the class \eqref{G} in practice.


Let us take as the discriminator class 
\begin{equation}\label{D}
\mathcal{D}:=\{D_{g,g^{'}}^\star | \  g,g^{'}\in \mathcal{G}_{1/n}\},
\end{equation}
with $D^\star_{g,g^{'}} \in \argmax \limits_{D\in \mathcal{H}^{\gamma}_1} \mathbb{E}[D(g(U))-D(g^{'}(u))]$. The class of discriminators is constructed specifically to well approximate the IPMs between the measures $g_{\# U}$ with $g\in \mathcal{G}$, without being too massive. This class $\mathcal{D}$ is not really computable in practice, but we use it here to illustrate that we can bypass the problems risen by the lack of regularity of the target measure $g^\star_{\# U}$, at the cost of the tractability of the method. Having defined the classes $\mathcal{G}$ and $\mathcal{D}$, we can state the final bound for the GAN estimator \eqref{WGANS} in the setting of Model~\ref{model1}.
\begin{theorem}\label{theo:minimaxfirstgan} For all $g^\star \in \mathcal{H}^{\beta+1}_K(\mathbb{T}^d,\mathbb{R}^p)$ and $\gamma\in [1,\beta+1]$, the GAN estimator $\hat{g}$ of~\eqref{WGANS} with $\mathcal{G}$ and $\mathcal{D}$ defined in \eqref{G} and \eqref{D}, satisfies
\begin{align*}  
\mathbb{E}\Big[d_{\mathcal{H}_1^\gamma}(\hat{g}_{\# U},g^\star_{\# U})\Big] \leq  C\log(n)^{C_2}\left(n^{-\frac{\beta+\gamma}{2\beta+d}}\vee n^{-\frac{1}{2}}\right).
\end{align*}
\end{theorem}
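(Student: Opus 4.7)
The entire strategy is to feed the bias/variance inequality of Theorem~\ref{theo:boundexpecterror} with (i) the approximation bounds on $\mathcal{G}$ and $\mathcal{D}$, and (ii) the covering number estimates of Proposition~\ref{prop:propertiesofhatper}, and then optimize the free parameter $\delta$ in the stochastic term.

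\textbf{Step 1 (generator bias).} Since $\mathcal{G}=\hat{\mathcal{F}}^{\beta+1,\delta_n}_{per}$ with $\delta_n=n^{-1/(2\beta+d)}$, Proposition~\ref{prop:propertiesofhatper} applied with $\eta=\beta+1$ yields directly
\[
\Delta_{\mathcal{G}}=\sup_{g^\star\in\mathcal{H}^{\beta+1}_K}\inf_{g\in\mathcal{G}}d_{\mathcal{H}^\gamma_1}(g_{\#U},g^\star_{\#U})\leq C\log(n)^{C_2}\,n^{-\frac{\beta+\gamma}{2\beta+d}},
\]
provided the precision $L^{-1}=n^{-1}$ of the scaling-function neural approximation is small enough to satisfy the hypothesis $L\geq \delta_n^{-(\lfloor\beta\rfloor+1+\gamma)}$ (if needed, shrink $L^{-1}$ to a higher polynomial in $n$; this only multiplies the depth/width of the network by a constant factor).

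\textbf{Step 2 (discriminator bias).} The class $\mathcal{D}$ was designed specifically so that $\Delta_{\mathcal{D}}$ can be controlled by $\Delta_{\mathcal{G}}$ itself. Fix $g\in\mathcal{G}$. Pick $g^\star_{\mathcal{G}}\in\mathcal{G}$ achieving the generator infimum for $g^\star$, and then $g^\star_\epsilon\in\mathcal{G}_{1/n}$ with $\|g^\star_\mathcal{G}-g^\star_\epsilon\|_\infty\leq 1/n$; since $\gamma\geq 1$, elements of $\mathcal{H}^\gamma_1$ are $1$-Lipschitz, hence $d_{\mathcal{H}^\gamma_1}(g^\star_{\mathcal{G},\#U},g^\star_{\epsilon,\#U})\leq 1/n$. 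With the optimal discriminator $D^\star_{g,g^\star_\epsilon}\in\mathcal{D}\cap\mathcal{H}^\gamma_1$, the triangle inequality gives
\begin{align*}
d_{\mathcal{H}^\gamma_1}(g_{\#U},g^\star_{\#U})
&\leq \mathbb{E}\bigl[D^\star_{g,g^\star_\epsilon}(g(U))-D^\star_{g,g^\star_\epsilon}(g^\star_\epsilon(U))\bigr]+d_{\mathcal{H}^\gamma_1}(g^\star_{\epsilon,\#U},g^\star_{\#U})\\
&\leq d_{\mathcal{D}}(g_{\#U},g^\star_{\#U})+2\,d_{\mathcal{H}^\gamma_1}(g^\star_{\epsilon,\#U},g^\star_{\#U}),
\end{align*}
where the extra $D^\star_{g,g^\star_\epsilon}$ evaluated against $g^\star$ is absorbed via $\|D^\star\|_{\mathcal{H}^\gamma}\leq 1$. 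Using the previous step, $\Delta_{\mathcal{D}}\leq 2\Delta_{\mathcal{G}}+2/n\leq C\log(n)^{C_2}n^{-(\beta+\gamma)/(2\beta+d)}\vee n^{-1/2}$.

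\textbf{Step 3 (covering numbers and stochastic term).} Proposition~\ref{prop:propertiesofhatper} delivers $\log|\mathcal{G}_{1/n}|\leq C n^{d/(2\beta+d)}\log(n)^2$, and since $|\mathcal{D}|\leq|\mathcal{G}_{1/n}|^2$ by construction, the product inside the logarithm of Theorem~\ref{theo:boundexpecterror} is of the same order. Plugging in and optimizing over $\delta$: for $d<2\gamma$ take $\delta=1/n$ so that the exponent $1-d/(2\gamma)>0$ makes $\delta^{1-d/(2\gamma)}/\sqrt n\lesssim n^{-1/2}$; for $d>2\gamma$ the relevant choice is $\delta=n^{-\gamma/(2\beta+d)}$, which makes $n^{-1/2}\delta^{1-d/(2\gamma)}\asymp n^{-(\beta+\gamma)/(2\beta+d)}$ and simultaneously $\delta\sqrt{\log(|\mathcal{G}_{1/n}||\mathcal{D}_{1/n}|)/n}\asymp n^{-(\beta+\gamma)/(2\beta+d)}\log(n)$; the boundary case $d=2\gamma$ costs only an extra $\log(\delta^{-1})$ factor. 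In all cases the stochastic term is dominated by $C\log(n)^{C_2}(n^{-(\beta+\gamma)/(2\beta+d)}\vee n^{-1/2})$.

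\textbf{Main obstacle.} Both bias terms and the stochastic term are essentially plug-and-play once the ingredients are in place, and the only non-routine check is \textbf{Step 2}: Lemma~\ref{lemma:deltaG} is not applicable to $\Delta_{\mathcal{D}}$ because $g^\star_{\#U}$ carries no ambient regularity, which is exactly why the theoretical (non-tractable) class $\mathcal{D}=\{D^\star_{g,g'}\}$ was introduced. The argument above leverages that $\mathcal{D}$ contains the exact Hölder-optimal witness between any two candidates in $\mathcal{G}_{1/n}$, so one only loses the covering-precision $1/n$ and one extra approximation error $\Delta_{\mathcal{G}}$, both of which are already of minimax order. This is precisely the limitation that Model~\ref{model2} and Theorem~\ref{theo:boundhallgammas} will remove by exploiting the manifold/density regularity to replace $\mathcal{D}$ by a tractable wavelet-based neural network class.
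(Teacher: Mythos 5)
Your proof takes essentially the same route as the paper: both obtain $\Delta_{\mathcal{G}}$ and $\log|\mathcal{G}_{1/n}|$ from Proposition~\ref{prop:propertiesofhatper}, bound $\Delta_{\mathcal{D}}\leq 2\Delta_{\mathcal{G}}+O(1/n)$ by exploiting that $\mathcal{D}$ contains the exact $\mathcal{H}^\gamma_1$-optimal witness between pairs in $\mathcal{G}_{1/n}$, and then apply Theorem~\ref{theo:boundexpecterror} with $\delta\asymp n^{-\gamma/(2\beta+d)}$. Your Step 2 is slightly more careful than the paper's in moving from $\overline{g}\in\mathcal{G}$ to a net point $g^\star_\epsilon\in\mathcal{G}_{1/n}$; note that for $D^\star_{g,g^\star_\epsilon}$ to genuinely lie in $\mathcal{D}$ you should also replace $g$ itself by a net point $g_\epsilon\in\mathcal{G}_{1/n}$, which only costs a further $O(1/n)$ and leaves the rate unchanged.
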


The proof of Theorem~\ref{theo:minimaxfirstgan} can be found in Section \ref{sec:theo:minimaxfirstgan}. As usual in nonparametric estimation, an extra logarithmic term in Theorem~\ref{theo:minimaxfirstgan} is to be a attributed to a concentration bound (Theorem~\ref{theo:boundexpecterror}). More interestingly, another one arises from functional inequalities when $\beta$ is not an integer (Lemma~\ref{lemma:deltaG}). We do not keep track of the extra exponent $C_2$ to avoid unnecessary heaviness, as the obtained exponent is  very likely suboptimal. 

 As pointed out in remark 4 of \cite{tang2022minimax}, some authors consider a more general setting than ours, by allowing the observed data to contain noise. It usually consists in supposing that instead of observing data $X_i\sim \mu$ i.i.d, we observe $Y_i=X_i+\epsilon_i$ with $\epsilon_i$ i.i.d. errors. In this setting, \cite{Genovese_2012} showed that when the error follows the standard Gaussian distribution, the minimax rate for estimating a manifold under the
Hausdorff distance is lower bounded by $O(\log(n)^{-1})$. To bypass this
slow convergence, previous work like~\cite{divol2021measure}, assume the noise variance 
to decay with the number of data so that it does not impact the rate of convergence. In our setting, supposing we observe noisy data with $\|\epsilon_1\|\leq n^{-\frac{\beta+ \gamma}{2\beta+d}}\vee n^{-\frac{1}{2}}$ almost surely, will not impact Theorem~\ref{theo:minimaxfirstgan} as the noise scales at the minimax rate.

Theorem~\ref{theo:minimaxfirstgan} shows that the GAN estimator is minimax optimal (up to the logarithmic factor) for the distance $d_{\mathcal{H}_1^\gamma}$. Nevertheless, this result does not really translate to practical applications as the class of discriminators $\mathcal{D}$ \eqref{D} we used is not tractable. Due to the lack of smoothness of the target measure $g_{\# U}^{\star}$, we had to use the fact that the class $\mathcal{G}$ is small rather than it is of smoothness $\beta+1$. Supposing that the target measure has a smooth density with respect to the volume measure of a submanifold, we show in the next sections that we can exploit this regularity and therefore, use a tractable class of discriminators.

\subsection{Model~\ref{model3}: A tractable $\mathcal{D}$ in full dimension}\label{sec:fulldim}
In this section we move away from the case of transport maps from the torus and treat the setting of Model~\ref{model3} where the target measure $\mu$ has a density $f^\star$ with respect to the $p$-dimensional Lebesgue measure. The goal is to understand how to build a tractable estimator of densities on submanifolds, in the simple case where the submanifold is the whole $\mathbb{R}^p$ space. 

Let us first state an analog of Theorem~\ref{theo:boundexpecterror} in the case of densities.

\begin{corollary}\label{coro:boundexpectederrorfulldim} If the target $f^\star$ belongs to $\mathcal{H}^{\beta}_K(\mathbb{R}^p,\mathbb{R})$, has support included in $B^p(0,K)$ and $\mathcal{F}\subset \mathcal{B}^{\beta}_{\infty,\infty}(B^p(0,K),\mathbb{R},K)$, $\mathcal{D} \subset \mathcal{B}^{\gamma}_{\infty,\infty}(B^p(0,K),\mathbb{R},1)$ with $\gamma \in [1,\beta]$, then the density adversarial estimator \eqref{eq:gandensity} verifies

\begin{align*}  
& \mathbb{E}\Big[ \sup \limits_{D\in \mathcal{H}^{\gamma}_1} \int_{\mathbb{R}^p}D(x)(\hat{f}(x)-f^\star(x))d\lambda^p(x)\Big]-\Delta_\mathcal{F}  -\Delta_\mathcal{D}\\
& \leq  C \min \limits_{\delta \in [0,1]} \left\{ \sqrt{\frac{(\delta+1/n)^2\log(n|\mathcal{F}_{1/n}| |\mathcal{D}_{1/n}|)}{n}}
 +\frac{1}{\sqrt{n}}(1+\delta^{(1-\frac{p}{2\gamma})}+\log(\delta^{-1})\mathds{1}_{\{2\gamma= p\}})\right\} .
\end{align*}
\end{corollary}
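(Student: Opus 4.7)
The plan is to mirror the proof of Theorem~\ref{theo:boundexpecterror}, adapted to the density setting. Two substitutions drive the adaptation: the role of the pushforward samples $g(U_i)$ is now played by the product $f(U_i)$ at auxiliary uniform samples on $B^p(0,K)$, and the intrinsic dimension $d$ is replaced everywhere by the ambient dimension $p$, since the candidate densities live on $B^p(0,K) \subset \mathbb{R}^p$. Setting $L(f,D) := \int D(f^\star - f)\, d\lambda^p$ and writing $\hat{L}(f,D)$ for its empirical counterpart (properly normalized, using $|B^p(0,K)|$, so that $\mathbb{E}\,\hat{L}(f,D) = L(f,D)$), I would first carry out the standard bias-variance decomposition: pick $\tilde{f} \in \mathcal{F}$ nearly realizing $\Delta_\mathcal{F}$, use $\mathcal{D} \subset \mathcal{H}^\gamma_1$ to pass from $\sup_{\mathcal{H}^\gamma_1}$ to $\sup_\mathcal{D}$ at cost $\Delta_\mathcal{D}$, and then apply the defining minimality of $\hat{f}$ to get
\begin{equation*}
\sup_{D\in \mathcal{H}^\gamma_1} L(\hat{f}, D) \le \Delta_\mathcal{F} + \Delta_\mathcal{D} + 2\,\Xi,\qquad \Xi := \sup_{f\in \mathcal{F},\, D\in \mathcal{D}} \big|\hat{L}(f,D) - L(f,D)\big|.
\end{equation*}

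The core of the proof is then to bound $\mathbb{E}[\Xi]$. I would split $\hat{L} - L$ into two empirical processes: one indexed by $D \in \mathcal{D}$ alone (the $X_i$ contribution) and one indexed by the product class $\{(f,D)\mapsto fD : f\in \mathcal{F}, D\in \mathcal{D}\}$ (the $U_i$ contribution). For the first, Dudley's chaining against the metric entropy bound $\log N(\epsilon, \mathcal{B}^\gamma_{\infty,\infty}(B^p(0,K),\mathbb{R},1), \|\cdot\|_\infty) \lesssim \epsilon^{-p/\gamma}$ yields, after truncating the integral at resolution $\delta$, the term
\begin{equation*}
\frac{1}{\sqrt{n}}\Big(1 + \delta^{1-\tfrac{p}{2\gamma}} + \log(\delta^{-1})\,\mathbf{1}_{\{2\gamma = p\}}\Big),
\end{equation*}
exactly as in Theorem~\ref{theo:boundexpecterror} but with $p$ in place of $d$ (the three regimes $2\gamma$ vs.\ $p$ produce the respective behaviors). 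For the product piece, I use that $\|f\|_\infty, \|D\|_\infty \le K$ together with $\|f_1 D_1 - f_2 D_2\|_\infty \le K(\|f_1 - f_2\|_\infty + \|D_1 - D_2\|_\infty)$: pairing the $\epsilon$-covers $\mathcal{F}_\epsilon$ and $\mathcal{D}_\epsilon$ gives an $O(K\epsilon)$-cover of the product class of cardinality $|\mathcal{F}_\epsilon| |\mathcal{D}_\epsilon|$. Bernstein/Talagrand concentration on the cover at scales $1/n$ (for the residual approximation error) and $\delta$ (for the chaining discretization) then delivers the first summand $\sqrt{(\delta+1/n)^2 \log(n|\mathcal{F}_{1/n}||\mathcal{D}_{1/n}|)/n}$.

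The main obstacle, compared to Theorem~\ref{theo:boundexpecterror}, is the coupling introduced by the factor $f(U_i)$: in the GAN formulation, each fixed $g \in \mathcal{G}$ produced a single empirical process in $D$, whereas here $f$ varies over the entire class $\mathcal{F}$ simultaneously with $D$, so one must chain jointly over $\mathcal{F} \times \mathcal{D}$. This coupling is precisely what is captured by the multiplicative covering factor $|\mathcal{F}_{1/n}||\mathcal{D}_{1/n}|$ in the final bound. Assembling the two contributions, taking expectations, and optimizing over $\delta \in [0,1]$ (in fact only one side of the split is optimized, the other being a clean Dudley integral) yields the stated inequality; the rest of the argument — localization, peeling, and the concentration machinery — is a verbatim transcription of the corresponding steps in the proof of Theorem~\ref{theo:boundexpecterror}, so I would present the corollary as a direct specialization with the two substitutions indicated above.
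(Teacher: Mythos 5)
Your overall strategy is indeed the paper's: reduce to the machinery of Theorem~\ref{theo:boundexpecterror} and Proposition~\ref{prop:meanvsiid}, with $d$ replaced by $p$ and the pushforward samples replaced by the weighted evaluations $D(U_i)f(U_i)$; the bias--variance decomposition into $\Delta_\mathcal{F}+\Delta_\mathcal{D}+2\,\Xi$ is the same as the paper's. The paper carries this out by applying the two-scale Dudley argument to the \emph{single} product class $\Theta=\{Df \,:\, D\in\mathcal{D},\ f\in\mathcal{F}\}$, using that $Df$ remains in a fixed $\gamma$-smoothness ball (so $\log|\Theta_\epsilon|\lesssim \epsilon^{-p/\gamma}\log(\epsilon^{-1})$), and then treats the $X_i$-process by the same argument.

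The genuine problem in your sketch is the allocation of the two terms to the two empirical processes. For the $U_i$-process indexed by the product class, a union bound over the paired $1/n$-covers with Bernstein/Hoeffding has variance proxy of order $\|Df\|_\infty^2=O(1)$, not $(\delta+1/n)^2$: the factor $\delta$ only appears after first restricting to the subclass $\{\|Df\|_\infty\le \delta\}$, and the complementary part then requires its own Dudley entropy integral over $\Theta$ on scales in $[\delta,2K]$. As written, your product-piece bound therefore degenerates to the case $\delta\asymp 1$, and the minimization over $\delta$ in the stated inequality buys nothing, so the corollary would not follow; the ``chaining discretization at scale $\delta$'' you allude to is precisely this missing entropy-integral step for the product class, which you have instead attributed solely to the $X_i$-process (which, conversely, also needs its own sub-$\delta$ covering/Hoeffding step, harmlessly absorbed in the stated first summand). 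The repair uses only ingredients you already have: your estimate $\|f_1D_1-f_2D_2\|_\infty\le K(\|f_1-f_2\|_\infty+\|D_1-D_2\|_\infty)$ together with the Besov-ball entropies of $\mathcal{F}$ and $\mathcal{D}$ (or, as the paper does, the remark that $Df\in\mathcal{H}^\gamma_C$ since $\gamma\le\beta$) gives $\log|\Theta_\epsilon|\lesssim \epsilon^{-p/\gamma}\log(\epsilon^{-1})$, so the Dudley integral for $\Theta$ truncated at $\delta$ yields the same $\tfrac{1}{\sqrt n}\bigl(1+\delta^{1-\frac{p}{2\gamma}}+\log(\delta^{-1})\mathds{1}_{\{2\gamma=p\}}\bigr)$ contribution; with both processes contributing both types of terms, your decomposition closes and coincides with the paper's proof.
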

 The proof of Corollary~\ref{coro:boundexpectederrorfulldim} can be found in Section \ref{sec:coro:boundexpectederrorfulldim}.  Let us now build the classes of functions $\mathcal{F}$ and $\mathcal{D}$ by adapting to the non periodic case, the construction of the class of generators \eqref{G} from the previous section. For $\eta\in(0,\beta+1)$ and $\delta\in (0,1)$, define
 $$\mathcal{F}^{\eta,\delta}=\{f\in \mathcal{B}^{\eta}_{\infty,\infty}(\mathbb{R}^p,\mathbb{R},C_\eta K) | \ \langle f,\psi_{jlw}\rangle_{L^2}=0,  \forall j\geq \log_2(\delta^{-1})\text{ and } supp(f)\subset B^p(0,K)\}$$
the set of functions in $\mathcal{B}^{\eta}_{\infty,\infty}$ that do not have high frequencies and have compact support in $B^p(0,K)$.  As in Section \ref{sec:theoreticalgan}, we are going to approximate the class $\mathcal{F}^{\eta,\delta}$ by neural networks. Let $\hat{\phi},\hat{\psi}$ be the approximations by neural networks of the scaling and wavelet Daubechies function from \eqref{eq:approxipsi} and \eqref{eq:approxipsiphi} at precision $L^{-1}=n^{-1}$. For ease of notation $\hat{\phi},\hat{\psi}$ will be written $\hat{\psi}_0,\hat{\psi}_1$ respectively and define the multi-dimensional approximation of the wavelet as 
$$
\hat{\psi}_{0w}(x) = \prod \limits_{i=1}^p \hat{\psi}_{0}(x_i-w_i) \ \text{ , } \ \hat{\psi}_{jlw}(x) = 2^{pj/2}\prod \limits_{i=1}^p \hat{\psi}_{l_i}(2^{j}x_i-w_i).
$$
Using these approximated wavelets, Define the class of function $\hat{\mathcal{F}}$ by
\begin{equation}
    \hat{\mathcal{F}}^{\eta,\delta}:=\Big\{\sum \limits_{j=0}^{\log_2(\delta^{-1})} \sum \limits_{l=1}^{2^p} \sum \limits_{w\in \{-K2^{j},...,K2^j\}^p} \hat{\alpha}(j,l,w) \hat{\psi}_{jlw}\big|\ |\hat{\alpha}(j,l,w)|\leq C_\eta K2^{-j(\eta+p/2)} \Big\}.
\end{equation}
 As in the periodic case, we obtain that $ \hat{\mathcal{F}}^{\eta,\delta}$ and $\mathcal{F}^{\eta,\delta}$ have the same bounds on their approximation errors and covering numbers. 

\begin{proposition}\label{prop:propertiesofhat}
    If $\|\phi-\hat{\phi}\|_{\mathcal{H}^{\lfloor \beta \rfloor +2}} \leq L^{-1}$ with $L\geq \delta^{-(\lfloor \beta \rfloor +2+\gamma)}$, then
        $$\Delta_{ \hat{\mathcal{F}}^{\eta,\delta}}:=\sup \limits_{h^\star\in \mathcal{H}^{\eta}_1}\inf \limits_{ f\in \hat{\mathcal{F}}^{\eta,\delta}} \sup \limits_{D\in \mathcal{H}^{\gamma}_1} \int_{\mathbb{R}^p}D(x)(f(x)-h^\star(x))d\lambda^p(x)\leq  C\delta^{\eta+\gamma}$$
    and  $\forall \epsilon\in (L^{-1}\delta^{\eta-1},1)$,
    $$\log(| \hat{\mathcal{F}}^{\eta,\delta}|_\epsilon)\leq C\delta^{-p}\log(\lfloor \delta^{-1}\rfloor)\log(\epsilon^{-1}).$$
\end{proposition}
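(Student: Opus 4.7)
The plan is to mirror the proof of the periodic analogue Proposition~\ref{prop:propertiesofhatper}, with two adjustments: the support condition in $B^p(0,K)$ must be tracked (it determines which wavelet indices $w$ are active), and the wavelet basis is the genuine Daubechies one rather than its periodization. Both bounds are obtained by splitting the error into (a) a truncation error with exact wavelets, and (b) an additional error incurred by swapping each $\psi_{jlw}$ for its neural-network proxy $\hat{\psi}_{jlw}$.

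For the approximation-error bound, I would first establish the non-periodic analogue of Proposition~\ref{prop:Fdelta}: given $h^\star\in \mathcal{H}^\eta_1$, Lemma~\ref{lemma:inclusions} yields the Besov decay $|\alpha_{h^\star}(j,l,w)|\leq C2^{-j(\eta+p/2)}$. Truncating to scales $j< J:=\log_2(\delta^{-1})$ and to wavelets whose support intersects $B^p(0,K)$ produces $f^\star\in \mathcal{F}^{\eta,\delta}$; for any $D\in \mathcal{H}^\gamma_1$ the dual decay is $|\alpha_D(j,l,w)|\leq C 2^{-j(\gamma+p/2)}$. Parseval then gives
\begin{align*}
\left|\int_{\mathbb{R}^p}D(x)\bigl(h^\star(x)-f^\star(x)\bigr) d\lambda^p(x)\right|
\leq C\sum_{j\geq J}(K 2^j)^p \cdot 2^{-j(\eta+p/2)}\cdot 2^{-j(\gamma+p/2)}
\leq C\delta^{\eta+\gamma},
\end{align*}
which is the exact-wavelet part. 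Now define $\hat f\in \hat{\mathcal{F}}^{\eta,\delta}$ with the same coefficients but with $\psi$ replaced by $\hat\psi$. Lifting $\|\phi-\hat\phi\|_{\mathcal{H}^{\lfloor\beta\rfloor+2}}\leq L^{-1}$ to rescaled, shifted, and tensorized wavelets introduces a factor $2^{j(\lfloor\beta\rfloor+2)}$ per wavelet in the $\mathcal{H}^\gamma$ dual pairing. Summing over the $O(\delta^{-p})$ active positions and coefficients decaying like $2^{-j(\eta+p/2)}$ produces an additional IPM error of order $L^{-1}\delta^{-(\lfloor\beta\rfloor+2+\gamma)}$ up to logarithmic losses, which the hypothesis $L\geq \delta^{-(\lfloor\beta\rfloor+2+\gamma)}$ is precisely calibrated to dominate by $C\delta^{\eta+\gamma}$.

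For the covering-number bound, the total number of active wavelet indices is $N=\sum_{j=0}^{J}2^p(2K2^j+O(1))^p=O(\delta^{-p})$. Each coefficient $\hat\alpha(j,l,w)$ ranges in $[-C2^{-j(\eta+p/2)},C2^{-j(\eta+p/2)}]$; discretizing coefficients on a scale-dependent grid of step $\theta_j$ contributes to $\|\hat f-\hat f'\|_\infty$ at most $C\theta_j\cdot 2^{jp/2}$ per scale, since only $O(1)$ wavelets overlap at any point. Choosing $\theta_j$ to equidistribute the $L^\infty$ error over the $O(\log(\delta^{-1}))$ scales and then covering the resulting finite-dimensional box yields
\begin{align*}
\log|\hat{\mathcal{F}}^{\eta,\delta}_\epsilon|\leq C N\log\bigl(\log(\delta^{-1})\epsilon^{-1}2^{J(\eta+p/2)+Jp/2}\bigr) \leq C\delta^{-p}\log(\delta^{-1})\log(\epsilon^{-1}).
\end{align*}
The condition $\epsilon\geq L^{-1}\delta^{\eta-1}$ guarantees that the coefficient-grid resolution does not fall below the intrinsic $\hat\psi$-approximation accuracy, so the covering indeed controls the $\|\cdot\|_\infty$ neighbourhoods.

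The delicate step is the calibration (b): the differentiation losses $2^{j(\lfloor\beta\rfloor+2)}$ compound with the combinatorial factor $\delta^{-p}$ and the test-function budget $2^{j\gamma}$ across scales and positions, and the choice $L\geq \delta^{-(\lfloor\beta\rfloor+2+\gamma)}$ must absorb exactly these losses up to log factors. Verifying this calibration, together with the routine checks that $\|\hat{\psi}_{jlw}\|_\infty$ stays comparable to $\|\psi_{jlw}\|_\infty$ uniformly in $j$ and that elements of $\hat{\mathcal{F}}^{\eta,\delta}$ are essentially supported in a small neighbourhood of $B^p(0,K)$, is the only technical hurdle; everything else reduces to applying standard wavelet Besov machinery already invoked in the periodic case.
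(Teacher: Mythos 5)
Your overall plan is the same one the paper follows: establish the exact-wavelet analogue (Proposition \ref{prop:Fdeltap}), then propagate the neural-network error via the coefficient-comparison result (Proposition \ref{prop:approxhat}), with the covering number obtained by discretizing the $O(\delta^{-p})$ active coefficients. However, the quantitative calibration in your step (b) is wrong, and it is precisely the step you flag as the ``only technical hurdle.'' You claim that swapping $\psi_{jlw}$ for $\hat\psi_{jlw}$ produces an additional IPM error of order $L^{-1}\delta^{-(\lfloor\beta\rfloor+2+\gamma)}$, and that the hypothesis $L\geq\delta^{-(\lfloor\beta\rfloor+2+\gamma)}$ dominates this by $C\delta^{\eta+\gamma}$. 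This does not follow: with that hypothesis, $L^{-1}\delta^{-(\lfloor\beta\rfloor+2+\gamma)}\leq 1$, which is of course not $\leq C\delta^{\eta+\gamma}$, so the conclusion you draw is internally inconsistent with the error estimate you just wrote. The correct size of the extra error is $L^{-1}\delta^{-(\lfloor\beta\rfloor+2-\eta)}$ up to logarithms (with $\eta$ in the exponent, not $\gamma$, and with a \emph{minus} sign). To see this, use Proposition \ref{prop:approxhat}: the contaminated coefficients satisfy $|\alpha_f(j,l,w)-\hat\alpha_f(j,l,w)|\leq CL^{-1}\bigl(2^{-j(\lfloor\beta\rfloor+2+p/2)}\delta^{-(\lfloor\beta\rfloor+2-\eta)}\log_2(\delta^{-1})+2^{-j(\eta+p/2)}\bigr)$. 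Pairing with $|\alpha_D(j,l,w)|\leq C2^{-j(\gamma+p/2)}$, summing over the $O(2^{jp})$ active positions at scale $j$, and summing in $j$, the leading contribution is $CL^{-1}\delta^{-(\lfloor\beta\rfloor+2-\eta)}\log_2(\delta^{-1})\sum_j 2^{-j(\gamma+\lfloor\beta\rfloor+2)}\leq CL^{-1}\delta^{-(\lfloor\beta\rfloor+2-\eta)}\log_2(\delta^{-1})$. Only then does the hypothesis $L^{-1}\leq\delta^{\lfloor\beta\rfloor+2+\gamma}$ close the loop:
\begin{align*}
L^{-1}\delta^{-(\lfloor\beta\rfloor+2-\eta)}\leq\delta^{\lfloor\beta\rfloor+2+\gamma}\cdot\delta^{-(\lfloor\beta\rfloor+2-\eta)}=\delta^{\eta+\gamma}.
\end{align*}
Your intermediate error ``sum over the $O(\delta^{-p})$ active positions $\times$ coefficients decaying like $2^{-j(\eta+p/2)}$'' appears to double-count the positional sum against the frequency sum; the positional sum is already absorbed scale by scale in the geometric series, and the surviving power is $\delta^{-(\lfloor\beta\rfloor+2-\eta)}$, not $\delta^{-(\lfloor\beta\rfloor+2+\gamma)}$. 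With the wrong exponent the argument cannot conclude; with the correct one it does, and then your covering-number argument (a direct discretization rather than the paper's route through the exact-wavelet class and the auxiliary class $\overline{\mathcal{F}}$) is a serviceable alternative, modulo the usual caveat that one must justify the effectively compact support of $\hat\psi_{jlw}$ when invoking the local-overlap bound.
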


The proof of Proposition~\ref{prop:propertiesofhat} can be found in Section \ref{sec:prop:propertiesofhat}. Here, we need our class of generators to approximate at a scale $O(n^{-\frac{\beta+\gamma}{2\beta+p}})$ the density $f^\star \in \mathcal{H}^{\beta}_K(\mathbb{R}^p,\mathbb{R})$, while having a covering number that does not exceed $O(n^{\frac{p}{2\beta+p}})$ (see~\eqref{eq:requiredcovering}) up to logarithmic factors. Therefore, choosing $\eta=\beta$ and $\delta_n=n^{-\frac{1}{2\beta+p}}$ in Proposition~\ref{prop:propertiesofhat}, we have 
$$\inf \limits_{ f\in \hat{\mathcal{F}}^{\beta,\delta_n}} \sup \limits_{D\in \mathcal{H}^{\gamma}_1} \int_{\mathbb{R}^p}D(x)(f(x)-f^\star(x))d\lambda^p(x)\leq  Cn^{-\frac{\beta+\gamma}{2\beta+p}}$$
and 
$$\log(|\left(\hat{\mathcal{F}}^{\beta,\delta_n}\right)_{1/n}|)\leq Cn^{\frac{p}{2\beta+p}}\log(n),$$
which makes $\hat{\mathcal{F}}^{\beta,\delta_n}$ perfectly suited to be our class of generators.  

We now develop a method that allows to exploit the smoothness of the target measure.  We are going to define a class of discriminators $\mathcal{D}$ such that the density adversarial estimator \eqref{eq:gandensity} attains minimax rates for all $d_{\mathcal{H}_1^{\gamma}}$, $\gamma \in [1,\beta]$ simultaneously. Then, the question of what should be the regularity of the class of discriminators $\mathcal{D}$ arises. Let us first show using Proposition~\ref{prop:Hölder}, that IPMs of high regularity control IPMs of lower regularity.

\begin{corollary}\label{coro:ineqfulldim}
   Let be $f,f^\star \in \mathcal{H}^{\beta}_K(\mathbb{R}^p,\mathbb{R})$ compactly supported in $B^p(0,K)$. Then for all $\alpha\geq \gamma>0$, we have
    $$d_{\mathcal{B}^{\gamma,2}_{\infty,\infty}(1)}(f,f^\star)\leq C d_{\mathcal{B}^{\alpha,2}_{\infty,\infty}(1)}(f,f^\star)^\frac{\beta+\gamma}{\beta+\alpha}.$$
\end{corollary}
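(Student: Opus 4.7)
The plan is to mimic the classical Bernstein-type interpolation inequality for negative Besov norms, realized here through the wavelet characterization of the IPMs. The key observation is that both IPMs can be rewritten as pairings $\langle D, f - f^\star\rangle$ against discriminator wavelet expansions, and that the Besov-Hölder inclusion $\mathcal{H}^\beta_K \hookrightarrow \mathcal{B}^\beta_{\infty,\infty}$ from Lemma~\ref{lemma:inclusions} provides a uniform decay bound $|\alpha_{f-f^\star}(j,l,w)| \leq C 2^{-j(\beta+p/2)}$ on the wavelet coefficients of the difference, thanks to the fact that $f,f^\star \in \mathcal{H}^\beta_K$ and their compact supports make only $O(2^{jp})$ coefficients non-zero at scale $j$.

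With these tools in hand, the proof proceeds by a standard high/low frequency split. Fix $J \in \mathbb{N}_0$ and, for any $D \in \mathcal{B}^{\gamma,2}_{\infty,\infty}(1)$, decompose $D = D_{\leq J} + D_{>J}$ according to whether the wavelet scale $j$ is $\leq J$ or $>J$. For the low frequency part, since $|\alpha_D(j,l,w)| \leq 2^{-j(\gamma+p/2)}(1+j)^{-2}$, one checks directly that $2^{-J(\alpha-\gamma)} D_{\leq J} \in \mathcal{B}^{\alpha,2}_{\infty,\infty}(1)$, whence
\begin{equation*}
\bigl|\langle D_{\leq J}, f-f^\star\rangle\bigr| \leq 2^{J(\alpha-\gamma)}\, d_{\mathcal{B}^{\alpha,2}_{\infty,\infty}(1)}(f,f^\star).
\end{equation*}
For the high-frequency part, bounding the pairing term-wise by Cauchy-Schwarz on each $(j,l,w)$,
\begin{equation*}
\bigl|\langle D_{>J}, f-f^\star\rangle\bigr| \leq \sum_{j>J} \sum_{l,w} 2^{-j(\gamma+p/2)}(1+j)^{-2} \cdot C\, 2^{-j(\beta+p/2)} \leq C'\, 2^{-J(\beta+\gamma)},
\end{equation*}
where the factor $2^{jp}$ coming from the number of non-vanishing $w$'s is absorbed into the Gaussian sum and the $(1+j)^{-2}$ ensures convergence.

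Combining the two estimates and optimizing, taking $J$ such that $2^{J(\alpha+\beta)} \asymp d_{\mathcal{B}^{\alpha,2}_{\infty,\infty}(1)}(f,f^\star)^{-1}$ balances the two contributions and yields the exponent $\tfrac{\beta+\gamma}{\beta+\alpha}$ as claimed. The only minor obstacle is to handle the case where the optimal $J$ should be taken $\leq 0$, which happens when $d_{\mathcal{B}^{\alpha,2}_{\infty,\infty}(1)}(f,f^\star)$ is already $\Omega(1)$; in that regime the conclusion follows trivially from the boundedness $\|f - f^\star\|_{\mathcal{B}^\beta_{\infty,\infty}} \leq 2CK$ and the continuous embedding $\mathcal{B}^{\alpha,2}_{\infty,\infty} \hookrightarrow \mathcal{B}^{\gamma,2}_{\infty,\infty}$ for $\alpha \geq \gamma$, so the inequality holds up to adjusting the constant $C$. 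The role of Proposition~\ref{prop:Hölder} is essentially to justify rewriting the pairing $\langle D, f-f^\star\rangle$ via the $\Gamma^{s,c}$ operator and thereby transferring regularity between the discriminator and the difference of densities; the frequency-split computation above is the concrete incarnation of this transfer.
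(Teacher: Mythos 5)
Your proof is correct, but it takes a genuinely different route from the paper's. The paper obtains the corollary in one stroke from Proposition~\ref{prop:Hölder}, applied with $h_1=D$, $h_2=f-f^\star$, $\tau=\gamma-\alpha$ and $q=\frac{\beta+\alpha}{\beta+\gamma}$: Hölder's inequality on the wavelet-coefficient pairing gives
$\int D(f-f^\star)\leq \bigl(\int \tilde{\Gamma}^{\gamma-\alpha}(D)(f-f^\star)\bigr)^{\frac{\beta+\gamma}{\beta+\alpha}}\bigl(\int \tilde{\Gamma}^{\gamma}(D)\,\Gamma^{\beta}(f-f^\star)\bigr)^{\frac{\alpha-\gamma}{\beta+\alpha}}$,
where the first factor is at most $d_{\mathcal{B}^{\alpha,2}_{\infty,\infty}(1)}(f,f^\star)$ because $\tilde{\Gamma}^{\gamma-\alpha}(D)\in\mathcal{B}^{\alpha,2}_{\infty,\infty}(1)$, and the second is $O(1)$ by exactly the two ingredients you also use: the coefficient decay $|\alpha_D|\leq 2^{-j(\gamma+p/2)}(1+j)^{-2}$, $|\alpha_{f-f^\star}|\leq C2^{-j(\beta+p/2)}$, and the $O(2^{jp})$ count of active translations $w$ coming from the compact support, so that $2^{j(\beta+\gamma)}|\alpha_D\alpha_{f-f^\star}|$ sums to a constant. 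You replace this multiplicative, Hölder-type step by an additive high/low frequency truncation at scale $J$ followed by an optimization in $J$, i.e.\ a Bernstein/real-interpolation argument; the two proofs consume the same facts and give the same exponent. The paper's version is shorter because Proposition~\ref{prop:Hölder} is already established and reused throughout (Lemma~\ref{lemma:deltaG}, Lemmas~\ref{lemma:firstterm} and~\ref{lemma:secondterm}), and it requires no case distinction since the second factor is bounded uniformly; yours is more elementary and self-contained, at the price of the edge case where the optimal $J$ is nonpositive. On that edge case your conclusion is right but the justification is slightly off: the embedding $\mathcal{B}^{\alpha,2}_{\infty,\infty}\xhookrightarrow{}\mathcal{B}^{\gamma,2}_{\infty,\infty}$ yields $d_{\mathcal{B}^{\alpha,2}_{\infty,\infty}(1)}\leq C\, d_{\mathcal{B}^{\gamma,2}_{\infty,\infty}(1)}$, which is the wrong direction; what you actually need (and already have) is simply $d_{\mathcal{B}^{\gamma,2}_{\infty,\infty}(1)}(f,f^\star)\leq C$, which follows from your own term-wise bound summed over all $j\geq 0$ (or from $L^\infty$--$L^1$ duality), so the inequality holds there after adjusting the constant. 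Finally, ``Gaussian sum'' should read ``geometric sum''.
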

The proof of Corollary~\ref{coro:ineqfulldim} can be found in Section \ref{sec:coro:ineqfulldim}. This is the key result that will allow us to prove that if $\hat{f}$ attains the minimax rate for $d_{\mathcal{H}^\alpha_1}$, then it will also attain the minimax rate for $d_{\mathcal{H}^\gamma_1}$ for all $ \gamma \in [1,\alpha]$. Indeed, by injection of Besov spaces into Holder spaces (up to logarithmic terms), Corollary~\ref{coro:ineqfulldim} implies that for all $\epsilon \in (0,1)$,
$$d_{\mathcal{H}^{\gamma}_{1}}(f,f^\star)\leq C\log(\epsilon^{-1})^2 d_{\mathcal{H}^{\alpha}_{1}}(f,f^\star)^\frac{\beta+\gamma}{\beta+\alpha}+\epsilon,$$
as detailed in Section \ref{sec:prop:minimaxfulldimgamma}. Therefore, if for a certain $\alpha\in [1, \beta]$ we obtain $d_{\mathcal{H}_1^{\alpha}}(\hat{f},f^\star) \leq  O(n^{-\frac{\beta+\alpha}{2\beta+p}})$, then $\forall \gamma \in [1,\alpha]$ we will have up to logarithmic terms,
\begin{equation}\label{eq:theresonwhy}
d_{\mathcal{H}_1^{\gamma}}(\hat{f},f^\star)\leq Cd_{\mathcal{H}_1^{\alpha}}(\hat{f},f^\star)^\frac{\beta+\gamma}{\beta+\alpha}\leq O(n^{-\frac{\beta+\alpha}{2\beta+p}})^\frac{\beta+\gamma}{\beta+\alpha}=O(n^{-\frac{\beta+\gamma}{2\beta+p}}).
\end{equation}
Hence, by taking the regularity of the discriminator class to be equal to $\beta$, if we obtain $d_{\mathcal{H}_1^{\beta}}(\hat{f},f^\star) \leq  O(n^{-\frac{\beta+\beta}{2\beta+p}})$, we would have minimax rates for $d_{\mathcal{H}_1^{\gamma}}$ for all $ \gamma \in [1,\beta]$. However,
in the case where $\beta>p/2$ the minimax rate for the distance $d_{\mathcal{H}_1^{\beta}}$ is $O(n^{-1/2})$ not $O(n^{-\frac{\beta+\beta}{2\beta+p}})$, so equation \eqref{eq:theresonwhy} does not work anymore. Therefore, we choose the regularity of the discriminator class $\mathcal{D}$ to be
$$\tilde{\beta}:=\beta\wedge p/2.$$
At the end of the day, we choose
\begin{equation}\label{eq:FandD}
\mathcal{F}:=\hat{\mathcal{F}}^{\beta,n^{-\frac{1}{2\beta+p}}} \quad \text{ and } \quad \mathcal{D}:=\hat{\mathcal{F}}^{\tilde{\beta},n^{-\frac{1}{2\beta+p}}}
\end{equation}
as classes of generators and discriminators. 
{
 As for Model 1, $\mathcal{F}$ is a class of neural networks with one layer of $O(n^d \log(n))$ blocks, each block with width $O(n)$ and depth $O(1)$. Contrary to Model 1, the class of discriminator is now a class of neural networks, with the same dimensions as for $\mathcal{F}$.
}

We are first going to show that the density adversarial estimator \eqref{eq:gandensity} with these classes attains minimax rates for the distance $d_{\mathcal{H}_1^{\tilde{\beta}}}$. Then applying  Corollary~\ref{coro:ineqfulldim}, we will obtain that it also attains minimax rates for all $d_{\mathcal{H}_1^{\gamma}}$, $\gamma\in[1,\tilde{\beta}]$. 

Now to obtain the rate for the distance $d_{\mathcal{H}_1^{\tilde{\beta}}}$, we need to bound the approximation error of the class $\mathcal{D}$ which is defined in the density case as
\begin{align*}
\Delta_\mathcal{D} & := \sup \limits_{f\in \mathcal{F}}\sup \limits_{D^\star \in \mathcal{H}^{\tilde{\beta}}_{1}} \inf\limits_{D\in \mathcal{D}}\int_{\mathbb{R}^p}(D^\star(x)-D(x))(f(x)-f^\star(x))d\lambda^p(x).
\end{align*}
Here, in contrast to the assumptions of Model~\ref{model1}, the target measure $f^\star$ has some regularity from the point of view of the $p$-dimensional Lebesgue measure. Therefore, as illustrated in the following lemma, we can give the $\beta$-regularity of the function $f-f^\star$ to the generators $D^\star-D$ and bound $\Delta_\mathcal{D}$ by the approximation error of the $\tilde{\beta}+\beta$-smooth map $\Gamma^{-\beta}(D^\star)$.

\begin{lemma}\label{lemma:petitlemme} For $\mathcal{D}$ defined in \eqref{eq:FandD} we have
$$\Delta_\mathcal{D} \leq C \sup \limits_{h \in \mathcal{B}^{0}_{\infty,\infty}}  \sup \limits_{D^\star \in \mathcal{H}^{\tilde{\beta}}_{1}} \inf\limits_{D\in \mathcal{D}}\int_{\mathbb{R}^p}(\Gamma^{-\beta}(D^\star)(x)-\Gamma^{-\beta}(D)(x))h(x)d\lambda^p(x).$$
\end{lemma}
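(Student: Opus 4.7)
\emph{Proof plan.} The whole strategy is to transfer the Besov $\beta$-regularity of $f-f^\star$ onto $D^\star-D$ via the wavelet operator $\Gamma^\beta$, mimicking the mechanism already used in Lemma~\ref{lemma:deltaG}. First I would invoke Parseval for the orthonormal wavelet basis $(\psi_{jlw})$ of $L^2(\mathbb{R}^p)$: for any $F,G$ with square-summable wavelet coefficients,
\begin{equation*}
\int_{\mathbb{R}^p} F(x)G(x)\,d\lambda^p(x) \;=\; \sum_{j,l,w} \langle F,\psi_{jlw}\rangle \langle G,\psi_{jlw}\rangle \;=\; \sum_{j,l,w} \bigl(2^{-j\beta}\langle F,\psi_{jlw}\rangle\bigr)\bigl(2^{j\beta}\langle G,\psi_{jlw}\rangle\bigr),
\end{equation*}
which by the very definition of $\Gamma^{\gamma,c}$ equals $\int \Gamma^{-\beta}(F)\,\Gamma^{\beta}(G)\,d\lambda^p$. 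Applied to $F=D^\star-D$ and $G=f-f^\star$ this gives
\begin{equation*}
\int (D^\star-D)(f-f^\star)\,d\lambda^p \;=\; \int \Gamma^{-\beta}(D^\star-D)(x)\cdot \Gamma^{\beta}(f-f^\star)(x)\,d\lambda^p(x).
\end{equation*}

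Next I would show that $\Gamma^{\beta}(f-f^\star)$ sits in a ball of fixed radius of $\mathcal{B}^{0}_{\infty,\infty}$. On the one hand, $f^\star\in \mathcal{H}^{\beta}_K$ embeds continuously into $\mathcal{B}^{\beta}_{\infty,\infty}$ by Lemma~\ref{lemma:inclusions}. On the other hand, by construction of the generator class, any $f\in \mathcal{F}=\hat{\mathcal{F}}^{\beta,\delta_n}$ is a perturbation (of size controlled by the neural-network approximation error $L^{-1}=n^{-1}$, hence negligible) of an element of $\mathcal{F}^{\beta,\delta_n}\subset \mathcal{B}^{\beta}_{\infty,\infty}(\cdot,C_\beta K)$. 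Thus $\|f-f^\star\|_{\mathcal{B}^{\beta}_{\infty,\infty}}\leq C$, and since $\Gamma^{\beta}$ multiplies each wavelet coefficient by $2^{j\beta}$ (cancelling exactly the Besov weight in the norm), one gets $\|\Gamma^{\beta}(f-f^\star)\|_{\mathcal{B}^{0}_{\infty,\infty}}\leq C$.

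Setting $h_f := \Gamma^{\beta}(f-f^\star)/C$, which by the above lies in the unit ball of $\mathcal{B}^{0}_{\infty,\infty}$, the previous identity yields
\begin{equation*}
\int (D^\star-D)(f-f^\star)\,d\lambda^p \;\leq\; C\int \Gamma^{-\beta}(D^\star-D)(x)\, h_f(x)\,d\lambda^p(x).
\end{equation*}
Taking the infimum over $D\in \mathcal{D}$, the supremum over $D^\star\in \mathcal{H}^{\tilde{\beta}}_1$, and the supremum over $f\in \mathcal{F}$, then enlarging the constraint $h\in \{h_f:f\in \mathcal{F}\}$ to $h\in \mathcal{B}^{0}_{\infty,\infty}$ (which can only increase the right-hand side), delivers the announced inequality.

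The only delicate point is the second step: establishing uniform control of $\|f-f^\star\|_{\mathcal{B}^{\beta}_{\infty,\infty}}$ when $f\in \hat{\mathcal{F}}^{\beta,\delta_n}$ is written in the approximated wavelet basis $\hat{\psi}_{jlw}$ rather than $\psi_{jlw}$, since only the true basis is orthonormal and characterises the Besov norm by coefficient decay. I would handle this by decomposing $\hat{\psi}_{jlw}=\psi_{jlw}+(\hat{\psi}_{jlw}-\psi_{jlw})$, controlling the main term by the imposed coefficient bound $|\hat{\alpha}(j,l,w)|\leq C_\beta K 2^{-j(\beta+p/2)}$, and bounding the residual term via $\|\hat{\psi}-\psi\|_{\mathcal{H}^{\lfloor\beta\rfloor+2}}\leq L^{-1}$ with the choice $L\geq \delta_n^{-(\lfloor\beta\rfloor+2+\gamma)}$, as already done in the proof of Proposition~\ref{prop:propertiesofhat}. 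Everything else is essentially bookkeeping on the wavelet side.
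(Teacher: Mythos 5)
Your proposal is correct and follows essentially the same route as the paper: expand the pairing in the wavelet basis, shift the weight $2^{j\beta}$ from $f-f^\star$ onto $D^\star-D$ (i.e.\ apply $\Gamma^{-\beta}$ to the discriminators), use the inclusion $\hat{\mathcal{F}}^{\beta,\delta_n}\subset \mathcal{B}^{\beta}_{\infty,\infty}(C)$ guaranteed by Proposition~\ref{prop:approxhat} to control the coefficients of $f-f^\star$, and conclude by majorizing with a supremum over $h\in\mathcal{B}^{0}_{\infty,\infty}$. The only cosmetic difference is that you keep the exact function $h_f=\Gamma^{\beta}(f-f^\star)/C$ while the paper passes to worst-case coefficient bounds and a fixed $h$ with coefficients $2^{-jp/2}$; both arguments are equivalent.
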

The proof of Lemma~\ref{lemma:petitlemme} can be found in Section \ref{sec:lemma:petitlemme}. Putting the bounds of Lemma~\ref{lemma:petitlemme} and Proposition~\ref{prop:propertiesofhat} in Corollary~\ref{coro:boundexpectederrorfulldim}, we obtain that the density adversarial estimator is minimax for the distance $d_{\mathcal{H}_1^{\tilde{\beta}}}$.

\begin{proposition}\label{coro:minimaxfulldim} For $\hat{f}$ the density adversarial estimator \eqref{eq:gandensity} using the classes $\mathcal{F}$ and $\mathcal{D}$ defined in \eqref{eq:FandD}, we have that
    \begin{align*}  
\mathbb{E}\Big[d_{\mathcal{H}_1^{\tilde{\beta}}}(\hat{f},f^\star)\Big] \leq  C\log(n)^{C_2}n^{-\frac{\beta+\tilde{\beta}}{2\beta+p}}.
\end{align*}
\end{proposition}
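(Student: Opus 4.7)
The plan is to feed the bias--variance decomposition of Corollary~\ref{coro:boundexpectederrorfulldim} (applied with discriminator smoothness $\gamma=\tilde{\beta}$ and the classes from~\eqref{eq:FandD}) with the approximation-error bounds furnished by Proposition~\ref{prop:propertiesofhat}, combined on the discriminator side with the regularity-transfer identity of Lemma~\ref{lemma:petitlemme}. Setting $\delta_n:=n^{-1/(2\beta+p)}$, the target is to show that each of $\Delta_{\mathcal{F}}$, $\Delta_{\mathcal{D}}$ and the stochastic term decay at most as $C\log(n)^{C_2}\, n^{-(\beta+\tilde{\beta})/(2\beta+p)}$. A useful observation is that when $\beta>p/2$ one has $\tilde{\beta}=p/2$ and the target rate becomes $n^{-1/2}$, so the parametric regime of the minimax rate $n^{-(\beta+\tilde\beta)/(2\beta+p)}\vee n^{-1/2}$ is automatically absorbed into the unique rate $n^{-(\beta+\tilde{\beta})/(2\beta+p)}$.

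The generator-side error $\Delta_{\mathcal{F}}$ is immediate: applying Proposition~\ref{prop:propertiesofhat} with $\eta=\beta$, $\delta=\delta_n$ and discriminator regularity $\tilde{\beta}$ yields
$\Delta_{\mathcal{F}}\leq C\delta_n^{\beta+\tilde{\beta}}=Cn^{-(\beta+\tilde{\beta})/(2\beta+p)}$ and $\log|\mathcal{F}_{1/n}|\leq Cn^{p/(2\beta+p)}\log(n)^2$. The precondition $L\geq \delta_n^{-(\lfloor\beta\rfloor+2+\tilde{\beta})}$ is met since $L=n$ and $\lfloor\beta\rfloor+2+\tilde{\beta}\leq 2\beta+p$ for all admissible $\beta,p$.

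The main obstacle is $\Delta_{\mathcal{D}}$. By Lemma~\ref{lemma:petitlemme}, it suffices to bound, uniformly in $D^\star\in\mathcal{H}^{\tilde{\beta}}_1$, the approximation of $\Gamma^{-\beta}(D^\star)$ by some $\Gamma^{-\beta}(D)$, $D\in\mathcal{D}$, tested against $h\in\mathcal{B}^{0}_{\infty,\infty}$. Because $\Gamma^{-\beta}$ multiplies each wavelet coefficient by $2^{-j\beta}$, one has $\Gamma^{-\beta}(\mathcal{H}^{\tilde{\beta}}_1)\subset \mathcal{B}^{\tilde{\beta}+\beta}_{\infty,\infty}$ and, coefficient-wise, $\Gamma^{-\beta}(\hat{\mathcal{F}}^{\tilde{\beta},\delta_n})$ coincides (up to the common neural-network wavelet approximation $\hat{\psi}$) with $\hat{\mathcal{F}}^{\tilde{\beta}+\beta,\delta_n}$. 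Applying Proposition~\ref{prop:propertiesofhat} with $\eta=\tilde{\beta}+\beta$ and discriminator smoothness $0$ (a mild extension of the stated range $\gamma\geq 1$: the wavelet-based proof uses only the frequency cut-off at level $\log_2\delta_n^{-1}$ and the $L^\infty$/$L^1$ duality of $\mathcal{B}^{0}_{\infty,\infty}$) yields $\Delta_{\mathcal{D}}\leq C\delta_n^{\tilde{\beta}+\beta}=Cn^{-(\beta+\tilde{\beta})/(2\beta+p)}$ and $\log|\mathcal{D}_{1/n}|\leq Cn^{p/(2\beta+p)}\log(n)^2$.

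It then remains to optimize the free parameter $\delta$ inside Corollary~\ref{coro:boundexpectederrorfulldim}. Choose $\delta=n^{-\tilde{\beta}/(2\beta+p)}$: since $\tilde{\beta}\leq p/2$ the exponent $1-p/(2\tilde{\beta})$ is nonpositive, so $\delta^{1-p/(2\tilde{\beta})}/\sqrt{n}\leq Cn^{-(\beta+\tilde{\beta})/(2\beta+p)}$ (the boundary case $2\tilde{\beta}=p$ contributing only an extra $\log(\delta^{-1})=O(\log n)$ factor through the indicator term), while the entropy term $\delta\sqrt{\log(n|\mathcal{F}_{1/n}||\mathcal{D}_{1/n}|)/n}$ matches the same rate thanks to the covering bounds established above. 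Summing the three contributions $\Delta_{\mathcal{F}}+\Delta_{\mathcal{D}}+\text{stochastic}$ gives the announced bound on $\mathbb{E}[d_{\mathcal{H}_1^{\tilde{\beta}}}(\hat{f},f^\star)]$.
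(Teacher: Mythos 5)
Your proposal is correct and follows essentially the same route as the paper: the bias--variance bound of Corollary~\ref{coro:boundexpectederrorfulldim} with $\gamma=\tilde{\beta}$, the approximation/covering bounds of Proposition~\ref{prop:propertiesofhat} with $\delta_n=n^{-1/(2\beta+p)}$, the regularity transfer of Lemma~\ref{lemma:petitlemme} for $\Delta_{\mathcal{D}}$, and the choice $\delta=n^{-\tilde{\beta}/(2\beta+p)}$. The only cosmetic difference is that the paper's appendix bounds $\Delta_{\mathcal{D}}$ by directly matching the low-frequency wavelet coefficients of $D_f^\star$ and summing the high-frequency tail using the $\beta$-decay of $f-f^\star$ (as in Proposition~\ref{prop:Fdeltap}), which is exactly the computation your ``$\gamma=0$, $\eta=\beta+\tilde{\beta}$'' variant of Proposition~\ref{prop:propertiesofhat} encapsulates.
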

The proof of Proposition~\ref{coro:minimaxfulldim} can be found in Section \ref{sec:coro:minimaxfulldim}. Using Corollary~\ref{coro:ineqfulldim}, we can now show the final bound for the density adversarial estimator \eqref{eq:gandensity} in the setting of Model~\ref{model3}.

\begin{proposition}\label{prop:minimaxfulldimgamma} For all $f^\star \in \mathcal{H}^{\beta+1}_K(\mathbb{R}^p,\mathbb{R})$, the density adversarial estimator $\hat{f}$ of~\eqref{eq:gandensity} using the classes $\mathcal{F}$ and $\mathcal{D}$ defined in \eqref{eq:FandD} verifies for all $\gamma\in[1,\beta]$
    \begin{align*}  
\mathbb{E}\Big[d_{\mathcal{H}_1^{\gamma}}(\hat{f},f^\star)\Big] \leq  C\log(n)^{C_2}\left(n^{-\frac{\beta+\gamma}{2\beta+p}}\vee n^{-\frac{1}{2}}\right).
\end{align*}
\end{proposition}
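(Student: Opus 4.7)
The plan is to split into two regimes depending on whether $\gamma\leq \tilde\beta:=\beta\wedge p/2$ or $\gamma>\tilde\beta$, and leverage Proposition~\ref{coro:minimaxfulldim} which already provides the optimal rate for the specific IPM $d_{\mathcal{H}_1^{\tilde\beta}}$. For $\gamma\leq \tilde\beta$, I would transport this bound to $d_{\mathcal{H}_1^\gamma}$ via the interpolation inequality of Corollary~\ref{coro:ineqfulldim}; for $\gamma>\tilde\beta$, which forces $\beta>p/2$ and in which case the target rate collapses to $n^{-1/2}$, a simple Hölder embedding suffices.

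For the first regime, Corollary~\ref{coro:ineqfulldim} is phrased in terms of the Besov IPM $d_{\mathcal{B}^{\gamma,2}_{\infty,\infty}(1)}$ rather than $d_{\mathcal{H}_1^\gamma}$. To bridge the two, I would use Lemma~\ref{lemma:inclusions} together with a low/high frequency decomposition of any test function $D\in\mathcal{H}_1^\gamma$. Splitting $D=D_{\leq J}+D_{>J}$ at wavelet scale $J$, standard wavelet estimates yield $\|D_{>J}\|_\infty\leq C\, 2^{-J\gamma}$ while $\|D_{\leq J}\|_{\mathcal{B}^{\gamma,2}_{\infty,\infty}}\leq C(1+J)^2$, so choosing $J\sim \gamma^{-1}\log(\epsilon^{-1})$ delivers
\[
d_{\mathcal{H}_1^\gamma}(f,f^\star)\leq C\log(\epsilon^{-1})^2\, d_{\mathcal{B}^{\gamma,2}_{\infty,\infty}(1)}(f,f^\star)+\epsilon.
\]
Combining this with Corollary~\ref{coro:ineqfulldim} applied with $\alpha=\tilde\beta$, plus the embedding $\mathcal{B}^{\tilde\beta,2}_{\infty,\infty}(1)\subseteq C\,\mathcal{H}_1^{\tilde\beta}$ (enlarging $b$ shrinks the Besov unit ball), produces the pointwise bound
\[
d_{\mathcal{H}_1^\gamma}(f,f^\star)\leq C\log(\epsilon^{-1})^2\, d_{\mathcal{H}_1^{\tilde\beta}}(f,f^\star)^{\frac{\beta+\gamma}{\beta+\tilde\beta}}+\epsilon.
\]
Taking expectation, applying Jensen's inequality to the concave power $(\beta+\gamma)/(\beta+\tilde\beta)\in(0,1]$, and inserting Proposition~\ref{coro:minimaxfulldim} gives $\mathbb{E}[d_{\mathcal{H}_1^\gamma}(\hat f,f^\star)]\leq C\log(n)^{C_2}n^{-\frac{\beta+\tilde\beta}{2\beta+p}\cdot\frac{\beta+\gamma}{\beta+\tilde\beta}}+\epsilon$. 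The algebraic key is that the two exponents telescope exactly into $\frac{\beta+\gamma}{2\beta+p}$, so picking $\epsilon=n^{-1}$ recovers the claimed rate for all $\gamma\in[1,\tilde\beta]$.

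For the second regime $\gamma\in(\tilde\beta,\beta]$, one necessarily has $\tilde\beta=p/2<\beta$, and elementary algebra shows $(\beta+\gamma)/(2\beta+p)>1/2$, so the target bound reduces to $n^{-1/2}$. Since both measures are supported on the compact ball $B^p(0,K)$, the standard inclusion of Hölder unit balls on a bounded domain yields $\mathcal{H}_1^\gamma\subseteq C\,\mathcal{H}_1^{\tilde\beta}$ (lower-order derivatives and Hölder seminorms are controlled by the full $\mathcal{H}^\gamma$ norm on a compact), whence $d_{\mathcal{H}_1^\gamma}\leq C\,d_{\mathcal{H}_1^{\tilde\beta}}$. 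Plugging in the $n^{-1/2}$ bound from Proposition~\ref{coro:minimaxfulldim} closes this case.

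The main technical obstacle lies in the first regime: passing from the Besov-based interpolation inequality of Corollary~\ref{coro:ineqfulldim} to a Hölder-IPM statement without degrading the exponent on the right-hand side. The wavelet truncation introduces only controlled $\log^2$ factors, and the telescoping of the product of exponents hinges critically on the choice $\alpha=\tilde\beta$. Edge effects for integer values of $\gamma$, for which Lemma~\ref{lemma:inclusions} loses a polynomial in $\log(n)$ between $\mathcal{B}^{\gamma,b}_{\infty,\infty}$ and $\mathcal{H}^\gamma$, are harmless and get absorbed in the final $\log(n)^{C_2}$ factor.
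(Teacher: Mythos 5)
Your proposal is correct and takes essentially the same route as the paper: both split on whether $\gamma$ exceeds the cap $\tilde\beta=\beta\wedge p/2$, both pass from $d_{\mathcal{H}_1^\gamma}$ to the weighted Besov IPM $d_{\mathcal{B}^{\gamma,2}_{\infty,\infty}(1)}$ by a wavelet frequency cutoff at scale $O(\log n)$ at the cost of a $\log^2$ factor, both then apply Corollary~\ref{coro:ineqfulldim} with $\alpha=\tilde\beta$ and feed in Proposition~\ref{coro:minimaxfulldim}, and both dispatch the case $\gamma>\tilde\beta$ by the trivial embedding into $\mathcal{H}_1^{\tilde\beta}$. The only cosmetic differences are that you bound the tail via $\|D_{>J}\|_\infty\|f-f^\star\|_{L^1}$ rather than via the joint decay of the coefficients of $D$ and $f-f^\star$ (the paper's cutoff $J=\log n$ exploits the additional $\beta$-smoothness of $f,f^\star$, but your cruder bound with $J\sim\gamma^{-1}\log(\epsilon^{-1})$ still lands in the $\log(n)^{C_2}$ budget), and that you make explicit the Jensen step for $\mathbb{E}[X^\theta]\leq(\mathbb{E}X)^\theta$, which the paper leaves implicit.
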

The proof of Proposition~\ref{prop:minimaxfulldimgamma} can be found in Section \ref{sec:prop:minimaxfulldimgamma}. We can conclude that the density adversarial estimator attains minimax optimal rates up to logarithmic factors in the density case. Note that we had to distinguish whether $\beta\leq p/2$ or not. 
\begin{itemize}
    \item In the case $\beta<p/2$, the estimator $\hat{f}$ is computed using a discriminator class of regularity $\beta$ and attains minimax rates for all $\gamma \in [1,\beta]$ at the same time.
    \item In the case $\beta\geq p/2$, the estimator $\hat{f}$ is computed using a discriminator class of regularity $p/2$ and attains minimax rates for all $\gamma \in [1,\infty)$ at the same time, as for $\gamma\geq p/2$, $d_{\mathcal{H}_1^{\gamma}}(\hat{f},f^\star)\leq d_{\mathcal{H}_1^{p/2}}(\hat{f},f^\star)$.
\end{itemize}

As mentioned in Section \ref{sec:overview}, the density adversarial estimator \eqref{eq:gandensity} does not provide a direct way of sampling from the approximate density $\hat{f}$. However, one could obtain such estimator by supposing that the target density $f^\star$ is the push forward measure of a known easy to sample density $\nu$ on $\mathbb{R}^p$. In this case, defining the GAN estimator as
\begin{equation*}
    \hat{g} \in \argmin \limits_{g\in \mathcal{G}} \sup \limits_{D\in \mathcal{D}} \frac{1}{n} \sum \limits_{i=1}^n D(X_i)-D(g(Y_i))
\end{equation*}
for $Y_i\sim \nu$ i.i.d. and $\mathcal{G}\subset \mathcal{H}^{\beta+1}_K(\mathbb{R}^p,\mathbb{R}^p)$, one could prove (following the same method developed above) that this estimator attains minimax rates of convergence. We do not develop further on this estimator as the main purpose of this paper is to provide results for the manifold case of Model~\ref{model2}. Having developed a method for the full dimensional case, we adapt it in the following section to the case where the submanifold is of dimension $d<p$.

\section{The manifold case}\label{sec:themanifoldcase}
In this section, we suppose that we are in the setting of Model~\ref{model2}.
Using this hypothesis we have that $g^\star_{\# U}$ admits a density with respect to the volume measure of the manifold $\mathcal{M}_{g^\star}=g^\star(\mathbb{T}^d)$ and that its density $f_{g^\star}$ belongs to $\mathcal{H}^{\beta}_{C}(\mathcal{M}_{g^\star},\mathbb{R})$. In order to adapt the method of the preceding section, we first prove an interpolation inequality similar to Corollary~\ref{coro:ineqfulldim} in this setting. Then, we use it to prove minimax optimality of the GAN estimator.
\subsection{Interpolation inequality for smooth densities on manifolds}\label{sec:interpolation}
In this section we want to prove an analog version of Corollary~\ref{coro:ineqfulldim} but in the context of Model~\ref{model2}. The difficulty comes from the fact that although our push forward maps $g,g^\star:\mathbb{T}^d\rightarrow \mathbb{R}^p$ belong to~$\mathcal{H}^{\beta+1}$, the densities they produce on their respective images do not have regularity from the point of view of the $p$-dimensional Lebesgue measure. We will see that in order to exploit the regularity of the push forward maps, we can split the cost $\mathbb{E}[D(g(U))-D(g^\star(U))]$ into two terms: one characterizing the distance between the support of $g_{\# U}$ and $g^\star_{\# U}$, and the other characterizing their difference in terms of density. Let us first state the desired result and then detail its proof. 
\begin{theorem}\label{theo:theineq}
 Let  $g,g^\star \in \mathcal{H}^{\beta+1}_K(\mathbb{T}^d,\mathbb{R}^p)$ that verify the $K$-manifold regularity condition with $g^\star$ that verifies the $K$-density regularity condition of Definition \ref{defi:manifold regularity}. There exists constants $C,C_{2}>0$ such that if 
        $
     d_{\mathcal{H}_1^{\beta+1}}(g_{\# U},g^\star_{\# U})\leq C^{-1}$, then for all $\epsilon\in(0,1)$ and $\gamma \in [1,\beta+1]$,  we have
    \begin{align*}
        d_{\mathcal{H}_1^{\gamma}}(g_{\# U},g^\star_{\# U})\leq C_2\log(\epsilon^{-1})^4\left( d_{\mathcal{H}_1^{\beta+1}}(g_{\# U},g^\star_{\# U})^{\frac{\beta+\gamma}{2\beta+1}} + \epsilon\right).   
    \end{align*}
\end{theorem}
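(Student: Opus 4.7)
The strategy is to reduce the IPM inequality to a Besov interpolation inequality on the parameter domain $\mathbb{T}^d$ applied to $\tilde{\phi} := \tilde{g} - g^\star$, where $\tilde{g}$ is a measure-preserving reparametrization of $g$ aligned with $g^\star$. The target exponent is exactly the real-interpolation exponent determined by $-(\gamma-1) = \theta(-\beta) + (1-\theta)(\beta+1)$, which yields $\theta = (\beta+\gamma)/(2\beta+1)$. Then the classical Besov interpolation
\begin{equation*}
  \|\Gamma^{-(\gamma-1)}\tilde\phi\|_{L^2(\mathbb{T}^d)} \leq \|\Gamma^{-\beta}\tilde\phi\|_{L^2(\mathbb{T}^d)}^{\theta} \cdot \|\tilde\phi\|_{\mathcal{B}^{\beta+1}_{\infty,\infty}}^{1-\theta},
\end{equation*}
combined with the uniform bound $\|\tilde{\phi}\|_{\mathcal{B}^{\beta+1}_{\infty,\infty}} \leq C$, reduces the theorem to establishing the two-sided correspondence
\begin{equation*}
  d_{\mathcal{H}_1^{\beta+1}}(g_{\# U},g^\star_{\# U}) \gtrsim \|\Gamma^{-\beta}\tilde\phi\|_{L^2(\mathbb{T}^d)}, \qquad d_{\mathcal{H}_1^\gamma}(g_{\# U},g^\star_{\# U}) \lesssim \|\Gamma^{-(\gamma-1)}\tilde\phi\|_{L^2(\mathbb{T}^d)},
\end{equation*}
up to logarithmic factors and the additive $\epsilon$.

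\textbf{Alignment and upper bound.} Since pushforward IPMs are invariant under measure-preserving reparametrizations of $\mathbb{T}^d$, I first replace $g$ by a better-behaved $\tilde g$. Testing $d_{\mathcal{H}_1^{\beta+1}}(g_{\#U},g^\star_{\#U})$ against a regularized signed-distance function to $\mathcal{M}_{g^\star}$ (which lies in $\mathcal{H}^{\beta+1}_1$ thanks to the $K$-manifold regularity of $g^\star$) forces $g(\mathbb{T}^d) \subset \mathcal{M}_{g^\star}^{1/K}$, so the projection $\pi_{\mathcal{M}_{g^\star}}$ is well-defined on $g(\mathbb{T}^d)$ and $K$-density regularity makes $(g^\star)^{-1}$ an $\mathcal{H}^{\beta+1}$-diffeomorphism of $\mathcal{M}_{g^\star}$ onto $\mathbb{T}^d$. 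Setting $\sigma := (g^\star)^{-1} \circ \pi_{\mathcal{M}_{g^\star}} \circ g$ and $\tilde g := g \circ \sigma^{-1}$ preserves the pushforward measure and ensures $\pi_{\mathcal{M}_{g^\star}}(\tilde g(u)) = g^\star(u)$ identically, so $\tilde\phi(u)$ lies in the normal space to $\mathcal{M}_{g^\star}$ at $g^\star(u)$. The upper bound on $d_{\mathcal{H}_1^\gamma}$ then follows by applying Lemma~\ref{lemma:deltaG} to the aligned pair $(\tilde g, g^\star)$.

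\textbf{Lower bound.} This is the crux of the argument. For each periodic wavelet $\psi_{jlz}^{per}$ on $\mathbb{T}^d$ and each unit section $v$ of the normal bundle of $\mathcal{M}_{g^\star}$, I build a tailor-made discriminator on $\mathbb{R}^p$ of the form
\begin{equation*}
  D_{jlz}^v(x) := c_j\, \psi_{jlz}^{per}\bigl((g^\star)^{-1}(\pi_{\mathcal{M}_{g^\star}}(x))\bigr) \cdot \bigl\langle v((g^\star)^{-1}\pi_{\mathcal{M}_{g^\star}}(x)),\, x-\pi_{\mathcal{M}_{g^\star}}(x)\bigr\rangle \cdot \chi(d(x,\mathcal{M}_{g^\star})),
\end{equation*}
with tubular cutoff $\chi$ and normalization $c_j \sim 2^{-j(\beta+1+d/2)}$ chosen so that $D_{jlz}^v \in \mathcal{H}^{\beta+1}_C$ uniformly in $(j,l,z,v)$ (the norm bound using $K$-manifold regularity for $\pi_{\mathcal{M}_{g^\star}}$ and $K$-density regularity for $(g^\star)^{-1}$). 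Because $D_{jlz}^v$ vanishes identically on $\mathcal{M}_{g^\star}$, a first-order Taylor expansion around $g^\star(u)$ produces
\begin{equation*}
  \mathbb{E}\bigl[D_{jlz}^v(\tilde g(U)) - D_{jlz}^v(g^\star(U))\bigr] = c_j \bigl\langle \psi_{jlz}^{per},\ \langle v,\tilde\phi\rangle \bigr\rangle_{L^2(\mathbb{T}^d)} + R_{jlz}^v,
\end{equation*}
with a quadratic remainder $R_{jlz}^v$ controlled by $\|\tilde\phi\|_\infty \cdot c_j \cdot \|\nabla^2 D_{jlz}^v\|_\infty$. Taking the weighted supremum over $(j,l,z,v)$ with unit $\ell^2$-coefficients recovers exactly $\|\Gamma^{-\beta}\tilde\phi\|_{L^2(\mathbb{T}^d)}$ (via Parseval in the periodic wavelet basis and a pointwise frame expansion in the normal directions) as a lower bound on $d_{\mathcal{H}_1^{\beta+1}}(g_{\#U},g^\star_{\#U})$, up to the Taylor residual.

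\textbf{Main obstacle.} The principal difficulty is the uniform control of the Taylor remainder across wavelet scales and the summation over high-frequency wavelets, where naively the second derivatives of $D_{jlz}^v$ blow up like $2^{2j}$. My plan is to truncate at scale $j \sim \log_2(\epsilon^{-1})$ and absorb the unresolved tail through the $\mathcal{H}^{\beta+1}$-boundedness of $\tilde\phi$, which produces the additive $\epsilon$ term. The four powers of $\log(\epsilon^{-1})$ arise cumulatively from the alignment step, the Taylor remainder, the Cauchy--Schwarz in the wavelet summation, and Lemma~\ref{lemma:deltaG}. The $K$-manifold and $K$-density regularities are essential both at the alignment step (to define $\sigma$) and in the discriminator construction (to keep the Hölder norms of $D_{jlz}^v$ uniformly bounded), explaining why Model~\ref{model2} cannot be relaxed to Model~\ref{model1} for this inequality.
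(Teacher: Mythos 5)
Your reduction hinges on the claim that $\sigma := (g^\star)^{-1}\circ \pi_{\mathcal{M}_{g^\star}}\circ g$ is measure-preserving on $\mathbb{T}^d$, which is false in general: its Jacobian encodes the density transfer between the two parametrizations and there is no reason for $|\det\nabla\sigma|$ to equal $1$. Consequently $\tilde g_{\#U} \neq g_{\#U}$, and the alignment step silently changes the target measure. A clean way to see the hole: if $g$ and $g^\star$ parametrize \emph{the same} submanifold $\mathcal{M}_{g}=\mathcal{M}_{g^\star}$ but with different densities (e.g.\ $g=g^\star\circ\rho$ for a non-measure-preserving torus diffeomorphism $\rho$), then $\pi_{\mathcal{M}_{g^\star}}\circ\tilde g = g^\star$ together with $\tilde g(\mathbb{T}^d)\subset\mathcal{M}_{g^\star}$ forces $\tilde g = g^\star$, so $\tilde\phi\equiv 0$, and your chain of inequalities would yield $d_{\mathcal{H}_1^\gamma}(g_{\#U},g^\star_{\#U})=0$, which is wrong. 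Your argument only captures the normal displacement of the supports and entirely drops the tangential density discrepancy. This is precisely the piece the paper isolates as the term $A_2 = \mathbb{E}[D(T(g(U)))-D(g^\star(U))]$ in its decomposition $\mathbb{E}[D(g(U))-D(g^\star(U))] = A_1 + A_2$, where $A_1$ is the normal-displacement term (handled by Lemma~\ref{lemma:firstterm}, roughly corresponding to what you are attempting) and $A_2$ is the on-manifold density mismatch (handled by Lemma~\ref{lemma:secondterm} via a manifold envelope and Proposition~\ref{prop:Hölder}). The theorem cannot be proved without accounting for $A_2$.

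There is a second, independent problem in the lower-bound step. With the normalization $c_j\sim 2^{-j(\beta+1+d/2)}$ that keeps $D_{jlz}^v$ in $\mathcal{H}^{\beta+1}_C$, a single such discriminator delivers $c_j\,\alpha_{\tilde\phi}(j,l,z)$, so the supremum over $(j,l,z,v)$ is a weighted $\ell^\infty$ quantity, not $\|\Gamma^{-\beta}\tilde\phi\|_{L^2} = \bigl(\sum_{jlz} 2^{-2j\beta}|\alpha_{\tilde\phi}(j,l,z)|^2\bigr)^{1/2}$. To get an $\ell^2$ quantity you must aggregate many $D_{jlz}^v$ into a single discriminator $D=\sum a_{jlz}D_{jlz}^v$ and control $\|D\|_{\mathcal{H}^{\beta+1}}$; because the $D_{jlz}^v$ are compositions with $(g^\star)^{-1}\circ\pi_{\mathcal{M}_{g^\star}}$ rather than an orthonormal system in the ambient space, this is not automatic and is not addressed in your plan. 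The paper sidesteps this by never proving a two-sided correspondence with an $L^2$ pivot; instead, Lemma~\ref{lemma:firstterm} builds one explicit potential $H$ (essentially a $\Gamma^{-\beta,-2}$-regularization of the unit normal direction field $L$, transported along $T$ and extended by Whitney) whose pairing against $g_{\#U}-(T\circ g)_{\#U}$ directly dominates $A_1^{(2\beta+1)/(\beta+\gamma)}$ up to logarithms, and then invokes the constraint $D\circ T = 0$ so that the quantity it produces is automatically $\le d_{\mathcal{H}^{\beta+1}_1}(g_{\#U},g^\star_{\#U})$. You would need an argument of comparable delicacy, and simply referencing Parseval and a frame expansion in the normal bundle does not supply it.
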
 

Compared to Corollary~\ref{coro:ineqfulldim},  $g_{\# U}$ and $g_{\# U}^\star$ need to be already close in $d_{\mathcal{H}^{\beta+1}_1}$ distance for Theorem \ref{theo:theineq} to apply. This assumption will imply that there exists a diffeomorphism between their supports with several key properties (Proposition~\ref{prop:compatibility}). The precise definition of these properties is gathered into the notion of $(g_{\# U},g^\star_{\# U})$-compatible map (Definition~\ref{defi:compatibility}) and can be found in the appendix Section \ref{sec:compatibility}. In a nutshell, for $t>d_{\mathcal{H}^{\beta+1}_1}(g_{\# U},g^\star_{\# U})$, a map $T:\mathcal{M}_{g^\star}^{t}\rightarrow \mathcal{M}_{g^\star}$ being $(g_{\# U},g^\star_{\# U})$-compatible is a projection onto $\mathcal{M}_{g^\star}$ such that its restriction to $\mathcal{M}_{g}$ is a $C^{\beta+1}$-diffeomorphism.

Let us now prove Theorem~\ref{theo:theineq} through $(g_{\# U},g^\star_{\# U})$-compatibility.
For $D\in \mathcal{H}^{\gamma}_1(\mathbb{R}^p,\mathbb{R})$ and $T$ being $(g_{\# U},g^\star_{\# U})$-compatible, write $$\mathbb{E}_{U\sim \mathcal{U}([0,1]^d)}\Big[D(g(U))-D(g^\star(U))\Big]=A_1+A_2,$$ with
$$A_1:=\mathbb{E}_{U\sim \mathcal{U}([0,1]^d)}\Big[D(g(U))-D(T(g(U)))\Big]$$
characterizing the distance between the supports, and 
$$A_2:=\mathbb{E}_{U\sim \mathcal{U}([0,1]^d)}\Big[D(T(g(U)))-D(g^\star(U))\Big]$$
characterizing the distance between the densities.
Splitting the cost this way allows to finely exploit the regularity of $g$ and $g^\star$:
\begin{itemize}[leftmargin=*]
    \item 
The term $A_2$ writes as an integral of a product of functions defined on the same support $\mathcal{M}_{g^\star}$.
Indeed, denoting by $f_{T_g}$ the density of $(T\circ g)_{\# U}$ with respect to the volume measure $\lambda_{\mathcal{M}_{g^\star}}$, we have $$A_2 =\int_{\mathcal{M}_{g^\star}}D(x)(f_{T_g}(x)-f_{g^\star}(x))d\lambda_{\mathcal{M}_{g^\star}}(x).$$
Since the only support involved is $\mathcal{M}_{g^\star}$, wavelet-based expansions allow to transfer the regularity of the densities on $\mathcal{M}_{g^\star}$ to the discriminator, 
as in the the full-dimensional case (Corollary~\ref{coro:ineqfulldim}).
\item
The term $A_1$ writes similarly, through the Taylor expansion \begin{align*}
A_1=\sum \limits_{i=1}^p \int_{[0,1]^d}\int_0^1\partial_iD(g(u)+t(T(g(u))-g(u)))(T(g(u))-g(u))_idtd\lambda^d(u).
\end{align*}
Here, a similar wavelet-like expansion is made possible using the compatibility properties of $T : \mathcal{M}_g \to \mathcal{M}_{g^\star}$. However, extra technical difficulties arise to deal with the evaluation of a regularised version of $D$ on the ambient points $\bigl( g(u)+t(T(g(u))-g(u) \bigr)_{0 \leq t \leq 1}$, while the wavelet construction naturally takes place on the torus $\mathbb{T}^d$.

\end{itemize}
More formally, the bound on $A_1$ comes from the following result.

\begin{lemma}\label{lemma:firstterm}
   Let $g,g^\star \in \mathcal{H}^{\beta+1}_K(\mathbb{T}^d,\mathbb{R}^p)$ that verify the $K$-manifold regularity condition with $g^\star$ that verifies the $K$-density regularity condition. There exist constants $C,C_{2}>0$ such that, if $
     d_{\mathcal{H}_1^{\beta+1}}(g_{\# U},g^\star_{\# U})\leq C^{-1}
     $, then for all $\epsilon\in(0,1)$ and $\gamma \in [1,\beta+1]$, we have
    \begin{align*}
       d_{\mathcal{H}_1^{\gamma}}((T\circ g)_{\# U},g_{\# U})\leq C_2\log(\epsilon^{-1})^4 \left(\sup \limits_{D \in \mathcal{H}^{\beta+1}_1, D\circ T=0}\mathbb{E}_{U\sim \mathcal{U}([0,1]^d)}[D(g(U))]^{\frac{\beta+\gamma}{2\beta+1}} + \epsilon\right). 
    \end{align*}
\end{lemma}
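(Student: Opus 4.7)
Given any $D \in \mathcal{H}_1^\gamma$, I will first replace it by the corrected discriminator $\tilde{D} := D - D \circ T$, defined on the tubular neighbourhood $\mathcal{M}_{g^\star}^t$. The projection identity $T \circ T = T$ guaranteed by the $(g_{\# U}, g^\star_{\# U})$-compatibility of $T$ implies $\tilde{D} \circ T = 0$ on $\mathcal{M}_{g^\star}^t$, and the $C^{\beta+1}$ smoothness of $T$ (again from compatibility) bounds the $\mathcal{H}^\gamma$ norm of $\tilde{D}$ by a universal constant. Since $\mathbb{E}[\tilde{D}(T(g(U)))] = 0$, we have $\mathbb{E}[D(g(U)) - D(T(g(U)))] = \mathbb{E}[\tilde{D}(g(U))]$, which reduces the desired IPM bound to controlling $\mathbb{E}[\tilde{D}(g(U))]$ over $\tilde{D} \in \mathcal{H}^\gamma_C$ with $\tilde{D} \circ T = 0$.

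The core of the proof will be a wavelet interpolation argument. For a dyadic level $2^J$ to be optimised later, I truncate the wavelet expansion of $\tilde{D}$ at frequency $2^J$, writing $\tilde{D} = \tilde{D}_J + R_J$. Since $\tilde{D} \in \mathcal{H}^\gamma_C$, classical estimates give $\|\tilde{D}_J\|_{\mathcal{H}^{\beta+1}} \leq C 2^{J(\beta+1-\gamma)}$, and a correction subtracting $\tilde{D}_J \circ T$ preserves the condition $\tilde{D}_J \circ T = 0$ without affecting the order of its norm. Denoting by $M := \sup_{D' \in \mathcal{H}^{\beta+1}_1,\, D' \circ T = 0} \mathbb{E}[D'(g(U))]$ the quantity appearing on the right-hand side, the low-frequency contribution is then bounded directly by $\mathbb{E}[\tilde{D}_J(g(U))] \leq C\, 2^{J(\beta+1-\gamma)} M$.

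For the high-frequency remainder $R_J$, I will pass to tubular coordinates around $\mathcal{M}_{g^\star}$ and exploit that $R_J$ vanishes on $\{n = 0\}$: a first-order Taylor expansion in the normal direction turns $\mathbb{E}[R_J(g(U))]$ into an $L^2$-type pairing of $\nabla R_J$ (of effective smoothness $\gamma-1$) with a $\beta+1$-smooth quantity built from $g - T\circ g$. Applying Proposition~\ref{prop:Hölder} to transfer $\beta$ units of regularity from the displacement onto $\nabla R_J$, together with a splitting parameter $\epsilon$ to handle borderline Hölder indices, should yield $\mathbb{E}[R_J(g(U))] \leq C \log(\epsilon^{-1})^4 \bigl(2^{-J(\beta+\gamma)} + \epsilon\bigr)$. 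Balancing the two contributions via $2^{J(\beta+1-\gamma)} M \asymp 2^{-J(\beta+\gamma)}$, i.e.\ $2^J \asymp M^{-1/(2\beta+1)}$, produces the announced exponent $(\beta+\gamma)/(2\beta+1)$.

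The main technical obstacle will be establishing the sharp high-frequency estimate $2^{-J(\beta+\gamma)}$. A direct use of the $\gamma$-Hölder regularity of $\tilde{D}$ alone only yields $2^{-J\gamma}$, which is insufficient by a factor of $2^{-J\beta}$. This extra factor must be extracted by combining the vanishing of $R_J$ on $\mathcal{M}_{g^\star}$ with the smoothness of the density of $g_{\# U}$ on $\mathcal{M}_g$ (itself a consequence of the $K$-density regularity of $g^\star$ and the hypothesis $d_{\mathcal{H}_1^{\beta+1}}(g_{\# U}, g^\star_{\# U}) \leq C^{-1}$). Executing this wavelet-duality argument in the tubular chart while tracking the Jacobians and cutoffs, and while controlling the constants uniformly in $J$ and $\epsilon$, is where the bulk of the work sits.
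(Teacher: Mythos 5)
Your opening reduction $\tilde{D} := D - D\circ T$ is a valid and elegant observation: since $T$ restricts to the identity on $\mathcal{M}_{g^\star}$, $\tilde D \circ T = 0$, $\tilde D$ inherits a bounded $\mathcal{H}^\gamma$ norm from $T\in\mathcal{H}^{\beta+1}_{K_T}$, and $\mathbb{E}[\tilde D(g(U))]=\mathbb{E}[D(g(U))-D(T(g(U)))]$. The paper takes a quite different route: it first reduces to $\gamma=1$ via an interpolation inequality between $d_{\mathcal{H}_1^1}$ and $d_{\mathcal{H}_1^{\beta+1}}$ (Lemma~\ref{lemma:wefocuson}), rewrites $\mathbb{E}[\|g(U)-T(g(U))\|]$ as $\mathbb{E}[\langle L(U),X(U)\rangle]$ with a regularised unit-vector field $L$ on the \emph{torus}, invokes Proposition~\ref{prop:Hölder} there, then \emph{constructs} an explicit $\mathcal{H}^{\beta+1}$ potential $H(x)=\langle\tilde\Gamma^{-\beta,-2}(L)\circ g^{-1}\circ T_g^{-1}\circ T(x),\,x-T(x)\rangle$ vanishing under $T$ and extends it to $\mathbb{R}^p$ by Whitney's theorem. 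Your proposal instead attempts a direct Littlewood--Paley split of $\tilde D$ in the ambient space; the two strategies only share the final exponent.

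The gap is in your high-frequency estimate. First, a concrete error: you assert that $R_J$ vanishes on $\{n=0\}$, i.e.\ on $\mathcal{M}_{g^\star}$. It does not --- only $\tilde D$ does; wavelet truncation does not preserve zero sets, so neither $\tilde D_J$ nor $R_J=\tilde D-\tilde D_J$ vanishes on $\mathcal{M}_{g^\star}$. What does vanish is $R_J-R_J\circ T$ (equivalently, after your proposed correction, $\tilde D_J-\tilde D_J\circ T$, noting $\tilde D_J\circ T=-R_J\circ T$), and that is what should appear in both pieces. Second, and more seriously, even after that fix the claimed bound $\mathbb{E}[R_J(g(U))]\lesssim\log(\epsilon^{-1})^4(2^{-J(\beta+\gamma)}+\epsilon)$ is not delivered by the sketched wavelet-duality step. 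The frequency localisation of $R_J$ lives on $\mathbb{R}^p$; once you pull back through $g$ (or through the segment between $g(U)$ and $T(g(U))$) onto $\mathbb{T}^d$ to pair against the $\mathcal{B}^{\beta+1}$ displacement $X=g-T\circ g$, the composition $\nabla R_J\circ g$ retains only $\|\nabla R_J\circ g\|_\infty\lesssim 2^{-J(\gamma-1)}$ and $\mathcal{H}^{\gamma-1}$ control, but its low-frequency wavelet coefficients on $\mathbb{T}^d$ are not negligible --- the high-frequency cutoff does not survive composition with a map of finite ($\beta+1$) smoothness. The pairing then only yields $O(2^{-J(\gamma-1)}\mathbb{E}[\|X\|])$ or $O(2^{-J\gamma})$, the former being circular (it reintroduces the very quantity you are bounding when $\gamma=1$) and the latter balancing to the wrong exponent $\gamma/(\beta+1)$. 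Extracting the missing factor $2^{-J\beta}$ is exactly the delicate part; the paper does it by building $L$ with position-dependent H\"older constants $\propto\|X(u)\|^{-1}$ that cancel against $\|x-T(x)\|$ in $H$, a local analysis that a frequency split alone does not replicate.
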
 
The proof of Lemma~\ref{lemma:firstterm} can be found in Section \ref{sec:lemma:firstterm}. 
Note that the right hand side only depends on $g^\star$ through the constraint $D \circ T = 0$, i.e. that $D_{|\mathcal{M}_{g^\star}}=0$. As a result, this term is oblivious to the mass distribution of $g^\star_{\# U}$ over $\mathcal{M}_{g^\star}$, and only measures distances between the support $\mathcal{M}_g$ and $\mathcal{M}_{g^\star}$.

To deduce a bound on $A_1$, observe that
\begin{align*}
    \sup \limits_{D \in \mathcal{H}^{\beta+1}_1,D\circ T=0}\mathbb{E}_{U\sim \mathcal{U}([0,1]^d)}[D(g(U))] & =    \sup \limits_{D \in \mathcal{H}^{\beta+1}_1,D\circ T=0}\mathbb{E}_{U\sim \mathcal{U}([0,1]^d)}[D(g(U))-D(g^\star(U))]\\
    & \leq d_{\mathcal{H}_1^{\beta+1}}(g_{\# U},g^\star_{\# U}),
\end{align*}
which finally leads to
$$A_1\leq C_2\log(\epsilon^{-1})^4 \left(d_{\mathcal{H}_1^{\beta+1}}(g_{\# U},g^\star_{\# U})^{\frac{\beta+\gamma}{2\beta+1}} + \epsilon\right).$$

The term $A_2$ may be bounded using the following result.

\begin{lemma}\label{lemma:secondterm}
 Let $g,g^\star \in \mathcal{H}^{\beta+1}_K(\mathbb{T}^d,\mathbb{R}^p)$ that verify the $K$-manifold regularity condition with $g^\star$ that verifies the $K$-density regularity condition. There exist constants $C,C_{2}>0$ such that if $
     d_{\mathcal{H}_1^{\beta+1}}(g_{\# U},g^\star_{\# U})\leq C^{-1},
     $ then for all $\epsilon\in(0,1)$ and $\gamma \in [1,\beta+1]$, we have
    \begin{align*}
        d_{\mathcal{H}_1^{\gamma}}((T\circ g)_{\# U},g^\star_{\# U})\leq C_2\log(\epsilon^{-1}) \sup \limits_{D\in \mathcal{H}_1^{\beta+1}(\mathcal{M}_{g^\star},\mathbb{R})}\mathbb{E}_{U}[D(T(g(U)))-D(g^\star(U))]^{\frac{\beta+\gamma}{2\beta+1}} + \epsilon. 
        \end{align*}
\end{lemma}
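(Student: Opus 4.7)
The plan is to exploit the fact that both $(T\circ g)_{\# U}$ and $g^\star_{\# U}$ are supported on the common manifold $\mathcal{M}_{g^\star}$, which reduces the statement to a wavelet-based interpolation inequality on $\mathcal{M}_{g^\star}$, analogous to Corollary~\ref{coro:ineqfulldim} but in the intrinsic dimension $d$ rather than the ambient dimension $p$. Writing $f_{T_g}$ and $f_{g^\star}$ for the densities of the two push-forwards with respect to the volume measure $\lambda_{\mathcal{M}_{g^\star}}$, any ambient test function $D$ pairs against the common density difference as
\[
\mathbb{E}_U[D(T(g(U)))-D(g^\star(U))] = \int_{\mathcal{M}_{g^\star}} D\cdot(f_{T_g}-f_{g^\star})\, d\lambda_{\mathcal{M}_{g^\star}},
\]
and both densities belong to $\mathcal{H}^\beta(\mathcal{M}_{g^\star},\mathbb{R})$ with norm controlled by $K$ (by the $K$-density regularity of $g^\star$ and, for $f_{T_g}$, by the compatibility of $T$ through Proposition~\ref{prop:compatibility}).

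Next, composing with the $C^{\beta+1}$-diffeomorphism $g^\star$ transports the integral to $\int_{\mathbb{T}^d}\tilde D\,\tilde h\,du$ with $\tilde D := D\circ g^\star \in \mathcal{H}^\gamma(\mathbb{T}^d,\mathbb{R})$ and $\tilde h \in \mathcal{H}^\beta(\mathbb{T}^d,\mathbb{R})$, with constants depending only on $K$. The hypothesis controlling the supremum over $\mathcal{H}^{\beta+1}_1(\mathcal{M}_{g^\star},\mathbb{R})$ then translates into the dual bound $\left|\int_{\mathbb{T}^d} \tilde D' \tilde h\, du\right| \leq C\,\eta$ valid for every $\tilde D' \in \mathcal{H}^{\beta+1}_1(\mathbb{T}^d,\mathbb{R})$, where $\eta$ denotes the supremum appearing on the right-hand side of the lemma. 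Expanding both $\tilde D$ and $\tilde h$ in the periodised Daubechies wavelet basis of Section~\ref{sec:wavelets}, write $\tilde D = \sum_j \tilde D_j$ and $\tilde h = \sum_j \tilde h_j$, and split the pairing at a threshold $j^\star$. The low-frequency truncation $\sum_{j\leq j^\star}\tilde D_j$ has $\mathcal{B}^{\beta+1}_{\infty,\infty}$-norm of order $2^{j^\star(\beta+1-\gamma)}$, contributing a term of size $C\,\eta\,2^{j^\star(\beta+1-\gamma)}$, while orthogonality and Cauchy-Schwarz combined with $\|\tilde h_j\|_{L^2}\leq C\,2^{-j\beta}$ and $\|\tilde D_j\|_{L^2}\leq C\,2^{-j\gamma}$ bound the high-frequency tail by $C\,2^{-j^\star(\beta+\gamma)}$. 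Optimising $j^\star \sim \log_2\bigl(\eta^{-1/(2\beta+1)}\bigr)$ produces the claimed exponent $(\beta+\gamma)/(2\beta+1)$. The $\log(\epsilon^{-1})$ factor and the additive $\epsilon$ absorb the integer-smoothness gap of Lemma~\ref{lemma:inclusions}, via a dyadic truncation of the frequency range when $\gamma$ or $\beta+1$ is an integer.

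The main technical obstacle is to transfer Hölder regularity between $\mathcal{M}_{g^\star}$ and $\mathbb{T}^d$ with constants independent of $g$: establishing $\|f_{T_g}\|_{\mathcal{H}^\beta(\mathcal{M}_{g^\star})} \leq C$ requires controlling the Hölder norm of the Jacobian of $T\circ g\circ (g^\star)^{-1}$, which is a $C^{\beta+1}$-diffeomorphism of $\mathcal{M}_{g^\star}$ by the $(g_{\# U},g^\star_{\# U})$-compatibility of $T$ (together with the density regularity condition on $g$), yet whose bounds must be traced back to $K$ uniformly in $g$. Once this manifold-to-torus transfer of regularity is made rigorous, the conclusion follows from the wavelet interpolation described above, which is the low-dimensional analog of the argument underlying Corollary~\ref{coro:ineqfulldim}.
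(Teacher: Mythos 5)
Your proposal is correct in substance, but it takes a genuinely different route from the paper. The paper never pulls the problem back to the torus: it thickens both densities into the ambient space through $t$-envelopes built from the compatible projection $T$ (a smooth cutoff along the normal fibers, weighted by the approximate Jacobian), rewrites $\mathbb{E}_U[D(T(g(U)))-D(g^\star(U))]$ as an integral over $\mathbb{R}^p$ via the coarea formula (Proposition~\ref{approx}), applies the wavelet trade-off of Proposition~\ref{prop:Hölder} together with Proposition~\ref{prop:logforweakregularity} in ambient ($p$-dimensional) wavelets, and then converts the regularised discriminator back into an element of $\mathcal{H}^{\beta+1}_{C}(\mathcal{M}_{g^\star},\mathbb{R})$ by integrating it over the fibers $T^{-1}(\{x\})$, which is where compatibility properties (iii)--(iv) enter. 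You instead transport everything to $\mathbb{T}^d$ through $g^\star$ and run a direct frequency-splitting argument in the intrinsic dimension $d$, turning the hypothesis into a dual bound by composing torus test functions with $(g^\star)^{-1}$. Your route is more elementary: it uses only compatibility point (i) (i.e.\ $f_T\in\mathcal{H}^{\beta}_{C}(\mathcal{M}_{g^\star},\mathbb{R})$, supplied by Proposition~\ref{prop:compatibility}), the boundedness in $\mathcal{H}^\beta$ of the Jacobian factor $\det((\nabla g^\star)^\top\nabla g^\star)^{1/2}$ (bounded below by the $K$-density condition), and the fact that $(g^\star)^{-1}\in\mathcal{H}^{\beta+1}_{C}(\mathcal{M}_{g^\star},\mathbb{T}^d)$ --- the analogue of Lemma~\ref{lemma:gmoins1inversible} for $g^\star$ --- which you should state explicitly, since it is exactly what legitimises translating the supremum over $\mathcal{H}^{\beta+1}_1(\mathcal{M}_{g^\star},\mathbb{R})$ into your dual bound on the torus. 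What the paper's ambient construction buys is uniformity with the treatment of the support-mismatch term $A_1$ (Lemma~\ref{lemma:firstterm}), where no common parametrisation exists, and it avoids inverting $g^\star$. Two minor repairs to your optimisation: cap the threshold at $j^\star\lesssim\log_2(\epsilon^{-1})$ rather than always taking $2^{j^\star}\sim\eta^{-1/(2\beta+1)}$ (otherwise your logarithmic factor is in $\eta$, not $\epsilon$, and the additive $\epsilon$ is what absorbs the capped tail), and note that your low-frequency block is only controlled in $\mathcal{B}^{\beta+1}_{\infty,\infty}$, so passing to $\mathcal{H}^{\beta+1}_1$ via Lemma~\ref{lemma:inclusions} costs an extra $(1+j^\star)^2$ and yields $\log(\epsilon^{-1})^2$ instead of the single power in the statement --- harmless, since Theorem~\ref{theo:theineq} tolerates $\log^4$ and the paper's own proof incurs the same squared logarithm through Proposition~\ref{prop:logforweakregularity}.
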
 
The proof of Lemma~\ref{lemma:secondterm} can be found in Section \ref{sec:lemma:secondterm}. For $D\in\mathcal{H}^{\beta+1}_{1}(\mathcal{M}_{g^\star},\mathbb{R})$, we have $D\circ T\in \mathcal{H}^{\beta+1}_{C}(\mathcal{M}_{g^\star}^t,\mathbb{R})$. Thus, using Theorem~\ref{whitney}, we can show (like for Proposition~\ref{prop:extensionH}) that $D\circ T$ can be extended globally to a map in $\mathcal{H}^{\beta+1}_{C}(\mathbb{R}^p,\mathbb{R})$. In particular, we have
\begin{align*}
    \sup \limits_{D \in \mathcal{H}^{\beta+1}_1(\mathcal{M}_{g^\star},\mathbb{R})}\mathbb{E}_{U\sim \mathcal{U}([0,1]^d)}[D(T(g(U)))-D(g^\star(U))] & \leq C  d_{\mathcal{H}_1^{\beta+1}}(g_{\# U},g^\star_{\# U}).
\end{align*}
Hence, from Lemma \ref{lemma:secondterm} we deduce that
$$A_2\leq C_2\log(\epsilon^{-1}) d_{\mathcal{H}_1^{\beta+1}}(g_{\# U},g^\star_{\# U})^{\frac{\beta+\gamma}{2\beta+1}} + \epsilon,$$
that concludes the proof of Theorem~\ref{theo:theineq}. 

Just as in the full-dimensional case (Section~\ref{sec:fulldim}), the next section builds upon this interpolation bound to show that GAN estimators attain optimal estimation rates for all the distances $\bigl(d_{\mathcal{H}_1^\gamma}\bigr)_{\gamma \in [1,\beta+1]}$ simultaneously.


\subsection{A tractable GAN estimator in the manifold case}\label{sec:tractablehbeta}
We are now in position to define our classes of generators and discriminators for the GAN estimator in the setting of Model~\ref{model2}. We use the class of generators $\mathcal{G}$ from Section \ref{sec:theoreticalgan} and the class of discriminators $\mathcal{D}$ from Section~\ref{sec:fulldim} (both computable in practice). As in Section \ref{sec:fulldim}, we first show that the GAN estimator attains minimax rates for the distance $d_{\mathcal{H}_1^{\tilde{\beta}+1}}$ (for $\tilde{\beta}$ defined in \eqref{eq:tildebetaplus1}). Then applying Theorem~\ref{theo:theineq}, we obtain that it also attains minimax rates for all $d_{\mathcal{H}_1^{\gamma}}$, $\gamma\in[1,\beta+1]$.

In order to apply Theorem~\ref{theo:theineq} to the GAN estimator $\hat{g}$, we need it to verify the manifold regularity condition of Definition \ref{defi:manifold regularity}. We define in the Appendix Section \ref{sec:numericcondi}, the $\chi$-numerical regularity condition that implies the manifold regularity condition. In a nutshell, it is an easy to compute condition that ensures that if two points are far in the torus, then their images by $\hat{g}$ are
not too close. We choose
\begin{equation}\label{eq:G}
\mathcal{G}:=\{ g \in \hat{\mathcal{F}}^{\beta+1,n^{-\frac{1}{2\beta+d}}}_{per}|\ g \text{ verifies the } \chi\text{-numerical regularity condition}\}
\end{equation}
to be the class of generators. 

As mentioned in Section \ref{sec:theoreticalgan},
the approximation error and covering numbers of the class $\hat{\mathcal{F}}^{\beta+1,n^{-\frac{1}{2\beta+d}}}_{per}$ are well suited for the GAN to attain minimax rates (see Proposition \ref{prop:propertiesofhatper}). Proposition \ref{prop:deltagmodel3} shows that requiring the generators to verify the $\chi$-numerical regularity condition does not deteriorate the approximation error of the class $\mathcal{G}$. Define
\begin{equation}\label{eq:tildebetaplus1}
\tilde{\beta}+1:=(\beta+1)\wedge d/2,
\end{equation}
write $\tilde{\delta}_n := n^{-\frac{1}{2\tilde{\beta}+d}}$, and choose 
\begin{equation}\label{eq:DD}
\mathcal{D}:= \hat{\mathcal{F}}^{\tilde{\beta}+1,\tilde{\delta}_n}
\end{equation}
to be the class of discriminators.
{

Up to the $\chi$-numerical regularity condition (that may be practically enforced using a regularization term, see the remark below Proposition \ref{prop:numericalcondi}), $\mathcal{G}$ and $\mathcal{D}$ are classes of neural networks with the same structure as in Model 2.
}

The specific discriminator regularity $\tilde{\beta}+1$ comes naturally in light of Theorem~\ref{theo:theineq}. Using the same argument as above \eqref{eq:theresonwhy} for the full-dimensional case, $\tilde{\beta} + 1$ is a regularity cap for the discriminator class, over which no gain may be expected in terms of convergence of the GAN (recall that the convergence rate in terms of $d_{\mathcal{H}_1^\gamma}$ cannot be faster than $n^{-1/2}$).

However, the use of this discriminator class raises two problems.
\begin{itemize}
    \item The logarithm of its $1/n$-covering is of order $O(n^{\frac{p}{2\tilde{\beta}+d}})$, which is too large as it depends on the ambient dimension $p$.
    \item It only approximates $\mathcal{H}^{\tilde{\beta}+1}_1$ up to an error $O(n^{-\frac{\tilde{\beta}+1}{2\tilde{\beta}+d}})$ in sup norm, so it is not clear whether we can obtain that $\Delta_{\mathcal{D}}$ is of order $O(n^{-\frac{2\tilde{\beta}+1}{2\tilde{\beta}+d}})$.
\end{itemize}To bypass these problems, we sharpen the bound given by Theorem \ref{theo:boundexpecterror} through a finer analysis of the bias and variance terms of the class $\mathcal{D}$. The quantities $\log(|\mathcal{D}_{1/n}|)$ and $\Delta_\mathcal{D}$ will be replaced using the following
finer (but more involved) quantities.
\begin{itemize}
    \item For $\epsilon\in(0,1)$, let
\begin{equation*}
|(\mathcal{D}_{\mathcal{G}})_\epsilon|:=\sup_{g\in \mathcal{G}}\left|\left(\{D\circ g\ |\ D\in \mathcal{D}\}\right)_\epsilon\right|,
\end{equation*} 
be the largest  $\epsilon$-covering number of the class $\{D\circ g\ |\ D\in \mathcal{D}\}$ among $g\in \mathcal{G}$. 
\item For $g\in \mathcal{G}$, define 
$$D_g^\star\in \argmax \limits_{D\in \mathcal{B}^{\tilde{\beta}+1}_{\infty,\infty}(C)} \mathbb{E}[D(g(U))-D(g^\star(U))]\quad \text{ and }\quad \overline{D}_g\in \argmin \limits_{D\in \mathcal{D}}\|D-D_g^\star\|_{\mathcal{B}^0_{\infty,\infty}},$$
as the optimal discriminator within $\mathcal{B}^{\tilde{\beta}+1}_{\infty,\infty}$ and its approximation by $\mathcal{D}$. 
Let us write 
$$\Delta_{\mathcal{D}}^g:=\mathbb{E}[D_g^\star(g(U))-D_g^\star(g^\star(U))-(\overline{D}_g(g(U))-\overline{D}_g(g^\star(U)))]$$
for the discrimination error between $g_{\# U}$ and $g^\star_{\# U}$ of the class $\mathcal{D}$ compared to the class $\mathcal{B}^{\tilde{\beta}+1}_{\infty,\infty}(C)$.

\end{itemize}

Using these two quantities, Theorem~\ref{theo:boundexpecterror} can be sharpened as follows.

\begin{theorem}\label{theo:boundexpecterror2}  For all $g^\star \in \mathcal{H}^{\beta+1}_K(\mathbb{T}^d,\mathbb{R}^p)$ verifying the $K$-manifold and $K$-density regularity conditions of Definition \ref{defi:manifold regularity}, the GAN estimator $\hat{g}$ of \eqref{WGANS}, with $\mathcal{G}$ and $\mathcal{D}$ defined in~\eqref{eq:G} and~\eqref{eq:DD}, satisfies
\begin{align*}  
 \mathbb{E}\Big[d_{\mathcal{H}_1^{\tilde{\beta}+1}}(\hat{g}_{\# U},g^\star_{\# U})\Big]-\Delta_\mathcal{G}  -\mathbb{E}[\Delta^{\hat{g}}_\mathcal{D}]
 \leq &
C \min \limits_{\delta \in [0,1]}\Biggl\{\sqrt{\frac{(\delta+1/n)^2\log(n|\mathcal{G}_{1/n}| |(\mathcal{D}_{\mathcal{G}})_{1/n}|)}{n}}\\
& +\frac{1}{\sqrt{n}}(1+\delta^{(1-\frac{d}{2(\tilde{\beta}+1)})}+\log(\delta^{-1})\mathds{1}_{\{\beta+1= d/2\}})  \Biggl\}.
\end{align*}
    \end{theorem}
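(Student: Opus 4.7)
The plan is to revisit the bias-variance decomposition leading to Theorem~\ref{theo:boundexpecterror}, and sharpen it at two places: (i) replace the uniform approximation error $\Delta_\mathcal{D}$ by the pointwise quantity $\Delta_\mathcal{D}^{\hat g}$ by inserting a \emph{specific} near-optimal element $\overline{D}_{\hat g}\in\mathcal{D}$, and (ii) replace the sup-norm covering $|\mathcal{D}_{1/n}|$ on $\mathbb{R}^p$ by the smaller covering $|(\mathcal{D}_\mathcal{G})_{1/n}|$ of the composed class $\{D\circ g\,|\,D\in\mathcal{D}\}$ on the latent torus. All the rest (localisation via $\delta$, chaining, the $\mathds{1}_{\{\beta+1=d/2\}}$ term) is identical to the proof of Theorem~\ref{theo:boundexpecterror} and can be imported verbatim.

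First, using Lemma~\ref{lemma:inclusions}, bound
\begin{align*}
d_{\mathcal{H}_1^{\tilde\beta+1}}(\hat g_{\#U},g^\star_{\#U})\le C\sup_{D\in\mathcal{B}^{\tilde\beta+1}_{\infty,\infty}(C)}L_{\hat g}(D)=C\,L_{\hat g}(D^\star_{\hat g}),
\end{align*}
where $L_g(D):=\mathbb{E}[D(g(U))-D(g^\star(U))]$. By the very definition of $\overline{D}_{\hat g}$,
\begin{align*}
L_{\hat g}(D^\star_{\hat g})=\Delta_\mathcal{D}^{\hat g}+L_{\hat g}(\overline{D}_{\hat g})\le \Delta_\mathcal{D}^{\hat g}+\sup_{D\in\mathcal{D}}L_{\hat g}(D).
\end{align*}
Taking expectations, the first term yields $\mathbb{E}[\Delta_\mathcal{D}^{\hat g}]$, which is the refined bias; it remains to bound the expected supremum.

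Next, by the GAN optimality of $\hat g$ one has $\sup_{D\in\mathcal{D}}\hat L_{\hat g}(D)\le \sup_{D\in\mathcal{D}}\hat L_{g}(D)$ for every $g\in\mathcal{G}$, where $\hat L_g$ denotes the empirical version of $L_g$ based on the data $(X_i,U_i)_{i\le n}$. Standard manipulations give
\begin{align*}
\sup_{D\in\mathcal{D}}L_{\hat g}(D)\le \inf_{g\in\mathcal{G}}\sup_{D\in\mathcal{D}}L_g(D)+2\,\mathcal{E}_n,\qquad \mathcal{E}_n:=\sup_{g\in\mathcal{G},\,D\in\mathcal{D}}\bigl|\hat L_g(D)-L_g(D)\bigr|.
\end{align*}
Since $\mathcal{D}\subset \mathcal{H}_1^{\tilde\beta+1}\subset \mathcal{H}_1^{\tilde\beta+1}$, the inner infimum over $g\in\mathcal{G}$ of $\sup_{D\in\mathcal{D}}L_g(D)$ is bounded by $\inf_{g\in\mathcal{G}}d_{\mathcal{H}_1^{\tilde\beta+1}}(g_{\#U},g^\star_{\#U})=\Delta_\mathcal{G}$, producing the generator bias.

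Finally, to control $\mathbb{E}[\mathcal{E}_n]$ I would copy verbatim the localisation plus chaining argument used in the proof of Theorem~\ref{theo:boundexpecterror} (which follows the Jensen-Shannon version of \cite{belomestny2023rates}), with a single change in the covering step: write $X_i=g^\star(Y_i)$ so that $\hat L_g(D)-L_g(D)$ only involves evaluations of $D\circ g$ at the $U_i$'s and of $D\circ g^\star$ at the $Y_i$'s; for each fixed $g\in\mathcal{G}$ the class $\{D\circ g\,|\,D\in\mathcal{D}\}$ admits an $\epsilon$-cover of cardinality at most $|(\mathcal{D}_\mathcal{G})_\epsilon|$ on $\mathbb{T}^d$, and the term involving $g^\star$ is handled by picking a sup-norm approximant $g'\in\mathcal{G}$ of $g^\star$ (which exists by Proposition~\ref{prop:propertiesofhatper} applied to $\hat{\mathcal{F}}^{\beta+1,n^{-1/(2\beta+d)}}_{per}$) and using Hölder continuity of $D\in\mathcal{D}$ to absorb $\|g'-g^\star\|_\infty\lesssim n^{-1}$ into the $\epsilon=1/n$ scale. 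Combining with the union bound over $|\mathcal{G}_{1/n}|$ generators produces the claimed $\log(n|\mathcal{G}_{1/n}||(\mathcal{D}_\mathcal{G})_{1/n}|)$ factor, and the remaining $\delta$-dependent term arises from the peeling argument on $\{D\in\mathcal{D}:\|D\|_\infty\le\delta\}$ exactly as in Theorem~\ref{theo:boundexpecterror} (with $\gamma$ replaced by $\tilde\beta+1$ in the Dudley integral exponent $1-d/(2\gamma)$).

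The main obstacle is the last point: rigorously replacing $|\mathcal{D}_{1/n}|$ by $|(\mathcal{D}_\mathcal{G})_{1/n}|$ in the chaining while also keeping track of the $g^\star$-side of the empirical process. The composition trick via $Y_i$'s works cleanly for the generator part $\sum D(g(U_i))$ but requires an extra approximation $g^\star\approx g'\in\mathcal{G}$ to handle $\sum D(X_i)$; one has to check that the resulting error is of order $\|g'-g^\star\|_\infty^{\min(\tilde\beta+1,1)}\lesssim n^{-1}$, which is absorbed by the $1/n$ term in the statement. All other estimates follow mechanically from the argument of Theorem~\ref{theo:boundexpecterror}.
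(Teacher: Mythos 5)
Your high-level plan is the paper's: split the bias at the pointwise quantity $\Delta_\mathcal{D}^{\hat g}$, insert the generator bias $\Delta_\mathcal{G}$ via the GAN optimality chain, and control the empirical process using a cover of the composed class $\{D\circ g\}$ indexed by $g\in\mathcal{G}_{1/n}$. The first two steps are essentially what the paper does (modulo repackaging: the paper tracks specific discriminators $\overline{D}_{\hat g},\hat{D}_{\overline g}$ rather than a uniform $\mathcal{E}_n$, but this only costs constants).

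However, your treatment of the $g^\star$-side of the empirical process has a genuine quantitative gap. You propose to pick $g'\in\mathcal{G}$ with $\|g'-g^\star\|_\infty\lesssim n^{-1}$ and use Lipschitz continuity of $D\in\mathcal{D}$ to reduce $D\circ g^\star$ to $D\circ g'$. But $\mathcal{G}=\hat{\mathcal{F}}^{\beta+1,\delta_n}_{per}$ with $\delta_n=n^{-1/(2\beta+d)}$ only contains wavelet frequencies $j\le\log_2(\delta_n^{-1})$, so the best sup-norm approximant of a generic $g^\star\in\mathcal{H}^{\beta+1}_K$ satisfies $\|g'-g^\star\|_\infty\gtrsim\delta_n^{\beta+1}=n^{-(\beta+1)/(2\beta+d)}$, not $n^{-1}$. (Proposition~\ref{prop:propertiesofhatper} gives a bound on $\Delta_{\hat{\mathcal{F}}^{\eta,\delta}_{per}}$ for the \emph{IPM}, not the sup-norm; the two differ by a factor $\delta^{\gamma-1}$ and the sup-norm error cannot beat the tail mass $\delta^{\eta}$.) In the regime $\beta+1<d/2$ (so $\tilde\beta=\beta$), the target rate in Theorem~\ref{theo:boundhbeta} is $n^{-(2\beta+1)/(2\beta+d)}$, and the spurious bias $n^{-(\beta+1)/(2\beta+d)}$ is strictly slower, so it would destroy the bound. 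The paper avoids this entirely: in the decomposition of $L(\hat g,\overline{D}_{\hat g})-L_n(\hat g,\overline{D}_{\hat g})$, the two halves of the empirical process are bounded separately as $\sup_{g\in\mathcal{G},D\in\mathcal{D}}\{\mathbb{E}[D(g(U))]-\tfrac1n\sum D(g(U_i))\}$ and $\sup_{D\in\mathcal{D}}\{\mathbb{E}[D(g^\star(U))]-\tfrac1n\sum D(g^\star(Y_i))\}$; for the second supremum one runs the chaining/Hoeffding argument directly on the class $\{D\circ g^\star:D\in\mathcal{D}\}$, whose covering number is controlled by the same proof as Lemma~\ref{lemma:coveringd} (which only uses that the composing map is $K$-Lipschitz, hence applies to $g^\star$), yielding a cover of the same logarithmic size as $|(\mathcal{D}_\mathcal{G})_{1/n}|$. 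No sup-norm approximation of $g^\star$ is ever required.
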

The proof of Theorem \ref{theo:boundexpecterror2} can be found in Section \ref{sec:theo:boundexpecterror2}. Contrary to $\Delta_{\mathcal{D}}$, the term $\mathbb{E}[\Delta^{\hat{g}}_\mathcal{D}]$ depends only on the approximation error of $\mathcal{D}$ for the candidate minimizer $\hat{g}$. On the other hand, in contrast to $\log(|\mathcal{D}_{1/n}|)$, the term $\log(|(\mathcal{D}_{\mathcal{G}})_{1/n}|)$ does not depend on the dimension of the ambient space $p$ as shown in the following result.

\begin{lemma}\label{lemma:coveringd}
    For $g\in \text{Lip}_K(\mathbb{T}^d,[0,1]^p)$, there exists an $\epsilon$-covering $(\mathcal{N}_g)_\epsilon$ of the class $\mathcal{N}_g=\{D\circ g\ |\ D\in \hat{\mathcal{F}}^{\tilde{\beta}+1,\delta}\}$, such that 
    $$\log(|(\mathcal{N}_g)_\epsilon|)\leq C\delta^{-d}\log(\delta^{-1})\log(\epsilon^{-1}).$$
\end{lemma}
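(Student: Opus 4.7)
The plan is to exploit the fact that $D\circ g$ only sees $D$ restricted to the $d$-dimensional set $g(\mathbb{T}^d)$, so that only $O(2^{jd})$ wavelets per scale $j$ actually contribute, rather than the $O(2^{jp})$ wavelets used to parametrize $D$ on the whole of $\mathbb{R}^p$. Any $D\in \hat{\mathcal{F}}^{\tilde\beta+1,\delta}$ is of the form
\begin{equation*}
D = \sum_{j=0}^{J}\sum_{l=1}^{2^p}\sum_{w} \hat\alpha(j,l,w)\,\hat\psi_{jlw},\qquad J=\log_2(\delta^{-1}),
\end{equation*}
with $|\hat\alpha(j,l,w)|\leq C\,2^{-j(\tilde\beta+1+p/2)}$, and $\hat\psi_{jlw}$ (essentially) supported in a box of side $O(2^{-j})$ around $2^{-j}w$, since $\hat\phi$ approximates the compactly supported Daubechies scaling function to precision $L^{-1}\leq 1/n$.

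The key geometric step is to control the number of effective wavelets per scale. Because $g$ is $K$-Lipschitz on the compact torus $\mathbb{T}^d$, the image $g(\mathbb{T}^d)$ can be covered by $O(2^{jd})$ Euclidean balls of radius $2^{-j}$; each such ball meets only $O(1)$ wavelet supports at scale $j$ (after summing over the $2^p$ choices of $l$). Setting
\begin{equation*}
W_j^g := \bigl\{(l,w)\,:\, \operatorname{supp}\hat\psi_{jlw}\,\cap\, g(\mathbb{T}^d)\neq\emptyset\bigr\},
\end{equation*}
this yields $|W_j^g|\leq C\,2^{jd}$. Summing gives a total of $\sum_{j=0}^{J}|W_j^g|\leq C\delta^{-d}$ effective coefficients, which (up to negligible tails from the inexact support of $\hat\psi$) are the only ones influencing $D\circ g$.

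To build the $\epsilon$-covering, I discretize each effective coefficient $\hat\alpha(j,l,w)$ at precision $\eta_j := \epsilon/(C_1 J\, 2^{jp/2})$. Since at every point of $\mathbb{R}^p$ only $O(1)$ wavelets per scale are nonzero, the sup-norm error on $g(\mathbb{T}^d)$ is bounded by $C\sum_{j=0}^{J}\eta_j\,2^{jp/2}\leq \epsilon$. Given the a priori bound $|\hat\alpha(j,l,w)|\leq C\,2^{-j(\tilde\beta+1+p/2)}$, each effective coefficient takes at most $O(J/\epsilon)$ discretized values. The resulting product covering of $\mathcal{N}_g$ satisfies
\begin{equation*}
\log |(\mathcal{N}_g)_\epsilon|\leq \sum_{j=0}^{J}|W_j^g|\,\log\!\bigl(CJ/\epsilon\bigr)\leq C\delta^{-d}\bigl(\log\log\delta^{-1}+\log\epsilon^{-1}\bigr)\leq C\delta^{-d}\log(\delta^{-1})\log(\epsilon^{-1}),
\end{equation*}
which is the desired bound. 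The main obstacle is the effective-support estimate $|W_j^g|\leq C\,2^{jd}$: it is precisely this replacement of the ambient $p$ by the intrinsic $d$ that keeps the discriminator covering number tractable on manifolds, and it hinges on both the Lipschitz continuity of $g$ and the compact, (essentially) compactly-supported wavelet structure of $\hat{\mathcal{F}}^{\tilde\beta+1,\delta}$.
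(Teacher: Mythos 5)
Your proof is correct and rests on the same key insight as the paper's: since $g$ is $K$-Lipschitz on the $d$-dimensional torus, the image $g(\mathbb{T}^d)$ meets only $O(2^{jd})$ wavelet supports at scale $j$ (rather than $O(2^{jp})$), so only $O(\delta^{-d}\log\delta^{-1})$ coefficients can influence $D\circ g$. The accounting differs in minor detail — the paper partitions $\mathbb{T}^d$ into $O(\delta^{-d})$ cubes of side $\delta/(2K)$ and counts $O(\log\delta^{-1})$ wavelets per cube, while you count $O(2^{jd})$ wavelets per level and sum the geometric series, which gives the slightly tighter $O(\delta^{-d})$ before you pad back up — but both are valid upper bounds yielding the stated estimate. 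One point you are right to flag but should not gloss over: the $\hat\psi_{jlw}$ built from tanh/ReQU networks are not exactly compactly supported, only $L^{-1}$-close in $\mathcal H^{\lfloor\beta\rfloor+2}$ to the compactly supported Daubechies wavelets; the paper's own proof also treats $\operatorname{supp}(\hat\psi_{jlw})$ as compact, so you are consistent with the paper, but a fully rigorous version would need to check that the $L^{-1}=1/n$ tails contribute at most $O(\epsilon)$ to the sup-norm error, using that $n^{-1}\leq\epsilon$ in the regime where the lemma is applied.
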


The proof of Lemma~\ref{lemma:coveringd} can be found in Section \ref{sec:lemma:coveringd}. 
For $\delta = \tilde{\delta}_n = n^{-\frac{1}{2\tilde{\beta}+d}}$, we deduce that $\log( |(\mathcal{D}_{\mathcal{G}})_{1/n}|)\leq C \log(n)^2 n^{\frac{d}{2\tilde{\beta}+d}}$, so it  scales roughly like $\log( |\mathcal{G}_{1/n}|)$.

Let us now explain how to bound the term $\mathbb{E}[\Delta^{\hat{g}}_\mathcal{D}]$. Compared to Model~\ref{model1}, $g^\star_{\# U}$ has some regularity with respect to a smooth submanifold. Therefore, this regularity can be exploited with Theorem~\ref{theo:theineq} in the following manner.

\begin{lemma}\label{lemma:deltaD}
    There exists a constant $C>0$ such that, if $g\in \mathcal{G}$ verifies the $\chi$-manifold regularity condition and
        $
     d_{\mathcal{H}^{\tilde{\beta}+1}}(g_{\# U},g_{\# U}^\star)\leq C^{-1},$ 
     then we have
    $$\Delta_{\mathcal{D}}^g\leq C_2\log(n)^{C_3}\|\nabla D_g^\star-\nabla \overline{D}_g\|_\infty\Big( d_{\mathcal{H}_1^{\tilde{\beta}+1}}(g_{\# U},g^\star_{\# U})^{\frac{\tilde{\beta}+1}{2\tilde{\beta}+1}}+\frac{1}{n}\Big).$$
\end{lemma}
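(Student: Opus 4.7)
The plan is to set $F := D_g^\star - \overline{D}_g$, so that $\Delta_{\mathcal{D}}^g = \mathbb{E}_U[F(g(U)) - F(g^\star(U))]$, and to factor out $\epsilon_0 := \|\nabla F\|_\infty = \|\nabla D_g^\star - \nabla \overline{D}_g\|_\infty$ by reducing to a $d_{\mathcal{H}^1_1}$ estimate before invoking the interpolation inequality of Theorem~\ref{theo:theineq}.

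\textbf{Step 1 (reduction to $d_{\mathcal{H}^1_1}$ via Lipschitz duality).} Since $F(g(u))-F(g^\star(u))$ is invariant under adding a constant to $F$, I may assume $F(x_0)=0$ for some fixed reference point $x_0$ in the common bounded ambient set containing the supports of $g_{\#U}$ and $g^\star_{\#U}$ (which have diameter $O(K\sqrt{p})$ since $g,g^\star \in \mathcal{H}^{\beta+1}_K$). The function $F$ is $\epsilon_0$-Lipschitz, so this normalization yields $\|F\|_{\infty} \leq C_K\epsilon_0$ on that set. Combining with $\|\nabla F\|_\infty=\epsilon_0$ and the definition of the $\mathcal{H}^1$ norm from Section~\ref{sec:preliminaries}, one gets $\|F\|_{\mathcal{H}^1}\leq C\epsilon_0$, so that $F/(C\epsilon_0)\in\mathcal{H}^1_1$. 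By the definition of the IPM,
\begin{equation*}
\Delta_{\mathcal{D}}^g\leq C\,\epsilon_0\,d_{\mathcal{H}^1_1}(g_{\#U},g^\star_{\#U}).
\end{equation*}

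\textbf{Step 2 (interpolation at smoothness cap $\tilde\beta+1$).} I then apply Theorem~\ref{theo:theineq}, but read through its proof with the top smoothness $\beta+1$ replaced by $\tilde\beta+1\in[1,\beta+1]$ and the exponent specialized to $\gamma=1$. The hypotheses transfer without modification: one has $g,g^\star\in\mathcal{H}^{\beta+1}_K\subset\mathcal{H}^{\tilde\beta+1}_{C}$; the $K$-manifold and $K$-density regularity conditions are intrinsic and thus unchanged; and the lemma's standing assumption $d_{\mathcal{H}^{\tilde\beta+1}_1}(g_{\#U},g^\star_{\#U})\leq C^{-1}$ plays the role of the closeness hypothesis. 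The output, at $\gamma=1$, is
\begin{equation*}
d_{\mathcal{H}^1_1}(g_{\#U},g^\star_{\#U})\leq C\log(\epsilon^{-1})^{4}\,d_{\mathcal{H}^{\tilde\beta+1}_1}(g_{\#U},g^\star_{\#U})^{\frac{\tilde\beta+1}{2\tilde\beta+1}}+\epsilon.
\end{equation*}
Setting $\epsilon=1/n$ and substituting into Step~1 produces the claimed bound.

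\textbf{Main obstacle.} The delicate point is justifying that the proof of Theorem~\ref{theo:theineq} runs at the lower cap $\tilde\beta+1$ rather than $\beta+1$. This requires revisiting Lemmas~\ref{lemma:firstterm} and~\ref{lemma:secondterm}: the compatible map $T$ from Proposition~\ref{prop:compatibility} only depends on the manifold and density regularity conditions and on closeness in a sufficiently smooth IPM, both of which are supplied by the hypotheses here. The two wavelet-truncation arguments handling $A_1$ and $A_2$ then pick a resolution balancing the bias from $\tilde\beta+1$-smoothness against the variance linked to the discriminator regularity; at $\gamma=1$ this balance gives precisely the exponent $(\tilde\beta+1)/(2\tilde\beta+1)$, with the same $\log^4$ factor as in Theorem~\ref{theo:theineq}. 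Every other step of that proof adapts verbatim, and the residual log factors may be absorbed into the exponent $C_3$ appearing in the statement.
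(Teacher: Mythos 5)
Your proof is correct and follows essentially the same route as the paper: the paper factors out $\|\nabla D_g^\star-\nabla \overline{D}_g\|_\infty$ via an optimal transport coupling and a first-order Taylor expansion (the primal counterpart of your Lipschitz rescaling of $F$ into $\mathcal{H}^1_1$), reducing $\Delta_{\mathcal{D}}^g$ to $W_1(g_{\# U},g^\star_{\# U})$, and then applies Theorem~\ref{theo:theineq} at regularity $\tilde{\beta}+1$ with $\gamma=1$ and $\epsilon=1/n$, exactly as in your Step~2. The only point to make explicit is that the constants in Theorem~\ref{theo:theineq} depend on the Hölder norm of $g\in\mathcal{G}$, which is only $\|g\|_{\mathcal{H}^{\tilde{\beta}+1}}\leq C\log(n)^2$ (Proposition~\ref{prop:reguofG}) rather than $K$, and on $\chi$; the paper absorbs this as a factor $(\mathcal{K}\chi)^{C_2}$ into $\log(n)^{C_3}$, which is precisely the absorption you invoke at the end.
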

The proof of Lemma \ref{lemma:deltaD} can be found in Section \ref{sec:lemma:deltaD}. 
It shows that $\mathbb{E}[\Delta^{\hat{g}}_\mathcal{D}]$ depends both on the precision of the approximation of $\mathcal{H}_1^{\tilde{\beta}+1}$ by $\mathcal{D}$, and on the $d_{\mathcal{H}_1^{\tilde{\beta}+1}}$ distance between $\hat{g}_{\# U}$ and $g^\star_{\# U}$.
Therefore, if $\hat{g}_{\# U}$ is close to $g^\star_{\# U}$, the term $\mathbb{E}[\Delta^{\hat{g}}_\mathcal{D}]$ is much smaller than $\Delta_{\mathcal{D}}$.

From Lemmas~\ref{lemma:coveringd} and~\ref{lemma:deltaD}, we obtain that the GAN estimator built with  the neural network   classes $\mathcal{G}$ and $\mathcal{D}$ defined in  \eqref{eq:G} and \eqref{eq:DD} is minimax for the distance $d_{\mathcal{H}^{\tilde{\beta}+1}}$.


\begin{theorem}\label{theo:boundhbeta} For all $g^\star \in \mathcal{H}^{\beta+1}_K(\mathbb{T}^d,\mathbb{R}^p)$ verifying the $K$-manifold and $K$-density regularity conditions of Definition \ref{defi:manifold regularity}, the GAN estimator $\hat{g}$ of \eqref{WGANS}, with $\mathcal{G}$ and $\mathcal{D}$ defined in~\eqref{eq:G} and~\eqref{eq:DD}, satisfies
    \begin{align*}  
\mathbb{E}\Big[d_{\mathcal{H}_1^{\tilde{\beta}+1}}(\hat{g}_{\# U},g^\star_{\# U})\Big] \leq  C \log(n)^{C_2}n^{-\frac{2\tilde{\beta}+1}{2\tilde{\beta}+d}}.
\end{align*}
\end{theorem}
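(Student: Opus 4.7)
The plan rests on applying the sharpened Theorem~\ref{theo:boundexpecterror2} with a well-chosen $\delta$, and controlling the two bias terms $\Delta_\mathcal{G}$ and $\mathbb{E}[\Delta_\mathcal{D}^{\hat{g}}]$ via the lemmas of this section. The decisive step is that Lemma~\ref{lemma:deltaD} yields a \emph{self-bounded} inequality whose fixed point matches the target rate.

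I would first choose $\delta = n^{-(\tilde{\beta}+1)/(2\tilde{\beta}+d)}$ in Theorem~\ref{theo:boundexpecterror2}. Combining the covering bound $\log|(\mathcal{D}_\mathcal{G})_{1/n}| \leq C \log(n)^2\, n^{d/(2\tilde{\beta}+d)}$ from Lemma~\ref{lemma:coveringd} with $\log|\mathcal{G}_{1/n}| \leq C \log(n)^2\, n^{d/(2\beta+d)}$ from Proposition~\ref{prop:propertiesofhatper}, a direct computation shows that both stochastic terms $\delta\sqrt{\log(n|\mathcal{G}_{1/n}||(\mathcal{D}_\mathcal{G})_{1/n}|)/n}$ and $\delta^{1-d/(2(\tilde{\beta}+1))}/\sqrt{n}$ reduce to $\log(n)^{C}\, n^{-(2\tilde{\beta}+1)/(2\tilde{\beta}+d)}$. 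The generator bias follows from Proposition~\ref{prop:propertiesofhatper} applied with $\eta = \beta+1$, $\gamma = \tilde{\beta}+1$ and $\delta = n^{-1/(2\beta+d)}$, yielding $\Delta_\mathcal{G} \leq C\log(n)^{C_2}\, n^{-(\beta+\tilde{\beta}+1)/(2\beta+d)}$. In both regimes $\tilde{\beta}=\beta$ and $\tilde{\beta}+1=d/2$, one checks that $(\beta+\tilde{\beta}+1)/(2\beta+d) = (2\tilde{\beta}+1)/(2\tilde{\beta}+d)$, so $\Delta_\mathcal{G}$ matches the target rate. The $\chi$-numerical regularity constraint embedded in~\eqref{eq:G} does not deteriorate this approximation error, as ensured by Proposition~\ref{prop:deltagmodel3}.

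The main step is controlling $\mathbb{E}[\Delta_\mathcal{D}^{\hat{g}}]$. By Lemma~\ref{lemma:deltaD}, on the event $E := \{d_{\mathcal{H}_1^{\tilde{\beta}+1}}(\hat{g}_{\# U},g^\star_{\# U}) \leq C^{-1}\}$,
\begin{equation*}
\Delta_\mathcal{D}^{\hat{g}} \leq C \log(n)^{C_3}\, \|\nabla D_{\hat{g}}^\star - \nabla \overline{D}_{\hat{g}}\|_\infty \Bigl( d_{\mathcal{H}_1^{\tilde{\beta}+1}}(\hat{g}_{\# U},g^\star_{\# U})^{(\tilde{\beta}+1)/(2\tilde{\beta}+1)} + 1/n \Bigr).
\end{equation*}
Standard wavelet approximation of a function in $\mathcal{B}^{\tilde{\beta}+1}_{\infty,\infty}(C)$ by the class $\mathcal{D} = \hat{\mathcal{F}}^{\tilde{\beta}+1,\tilde{\delta}_n}$ in the $\mathcal{B}^1_{\infty,\infty}$-norm (which dominates the sup-norm of the gradient) gives $\|\nabla D_{\hat{g}}^\star - \nabla \overline{D}_{\hat{g}}\|_\infty \leq C\log(n)^{C_4}\, n^{-\tilde{\beta}/(2\tilde{\beta}+d)}$. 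Taking expectations, applying Jensen's inequality to the concave map $x \mapsto x^{(\tilde{\beta}+1)/(2\tilde{\beta}+1)}$, and writing $R_n := \mathbb{E}[d_{\mathcal{H}_1^{\tilde{\beta}+1}}(\hat{g}_{\# U},g^\star_{\# U})]$, we arrive at
\begin{equation*}
R_n \leq C\log(n)^{C_5} \bigl( n^{-(2\tilde{\beta}+1)/(2\tilde{\beta}+d)} + n^{-\tilde{\beta}/(2\tilde{\beta}+d)}\, R_n^{(\tilde{\beta}+1)/(2\tilde{\beta}+1)} \bigr).
\end{equation*}
A standard fixed-point analysis of $r \leq B + A r^\rho$ with $\rho = (\tilde{\beta}+1)/(2\tilde{\beta}+1)$ and $A^{1/(1-\rho)} = A^{(2\tilde{\beta}+1)/\tilde{\beta}} = n^{-(2\tilde{\beta}+1)/(2\tilde{\beta}+d)}$ (up to logs) then yields $R_n \leq C\log(n)^{C_2}\, n^{-(2\tilde{\beta}+1)/(2\tilde{\beta}+d)}$, which is the target rate.

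The main obstacle is twofold: the fixed-point argument above crucially requires a matching between the interpolation exponent $(\tilde{\beta}+1)/(2\tilde{\beta}+1)$ inherited from Theorem~\ref{theo:theineq} and the gradient approximation rate $n^{-\tilde{\beta}/(2\tilde{\beta}+d)}$, which is precisely why the discriminator regularity is capped at $\tilde{\beta}+1$; and Lemma~\ref{lemma:deltaD} only applies on the event $E$, so its precondition has to be secured. I would handle the latter by a bootstrap: first obtain a preliminary rate by bounding $\Delta_\mathcal{D}^{\hat{g}}$ crudely by $2\|D^\star_{\hat{g}} - \overline{D}_{\hat{g}}\|_\infty \leq C\log(n)^{C_2}\, n^{-(\tilde{\beta}+1)/(2\tilde{\beta}+d)}$ without invoking Lemma~\ref{lemma:deltaD}, which gives a preliminary bound $R_n \to 0$; then use Markov's inequality to deduce $\mathbb{P}(E^c) \to 0$, and split $\mathbb{E}[\Delta_\mathcal{D}^{\hat{g}}] = \mathbb{E}[\Delta_\mathcal{D}^{\hat{g}} \mathbb{1}_E] + \mathbb{E}[\Delta_\mathcal{D}^{\hat{g}} \mathbb{1}_{E^c}]$, with the $E^c$ contribution dominated by the crude bound times $\mathbb{P}(E^c)$. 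Plugging the refined bound into Theorem~\ref{theo:boundexpecterror2} and solving the resulting self-bounded inequality concludes the proof.
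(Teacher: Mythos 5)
Your proposal follows essentially the same route as the paper: the sharpened bound of Theorem~\ref{theo:boundexpecterror2} with $\delta = n^{-(\tilde\beta+1)/(2\tilde\beta+d)}$, Lemma~\ref{lemma:coveringd} and Propositions~\ref{prop:propertiesofhatper}, \ref{prop:deltagmodel3} for the entropy and generator-bias terms, a Besov-differentiation bound for $\|\nabla D^\star_{\hat g}-\nabla\overline D_{\hat g}\|_\infty$, Lemma~\ref{lemma:deltaD} for $\mathbb{E}[\Delta_\mathcal{D}^{\hat g}]$, and the resulting self-bounded inequality (the paper phrases the fixed-point step as a dichotomy rather than a formal $r\leq B+Ar^\rho$ argument, and its Lemma~\ref{lemma:hatgisclose} plays the role of your bootstrap step, but these are the same move). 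One minor technical note: to dominate $\|\nabla D^\star_{\hat g}-\nabla\overline D_{\hat g}\|_\infty$ you need the logarithmically weighted norm $\mathcal{B}^{1,2}_{\infty,\infty}$ rather than $\mathcal{B}^1_{\infty,\infty}$, since $\mathcal{B}^1_{\infty,\infty}$ does not inject into $\mathcal{H}^1$ at integer smoothness (Lemma~\ref{lemma:inclusions}); this is precisely what the paper uses and only costs an extra $\log(n)^2$ factor.
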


The proof of Theorem~\ref{theo:boundhbeta} can be found in Section \ref{sec:theo:boundhbeta}. 
Finally, combining Theorem~\ref{theo:boundhbeta} with the interpolation bound of Theorem~\ref{theo:theineq}, the main result follows:  over Model~\ref{model2}, GAN estimators attain minimax optimal rates for all the distances $\bigl(d_{\mathcal{H}_1^\gamma}\bigr)_{\gamma \in [1,\beta+1]}$ simultaneously. 
\begin{theorem}\label{theo:boundhallgammas} For all $g^\star \in \mathcal{H}^{\beta+1}_K(\mathbb{T}^d,\mathbb{R}^p)$ verifying the $K$-manifold and $K$-density regularity conditions of Definition \ref{defi:manifold regularity}, the GAN estimator $\hat{g}$ \eqref{WGANS}, with $\mathcal{G}$ and $\mathcal{D}$ defined in \eqref{eq:G} and \eqref{eq:DD}, satisfies, for all $\gamma\in[1,\tilde{\beta}+1]$,
    \begin{align*}  
\mathbb{E}\Big[d_{\mathcal{H}_1^\gamma}(\hat{g}_{\# U},g^\star_{\# U})\Big] \leq  C \log(n)^{C_2}\left(n^{-\frac{\beta+\gamma}{2\beta+d}}\vee n^{-\frac{1}{2}}\right).
\end{align*}
\end{theorem}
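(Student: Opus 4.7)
The plan is to combine the two ingredients already established in this section: the minimax rate for the capped discriminator smoothness $\tilde\beta+1$ from Theorem~\ref{theo:boundhbeta}, and the Hölder interpolation inequality of Theorem~\ref{theo:theineq}, which transfers IPM control from smoothness $\beta+1$ down to any $\gamma\in[1,\beta+1]$. First I would apply Theorem~\ref{theo:boundhbeta} directly to the estimator $\hat g$ defined by~\eqref{eq:G}--\eqref{eq:DD}, to obtain
\begin{align*}
\mathbb{E}\bigl[d_{\mathcal{H}_1^{\tilde\beta+1}}(\hat g_{\#U},g^\star_{\#U})\bigr]\;\leq\;C(\log n)^{C_2}\,n^{-(2\tilde\beta+1)/(2\tilde\beta+d)},
\end{align*}
and then transfer this control to $d_{\mathcal{H}_1^{\beta+1}}$ through the continuous embedding $\mathcal{H}_1^{\beta+1}\subset C\cdot \mathcal{H}_1^{\tilde\beta+1}$, which holds up to a multiplicative constant since $\tilde\beta+1\leq \beta+1$.

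Next I would activate the interpolation inequality of Theorem~\ref{theo:theineq}. Its hypothesis $d_{\mathcal{H}_1^{\beta+1}}(\hat g_{\#U},g^\star_{\#U})\leq C^{-1}$ is a sample-dependent event $A_n$, so I would split the expectation along $A_n$. Since the bound from the first step tends to zero, Markov's inequality yields $\mathbb{P}(A_n^c)=\mathcal{O}(n^{-(2\tilde\beta+1)/(2\tilde\beta+d)})$ (up to logs), and on $A_n^c$ the trivial estimate $d_{\mathcal{H}_1^\gamma}\leq 2$ contributes only a lower-order term. On $A_n$, Theorem~\ref{theo:theineq} gives, for any $\epsilon\in(0,1)$,
\begin{align*}
d_{\mathcal{H}_1^\gamma}(\hat g_{\#U},g^\star_{\#U})\;\leq\;C(\log\epsilon^{-1})^4\Bigl(d_{\mathcal{H}_1^{\beta+1}}(\hat g_{\#U},g^\star_{\#U})^{(\beta+\gamma)/(2\beta+1)}+\epsilon\Bigr).
\end{align*}
Taking expectations, I would apply Jensen's inequality to the concave map $x\mapsto x^{(\beta+\gamma)/(2\beta+1)}$ (the exponent lies in $(0,1)$) to pull the expectation inside the power, then set $\epsilon=1/n$ to balance the two terms.

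In the regime $\tilde\beta=\beta$ (that is, $\beta+1\leq d/2$), the two exponents telescope exactly, $\tfrac{2\beta+1}{2\beta+d}\cdot\tfrac{\beta+\gamma}{2\beta+1}=\tfrac{\beta+\gamma}{2\beta+d}$, which matches the target since $\gamma\leq\tilde\beta+1=\beta+1\leq d/2$ forces $n^{-(\beta+\gamma)/(2\beta+d)}\geq n^{-1/2}$. The main obstacle I anticipate is the complementary regime $\tilde\beta+1=d/2<\beta+1$: here the naive telescoping produces the exponent $(\beta+\gamma)/(2(2\beta+1))$, which is strictly smaller than the target exponent $(\beta+\gamma)/(2\beta+d)$ as soon as $\beta+1>d/2$, so additional work is needed. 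I expect to close this gap either by invoking Theorem~\ref{theo:theineq} with an intermediate smoothness parameter $\beta^\star\in[\tilde\beta,\beta]$ chosen to optimize the product of the two exponents, or by sharpening the bound on $d_{\mathcal{H}_1^{\beta+1}}(\hat g_{\#U},g^\star_{\#U})$ beyond the crude embedding estimate by re-running the bias-variance analysis of Theorem~\ref{theo:boundexpecterror2} with the $(\beta+1)$-regularity of $g^\star$ used as input. Everything else (Jensen, Markov on $A_n^c$, optimization in $\epsilon$) is standard and only produces additional logarithmic factors, which get absorbed into the $(\log n)^{C_2}$ prefactor.
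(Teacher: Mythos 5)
In the regime $\beta+1\leq d/2$ (so that $\tilde\beta=\beta$) your plan is essentially the paper's own route: take the $d_{\mathcal{H}_1^{\tilde\beta+1}}$ rate from Theorem~\ref{theo:boundhbeta}, restrict to the event where the hypothesis of Theorem~\ref{theo:theineq} holds (the complement being negligible since the IPM is bounded on $B^p(0,K)$, cf.\ Lemma~\ref{lemma:hatgisclose}), apply the interpolation with $\epsilon=1/n$ together with Jensen, and the exponents telescope to $\frac{\beta+\gamma}{2\beta+d}$. The only step you pass over silently is that Theorem~\ref{theo:theineq} requires $\hat g$ itself to satisfy the manifold and density regularity conditions with controlled constants; this is exactly what the $\chi$-numerical regularity constraint in \eqref{eq:G}, Proposition~\ref{prop:numericalcondi}, Proposition~\ref{prop:reguofG} and Lemma~\ref{lemma:hatgisclose} provide, and it only costs logarithmic factors.

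The genuine gap is the regime $\beta+1>d/2$ with $\gamma<d/2$, which you correctly flag but do not resolve, and neither of your proposed fixes can close it. (a) Applying Theorem~\ref{theo:theineq} at an intermediate regularity $\beta^\star\in[\tilde\beta,\beta]$ and bounding $d_{\mathcal{H}_1^{\beta^\star+1}}\leq C\,d_{\mathcal{H}_1^{\tilde\beta+1}}\lesssim n^{-1/2}$ yields the exponent $\frac{\beta^\star+\gamma}{2(2\beta^\star+1)}$, which is decreasing in $\beta^\star$ for $\gamma>1/2$ and at $\beta^\star=\tilde\beta=d/2-1$ equals $\frac{\tilde\beta+\gamma}{2\tilde\beta+d}$, still strictly below the target $\frac{\beta+\gamma}{2\beta+d}$ whenever $\gamma<d/2<\beta+1$. (b) Sharpening the bound on $d_{\mathcal{H}_1^{\beta+1}}$ cannot help either: its minimax rate in this regime is $n^{-1/2}$, already delivered by the embedding, and $n^{-\frac{1}{2}\cdot\frac{\beta+\gamma}{2\beta+1}}=n^{-\frac{\beta+\gamma}{4\beta+2}}$ with $4\beta+2>2\beta+d$. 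What is actually needed is an interpolation inequality in which the density smoothness $\beta$ is decoupled from the discriminator smoothness on the right-hand side, i.e.\ the manifold analogue of Corollary~\ref{coro:ineqfulldim} with $\alpha=\tilde\beta$, of the form $d_{\mathcal{H}_1^\gamma}\lesssim d_{\mathcal{H}_1^{\tilde\beta+1}}^{\frac{\beta+\gamma}{\beta+\tilde\beta+1}}$ up to logarithms; combined with Theorem~\ref{theo:boundhbeta}, whose rate $n^{-\frac{2\tilde\beta+1}{2\tilde\beta+d}}$ equals $n^{-1/2}=n^{-\frac{\beta+\tilde\beta+1}{2\beta+d}}$ when $\tilde\beta+1=d/2$, this gives exactly $n^{-\frac{\beta+\gamma}{2\beta+d}}$. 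This decoupled mechanism is how the full-dimensional result (Proposition~\ref{prop:minimaxfulldimgamma}) is proved, and it is what the paper's terse ``combine Theorem~\ref{theo:boundhbeta} with Theorem~\ref{theo:theineq}'' implicitly relies on; the coupled inequality of Theorem~\ref{theo:theineq} alone, no matter how you optimize the intermediate smoothness or the $d_{\mathcal{H}_1^{\beta+1}}$ bound, cannot produce the claimed rate in this regime, so to complete your proof you would have to establish this variant (e.g.\ by rerunning the proofs of Lemmas~\ref{lemma:firstterm} and~\ref{lemma:secondterm} with discriminator smoothness $\tilde\beta+1$ while keeping the density regularity $\beta$).
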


As in the full dimensional case, we have:
\begin{itemize}
    \item When $\beta+1<d/2$,  $\hat{g}$ is computed using a discriminator class of regularity $\beta+1$ and attains minimax rates up to logarithmic factors for all $\gamma \in [1,\beta+1]$ at the same time.
    \item When $\beta+1\geq d/2$,  $\hat{g}$ is computed using a discriminator class of regularity $d/2$ and attains minimax rates up to logarithmic factors for all $\gamma \in [1,\infty)$ at the same time.
\end{itemize}
To the best of our knowledge, the GAN estimator \eqref{WGANS} is the first tractable estimator attaining minimax rates up to logarithmic factors in the manifold setting. It is also the first one to be proved to be minimax for several IPMs simultaneously. On the other hand, the fact that we attain minimax rates shows in particular that Theorem~\ref{theo:theineq} is sharp up to the logarithmic factors.

In \cite{liang2021generative}, the author shows that in the case $\gamma<d/2$, if we take $\mathcal{D}=\mathcal{H}^\gamma_1$ as the discriminator class for the GAN estimator \eqref{WGANS}, then the rate of error is $O(n^{-\frac{\gamma}{d}})$ which is strictly slower than the minimax. This is due to the fact that the class $\mathcal{H}^\gamma_1$ is so massive that it is able to well discriminate between empirical and smooth measures. Therefore, it will be more sensitive to the distance between the data itself than the underlying measures behind it. This is well illustrated by the fact that for any measure $\nu$ with a smooth density with respect to a closed $d$-dimensional manifold $\mathcal{M}\subset \mathbb{R}^p$, we have (Lemma 13 in \cite{tang2022minimax})
$$C_1\log(n)^{-1}n^{-\frac{\gamma}{d}}\leq \mathbb{E}_{X_i,Y_i \sim \nu}\Big[d_{\mathcal{H}^\gamma_1}( \frac{1}{n}\sum \limits_{i=1}^n \delta_{X_i},\frac{1}{n}\sum \limits_{i=1}^n \delta_{Y_i})\Big]\leq C_2 n^{-\frac{\gamma}{d}}.$$
Although the $X_i$'s and $Y_i$'s are i.i.d. with the same distribution, the $d_{\mathcal{H}^\gamma_1}$ distance between their empirical distribution is strictly greater than the minimax rate. To overcome this problem, \cite{liang2021generative} and \cite{tang2022minimax} use as estimator
$$\hat{\mu} \in \argmin \limits_{\nu \in S}  \sup_{D \in \mathcal{H}^\gamma_1} \mathbb{E}_{\nu}[D(X)]-\hat{\mathcal{J}}(D),$$
where $\hat{\mathcal{J}}$ a regularization of the integration of $D$ under the empirical measure $\mu_n=\frac{1}{n}\sum \limits_{i=1}^n \delta_{X_i}$. Theorem~\ref{theo:boundhallgammas} shows that actually, it is not necessary to regularize $\mu_n$ to obtain minimax rates. Taking a discriminator class $\mathcal{D}$ that is much smaller than the Hölder functions suffices to
implicitly regularize the measures, as this class is not able to distinguish between discrete and smooth measures.
This result gives a theoretical explanation of the well known fact that WGANs work because the discriminator class fails to approximate the Wasserstein distance as studied in \cite{stanczuk2021wasserstein}.

\section*{Discussion}
In this paper, we provided minimax rates of convergence of the GAN estimator in a general setting. This is a step towards a theoretical understanding of why Generative Adversarial Networks obtain state of the art results in various fields. On the other hand,
it shows that it is not necessary to regularize the empirical measure in the GAN estimator \eqref{WGANS} (like in \cite{liang2021generative} and \cite{tang2022minimax}) to obtain minimax rates.

Our analysis suggests that if the target measure has some higher Hölder regularity than the class used in the IPM, one should use discriminators having the same regularity than the target. This is supported by our inequality between the discrimination of different regularities (Theorem~\ref{theo:theineq}). It shows that the Hölder class of the same regularity than the target distinguishes just as efficiently as the classes of weaker regularity (in particular the Wasserstein).

 Our results strongly highlight the use of wavelets to describe function's regularity.  From a theoretical point of view, wavelets are only suitable tools to get interpolation inequalities such as Theorem \ref{theo:theineq}. On a practical side, they allow to easily enforce regularities of both generators and discriminators.
Provided that such high-order regularity constraints can be made tractable on general neural networks, other architectures than the ones we propose in Sections \ref{sec:tractable} and \ref{sec:themanifoldcase}  might be proved optimal.
On the other hand, any class of functions able to approximate Daubechies wavelets well would work. 
Nonetheless, approximating Daubechies wavelets is not an easy task (see, e.g., \cite{daubechies1991two}), a using neural networks for this purpose seems to be a relevant strategy (see, e.g., \cite{Daubechies23}).


Overall, this suggests that a "Generative Adversarial Wavelet" estimator would be perfectly suited to estimate smooth densities on arbitrary manifolds, and that using neural networks may be a relevant way to implement it. Future research will be focused on extending our analysis of this type of estimator and on numerical experiments to see how it performs compared to the state of the art. Furthermore, we could also consider moving away from the classical GAN setting with push-forward measures and extend our results to the case of manifolds with several charts.

\begin{supplement}
\stitle{Supplement to "Wasserstein Generative Adversarial Networks are Minimax Optimal Distribution Estimators"}
\sdescription{
The supplement contains auxiliary results and the neural network construction, together with all the proofs.}
\end{supplement}
\bibliographystyle{plainnat}
\bibliography{adversarial_training}

\newpage

\begin{frontmatter}
\title{Supplement to "Wasserstein Generative Adversarial Networks are Minimax Optimal Distribution Estimators"
}

\runtitle{Wasserstein GANs are Minimax Optimal Distribution Estimators}

\begin{aug}
    \author[A,C]{\fnms{Arthur} \snm{Stéphanovitch }\ead[label=e1]{stephanovitch@dma.ens.fr}}
    ,
    \\
    \author[A]{\fnms{Eddie} \snm{Aamari}\ead[label=e2]{eddie.aamari@ens.fr}}
    \and
    \author[B]{\fnms{Clément} \snm{Levrard}\ead[label=e3]{clement.levrard@univ-rennes1.fr}}
    \address[A]{Ecole Normale Sup\'erieure, Université PSL, CNRS\\
D\'epartement de Math\'ematiques et Applications\\
F-75005 Paris, France\\
    \printead{e1,e2}
    }
    \address[C]{Université Paris Cité, Sorbonne Université, CNRS\\
Laboratoire de Probabilités, Statistique et Modélisation\\
Paris, France
    }

    \address[B]{Université de Rennes, CNRS \\
    Institut de recherche mathématique de Rennes \\ F-35000 Rennes, France\\
    \printead{e3}
    }
\end{aug}

\begin{keyword}[class=MSC]
\kwd[Primary ]{62G05}
\kwd[; secondary ]{62E17}
\end{keyword}

\begin{keyword}
\kwd{Minimax rate}
\kwd{generative model}
\kwd{distribution estimation}
\kwd{manifold}
\kwd{interpolation inequality}
\end{keyword}

\end{frontmatter}

\startcontents[appendices]
\printcontents[appendices]{l}{1}{{\huge{Contents} \vspace{0.2cm}}\setcounter{tocdepth}{2}}


\appendix
\section{Additional notation and Neural Network construction}\label{sec:appendix}
In this section, we define some additional notation needed for the poofs and we detail the construction of the Neural Networks used by our classes of generators and discriminators.

\subsection{Additional notation} We will write $\text{Lip}_1$ the set of 1-Lipschitz functions. The support of a function $f$ will be denoted by $supp(f)$. For a path $\gamma:[0,1]\rightarrow \mathbb{T}^d$, we write $\dot{\gamma}$ its differential. For $u,v\in \mathbb{T}^d$ in the $d$-dimensional torus, we write $\|u-v\|=\min \{\int_0^1\|\dot{\gamma}(t)\|dt \ | \ \gamma:[0,1]\rightarrow \mathbb{T}^d,\gamma(0)=u,\gamma(1)=v\}$ as the periodised Euclidean norm. The $d$-dimensional ball of radius $\epsilon>0$ in the torus for this distance will be written $B^d(0,\epsilon)$.
The Hausdorff distance between two subset $A,B\subset \mathbb{R}^p$ will be denoted $\mathbb{H}(A,B)$. For $x$ a point belonging either to $\mathbb{R}^p,\mathbb{T}^d$ or $\mathbb{R}$, we will write $O(x)$ to denote a quantity that is bounded by $C\|x\|$ for $C>0$ independent of $x$.

To lighten the notation, for a function $f:\mathbb{R}^p\rightarrow \mathbb{R}$ that admits a wavelet expansion in $L^2$:
$$
f(x)=\sum \limits_{w\in \mathbb{Z}^p} \alpha_f(w)\psi_{0w}(x) + \sum \limits_{j=0}^\infty \sum \limits_{l=1}^{2^p-1}\sum \limits_{w\in \mathbb{Z}^p} \alpha_f(j,l,w)\psi_{jlw}(x),
$$

we will write
$$
f(x)= \sum \limits_{j=0}^\infty \sum \limits_{l=1}^{2^p}\sum \limits_{w\in \mathbb{Z}^p} \alpha_f(j,l,w)\psi_{jlw}(x),
$$
with the convention that $\psi_{02^pw}=\psi_{0w}$ and for all $j\geq 1$, $\psi_{j2^pw}=0$. Likewise, the wavelet expansion of a map $f:\mathbb{T}^d\rightarrow \mathbb{R}$ will be written
$$
f(u)= \sum \limits_{j=0}^\infty \sum \limits_{l=1}^{2^d}\sum \limits_{z\in \{0,...,2^j-1\}^d} \alpha_f(j,l,w)\psi_{jlz}^{per}(u),
$$
with the convention that $\psi_{02^d0}=1$ and for all $z\neq 0$, $j\geq 1$, $\psi^{per}_{02^dz}=\psi^{per}_{j2^dz}=\psi^{per}_{j2^d0}=0$.

Recalling the definition of Besov spaces from Section \ref{sec:wavelets}, in the case $q_1=q_1=\infty$, the norm $\|\cdot\|_{\mathcal{B}^{s,b}_{q_1,q_2}}$ of a function $f:\mathbb{R}^p\rightarrow \mathbb{R}$ value 
\begin{align*}
\|f\|_{\mathcal{B}^{s,b}_{\infty,\infty }}= &\sup \limits_{j\geq 0} 2^{j(s+p/2)}(1+j)^{b} \sum \limits_{l=1}^{2^p} \sup \limits_{w\in \mathbb{Z}^p} |\alpha_f(j,l,w)|.
\end{align*}
Likewise, for a function $f:\mathbb{T}^d\rightarrow \mathbb{R}$ we have 
\begin{align*}
\|f\|_{\mathcal{B}^{s,b}_{\infty,\infty }}= &\sup \limits_{j\geq 0} 2^{j(s+p/2)}(1+j)^{b} \sum \limits_{l=1}^{2^d} \sup \limits_{z\in \{0,...,2^j-1\}^d} |\alpha_f(j,l,z)|.
\end{align*}
\subsection{Approximation of low wavelet frequencies by Neural Networks}\label{sec:appendixnn}

\subsubsection{Theoretical construction of the approximated wavelet}\label{sec:appendixnn1}
Let us describe how to approximate the class $\mathcal{F}^{\eta,\delta}_{per}$ of \eqref{eq:fdeltaper} by neural networks. The first ingredient we need, is to have a good approximation $\hat{\phi}$ of the Daubechies scaling function $\phi\in \mathcal{H}^{\lfloor \beta \rfloor+3}(\mathbb{R},\mathbb{R})$ \citepproofs{daubechies1988orthonormal}, that will then give us a good approximation of the wavelet function \begin{equation}\label{eq:defpsi}
\psi=\sum \limits_{k=-\infty}^{+\infty}\lambda_k\phi(2\cdot-k).
\end{equation} The classical technique to approximate $\phi$ is to use that it is solution to the two-scale difference equation \citepproofs{daubechies1991two}:
\begin{equation}\label{eq:scaling}
    \phi(x)=\sum \limits_{k=-\infty}^{+\infty}h_k\phi(2x-k)
    \end{equation}
for $h_k=2\int \phi(y)\phi(2y-k)dy$. From Theorem 4.2.10 in \citepproofs{giné_nickl_2015}, there exists $\phi$ with compact support in $[0,N]$ that belongs to $\mathcal{H}^{\lfloor \beta \rfloor+3}(\mathbb{R},\mathbb{R})$ with $N\leq 12(\lfloor \beta \rfloor+3)$.  Then~\eqref{eq:scaling} can be rewritten as
\begin{equation}\label{eq:scaling2}
    \phi(x)=\sum \limits_{k=0}^{N}h_k\phi(2x-k).
    \end{equation}
Using this property, the classical methods to approximate $\phi$ are iterative schemes as the "cascade algorithm" \citepproofs{daubechies1991two}.   Here we use neural networks in order to connect with the "Network" part of the GAN. Any neural network architecture that allows to approximate $\phi$ and its derivatives to an arbitrary precision would work. For example we can use the tanh neural
networks construction of \cite{De_Ryck_2021}.
\begin{lemma}[Theorem 5.1, \cite{De_Ryck_2021}]\label{lemma:Deryck}
    Let $k\in \mathbb{N}$ and  $L>0$, there exists a class of neural networks $\mathcal{T}_{L}^{k} \subset C^\infty([0,1]^d, \mathbb{R}^p)$ with tanh activation function that is made of two hidden layers of width $CL$ and $CL^d$ respectively, such that for any $f\in \mathcal{H}_1^{k}([0,1]^d,B^p(0,1))$, there exists a neural network $h_f\in \mathcal{T}_{L}^{k}$ such that $\forall l \in \{0,..., k-1\} $ we have
$$\|f-h_f\|_{\mathcal{H}^l} \leq C \frac{1}{L^{k-l}}.$$
\end{lemma}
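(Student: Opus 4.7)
The plan is the standard multivariate spline-approximation strategy, realised with tanh activations: cover $[0,1]^d$ by a grid of $L^d$ cubes $(Q_w)_w$ of side $1/L$, replace $f$ on each $Q_w$ by its Taylor polynomial $P_w$ of degree $k-1$ around the centre $x_w$ of $Q_w$, and glue these local pieces via a smooth partition of unity $(\chi_w)_w$ subordinate to a slight enlargement of the cubes. Because $f\in\mathcal{H}_1^{k}$, Taylor's theorem gives $|\partial^\nu(f-P_w)(x)|\leq CL^{-(k-|\nu|)}$ for $x\in Q_w$ and $|\nu|\leq k-1$, so the target function $\sum_w\chi_w P_w$ already satisfies $\|f-\sum_w\chi_w P_w\|_{\mathcal{H}^l}\leq CL^{-(k-l)}$ provided that the bumps $\chi_w$ have derivatives of order $\nu$ bounded by $CL^{|\nu|}$, which is guaranteed by picking a fixed smooth bump and rescaling.

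The nontrivial step is realising $\sum_w\chi_w P_w$ as a tanh network of depth $2$ with the prescribed widths. The basic building block is a tanh-based approximate multiplier: from the expansion $\tanh(hx)=hx-\tfrac{1}{3}(hx)^3+O(h^5)$ one recovers $x\mapsto x$, $x\mapsto x^2$, and bilinear products $xy$ by finite differences of a single tanh applied to affine combinations of the inputs, at precision that is an arbitrarily small polynomial in $h$, \emph{in every fixed $C^m$ norm} (derivatives of the gadget produce products of $\mathrm{sech}^2$'s which stay uniformly bounded). Iterating this gadget a bounded number of times, one obtains a tanh subnetwork of constant size approximating any fixed polynomial of degree $k-1$ on a bounded domain, in $\mathcal{H}^{k-1}$ norm, to any prescribed precision $\eta$.

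For the architecture, the first hidden layer (width $CL$) stores, for each coordinate $i\in\{1,\dots,d\}$ and each $w_i\in\{0,\dots,L-1\}$, a fixed number of tanh activations producing both a smoothed indicator bump of width $1/L$ centred at $x_{w_i}$ along coordinate $i$ and the affine coordinate $x_i-x_{w_i}$ itself; in total $O(dL)=O(L)$ neurons suffice. The second hidden layer (width $CL^d$) uses, for every multi-index $w\in\{0,\dots,L-1\}^d$, one $O(1)$-sized multiplication gadget producing $\chi_w(x)P_w(x)$ by multiplying together the $d$ one-dimensional bumps and the $O(1)$ monomials in $(x-x_w)$ weighted by the Taylor coefficients $\partial^\nu f(x_w)/\nu!$; the output layer sums these $L^d$ contributions linearly.

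The main obstacle is the simultaneous control of all derivatives of order $l\in\{0,\dots,k-1\}$. Differentiating $\chi_w$ costs a factor $L^{|\nu|}$ and only $O(1)$ cutoffs overlap at any given point, so the Taylor remainder, of intrinsic size $L^{-k}$ per cube, is precisely compensated into $\|\partial^\nu(f-\sum_w\chi_w P_w)\|_\infty\leq CL^{-(k-|\nu|)}$; but the same $L^{|\nu|}$ amplification applies to the error coming from the tanh multiplication gadget after $l$ derivatives. One therefore has to choose the internal precision parameter $h$ of the gadget as a small enough polynomial in $L^{-1}$ (with exponent depending on $k$) so that, even after $k-1$ derivatives, the gadget error remains below $CL^{-(k-l)}$ uniformly over cubes; the smoothness of $\tanh$ keeps all such derivatives bounded. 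Once this bookkeeping is carried out, one defines $\mathcal{T}_L^k$ as the class of all networks with this fixed two-layer architecture and free real weights/biases, and the bound $\|f-h_f\|_{\mathcal{H}^l}\leq CL^{-(k-l)}$ follows for every $l\in\{0,\dots,k-1\}$ with constants depending only on $k,d,p$.
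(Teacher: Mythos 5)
The paper does not prove this lemma; it is imported verbatim as Theorem~5.1 of \cite{De_Ryck_2021}, so there is no internal proof to compare against --- what you have done is reconstruct the argument of the cited reference. Your reconstruction does follow the right overall strategy (local Taylor polynomials on a grid of $L^d$ cubes, a smooth partition of unity, tanh finite-difference gadgets for monomials, and careful derivative bookkeeping), and this is indeed the skeleton of De Ryck, Lanthaler and Mishra's construction.

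One point, however, is at odds with the architecture stated in the lemma. You write that one obtains a constant-size polynomial gadget by ``iterating'' the bilinear multiplication block; a naive iteration of a depth-1 multiplier produces a $d$-fold (or degree-$(k-1)$) product only at depth $O(\log d)$ or $O(\log k)$, which would make the resulting network deeper than two hidden layers. The way around this in De Ryck et al.\ is to avoid iteration entirely: univariate monomials $y^j$ for $j\leq k$ are produced in a \emph{single} hidden layer via $j$-th order central finite differences of $\tanh$ around a noncritical point, and multivariate monomials $\prod_i x_i^{\nu_i}$ are reduced to univariate powers by the polarization identity $\prod_i x_i = \frac{1}{2^d d!}\sum_{\sigma\in\{\pm1\}^d}\bigl(\prod_i\sigma_i\bigr)\bigl(\sum_i\sigma_i x_i\bigr)^d$. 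This keeps the whole product-and-bump block in one hidden layer of $O(1)$ width per cube, which is what makes the two-layer $(CL,\ CL^d)$ architecture go through. Relatedly, your assertion that the gadget error is ``an arbitrarily small polynomial in $h$ in every fixed $C^m$ norm'' deserves a word of caution: the finite-difference weights carry inverse powers of $h$, so the $C^m$ control is not automatic and requires the balancing argument that De Ryck et al.\ carry out (choosing $h$ as a suitable negative power of $L$, exactly the bookkeeping you gesture at in your last paragraph). With the polarization trick substituted for iteration, your sketch and the cited proof coincide.
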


The same way, we can also use the ReQU construction of \citepproofs{belomestny2022simultaneous}. \black

\begin{lemma}[Theorem 2, \citeproofs{belomestny2022simultaneous}]\label{lemma:Belo} For all $\alpha > 2$ and $L>0$, there exists a class of neural networks $\mathcal{R}_{L}^{\alpha} \subset \mathcal{H}^{\lfloor \alpha \rfloor }([0,1]^d, \mathbb{R}^p)$ with ReQu activation function that is of width $CL^d$ and depth $C$ such that for any $f\in \mathcal{H}_1^{\alpha}([0,1]^d,B^p(0,1))$, there exists a neural network $h_f\in \mathcal{R}_{L}^{\alpha}$ such that $\forall l \in \{0,..., \lfloor \alpha \rfloor\} $ we have
$$\|f-h_f\|_{\mathcal{H}^l} \leq C \frac{1}{L^{\alpha-l}}.$$
\end{lemma}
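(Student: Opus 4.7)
The plan is to build $h_f$ as a sum of local Taylor polynomials glued together by a smooth partition of unity, and then to observe that both ingredients are exactly computable by a shallow ReQU network since ReQU activations implement multiplication exactly.

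First I would set up the analytic approximation. Partition $[0,1]^d$ into a uniform grid of $M = \lceil L \rceil^d$ cubes of side $1/L$ and let $\{x_k\}_{k \leq M}$ be their centers. Construct a partition of unity $\{\phi_k\}$ subordinate to the cover by doubled cubes $B(x_k, 2/L)$, where each $\phi_k$ is a tensor product of one-dimensional polynomial bumps of degree $\lfloor \alpha \rfloor + 1$ scaled by $L$; this can be done so that $\sum_k \phi_k \equiv 1$ on $[0,1]^d$, $\mathrm{supp}(\phi_k) \subset B(x_k, C/L)$, and $\|\partial^\nu \phi_k\|_\infty \leq C L^{|\nu|}$ for $|\nu| \leq \lfloor \alpha \rfloor + 1$. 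For each coordinate $f_i$ and each center $x_k$, form the Taylor polynomial
\begin{equation*}
P_k^{(i)}(x) \ =\  \sum_{|\nu| \leq \lfloor \alpha \rfloor} \frac{\partial^\nu f_i(x_k)}{\nu!}(x-x_k)^\nu,
\end{equation*}
and set $h_f^{(i)} = \sum_k \phi_k\, P_k^{(i)}$.

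Next I would prove the Hölder-norm bounds. Since $\|f_i\|_{\mathcal{H}^\alpha} \leq 1$, the usual Taylor remainder with Hölder-continuous top derivatives yields, for $x \in \mathrm{supp}(\phi_k)$, the pointwise estimate $|P_k^{(i)}(x) - f_i(x)| \leq C L^{-\alpha}$. For higher derivative orders $l \leq \lfloor \alpha \rfloor$, writing $f_i = \sum_k \phi_k f_i$ by the partition of unity identity and differentiating $\sum_k \phi_k (f_i - P_k^{(i)})$ via Leibniz, each factor $\partial^\mu \phi_k$ contributes $L^{|\mu|}$ while $\partial^{\nu}(f_i - P_k^{(i)})$ is of order $L^{-(\alpha - |\nu|)}$ on $\mathrm{supp}(\phi_k)$ by the standard Bramble–Hilbert bound; summing across the $O(1)$ cells overlapping at a point gives $\|f_i - h_f^{(i)}\|_{\mathcal{H}^l} \leq C L^{-(\alpha - l)}$ for all $l \leq \lfloor \alpha \rfloor$. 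The top order $l = \lfloor \alpha \rfloor$ is where the fractional exponent $\alpha - \lfloor \alpha \rfloor$ enters, and it is handled using Hölder continuity of $\partial^\nu f_i$ for $|\nu| = \lfloor \alpha \rfloor$.

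Finally I would argue that $h_f$ fits in the required architecture. Using the identities $u^2 = \sigma(u) + \sigma(-u)$ and $uv = \tfrac{1}{4}\bigl[(u+v)^2 - (u-v)^2\bigr]$, one ReQU layer computes a product exactly, so any monomial of degree $\leq \lfloor \alpha \rfloor + 1$ in $d$ variables is computed exactly by a ReQU sub-network of depth $O(\log(\lfloor\alpha\rfloor+1)) = O(1)$ and $O(1)$ width. Running these $O(L^d)$ polynomial modules (one per cell, for both $\phi_k$ and $P_k^{(i)}$) in parallel produces $O(L^d)$-wide, $O(1)$-deep representations; multiplying each $\phi_k$ with its $P_k^{(i)}$ through one further ReQU layer and summing with a final linear layer yields $h_f$ exactly. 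This gives a class $\mathcal{R}_L^\alpha$ of width $C L^d$ and depth $C$ as required, with the $\mathcal{H}^l$ error bound inherited from the preceding analytic step.

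The main obstacle will be the simultaneous control of all Hölder norms $\|f - h_f\|_{\mathcal{H}^l}$ for $0 \leq l \leq \lfloor \alpha \rfloor$: naively differentiating the partition of unity multiplies errors by $L^l$, so one must exploit the cancellation $\sum_k \partial^\mu \phi_k \equiv 0$ for $|\mu| \geq 1$ to replace $P_k^{(i)}$ by $P_k^{(i)} - P_{k_0}^{(i)}$ inside derivatives, producing the extra factor $L^{-(\alpha - l)}$ that absorbs the $L^l$ loss — this is the only nontrivial analytic ingredient, the rest being bookkeeping of the ReQU circuit depth.
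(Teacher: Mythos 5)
This lemma is not proved in the paper at all: it is imported verbatim as Theorem~2 of the cited reference (Belomestny et al.), so there is no in-paper argument to compare against. Your sketch is essentially the standard proof used in that reference — tensor-product spline partition of unity plus local Taylor polynomials, with ReQU units computing squares and products (hence piecewise polynomials) exactly — and the error bookkeeping is correct; note only that the bumps $\phi_k$ must be compactly supported \emph{piecewise} polynomials (B-splines of degree $\lfloor\alpha\rfloor+1$, representable via truncated powers $\max(x-t,0)^m$, $m\geq 2$), since a single polynomial piece of that degree extended by zero cannot be $C^{\lfloor\alpha\rfloor}$, and that the cancellation $\sum_k\partial^\mu\phi_k\equiv 0$ you invoke at the end is not actually needed: because $P_k^{(i)}$ matches all derivatives of $f_i$ at $x_k$ up to order $\lfloor\alpha\rfloor$, the naive Leibniz estimate $L^{|\mu|}\cdot L^{-(\alpha-|\nu|)}=L^{l-\alpha}$ from your middle paragraph already closes the bound.
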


In \citeproofs{belomestny2023rates} a bound on the covering number of the neural network class  $\mathcal{R}_{L}^{\alpha}$ is also provided. We do not use it as it depends on the number of parameters of the network which is to large here to obtain minimax rates. We instead give a smaller covering using the wavelet decomposition of the functions (Proposition \ref{prop:propertiesofhatper}).

Fix $L>0$ and define   $\hat{\phi}\in \mathcal{T}_{L}^{\lfloor \beta \rfloor+2}$ (or $\hat{\phi}\in \mathcal{R}_{L}^{\lfloor \beta \rfloor+2}$ if we use ReQU instead of tanh) \black as the neural network from   Lemma \ref{lemma:Deryck} (or  Lemma \ref{lemma:Belo} in the ReQU case) \black such that  
\begin{equation}\label{eq:approxipsi}
\|\phi-\hat{\phi}\|_{\mathcal{H}^{\lfloor \beta \rfloor +2}} \leq C L^{-1}.
\end{equation}
Recalling \eqref{eq:defpsi}, define also
\begin{equation}\label{eq:approxipsiphi}
\hat{\psi}=\sum \limits_{k=-N+1}^{1}\lambda_k\hat{\phi}(2\cdot-k),
\end{equation}
with $\lambda_k=(-1)^{k+1}h_{1-k}$ for the approximation by neural networks of the wavelet function. Using these neural networks, let us approximate the  periodised wavelets from \eqref{eq:periodicwav}.
Recalling that $\phi$ has support in $[0,N]$ and $\psi$ in $[-N/2+1/2,N/2+1/2]$, then for $s\in[0,1]$ and $z\ \in\{0,...,2^j-1\}$ we have 
\begin{equation}\label{eq:defpsiper}
    \phi^{per}_j(s-2^{-j}z)=2^{j/2}\sum \limits_{k=\lceil -1-2^{-j}N\rceil}^{0}\phi(2^j(s-k)-z)
\end{equation}
and 
\begin{equation}\label{eq:defphiper}
\psi^{per}_j(s-2^{-j}z)=2^{j/2}\sum \limits_{k=\lceil -1-2^{-(j+1)}(N+1)\rceil}^{\lfloor 1+2^{-(j+1)}(N-1) \rfloor}\psi(2^j(s-k)-z).
\end{equation}
Note that if $2^j> N$ then 
\begin{equation}\label{eq:periosca}
    \phi^{per}_j(s-2^{-j}z)=\phi(2^js-z)+\phi(2^j(s+1)-z)
\end{equation}
and
\begin{equation}\label{eq:periowav}
    \psi^{per}_j(s-2^{-j}z)=\psi(2^j(s-1)-z)+\psi(2^js-z)+\psi(2^j(s+1)-z).
\end{equation}
Define the periodised wavelet approximations
$$\hat{\phi}^{per}_j(s)=2^{j/2}\sum \limits_{k=\lceil -1-2^{-j}N\rceil}^{0}\hat{\phi}(2^j(s-k))$$ and
$$
\hat{\psi}^{per}_j(s)=2^{j/2}\sum \limits_{k=\lceil -1-2^{-(j+1)}(N+1)\rceil}^{\lfloor 1+2^{-(j+1)}(N-1) \rfloor}\hat{\psi}(2^j(s-k)).$$
for $\hat{\phi}$ and $\hat{\psi}$ from \eqref{eq:approxipsi} and \eqref{eq:approxipsiphi} .  For ease of notation $\hat{\phi}^{per}$ and $\hat{\psi}_j^{per}$ will be written $\hat{\psi}^{per}_{j0},\hat{\psi}^{per}_{j1}$ respectively. We finally define the multi-dimension approximation of the wavelet as 
\begin{equation}\label{eq:defaprroxphiper}
\hat{\psi}^{per}_{02^d0}(u) = 1 \ \text{ , } \ \hat{\psi}^{per}_{jlz}(u) = \prod \limits_{i=1}^d \hat{\psi}^{per}_{jl_i}(u_i-2^{-j}z_i) \ l=1,...,2^d, \ z\in \{0,...,2^j-1\}^d. 
\end{equation} 
 We use the convention that for all $z\neq 0$, $j\geq 1$, $\hat{\psi}^{per}_{02^dz}=\hat{\psi}^{per}_{j2^dz}=\hat{\psi}^{per}_{j2^d0}=0$.   From Lemma 1 in \cite{belomestny2022simultaneous}, the product $(x_1,...,x_d)\mapsto \prod_{i=1}^d x_i$ can be exactly implemented by a REQU neural network with finite width and depth. On the other hand, from Corollary 3.7 in \cite{De_Ryck_2021}, for all $\epsilon>0$ there exists a one layer tanh neural network with finite width that implements the product $(x_1,...,x_d)\mapsto \prod_{i=1}^d x_i$ up to an error $\epsilon$.
\black 

Now taking a function $f=\sum \limits_{j=0}^{\log_2(\delta^{-1})} \sum \limits_{l=1}^{2^d} \sum \limits_{z\in \{0,...,2^j-1\}^d} \hat{\alpha}_f(j,l,z) \hat{\psi}_{jlz}^{per}$, the next proposition states that the true wavelet coefficients $\alpha_f(j,l,z)$ are close to its approximate coefficients $\hat{\alpha}_f(j,l,z)$. 

\begin{proposition}\label{prop:approxhatper}
For any $f\in L^2(\mathbb{T}^d,\mathbb{R})$, if there exists a sequence $(\hat{\alpha}_f(j,l,z))$ with $|\hat{\alpha}_f(j,l,z)|\leq C2^{-j(\eta+d/2)}$ such that $$f=\sum \limits_{j=0}^{\log_2(\delta^{-1})} \sum \limits_{l=1}^{2^d} \sum \limits_{z\in \{0,...,2^j-1\}^d} \hat{\alpha}_f(j,l,z) \hat{\psi}_{jlz}^{per},$$
then $\forall j\leq \log_2(\delta^{-1})$ we have
$$
|\alpha_f(j,l,z)-\hat{\alpha}_f(j,l,z)|\leq CL^{-1}\big(2^{-j(\lfloor \beta \rfloor +2+d/2)}\delta^{-(\lfloor \beta \rfloor +2-\eta)}\log_2(\delta^{-1})+2^{-j(\eta+d/2)}\big)$$
and $\forall j> \log_2(\delta^{-1})$ we have 
$$
|\alpha_f(j,l,z)|\leq CL^{-1}2^{-j(\lfloor \beta \rfloor +2+d/2)}\delta^{-(\lfloor \beta \rfloor +2-\eta)}\log_2(\delta^{-1}).
$$
\end{proposition}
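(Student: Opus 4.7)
The plan is to compute $\alpha_f(j,l,z) - \hat{\alpha}_f(j,l,z)\mathds{1}_{j \leq \log_2(\delta^{-1})}$ by expressing it as a sum of inner products between $\psi^{per}_{jlz}$ and the approximation errors $\hat{\psi}^{per}_{j'l'z'} - \psi^{per}_{j'l'z'}$, then to bound each inner product in two complementary regimes using vanishing moments on one side and an $L^1$--$L^\infty$ duality estimate on the other. First, testing $f = \sum_{j' \leq \log_2(\delta^{-1})} \sum_{l',z'} \hat{\alpha}_f(j',l',z') \hat{\psi}^{per}_{j'l'z'}$ against $\psi^{per}_{jlz}$ and using that $(\psi^{per}_{jlz})$ is an orthonormal basis of $L^2(\mathbb{T}^d)$ yields the identity
\begin{align*}
\alpha_f(j,l,z) - \hat{\alpha}_f(j,l,z) \mathds{1}_{j \leq \log_2(\delta^{-1})} = \sum_{j' \leq \log_2(\delta^{-1})} \sum_{l',z'} \hat{\alpha}_f(j',l',z') \langle \hat{\psi}^{per}_{j'l'z'} - \psi^{per}_{j'l'z'},\, \psi^{per}_{jlz} \rangle,
\end{align*}
the diagonal contribution $\hat{\alpha}_f(j,l,z)\langle \psi^{per}_{jlz},\psi^{per}_{jlz}\rangle$ exactly cancelling the indicator term on the left.

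Next I would derive two complementary estimates. From the approximation~\eqref{eq:approxipsi}, the dyadic scaling relating $\psi^{per}_j$ to $\psi$, and the locality of the periodization sums, one obtains $\|\hat{\psi}^{per}_{j'l'z'} - \psi^{per}_{j'l'z'}\|_{\mathcal{H}^{\lfloor \beta\rfloor + 2}} \leq C L^{-1} 2^{j'(\lfloor \beta\rfloor + 2 + d/2)}$ together with $\|\hat{\psi}^{per}_{j'l'z'} - \psi^{per}_{j'l'z'}\|_{L^1(\mathbb{T}^d)} \leq C L^{-1} 2^{-j'd/2}$. Combined respectively with the $\lfloor \beta\rfloor + 2$ vanishing moments of the Daubechies wavelet $\psi^{per}_{jlz}$ (which provides $|\langle g, \psi^{per}_{jlz}\rangle| \leq C 2^{-j(\lfloor \beta\rfloor + 2 + d/2)} \|g\|_{\mathcal{H}^{\lfloor \beta\rfloor + 2}}$ through a Taylor expansion argument) and with the $L^\infty$ bound $\|\psi^{per}_{jlz}\|_{L^\infty} \leq C 2^{jd/2}$, these yield the two estimates $|\langle \hat{\psi}^{per}_{j'l'z'} - \psi^{per}_{j'l'z'}, \psi^{per}_{jlz} \rangle| \leq C L^{-1} 2^{(j'-j)(\lfloor \beta\rfloor + 2 + d/2)}$ and $|\langle \hat{\psi}^{per}_{j'l'z'} - \psi^{per}_{j'l'z'}, \psi^{per}_{jlz} \rangle| \leq C L^{-1} 2^{(j-j')d/2}$, sharper respectively when $j' \leq j$ and when $j' > j$.

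Finally I sum these contributions, exploiting that for fixed $(j,l,z)$ and level $j'$, only $O(\max(1, 2^{(j'-j)d}))$ indices $(l',z')$ produce a nonzero inner product by wavelet locality. When $j > \log_2(\delta^{-1})$ every $j'$ satisfies $j' < j$, so only the first estimate is used; combined with $|\hat{\alpha}_f(j',l',z')| \leq C 2^{-j'(\eta + d/2)}$, the bound becomes $C L^{-1} 2^{-j(\lfloor \beta\rfloor + 2 + d/2)} \sum_{j'=0}^{\log_2(\delta^{-1})} 2^{j'(\lfloor \beta\rfloor + 2 - \eta)} \leq C L^{-1} 2^{-j(\lfloor \beta\rfloor + 2 + d/2)} \delta^{-(\lfloor \beta\rfloor + 2 - \eta)} \log_2(\delta^{-1})$, the logarithmic factor accounting for the boundary case $\eta = \lfloor \beta\rfloor + 2$ in which the geometric summation degenerates. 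When $j \leq \log_2(\delta^{-1})$, splitting the sum at $j'=j$ and applying the sharper of the two estimates on each side produces contributions of order $C L^{-1} 2^{-j(\eta + d/2)}$, which fit within the second term of the claimed inequality. The main technical obstacle will be carefully counting overlapping wavelet indices at each scale and isolating the logarithmic factor arising from the boundary case.
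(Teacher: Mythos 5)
Your proposal is correct, and in one respect it is more careful than the paper's own argument. The paper treats this statement by citing the proof of the non-periodic analogue (Proposition~\ref{prop:approxhat}), which uses a single estimate on the cross inner products: $\|\hat{\psi}_{j'l'z'}-\psi_{j'l'z'}\|_{\mathcal{H}^{\lfloor\beta\rfloor+2}}\leq CL^{-1}2^{j'(\lfloor\beta\rfloor+2+d/2)}$ together with the inclusion $\mathcal{H}^{\lfloor\beta\rfloor+2}\subset\mathcal{B}^{\lfloor\beta\rfloor+2}_{\infty,\infty}$ gives $|\langle\hat{\psi}_{j'l'z'}-\psi_{j'l'z'},\psi_{jlz}\rangle|\leq CL^{-1}2^{(j'-j)(\lfloor\beta\rfloor+2+d/2)}$, which is the same in spirit as your first, vanishing-moment estimate. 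The paper then collapses the resulting triple sum over $(j',l',z')$ to a single sum over levels $j'$, implicitly taking the number of overlapping translates per level to be $O(1)$. That is justified only when $j'\leq j$; in the fine-to-coarse regime $j'>j$ there are $O(2^{(j'-j)d})$ translates $(l',z')$ whose support meets that of $\psi_{jlz}$, and the Besov estimate \emph{grows} like $2^{(j'-j)(\lfloor\beta\rfloor+2+d/2)}$ in that regime, so using it there with the correct overlap count would not close to the stated bound for small $j$. Your second, $L^1$--$L^\infty$ estimate $|\langle\hat{\psi}_{j'l'z'}-\psi_{j'l'z'},\psi_{jlz}\rangle|\leq CL^{-1}2^{(j-j')d/2}$, which decays in $j'-j$, is exactly what is needed there: combined with the overlap count $O(2^{(j'-j)d})$ and the coefficient decay $|\hat{\alpha}_f(j',l',z')|\leq C2^{-j'(\eta+d/2)}$, the $j'>j$ terms sum to $O(L^{-1}2^{-j(\eta+d/2)})$ as you indicate, fitting inside the second term of the claimed bound. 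So your two-regime strategy reproduces the same bound but makes the fine-to-coarse contributions explicit and rigorous where the paper's one-estimate derivation leaves them implicit.
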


The proof of Proposition \ref{prop:approxhatper} follows the same arguments than the proof of the following proposition.

\begin{proposition}\label{prop:approxhat}
For any $f\in L^2(\mathbb{R}^p,\mathbb{R})$, if there exists a sequence $(\hat{\alpha}_f(j,l,w))$ with $|\hat{\alpha}_f(j,l,w)|\leq C2^{-j(\eta+p/2)}$ such that $$f=\sum \limits_{j=0}^{\log_2(\delta^{-1})} \sum \limits_{l=1}^{2^p} \sum \limits_{w\in \{-K2^{j},...,K2^j\}^p} \hat{\alpha}_f(j,l,w) \hat{\psi}_{jlw},$$
then $\forall j\leq \log_2(\delta^{-1})$ we have
$$|\alpha_f(j,l,w)-\hat{\alpha}_f(j,l,w)|\leq CL^{-1}(2^{-j(\lfloor \beta \rfloor +2+p/2)}\delta^{-(\lfloor \beta \rfloor +2-\eta)}\log_2(\delta^{-1})+2^{-j(\eta+p/2)})$$
and $\forall j> \log_2(\delta^{-1})$ we have 
$$|\alpha_f(j,l,w)|\leq CL^{-1}2^{-j(\lfloor \beta \rfloor +2+p/2)}\delta^{-(\lfloor \beta \rfloor +2-\eta)}\log_2(\delta^{-1}).$$
\end{proposition}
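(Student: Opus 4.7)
The starting identity comes from $\alpha_f(j,l,w)=\langle f,\psi_{jlw}\rangle$ together with the hypothesized expansion of $f$: inserting the decomposition and using the orthonormality of the exact wavelet basis $(\psi_{j'l'w'})$, I would isolate the ``diagonal'' term and obtain
\begin{align*}
\alpha_f(j,l,w) - \hat{\alpha}_f(j,l,w)\mathds{1}_{j \leq \log_2(\delta^{-1})}
= \sum_{j' \leq \log_2(\delta^{-1})}\sum_{l',w'} \hat{\alpha}_f(j',l',w')\,E_{j',l',w'},
\end{align*}
where $E_{j',l',w'}:=\langle \hat{\psi}_{j'l'w'}-\psi_{j'l'w'},\psi_{jlw}\rangle$. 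The whole proof thus reduces to a scale-by-scale estimate of $E_{j',l',w'}$ followed by summation against $|\hat{\alpha}_f(j',l',w')|\leq C 2^{-j'(\eta+p/2)}$. Observe in particular that for $j>\log_2(\delta^{-1})$ we always have $j'<j$, which is why the second bound of the proposition does not carry the extra $2^{-j(\eta+p/2)}$ term.

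From the tensor-product definition of $\hat{\psi}_{j'l'w'}$ and the hypothesis $\|\phi-\hat{\phi}\|_{\mathcal{H}^{\lfloor\beta\rfloor+2}}\leq CL^{-1}$, I would first record the uniform estimate $\|\partial^\nu(\hat{\psi}_{j'l'w'}-\psi_{j'l'w'})\|_{L^\infty}\leq CL^{-1}2^{j'(p/2+|\nu|)}$ for $|\nu|\leq \lfloor\beta\rfloor+2$, with support in a box of side $O(2^{-j'})$ around $2^{-j'}w'$, and the same bounds for $\psi_{jlw}$ with $j'$ replaced by $j$. The analysis then splits in two. When $j'\leq j$, the factor $\psi_{jlw}$ carries $\lfloor\beta\rfloor+2$ vanishing moments (Daubechies with $N=12(\lfloor\beta\rfloor+3)$); replacing $\hat{\psi}_{j'l'w'}-\psi_{j'l'w'}$ by its Taylor polynomial of order $\lfloor\beta\rfloor+1$ at $2^{-j}w$, the polynomial part drops and the Lagrange remainder gives $|E_{j',l',w'}|\leq C L^{-1} 2^{j'(p/2+\lfloor\beta\rfloor+2)}2^{-j(p/2+\lfloor\beta\rfloor+2)}$ with only $O(1)$ overlapping $w'$. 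When $j'>j$, I would symmetrically Taylor-expand $\psi_{jlw}$ at $2^{-j'}w'$; the remainder integrates against $\hat{\psi}_{j'l'w'}-\psi_{j'l'w'}$ (whose $L^1$ norm is $\leq CL^{-1}2^{-j'p/2}$) to the same bound, while the polynomial part is controlled using that $\psi$ has $\lfloor\beta\rfloor+2$ vanishing moments and that $\hat{\psi}$ inherits \emph{approximate} vanishing moments of order $L^{-1}$ from $\|\hat{\phi}-\phi\|_\infty\leq CL^{-1}$ via the cascade identity. In this regime there are $O(2^{(j'-j)p})$ overlapping $w'$.

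Substituting these bounds into the sum over $(j',l',w')$, the contribution of the regime $j'\leq j$ is a geometric series with ratio $2^{\lfloor\beta\rfloor+2-\eta}$, dominated by its largest index: when $j\geq \log_2(\delta^{-1})$ this yields the factor $\delta^{-(\lfloor\beta\rfloor+2-\eta)}2^{-j(p/2+\lfloor\beta\rfloor+2)}$, and when $j<\log_2(\delta^{-1})$ it produces $2^{-j(\eta+p/2)}$ as a by-product. The regime $j'>j$ is geometric in the opposite direction (ratio $2^{-\eta}$) and sums to $CL^{-1}2^{-j(\eta+p/2)}$, matching the second term of the bound for $j\leq \log_2(\delta^{-1})$. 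The extra $\log_2(\delta^{-1})$ factor appears through the borderline case $\eta=\lfloor\beta\rfloor+2$ in which the geometric sum degenerates into an arithmetic one. The main obstacle will be the $j'>j$ regime: in the absence of exact vanishing moments of $\hat{\psi}$, one has to derive approximate vanishing moments from the proximity of $\hat{\phi}$ to $\phi$ and carefully track how the resulting $L^{-1}$ error is amplified by the $2^{j'}$ rescalings, so as to show that the bound does not deteriorate faster than stated.
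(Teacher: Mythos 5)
Your proposal is correct in outline and lands on exactly the bounds of the statement, but it takes a genuinely different route for the key estimate. You estimate the cross-Gram coefficients $E_{j',l',w'}=\langle\hat{\psi}_{j'l'w'}-\psi_{j'l'w'},\psi_{jlw}\rangle$ directly, by a classical almost-orthogonality argument: size/derivative bounds $\|\partial^\nu(\hat{\psi}_{j'l'w'}-\psi_{j'l'w'})\|_\infty\leq CL^{-1}2^{j'(p/2+|\nu|)}$, support localization, Taylor expansion against the vanishing moments of the Daubechies factor, and an explicit count of overlapping cubes in each regime. The paper instead gets all coefficient bounds in one stroke from the embedding $\mathcal{H}^{\lfloor\beta\rfloor+2}\hookrightarrow\mathcal{B}^{\lfloor\beta\rfloor+2}_{\infty,\infty}$: since $\|\hat{\psi}_{jlw}-\psi_{jlw}\|_{\mathcal{H}^{\lfloor\beta\rfloor+2}}\leq C2^{j(\lfloor\beta\rfloor+2+p/2)}L^{-1}$ by the tensor/telescoping computation, every wavelet coefficient of the error at scale $j'$ is at most $CL^{-1}2^{(j-j')(\lfloor\beta\rfloor+2+p/2)}$, and the proof then only needs the support-disjointness observation before summing. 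Your route is longer but, in one respect, sharper: in the regime where the building block is finer than the test wavelet you track the multiplicity $O(2^{(j'-j)p})$ and compensate it with a genuinely decaying bound on $E$, whereas the paper's written summation bounds the number of overlapping indices per scale as if it were $O(1)$ (its coefficient bound grows like $2^{(j-j')(\lfloor\beta\rfloor+2+p/2)}$ in that regime, so the decay there really has to come from an argument like yours); your analysis shows why the stated conclusion nevertheless holds. Two small corrections to your outline: the ``approximate vanishing moments'' of $\hat{\psi}$ that you flag as the main obstacle are not needed --- on the support of the fine building block the Taylor polynomial of $\psi_{jlw}$ is bounded by $C2^{jp/2}$, so the crude $L^1$--$L^\infty$ pairing already gives $|E|\leq CL^{-1}2^{(j-j')p/2}$, which together with the $2^{-j'(\eta+p/2)}$ decay of the coefficients and the multiplicity sums to the required $CL^{-1}2^{-j(\eta+p/2)}$; and the $\log_2(\delta^{-1})$ factor does not arise from a borderline $\eta=\lfloor\beta\rfloor+2$ (here $\eta\leq\beta+1<\lfloor\beta\rfloor+2$ always), it is simply the slack the paper accepts by bounding the geometric sum $\sum_{j\leq\log_2(\delta^{-1})}2^{j(\lfloor\beta\rfloor+2-\eta)}$ by the number of terms times the largest one.
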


\begin{proof}\label{sec:prop:approxhat}

Let $(j,l,w)\in \{1,...,\log(\delta^{-1})\}\times \{1,...,2^p\}\times \{-K2^{j},...,K2^j\}^p$, we have 
\begin{align*}
    \|\hat{\psi}_{jlw}-\psi_{jlw}\|_{\mathcal{H}^{\lfloor \beta \rfloor +2}} & \geq \|\hat{\psi}_{jlw}-\psi_{jlw}\|_{\mathcal{B}^{\lfloor \beta \rfloor +2}_{\infty,\infty}}\\
    & = \sup \limits_{(j^{'},l^{'},w^{'})}2^{j^{'}(\lfloor \beta \rfloor +2+p/2)}|\alpha_{\hat{\psi}_{jlw}}(j^{'},l^{'},w^{'})-\mathds{1}_{\{(j^{'},l^{'},w^{'})=(j,l,w)\}}|.
\end{align*}
Furthermore
\begin{align*}
    \|& \hat{\psi}_{jlw}-\psi_{jlw}\|_{\mathcal{H}^{\lfloor \beta \rfloor +2}} \\
    &=2^{jp/2}\|\prod \limits_{i=1}^p \hat{\psi}_{l_i}(2^j\cdot_i-w_i)-\prod \limits_{i=1}^p \psi_{l_i}(2^j\cdot_i-w_i)\|_{\mathcal{H}^{\lfloor \beta \rfloor +2}}\\
    & =2^{jp/2}\|\sum \limits_{i=1}^p (\hat{\psi}_{l_i}(2^j\cdot_i-w_i)-\psi_{l_i}(2^j\cdot_i-w_i))\prod \limits_{r>i}^p \hat{\psi}_{l_r}(2^j\cdot_r-w_r)\prod \limits_{s<i}^p \psi_{l_s}(2^j\cdot_s-w_s)\|_{\mathcal{H}^{\lfloor \beta \rfloor +2}}\\
    & \leq 2^{j(\lfloor \beta \rfloor +2+p/2)}\sum \limits_{i=1}^p \| (\hat{\psi}_{l_i}(\cdot_i-w_i)-\psi_{l_i}(\cdot_i-w_i))\prod \limits_{r>i}^p \hat{\psi}_{l_r}(\cdot_r-w_r)\prod \limits_{s<i}^p \psi_{l_s}(\cdot_s-w_s)\|_{\mathcal{H}^{\lfloor \beta \rfloor +2}}\\
    & \leq C 2^{j(\lfloor \beta \rfloor +2+p/2)}L^{-1},
\end{align*}
recalling that $\|\hat{\psi}_{l_i}-\psi_{l_i}\|_{\mathcal{H}^{\lfloor \beta \rfloor +2}} \leq CL^{-1}$ for $l_i\in \{0,1\}$.
We deduce that for all $(j^{'},l^{'},w^{'})\neq(j,l,w)$ we have
$$
|\alpha_{\hat{\psi}_{jlw}}(j^{'},l^{'},w^{'})|\leq C2^{(j-j^{'})(\lfloor \beta \rfloor +2+p/2)}L^{-1}
$$
and 
$$
|\alpha_{\hat{\psi}_{jlw}}(j,l,w)-1|\leq CL^{-1}.
$$
Let $(j^{'},l^{'},w^{'})\in \mathbb{N}_0^\star\times \{1,...,2^p\}\times \mathbb{Z}^p$, as $$supp(\hat{\psi}_{jlw}),supp(\psi_{jlw})\subset \bigotimes \limits_{i=1}^p [2^{-j}(w_i-N),2^{-j}(w_i+N)],$$
we have that if 
$$\bigotimes \limits_{i=1}^p [2^{-j}(w_i-N),2^{-j}(w_i+N)]\bigcap \bigotimes \limits_{i=1}^p [2^{-j^{'}}(w_i^{'}-N),2^{-j^{'}}(w_i^{'}+N)] = \varnothing, $$ then 
$$\langle \hat{\psi}_{jlw},\psi_{j^{'}l^{'}w^{'}}\rangle=0.$$

Let
$$f=\sum \limits_{j=0}^{\log_2(\delta^{-1})} \sum \limits_{l=1}^{2^p} \sum \limits_{w\in \{-K2^{j},...,K2^j\}^p} \hat{\alpha}_f(j,l,w) \hat{\psi}_{jlw} \in \hat{\mathcal{F}}^{\eta,\delta},$$
taking $\hat{\alpha}_f(j,l,w):=0$ for $j\geq \log_2(\delta^{-1})$, we have 
\begin{align*}
    |&\alpha_f(j^{'},l^{'},w^{'})-\hat{\alpha}_f(j^{'},l^{'},w^{'})| \\
    = & | \sum \limits_{j=0}^{\log_2(\delta^{-1})} \sum \limits_{l=1}^{2^p} \sum \limits_{w\in \{-K2^{j},...,K2^j\}^p} \hat{\alpha}_f(j,l,w) \langle\hat{\psi}_{jlw},\psi_{j^{'}l^{'}w^{'}}\rangle-\hat{\alpha}_f(j^{'},l^{'},w^{'})|\\
     \leq & | \sum \limits_{(j,l,w)\neq (j^{'},l^{'},w^{'})} |\hat{\alpha}_f(j,l,w)\langle \hat{\psi}_{jlw},\psi_{j^{'}l^{'}w^{'}}\rangle|\\
    & +|\hat{\alpha}_f(j^{'},l^{'},w^{'})||\langle \hat{\psi}_{j^{'}l^{'}w^{'}} , \psi_{j^{'}l^{'}w^{'}}\rangle-1|\mathds{1}_{\{j^{'}\leq\log_2(\delta^{-1})\}}\\
     \leq & \sum \limits_{\substack{(j,l,w)\neq (j^{'},l^{'},w^{'})\\ supp(\hat{\psi}_{jlw})\cap supp(\psi_{j^{'}l^{'}w^{'}})\neq \varnothing}} C2^{-j(\eta+p/2)}2^{(j-j^{'})(\lfloor \beta \rfloor +2+p/2)}L^{-1}\\
     & + C2^{-j^{'}(\eta+p/2)}L^{-1}\mathds{1}_{\{j^{'}\leq\log_2(\delta^{-1})\}}\\
     = & CL^{-1}2^{-j^{'}(\lfloor \beta \rfloor +2+p/2)}  \sum \limits_{\substack{(j,l,w)\in \{1,...,\log_2(\delta^{-1})\}\times \{1,...,2^p\}\times \{-K2^{j},...,K2^j\}^p\\ supp(\hat{\psi}_{jlw})\cap supp(\psi_{j^{'}l^{'}w^{'}})\neq \varnothing}} 2^{j(\lfloor \beta \rfloor +2-\eta)}\\
     & + C2^{-j^{'}(\eta+p/2)}L^{-1}\mathds{1}_{\{j^{'}\leq\log_2(\delta^{-1})\}}\\
    \leq & CL^{-1}2^{-j^{'}(\lfloor \beta \rfloor +2+p/2)}  \sum \limits_{j\in \{1,...,\log_2(\delta^{-1})\}} 2^{j(\lfloor \beta \rfloor +2-\eta)}\\
     & + C2^{-j^{'}(\eta+p/2)}L^{-1}\mathds{1}_{\{j^{'}\leq\log_2(\delta^{-1})\}}\\
     \leq & CL^{-1}(2^{-j^{'}(\lfloor \beta \rfloor +2+p/2)}\delta^{-(\lfloor \beta \rfloor +2-\eta)}\log_2(\delta^{-1})+2^{-j^{'}(\eta+p/2)}\mathds{1}_{\{j^{'}\leq\log_2(\delta^{-1})\}}).
\end{align*}
\end{proof}

\subsubsection{Implementation of the class $\hat{\mathcal{F}}^{\beta+1,n^{-\frac{1}{2\beta+d}}}_{per}$ \eqref{eq:hatfper}}\label{sec:appendixnnopti}
Let us first give a method to find a   tanh (or ReQU) \black  neural network $\hat{\phi}$ from \eqref{eq:approxipsi} that approximates the true scaling function at a precision $n^{-1}$. This method is largely inspired by the Section 4 of \citeproofs{daubechies1991two}. The idea is to use an iterative scheme based on the two-scale difference equation \eqref{eq:scaling2}. The unique solution $\phi$ of this equation, is a fixed point fixed point $Vf=f$ of the linear operator 
\begin{equation}\label{eq:scaling3}
    Vf(x)=\sum \limits_{k=0}^{N}h_kf(2x-k).
\end{equation}  
Take a function $f_0\in \mathcal{H}^{\lfloor \beta \rfloor +3}(\mathbb{R},\mathbb{R})$ such that $f_0^{(l)}(k)=\phi^{(l)}(k)$ for all  $k\in\{0,1,...,N\}$ and $l\in \{0,...,\lfloor \beta \rfloor +3\}$. Then, writing $V^jf_0$ for the application of the operator $V$ a total of $j$ times on the function $f_0$, Theorem $4.2$ in \citeproofs{daubechies1991two} states that:
$$\text{For }f_j=V^jf_0, \text{ we have } \|f_j^{(l)}-\phi^{(l)}\|_\infty\leq C 2^{-j(\lfloor \beta \rfloor +3-l)}, \forall l\in \{0,...,\lfloor \beta \rfloor +3\}.$$ 
We use   a smooth neural networks (either with tanh or ReQU activation functions) \black to approximate the function $f_0$. Based on the data $(k,\phi^{(l)}(k))_{k\in \{0,...,N\}}$ (\citeproofs{daubechies1992two} give a precise method to compute the values $\phi^{(l)}(k)$), find a   tanh (or ReQU) \black  neural network $\hat{f}_0 \in \mathcal{R}^{\lfloor \beta \rfloor +3}_{CN}$ using classical gradient descent optimization on the weights of the network,  such that
\begin{equation}\label{eq:optirequ}
\sum \limits_{l=0}^{\lfloor \beta \rfloor +3}\sum \limits_{k=0}^N |\hat{f}_0^{(l)}(k)-\phi^{(l)}(k)|\leq n^{-\log_2(C_h)-1},
\end{equation}
for $C_h=\max \limits_{k\in \{0,...,N\}} |h_k| \in (1,2)$. Then, the neural network $\hat{f}_j=V^j \hat{f}_0$, which is a linear combination of scalings and translations of $\hat{f}_0$, verifies 
$$
\|\hat{f}_j^{(l)}-\phi^{(l)}\|_\infty\leq C\left( 2^{-j(\lfloor \beta \rfloor +3-l)}+C_h^jn^{-\log_2(C_h)-1}\right), \forall l\in \{0,...,\lfloor \beta \rfloor +3\}.
$$
Finally, for $j=\log_2(n)$ we obtain a   tanh (or ReQU) \black  neural network $\hat{\phi}=\hat{f}_j$ such that
$$\|\phi-\hat{\phi}\|_{\mathcal{H}^{\lfloor \beta \rfloor +2}} \leq C n^{-1}.$$

Note that as we need a smooth approximation of $\phi$, we cannot use classical ReLU neural networks as it will not give us a function that also approximates the derivatives of $\phi$.
Having the approximation $\hat{\phi}$ of the scaling function, we can compute the approximated periodised wavelets $\hat{\psi}^{per}_{jlz}$ as described in the precedent Section \eqref{eq:defaprroxphiper}. The class $\hat{\mathcal{F}}^{\beta+1,n^{-\frac{1}{2\beta+d}}}_{per}$ \eqref{eq:hatfper} can then be implemented as the set of neural networks being linear combinations of the  $\hat{\psi}^{per}_{jlz}$, for $j\in \{0,...,\log_2(n^{\frac{1}{2\beta+d}})\}$, $l\in \{1,...,2^d\}$ and $z\in \{0,...,2^j-1\}^d$. 

Let us sum up the different steps of the construction and then detail their tractability. 
\begin{itemize}
    \item[1.] Find by gradient descent a   tanh (or  ReQU) \black neural network $\hat{f}_0 \in \mathcal{R}^{\lfloor \beta \rfloor +3}_{CN}$ verifying \eqref{eq:optirequ}.
    \item[2.] Compute the neural network $$\hat{f}_j=V^j \hat{f}_0=\sum \limits_{k_1,...,k_j\in \{0,...,N\}^j}h_{k_1}...h_{k_j}\ \hat{f}_0(2^jx-\sum \limits_{i=1}^{j}2^{i-1}k_i)$$
    for $j=\log_2(n).$
    \item[3.] Build $p$ neural networks $f_i$ of the shape $f_i=\sum \limits_{j=0}^{\log_2(n^{\frac{1}{2\beta+d}})} \sum \limits_{l=1}^{2^d} \sum \limits_{z\in \{0,...,2^j-1\}^d} \hat{\alpha}_f(j,l,z)_i \hat{\psi}_{jlz}^{per}$, for $\hat{\psi}^{per}_{jlz}$ from \eqref{eq:defaprroxphiper}.
\end{itemize}

\textbf{Step 1.} Although we need to compute the derivatives of $\hat{f}_0$ up to the order $\lfloor \beta \rfloor +3$, this is not very costly as $\hat{f}_0$ is a uni-dimensional neural network with width and depth that do not depend on the number of data. Furthermore, we only need to compute the error with $\phi^{(l)}$ at the points $k\in \{1,...,N\}$ which also does not depend on the number of data. Finally we need to obtain an error $O(n^{-\log_2(C_h)-1})$ which is strictly greater than $O(n^{-2})$. To the best of our knowledge, there are not existing results on the number of operations it would take for a   tanh or \black ReQU neural network to obtain such error. Nevertheless, if one would use classical splines (instead of neural network), then it would take a finite number of steps to have an error of $0$ at the points $k\in \{1,...,N\}$. Therefore, one could hope that it would take as most $O(\log(n))$ operation for a   tanh (or ReQU) \black  neural network to obtain an error $O(n^{-2})$.

\textbf{Step 2.} Computing the operator $V$only requires a finite number of products and sums. Therefore this step takes $O(\log(n))$ operations.

\textbf{Step 3.} This step requires the computations of the $\hat{\psi}^{per}_{jlz}$, but for $u\in[0,1]^d$, and  fixed $j$, there are only a finite number of $\hat{\psi}^{per}_{jlz}(u)$ that are not 0. The computation of each $\hat{\psi}^{per}_{jlz}$ using the neural network $\hat{\phi}=\hat{f}_{\log(n)}$ from step 2, is done in a finite time. Therefore this step takes $O(\log(n))$ operations.

Note that once the neural network class $\hat{\mathcal{F}}^{\beta+1,n^{-\frac{1}{2\beta+d}}}_{per}$ has been implemented, it is then very easy to optimize it in the GAN loss \eqref{WGANS} as the neural network depends linearly on the weights $\hat{\alpha}_f(j,l,z)_i$.

  Our choice of neural network architecture is motivated by the need of having generators with a small $\mathcal{H}^{\beta+1}$ norm. Indeed, it is stated in Proposition \ref{prop:reguofG} that the neural network within $\hat{\mathcal{F}}^{\beta+1,n^{-\frac{1}{2\beta+d}}}_{per}$ have their  $\mathcal{H}^{\beta+1}$ norm bounded by $O(\log(n)^{2})$. This is a key property allowing to use the interpolation inequality (Theorem \ref{theo:theineq}) without getting multiplicative constants larger than $O(\log(n)^C)$.  If one were to use a more classical architecture (with prescribed width and depth), they would not have a guarantee that the generator selected by the GAN will have a $\mathcal{H}^{\beta+1}$ norm bounded by $O(\log(n)^C)$. Indeed, although a tanh or ReQu neural network is smooth, the norm of its derivatives can be proportional to the size of its layers. Therefore, one would have to constraint the norm of the derivatives by adding to  the GAN loss \eqref{WGANS} a cost depending on the value of the derivatives which would make the estimator not tractable.\black

\black

\section{Proofs of the technical tools of Section \ref{sec:theoreticalGAN}}
In this section, we gather the proofs of Section \ref{sec:theoreticalGAN}. We start by stating some preliminaries results of interpolation theory that will be used in the proofs. Then, we give the proofs of each result in the order in which they are stated in the paper. 
\subsection{Some useful interpolation inequalities}\label{sec:tenicalinterp}
Let us state a general interpolation inequality that shows that we can trade of the regularity of the distributions in the $L^2$ scalar product.
\begin{proposition}\label{prop:Hölder}
Let $\mathcal{X}$ be either $\mathbb{R}^\eta$ or $[0,1]^\eta/\mathbb{Z}^\eta$, $h_1\in \mathcal{B}^{s_1,b_1}_{\infty,\infty}(\mathcal{X},\mathbb{R}^p,1)$ and $h_2\in \mathcal{B}^{s_2,b_2}_{\infty,\infty}(\mathcal{X},\mathbb{R}^p,1)$ for some $s_1,s_2,b_1,b_2\in \mathbb{R}$. Then for $\tau\in \mathbb{R}$, $t,r\in[0,1]$, $q>1$ and $1/q+1/q^\star=1$ we have
$$
\Big\langle h_1,h_2\Big\rangle_{L^2(\mathcal{X},\mathbb{R}^p)}\leq \Big\langle \tilde{\Gamma}^{t\tau}(h_1),\Gamma^{(1-t)\tau}(h_2)\Big\rangle_{L^2(\mathcal{X},\mathbb{R}^p)}^{\frac{1}{q}}\Big\langle \tilde{\Gamma}^{-r\frac{q^\star}{q}\tau}(h_1),\Gamma^{-(1-r)\frac{q^\star}{q}\tau}(h_2)\Big\rangle_{L^2(\mathcal{X},\mathbb{R}^p)}^{\frac{1}{q^{\star}}}
$$
where $\tilde{\Gamma}^{t\tau}(h_1)\in \mathcal{B}^{s_1+t\tau,b_1}_{\infty,\infty}$ and $\tilde{\Gamma}^{-r\frac{q^\star}{q}\tau}(h_1)\in \mathcal{B}^{s_1-r\frac{q^\star}{q},b_1}_{\infty,\infty}$ are defined by
$$
\langle \tilde{\Gamma}^{t\tau}(h_1)_i,\psi_{jlz}\rangle=S(j,l,w)_i\langle \Gamma^{t\tau}(h_1)_i,\psi_{jlz}\rangle
$$ 
and 
$$
\langle \tilde{\Gamma}^{-t\frac{q^\star}{q}\tau}(h_1)_i,\psi_{jlz}\rangle=S(j,l,w)_i\langle \Gamma^{-r\frac{q^\star}{q}\tau}(h_1)_i,\psi_{jlz}\rangle
$$
with $S(j,l,w)_i \in \{-1,1\}$ the sign of $\langle h_{1_i},\psi_{jlz}\rangle \langle h_{2_i},\psi_{jlz}\rangle$, $i=1,...,p$.
\end{proposition}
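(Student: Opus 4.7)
My plan is to expand everything in the wavelet basis and apply Hölder's inequality termwise to the sequence of wavelet coefficients. Writing $a_i(j,l,z) := \langle h_{1,i}, \psi_{jlz}\rangle$ and $b_i(j,l,z) := \langle h_{2,i}, \psi_{jlz}\rangle$, Parseval applied coordinatewise will give
$$
\langle h_1, h_2\rangle_{L^2(\mathcal{X},\mathbb{R}^p)} = \sum_{i=1}^p \sum_{j,l,z} a_i(j,l,z)\, b_i(j,l,z) \leq \sum_{i=1}^p \sum_{j,l,z} |a_i(j,l,z)\, b_i(j,l,z)|,
$$
which is the only place I will throw signs away; everything after is an equality.

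Next I will use $1 = 2^{j\tau/q}\cdot 2^{-j\tau/q}$ together with $1/q + 1/q^\star = 1$ (so $1/q^\star = (q-1)/q$ and hence $\tau/q \cdot q^\star = \tau q^\star/q$) to factor each summand as
$$
|a_i b_i| = \bigl(2^{j\tau}|a_i b_i|\bigr)^{1/q}\bigl(2^{-j\tau q^\star/q}|a_i b_i|\bigr)^{1/q^\star},
$$
and apply Hölder's inequality on the countable index set $(i,j,l,z)$. The remaining step will be to recognize each of the two resulting factors as an honest $L^2$ inner product. This is precisely what the sign-flipping device built into $\tilde{\Gamma}$ is designed for: since $S(j,l,z)_i$ is by definition the sign of $a_i(j,l,z)\,b_i(j,l,z)$, one may write, for any $t\in[0,1]$,
$$
2^{j\tau}|a_i b_i| = \bigl(S(j,l,z)_i\, 2^{jt\tau} a_i\bigr)\bigl(2^{j(1-t)\tau} b_i\bigr) = \langle \tilde{\Gamma}^{t\tau}(h_1)_i, \psi_{jlz}\rangle\, \langle \Gamma^{(1-t)\tau}(h_2)_i, \psi_{jlz}\rangle.
$$
Summing over $(i,j,l,z)$ and reapplying Parseval will identify this with $\langle \tilde{\Gamma}^{t\tau}(h_1), \Gamma^{(1-t)\tau}(h_2)\rangle_{L^2}$. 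The second factor will be handled identically upon replacing $\tau$ with $-\tau q^\star/q$ and $t$ with $r$, yielding the stated inequality.

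I do not anticipate any serious obstacle; the only point worth flagging is that the free parameters $t,r\in[0,1]$ play no role in the inequality itself, since the splitting $2^{jt\tau}\cdot 2^{j(1-t)\tau} = 2^{j\tau}$ is independent of $t$. They are retained in the statement because downstream applications (e.g.\ in the proof of Theorem~\ref{theo:theineq}) will tune them to redistribute regularity between the two factors and control individual Besov norms. Absolute convergence of every series involved is provided by the Besov hypotheses $h_k\in\mathcal{B}^{s_k,b_k}_{\infty,\infty}$, which simultaneously legitimize Parseval and Hölder; if the right-hand side is $+\infty$ the inequality is trivial.
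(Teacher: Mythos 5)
Your proposal is correct and follows exactly the same route as the paper's proof: expand $\langle h_1,h_2\rangle_{L^2}$ via Parseval into the double sum over wavelet coefficients, factor the weight $2^{j\tau}$ as $2^{j\tau/q}\cdot 2^{-j\tau/q}$, apply Hölder's inequality on the index set $(i,j,l,z)$, and then reassemble each of the two resulting sums as an $L^2$ inner product using the sign-flipping operator $\tilde{\Gamma}$. Your side remark that $t$ and $r$ are vacuous at the level of the inequality (they only redistribute the factor $2^{j\tau}$ inside a product whose value is unchanged) is accurate and is indeed the reason they appear in the statement — they are tuned downstream to match specific Besov-norm bounds on each factor.
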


\begin{proof}
    The proof of this proposition is a simple application of Hölder's inequality on the wavelets coefficients of $h_1$ and $h_2$. For $i\in \{1,...,p\}$ and $k\in \{1,2\}$, writing $\alpha_{h_k}(j,l,w)_i=\langle h_{k_i},\psi_{j,l,w}\rangle_{L^2(\mathcal{X},\mathbb{R}^p)}$ we have
\begin{align*}
    \Big\langle h_1&,h_2\Big\rangle_{L^2(\mathcal{X},\mathbb{R}^p)}  
=\sum \limits_{i=1}^p\sum \limits_{j=0}^\infty \sum \limits_{l=1}^{2^\eta} \sum \limits_{w \in \mathbb{Z}^\eta}\alpha_{h_1}(j,l,w)_i\alpha_{h_2}(j,l,w)_i\\
 \leq & \sum \limits_{i=1}^p\sum \limits_{j=0}^\infty \sum \limits_{l=1}^{2^\eta} \sum \limits_{w \in \mathbb{Z}^\eta}2^{j\frac{\tau}{q}}|\alpha_{h_1}(j,l,w)_i\alpha_{h_2}(j,l,w)_i|^{1/q}2^{-j\frac{\tau}{q}}|\alpha_{h_1}(j,l,w)_i\alpha_{h_2}(j,l,w)_i|^{1/q^\star}\\
 \leq & \left(\sum \limits_{i=1}^p\sum \limits_{j=0}^\infty \sum \limits_{l=1}^{2^\eta} \sum \limits_{w \in \mathbb{Z}^\eta}2^{j\tau}|\alpha_{h_1}(j,l,w)_i\alpha_{h_2}(j,l,w)_i|\right)^{\frac{1}{q}}\\
& \times \left(\sum \limits_{i=1}^p\sum \limits_{j=0}^\infty \sum \limits_{l=1}^{2^\eta} \sum \limits_{w \in \mathbb{Z}^\eta}2^{-\tau\frac{q^\star}{q}}|\alpha_{h_1}(j,l,w)_i\alpha_{h_2}(j,l,w)_i|\right)^{\frac{1}{q^\star}}\\
 = &\Big\langle \tilde{\Gamma}^{t\tau}(h_1),\Gamma^{(1-t)\tau}(h_2)\Big\rangle_{L^2(\mathcal{X},\mathbb{R}^p)}^{\frac{1}{q}}\Big\langle \tilde{\Gamma}^{-r\frac{q^\star}{q}\tau}(h_1),\Gamma^{-(1-r)\frac{q^\star}{q}\tau}(h_2)\Big\rangle_{L^2(\mathcal{X},\mathbb{R}^p)}^{\frac{1}{q^{\star}}}.
\end{align*}
\end{proof}

This proposition is a key result that we will use numerous times all along the paper. An useful corollary is the following.
\begin{corollary}\label{coro:ineq without reg}
   Let $\mu,\nu$ be two probability measures with compact supports in $\mathbb{R}^p$. Then for all $\theta,\theta_1,\theta_2>0$ with $\theta_1,\theta_2$ non integers such that $\theta_1<\theta<\theta_2$,  we have
    \begin{align*}
        \sup \limits_{D \in \mathcal{H}^{\theta}_1}\mathbb{E}_{\substack{X\sim \mu \\ Y\sim \nu}}[D(X)-D(Y)]\leq & C \sup \limits_{D \in \mathcal{H}^{\theta_1}_1}\left(\mathbb{E}_{\substack{X\sim \mu \\ Y\sim \nu}}[D(X)-D(Y)]\right)^\frac{\theta_2-\theta}{\theta_2-\theta_1}\\
        & \times \sup \limits_{D \in \mathcal{H}^{\theta_2}_1}\left(\mathbb{E}_{\substack{X\sim \mu \\ Y\sim \nu}}[D(X)-D(Y)]\right)^\frac{\theta-\theta_1}{\theta_2-\theta_1}.
        \end{align*}
\end{corollary}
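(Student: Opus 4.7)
The plan is the classical dyadic interpolation: given any test discriminator $D \in \mathcal{H}^\theta_1$, I split it into a low-frequency and a high-frequency wavelet part, bound the low-frequency part via the $\theta_2$-Hölder IPM and the high-frequency part via the $\theta_1$-Hölder IPM, then optimize the cut-off scale $J$.

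First, from Lemma \ref{lemma:inclusions} (Hölder-to-Besov direction, valid at every regularity), any $D \in \mathcal{H}^\theta_1$ satisfies $|\alpha_D(j,l,w)| \leq C\cdot 2^{-j(\theta + p/2)}$. Writing $D = D_{<J} + D_{\geq J}$ for the wavelet split at scale $J \geq 0$, this coefficient decay propagates to Besov norms as
\begin{equation*}
\|D_{<J}\|_{\mathcal{B}^{\theta_2}_{\infty,\infty}} \leq C\cdot 2^{J(\theta_2 - \theta)}, \qquad \|D_{\geq J}\|_{\mathcal{B}^{\theta_1}_{\infty,\infty}} \leq C\cdot 2^{-J(\theta - \theta_1)}.
\end{equation*}
The reverse inclusion $\mathcal{B}^{\theta_i}_{\infty,\infty} \hookrightarrow \mathcal{H}^{\theta_i}$ from Lemma \ref{lemma:inclusions} (where the non-integer hypothesis on $\theta_1,\theta_2$ is essential to avoid a logarithmic loss) transfers these bounds to the Hölder scale, so that with $N_{\theta_i} := \sup_{D \in \mathcal{H}^{\theta_i}_1} \mathbb{E}_{X\sim\mu,\,Y\sim\nu}[D(X) - D(Y)]$,
\begin{equation*}
\mathbb{E}_{X\sim\mu,\,Y\sim\nu}[D(X)-D(Y)] \leq C\bigl(2^{J(\theta_2 - \theta)} N_{\theta_2} + 2^{-J(\theta - \theta_1)} N_{\theta_1}\bigr).
\end{equation*}

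Choosing $J$ as the nearest non-negative integer to $\log_2(N_{\theta_1}/N_{\theta_2})/(\theta_2 - \theta_1)$ balances the two summands, and a direct computation yields
\begin{equation*}
\mathbb{E}[D(X)-D(Y)] \leq C\cdot N_{\theta_1}^{(\theta_2 - \theta)/(\theta_2 - \theta_1)}\cdot N_{\theta_2}^{(\theta - \theta_1)/(\theta_2 - \theta_1)}.
\end{equation*}
Taking the supremum over $D \in \mathcal{H}^\theta_1$ concludes.

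The main technical subtlety is the clean use of Lemma \ref{lemma:inclusions} in both directions, since the non-integer hypothesis on $\theta_1, \theta_2$ is precisely what allows equivalent Besov and Hölder norms and avoids a $\log$-correction. A minor issue is the degenerate case $N_{\theta_2} = 0$, which forces $\mu = \nu$ in a weak sense so the bound is trivial, together with the edge case where the optimal $J$ would be negative, handled by taking $J = 0$ and using that $\mathcal{H}^{\theta_2}_1 \subset C\cdot\mathcal{H}^\theta_1$ on the compact union of supports. The compactness assumption on $\mathrm{supp}(\mu) \cup \mathrm{supp}(\nu)$ also ensures that all the wavelet sums at a given scale are finite and that $N_\theta$ is itself finite.
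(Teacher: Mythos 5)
Your proof is correct and takes a genuinely different route. The paper first replaces $\mu$ and $\nu$ by their $\epsilon$-envelopes (Definition~\ref{defi:envelope}) so that the difference becomes a bounded $L^2$ density $f^\epsilon_\mu - f^\epsilon_\nu$, then applies the multiplicative H\"older-on-wavelet-coefficients inequality (Proposition~\ref{prop:Hölder}) to the pair $\bigl(D,\, f^\epsilon_\mu - f^\epsilon_\nu\bigr)$ to obtain the product form directly, and finally lets $\epsilon \to 0$. You argue additively: you split the test function $D = D_{<J} + D_{\geq J}$ at a dyadic scale $J$, bound the pieces in $\mathcal{H}^{\theta_2}_C$ and $\mathcal{H}^{\theta_1}_C$ via the Besov--H\"older equivalence of Lemma~\ref{lemma:inclusions} (a Bernstein--Jackson type estimate), and optimize $J$. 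Your route is arguably cleaner for this particular statement because it sidesteps the envelope regularization entirely: the split happens on the discriminator, the two pieces are honest H\"older functions, and the IPMs apply directly to the raw measures $\mu,\nu$ without ever passing through densities. The paper's choice fits its broader strategy of reusing Proposition~\ref{prop:Hölder} as a general-purpose interpolation tool, at the modest cost of the envelope step. One small imprecision: for the $J<0$ edge case your invocation of $\mathcal{H}^{\theta_2}_1 \subset C\cdot\mathcal{H}^\theta_1$ is not quite the right ingredient. What actually closes it is that the continuous minimizer being negative forces $N_{\theta_1} \lesssim N_{\theta_2}$, so the crude bound $\mathbb{E}[D(X)-D(Y)] \leq C N_{\theta_1}$ from $J=0$ is already dominated by $C' N_{\theta_1}^{(\theta_2-\theta)/(\theta_2-\theta_1)} N_{\theta_2}^{(\theta-\theta_1)/(\theta_2-\theta_1)}$; the conclusion is the same.
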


 We will use this result in the proof of minimax optimality of the GAN estimator of Section \ref{sec:tractable}. Before giving the proof of this result, let us first define the notion of envelope of a measure.

\begin{definition}\label{defi:envelope}
Let $\mu$ be a probability measure compactly supported on $\mathbb{R}^p$. For $\epsilon \in (0,1]$, we call the $\epsilon$-envelope of $\mu$, the probability measure $\mu_\epsilon$ defined for any Borel set $A$ by
    \begin{equation}\label{eq:envelopebis}
    \mu_\epsilon(A) = \frac{1}{\lambda^p(B^p(0,\epsilon))}\int_{\mathds{R}^p} \lambda^p(A\cap B(x,\epsilon)){\rm d}\mu(x).
    \end{equation}
\end{definition}

Then for $\gamma>0$ and $D\in \mathcal{H}^\gamma_1$, we have 
\begin{align*}
    \int_{\mathbb{R}^p} D(x)(d\mu-d\mu_\epsilon)(x) & = \int_{\mathbb{R}^p} \left(D(x)-\frac{1}{\lambda^p(B^p(0,\epsilon))}\int_{B^p(x,\epsilon)} D(y)d\lambda^p(y)\right)d\mu(x)\leq \epsilon^{1\wedge \gamma}.
\end{align*}
Furthermore, $\mu_\epsilon$ is absolutely continuous with respect to $\lambda^p$ and its density $f_\mu^\epsilon$ equals to
\begin{align*}
    f_\mu^\epsilon(y) & = \frac{1}{\lambda^p(B^p(0,\epsilon))}\int_{\mathds{R}^p} \mathds{1}_{\{y\in B^p(x,\epsilon)\}}{\rm d}\mu(x)\\
    & \leq \lambda^p(B^p(0,\epsilon))^{-1},
\end{align*}
so $f_\mu^\epsilon$ is bounded and therefore belongs to $L^2$. Let us now give the proof of Corollary \ref{coro:ineq without reg}.

\begin{proof}[Proof of Corollary \ref{coro:ineq without reg}]
For $\epsilon \in (0,1]$, let $\mu_\epsilon,\nu_\epsilon$ be the $\epsilon$-envelope of $\mu,\nu$ respectively and $f^\epsilon_\mu,f_{\nu}^\epsilon$ the densities of $\mu_\epsilon$ and $\nu_\epsilon$. For all $D\in \mathcal{H}^\theta_1$ we have 
\begin{align*}
    \mathbb{E}_{X\sim \mu}[D(X)]-\mathbb{E}_{Y\sim \nu}[D(Y)]&  \leq \mathbb{E}_{X_\epsilon\sim \mu_\epsilon}[D(X_\epsilon)] - \mathbb{E}_{Y_\epsilon\sim \nu_\epsilon}[D(Y_\epsilon)] +2\epsilon^{1\wedge \theta}\\
    & =\int_{\mathbb{R}^p} D(x)(f_\mu^\epsilon(x)-f_{\nu}^\epsilon(x))d\lambda^p(x) +2\epsilon^{1\wedge \theta}.
\end{align*}
Using Proposition \ref{prop:Hölder} for $h_1=D$, $h_2=f_\mu^\epsilon-f_\nu^\epsilon$, $s_1=\theta$,  $\tau=\theta-\theta_1$ , $s_2=b_1=b_2=0$, $t=r=1$
and $q=\frac{\theta_2-\theta_1}{\theta_2-\theta}$ we have 
\begin{align*}
\int_{\mathbb{R}^p} D(x)(f_\mu^\epsilon(x)-f_\nu^\epsilon(x))d\lambda^p(x)\leq & \left(\int_{\mathbb{R}^p} \tilde{\Gamma}^{\theta-\theta_1}(D)(x)(f_\mu^\epsilon(x)-f_\nu^\epsilon(x))d\lambda^p(x)\right)^\frac{\theta_2-\theta}{\theta_2-\theta_1}\\
& \left(\int_{\mathbb{R}^p} \tilde{\Gamma}^{\theta-\theta_2}(D)(x)(f_\mu^\epsilon(x)-f_\nu^\epsilon(x))d\lambda^p(x)\right)^\frac{\theta-\theta_1}{\theta_2-\theta_1}\\
 \leq & C \left(\sup \limits_{D\in\mathcal{B}^{\theta_1}_{\infty,\infty}(1)} \int_{\mathbb{R}^p}  D(x)(f_\mu^\epsilon(x)-f_\nu^\epsilon(x))d\lambda^p(x)\right)^\frac{\theta_2-\theta}{\theta_2-\theta_1}\\
 & \left(\sup \limits_{D\in\mathcal{B}^{\theta_2}_{\infty,\infty}(1)}\int_{\mathbb{R}^p} D(x)(f_\mu^\epsilon(x)-f_\nu^\epsilon(x))d\lambda^p(x)\right)^\frac{\theta-\theta_1}{\theta_2-\theta_1}.
\end{align*}

From Lemma \ref{lemma:inclusions} we have that $\mathcal{B}^{\theta_1}_{\infty,\infty}(1)\subset \mathcal{H}^{\theta_1}_C$ and for any $D\in\mathcal{H}^{\theta_1}_C$ we have
\begin{align*}
\int_{\mathbb{R}^p}  D(x)(f_\mu^\epsilon(x)-f_\nu^\epsilon(x))d\lambda^p(x)
    & \leq 2\epsilon^{1\wedge\theta_1} + \mathbb{E}_{X\sim \mu}[D(X)]-\mathbb{E}_{Y\sim \nu}[D(Y)]. 
\end{align*}
Doing the same thing for $D\in\mathcal{H}^{\theta_2}_C$ and letting $\epsilon$ go to $0$, we get the result.
\end{proof}

Let us now show that we can gain some weak Besov regularity by paying a logarithmic term.

\begin{proposition}\label{prop:logforweakregularity}
    Let $\mathcal{X}$ be either $\mathbb{R}^p$ or $\mathbb{T}^d$, $f\in L^1(\mathcal{X},\mathbb{R},1)\cap L^2(\mathcal{X},\mathbb{R})$ and $g\in \mathcal{H}^\gamma_1(\mathcal{X},\mathbb{R})$ with $\gamma>0$. Then for all $\tau>0$, $\epsilon\in (0,1)$ we have
$$\int_\mathcal{X}f(x)g(x)d\lambda_{\mathcal{X}}(x)\leq C\log(\epsilon^{-1})^\tau\int_\mathcal{X}f(x)\tilde{\Gamma}^{0,-\tau}_\epsilon(g)(x)d\lambda_{\mathcal{X}}(x)+C_\gamma\epsilon$$
for $C_\gamma$ such that $\mathcal{B}^{\gamma/2}_{\infty,\infty}(\mathcal{X},\mathbb{R},1)\subset \mathcal{H}^0_{C_\gamma}(\mathcal{X},\mathbb{R})$ and
$$\tilde{\Gamma}^{0,-\tau}_\epsilon(g)(x)=\sum \limits_{j=0}^{\log(\epsilon^{-2/\gamma})} \sum \limits_{l=1}^{2^{\text{dim}(\mathcal{X})}} \sum \limits_{w \in \mathbb{Z}^{\text{dim}(\mathcal{X})}}(1+j)^{-\tau}\alpha_g(j,l,w)\frac{\alpha_g(j,l,w)\alpha_f(j,l,w)}{|\alpha_g(j,l,w)\alpha_f(j,l,w)|}\psi_{j,l,w}(x).$$
\end{proposition}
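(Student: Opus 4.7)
The strategy I would follow is a hard frequency truncation of $g$ at the scale $J := \log_2(\epsilon^{-2/\gamma})$. Write $g = g_{\mathrm{low}} + g_{\mathrm{high}}$ where $g_{\mathrm{low}}$ collects the wavelet coefficients of $g$ at scales $j \leq J$ and $g_{\mathrm{high}}$ the remaining tail. Then split
\[
\int_\mathcal{X} f g \, d\lambda_\mathcal{X} = \int_\mathcal{X} f g_{\mathrm{low}} \, d\lambda_\mathcal{X} + \int_\mathcal{X} f g_{\mathrm{high}} \, d\lambda_\mathcal{X},
\]
and show that the first piece is bounded by the main term $C\log(\epsilon^{-1})^\tau \int f \tilde{\Gamma}^{0,-\tau}_\epsilon(g)\,d\lambda_\mathcal{X}$, while the second is absorbed into $C_\gamma \epsilon$.

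For the high-frequency tail, use $g \in \mathcal{H}^\gamma_1 \hookrightarrow \mathcal{B}^\gamma_{\infty,\infty}(C)$ from Lemma~\ref{lemma:inclusions}, which yields $|\alpha_g(j,l,w)| \leq C\,2^{-j(\gamma+\dim(\mathcal{X})/2)}$. Consequently
\[
\|g_{\mathrm{high}}\|_{\mathcal{B}^{\gamma/2}_{\infty,\infty}} = \sup_{j>J} 2^{j(\gamma/2+\dim(\mathcal{X})/2)}\sum_{l=1}^{2^{\dim(\mathcal{X})}}\sup_w |\alpha_g(j,l,w)| \leq C\,2^{-J\gamma/2} = C\epsilon,
\]
by the very choice of $J$. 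Invoking the embedding $\mathcal{B}^{\gamma/2}_{\infty,\infty}(1) \hookrightarrow \mathcal{H}^0_{C_\gamma}$ gives $\|g_{\mathrm{high}}\|_\infty \leq C_\gamma \epsilon$, and since $\|f\|_{L^1} \leq 1$ we obtain $\int f g_{\mathrm{high}}\,d\lambda_\mathcal{X} \leq C_\gamma \epsilon$.

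For the low-frequency part, I would unfold in wavelets and exploit the tailored sign convention hidden in the definition of $\tilde{\Gamma}^{0,-\tau}_\epsilon(g)$. The factor $\alpha_g\alpha_f/|\alpha_g\alpha_f|$ is crafted so that the coefficient of $\psi_{jlw}$ in $\tilde{\Gamma}^{0,-\tau}_\epsilon(g)$ equals $(1+j)^{-\tau}|\alpha_g(j,l,w)|\operatorname{sgn}(\alpha_f(j,l,w))$, and Parseval gives
\[
\int_\mathcal{X} f\,\tilde{\Gamma}^{0,-\tau}_\epsilon(g)\,d\lambda_\mathcal{X} = \sum_{j \leq J}\sum_{l,w} (1+j)^{-\tau}|\alpha_f(j,l,w)||\alpha_g(j,l,w)|,
\]
a sum of non-negative terms. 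On the other hand $\int f g_{\mathrm{low}}\,d\lambda_\mathcal{X} \leq \sum_{j\leq J,l,w}|\alpha_f\alpha_g|$, and for $j \leq J$ one has $(1+j)^\tau \leq (1+J)^\tau \leq C\log(\epsilon^{-1})^\tau$ (the constant $C$ depending on $\gamma,\tau$). Multiplying inside the sum by $(1+j)^{-\tau}(1+j)^\tau$ and bounding the latter by $C\log(\epsilon^{-1})^\tau$ gives $\int f g_{\mathrm{low}}\,d\lambda_\mathcal{X} \leq C\log(\epsilon^{-1})^\tau \int f\,\tilde{\Gamma}^{0,-\tau}_\epsilon(g)\,d\lambda_\mathcal{X}$, and combining both bounds yields the claim.

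There is no serious obstacle in this argument; it is essentially wavelet bookkeeping. The one point to isolate carefully is the sign convention in $\tilde{\Gamma}^{0,-\tau}_\epsilon$: it is precisely what converts the Parseval pairing with $f$ into the sum of non-negative quantities $(1+j)^{-\tau}|\alpha_f\alpha_g|$ and thereby allows a clean coefficient-wise comparison. The only scale-balance to verify is the choice $J = \log_2(\epsilon^{-2/\gamma})$, which matches the $\epsilon$-budget of the tail (via the $\mathcal{B}^{\gamma/2}_{\infty,\infty} \hookrightarrow L^\infty$ embedding) against the logarithmic budget $(1+J)^\tau \lesssim \log(\epsilon^{-1})^\tau$ on the low-frequency side.
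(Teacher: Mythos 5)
Your argument is correct and follows essentially the same route as the paper: a Parseval split at the cutoff $J=\log_2(\epsilon^{-2/\gamma})$, absorption of the tail via the embedding $\mathcal{B}^{\gamma/2}_{\infty,\infty}\hookrightarrow L^\infty$ paired against $\|f\|_{L^1}\leq 1$, and the multiply-and-divide by $(1+j)^\tau$ on the low-frequency block together with the sign convention in $\tilde{\Gamma}^{0,-\tau}_\epsilon$. The only cosmetic difference is that you bound $\|g_{\mathrm{high}}\|_\infty$ directly from its Besov norm, whereas the paper factors out $2^{-J\gamma/2}$ from $|\alpha_g|$ and pairs $f$ with an auxiliary function $\kappa$ whose signs follow $\alpha_f$; both invoke the same embedding and lead to the same tail estimate.
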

\begin{proof}
Let us write $M$ the dimension of the space $\mathcal{X}$. For $k\geq 1$ we have
\begin{align*}
\sum \limits_{j=k}^\infty \sum \limits_{l=1}^{2^{M}} \sum \limits_{w \in \mathbb{Z}^{M}}\alpha_f(j,l,w) \alpha_g(j,l,w)
    & \leq     \sum \limits_{j=k}^\infty \sum \limits_{l=1}^{2^{M}} \sum \limits_{w \in \mathbb{Z}^{M}}C2^{-j(\gamma+M/2)}| \alpha_f(j,l,w)|\\
     & \leq  C2^{-k\gamma/2}   \sum \limits_{j=k}^\infty \sum \limits_{l=1}^{2^{M}} \sum \limits_{w \in \mathbb{Z}^{M}} 2^{-j(\gamma+M)/2}| \alpha_f(j,l,w)|\\
    & = C2^{-k\gamma/2}\int \kappa(x)f(x)d\lambda^p(x)
\end{align*}
for 
$$
\kappa(x)=\sum \limits_{j=k}^\infty \sum \limits_{l=1}^{2^{M}} \sum \limits_{w \in \mathbb{Z}^{M}} 2^{-j(\gamma+M)/2}S(j,l,w)\psi_{j,l,w}(x)
$$
with $S(j,l,w)=\frac{\alpha_f(j,l,w)}{|\alpha_f(j,l,w)|}$.
As $\kappa \in \mathcal{B}^{\gamma/2}_{\infty,\infty}(1)\subset \mathcal{H}^0_{C_\gamma}$, we have 
$$\int \kappa(x)f(x)d\lambda^p(x)\leq \|\kappa\|_\infty \|f\|_{L_1}\leq C_\gamma.$$
Let us write $\gamma_\epsilon= \lfloor \log_2(\epsilon^{-2/\gamma})\rfloor$, we have
\begin{align*}
\int_\mathcal{X}f(x)g(x)d\lambda_{\mathcal{X}}(x) & = \sum \limits_{j=0}^\infty \sum \limits_{l=1}^{2^{M}} \sum \limits_{w \in \mathbb{Z}^{M}}\alpha_g(j,l,w) \alpha_f(j,l,w) \\
    & \leq \sum \limits_{j=0}^{\gamma_\epsilon} \sum \limits_{l=1}^{2^{M}} \sum \limits_{w \in \mathbb{Z}^{M}}\alpha_g(j,l,w) \alpha_f(j,l,w) + C_\gamma\epsilon\\
    & \leq (\gamma_\epsilon+1)^\tau \sum \limits_{j=0}^{\gamma_\epsilon} \sum \limits_{l=1}^{2^{M}} \sum \limits_{w \in \mathbb{Z}^{M}}(1+j)^{-\tau}|\alpha_g(j,l,w) \alpha_f(j,l,w)| + C_\gamma\epsilon
    \\
    & \leq C\log(\epsilon^{-1})^\tau\int_\mathcal{X}f(x)\tilde{\Gamma}^{0,-\tau}_\epsilon(g)(x)d\lambda_{\mathcal{X}}(x)+C_\gamma\epsilon
\end{align*}    
\end{proof}

\subsection{Proof of the bias-variance trade-off for GANs (Section~\ref{firstbound})}
Let us define some useful notations for the GAN estimator.
\begin{definition}\label{defi:L}
For $D\in \mathcal{H}^{\gamma}_1$ and $g\in \mathcal{H}^{\beta+1}_K(\mathbb{T}^d,\mathbb{R}^p)$ define:
\begin{itemize}
\item the discrimination score under $D$ of the push-forward measure $g_{\# U}$ and $g^\star_{\# U}$ as
$$
L(g,D):=\mathbb{E}_{U\sim \mathcal{U}([0,1]^d)}[D(g(U))-D(g^\star(U))],
$$
\item the discretized version of the discrimination score
$$L_n(g,D):=\frac{1}{n}\sum \limits_{i=1}^n D(X_i)-D(g(U_i)),$$
with $U_i\sim \mathcal{U}([0,1]^d)$ and $X_i\sim g^\star_{\# U}$ i.i.d. random variables, $i=1,...,n$, 
\item the optimal discriminator in $\mathcal{H}^{\gamma}_1$ between $g_{\# U}$
$$D^\star_g \in \argmax \limits_{D\in \mathcal{H}^{\gamma}_1} L(g,D),$$  
\item the optimal discriminator in $\mathcal{D}$
of the empirical approximation of the expectation
$$ \hat{D}_g \in \argmax \limits_{D\in \mathcal{D}} L_n(g,D).$$
\end{itemize}
\end{definition}

 When the context is clear we will write $\mathbb{E}[D(g(U))]$ instead of $\mathbb{E}_{U\sim \mathcal{U}([0,1]^d)}[D(g(U))]$.

\subsubsection{Proof of Theorem \ref{theo:boundexpecterror}}\label{sec:theo:boundexpecterror}

Let us state a first proposition that gives a bound on the expected difference between $\mathbb{E}[D(g(U))]$ and its empirical approximation $\frac{1}{n}\sum \limits_{i=1}^n D(g(U_i))$, for all $g\in \mathcal{G}$ and $D\in \mathcal{D}$.
\begin{proposition}\label{prop:meanvsiid} If $\mathcal{G}\subset \mathcal{H}^{\beta+1}_K(\mathbb{T}^d,\mathbb{R}^p)$ and $\mathcal{D} \subset \mathcal{H}^{\gamma}_1(B^p(0,K),\mathbb{R})$,  for i.i.d. random variables $U$ and $U_i$ that are uniform on $[0,1]^d$, we have
\begin{align*}  
& \mathbb{E}_{U_i}\Big[\sup \limits_{\substack{g\in \mathcal{G} \\D\in \mathcal{D}}} \mathbb{E}_{U}[D(g(U)]-\frac{1}{n}\sum \limits_{i=1}^n D(g(U_i))\Big]\\
& \leq  C \min \limits_{\delta \in [0,1]}  \Biggl\{ \sqrt{\frac{(\delta+1/n)^2\log(n|\mathcal{G}_{1/n}| |\mathcal{D}_{1/n}|)}{n}}+\frac{1}{\sqrt{n}}(1+\delta^{(1-\frac{d}{2\gamma})}+\log(\delta^{-1})\mathds{1}_{\{2\gamma= d\}})\Biggl\}.
\end{align*}
    
\end{proposition}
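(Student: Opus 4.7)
The bound has the hallmarks of a classical empirical-process bound via symmetrization, peeling, and entropy integral, with a parameter $\delta$ that interpolates between a ``parametric'' covering regime (large scales, cardinality $|\mathcal{G}_{1/n}||\mathcal{D}_{1/n}|$) and a ``nonparametric'' Hölder regime (small scales, smoothness $\gamma$ on a $d$-dimensional domain). Accordingly, my plan is as follows.

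First, I would apply the standard symmetrization inequality (ghost sample plus Rademacher signs) to reduce the quantity
$$
\mathbb{E}\sup_{g,D}\Bigl(\mathbb{E}_U[D(g(U))]-\tfrac{1}{n}\textstyle\sum_i D(g(U_i))\Bigr)
$$
to (up to a factor $2$) the expected Rademacher complexity $\mathbb{E}\sup_{g,D}\tfrac{1}{n}\sum_i \varepsilon_i\,D(g(U_i))$ of the composition class $\mathcal{F}:=\{D\circ g:g\in\mathcal{G},D\in\mathcal{D}\}$, which is a class of uniformly bounded functions on $[0,1]^d$. Because $\gamma\ge 1$, any $D\in\mathcal{H}^\gamma_1$ is $1$-Lipschitz, and composition with $K$-Lipschitz $g$ yields $D\circ g\in\mathcal{H}^{\gamma}_{C}([0,1]^d,\mathbb{R})$ for a constant $C$ depending on $(K,\gamma,\beta)$, so $\mathcal{F}$ inherits Hölder smoothness $\gamma$ with a controlled norm.

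Next, I would perform a two-scale decomposition. Pick a parameter $\delta\in[0,1]$. Using the $1/n$-covers $\mathcal{G}_{1/n}$ and $\mathcal{D}_{1/n}$, associate to each $(g,D)$ a nearest pair $(g^{(n)},D^{(n)})$, and also a coarser representative $(g_\delta,D_\delta)$ (obtained by grouping the fine net at scale $\delta$, so that $\|g-g_\delta\|_\infty\le\delta+1/n$ and $\|D-D_\delta\|_\infty\le\delta+1/n$). Write
$$
D\circ g \;=\; D_\delta\circ g_\delta \;+\; R(D,g),\qquad \|R(D,g)\|_\infty\le C(\delta+1/n),
$$
using the Lipschitz property of $D$ and $g$ together with triangle inequality. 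The coarse part $\{D_\delta\circ g_\delta\}$ is indexed by at most $|\mathcal{G}_{1/n}||\mathcal{D}_{1/n}|$ pairs; after centering against a common fine reference, each entry of the Rademacher sum has variance bounded by $C(\delta+1/n)^2$ and a uniform envelope of the same order, so a Bernstein/Hoeffding maximal inequality combined with a union bound over the $n|\mathcal{G}_{1/n}||\mathcal{D}_{1/n}|$ possible pairs (the extra $n$ accommodating the $1/n$-precision discretization of ghost sample positions) yields the first term
$\sqrt{(\delta+1/n)^2\log(n|\mathcal{G}_{1/n}||\mathcal{D}_{1/n}|)/n}$.

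For the microscopic residual class $\mathcal{R}_\delta:=\{R(D,g)\}$, I would apply Dudley's entropy integral. Each element lies in $\mathcal{H}^\gamma_C$ with sup-norm at most $C(\delta+1/n)$, and the $L^\infty$ (hence $L^2(P_n)$) entropy of a Hölder ball on $[0,1]^d$ satisfies $\log N(\varepsilon,\mathcal{H}^\gamma_C,L^\infty)\lesssim \varepsilon^{-d/\gamma}$. Dudley's bound therefore gives
$$
\mathbb{E}\sup_{r\in\mathcal{R}_\delta}\tfrac{1}{n}\textstyle\sum_i\varepsilon_i r(U_i)\;\lesssim\;\tfrac{1}{\sqrt{n}}\int_0^{C(\delta+1/n)}\varepsilon^{-d/(2\gamma)}\,\mathrm{d}\varepsilon,
$$
which evaluates to $\delta^{1-d/(2\gamma)}/\sqrt{n}$ when $\gamma>d/2$, to $\log(\delta^{-1})/\sqrt{n}$ when $2\gamma=d$, and where the $1/\sqrt{n}$ summand takes care of the complementary regime $\gamma<d/2$ by reverting to the crude global envelope $\|f\|_\infty\le K$. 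Summing the coarse and fine contributions and optimizing over $\delta\in[0,1]$ gives the claimed bound.

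The main technical difficulty is the microscopic step: one must keep the $L^\infty$ envelope $(\delta+1/n)$ of the residual class aligned with the effective radius used in the Dudley integral, which forces a careful bookkeeping of the Lipschitz propagation through the composition $D\circ g$ (and thus justifies the restriction $\gamma\ge 1$), and requires the separate treatment of the three regimes $\gamma\gtreqless d/2$. The rest is routine once the covering numbers of the composed class are shown to be controlled by $|\mathcal{G}_{1/n}||\mathcal{D}_{1/n}|$ at coarse scales and by the Hölder entropy at fine scales.
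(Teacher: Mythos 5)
Your toolkit is the right one (nets, Hoeffding plus a union bound, H\"older entropy, chaining at a cut-off scale $\delta$), but the two scales are assembled backwards, and both halves of your decomposition break as written. First, the coarse class $\{D_\delta\circ g_\delta\}$ does \emph{not} have envelope or variance of order $\delta+1/n$: its elements form a $\delta$-net of the whole composite class, so their mutual distances (and their distance to any single ``common fine reference'') are of the order of the diameter of the class, i.e.\ order one, not $\delta$. Centering against one fixed function cannot shrink the envelope, so your finite-class maximal inequality only yields $\sqrt{\log(n|\mathcal{G}_{1/n}||\mathcal{D}_{1/n}|)/n}$ without the factor $(\delta+1/n)$ --- and that factor is exactly the point of the first term (it is what later allows $\delta\asymp n^{-\gamma/(2\beta+d)}$ to tame the entropy). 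Second, your residual step fails whenever $2\gamma\le d$: the Dudley integral $\int_0^{C(\delta+1/n)}\epsilon^{-d/(2\gamma)}\,d\epsilon$ diverges at $0$ in that regime, so it does not ``evaluate to $\log(\delta^{-1})/\sqrt{n}$'' when $2\gamma=d$, and for $2\gamma<d$ the crude envelope bound gives only $\delta+1/n$, which is not dominated by $\delta^{1-d/(2\gamma)}/\sqrt{n}$ for general $\delta\in[0,1]$.

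The working assembly (which is the paper's) swaps the roles. The $(\delta+1/n)$ envelope legitimately belongs to the \emph{small} functions: by the truncated chaining (Dudley) inequality, the supremum over the whole class is bounded by the supremum over composites with $\|D\circ g\|_\infty\le\delta$ plus the entropy integral over the \emph{large} scales $[\delta/4,\,2]$. The small-norm part is then discretized with the $1/n$-nets of $\mathcal{G}$ and $\mathcal{D}$ (the approximants have sup-norm $\le\delta+2/n$), and Hoeffding plus a union bound over at most $|\mathcal{G}_{1/n}||\mathcal{D}_{1/n}|$ pairs, with confidence level $1/n$, gives the first term with its $(\delta+1/n)$ factor. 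The global structure is handled by $\frac{1}{\sqrt{n}}\int_{\delta/4}^{2}\epsilon^{-d/(2\gamma)}\,d\epsilon$, whose \emph{lower} limit produces $\delta^{1-d/(2\gamma)}$ when $2\gamma<d$, $\log(\delta^{-1})$ when $2\gamma=d$, and a constant when $2\gamma>d$, with no divergence issue. (Your initial symmetrization is harmless but unnecessary; the paper works directly with the sub-Gaussian process $\frac{1}{\sqrt{n}}\sum_i\bigl(D(g(U_i))-\mathbb{E}[D(g(U))]\bigr)$.) If you rewire your argument so that the union bound carries the small-envelope class and the entropy integral runs over scales above $\delta/4$, your proof becomes essentially the paper's.
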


\begin{proof} We are first going to use Dudley's inequality bound (\citeproofs{vaart2023empirical}, chapter 2). For $g\in \mathcal{H}^{\beta+1}_K$ and $D\in \mathcal{H}^\gamma_1$, we have from the Faa di Bruno formula that $D\circ g \in \mathcal{H}^\gamma_{C}$. Let us take $$\Theta=\{D\circ g \in \mathcal{H}^{\gamma}_{C} \ | \ D\in \mathcal{D},g\in \mathcal{G}\}$$ equipped with the metric $M=\|\cdot\|_\infty$. For $\theta=D\circ g \in \Theta$, we define the zero-mean sub-Gaussian process (e.g. \citepproofs{Van2016prob}, chapter 5) as $X_\theta=\frac{1}{\sqrt{n}} \sum \limits_{i=1}^n D(g(U_i))-\mathbb{E}[D(g(U))]$ with $U_i\sim \mathcal{U}([0,1]^d)$ i.i.d. Then as $\sup \limits_{\theta,\theta^{'}\in \Theta} M(\theta,\theta^{'})\leq 2$, using Dudley's bound we obtain $\forall \delta \in [0,2]$,
\begin{align}\label{eq:dudleyf}
\mathbb{E}\Big[\sup \limits_{\substack{g\in \mathcal{G} \\D\in \mathcal{D}}} \mathbb{E}[D(g(U)]-\frac{1}{n}\sum \limits_{i=1}^n D(g(U_i)) \Big]\leq & 2 \mathbb{E}\Big[\sup \limits_{\substack{g\in \mathcal{G},\ D\in \mathcal{D}\nonumber \\ \|D\circ g\|_\infty\leq \delta}} \mathbb{E}[D(g(U))]-\frac{1}{n}\sum \limits_{i=1}^n D(g(U_i))\Big]\\
& +\frac{16}{\sqrt{n}}\int_{\delta/4}^{2}\sqrt{\log(|\Theta_\epsilon|)}d\epsilon.
\end{align}
Let us take care of the first term of \eqref{eq:dudleyf}. Let $\mathcal{G}_{1/n}$ and $\mathcal{D}_{1/n}$ be minimal $1/n$ covering of the class $\mathcal{G}$ and $\mathcal{D}$ for the distance $\|\cdot\|_\infty$. Therefore, for any $ g \in \mathcal{G}$, $D \in \mathcal{D}$, $\exists g_n\in \mathcal{G}_{1/n}$ and $D_n\in \mathcal{D}_{1/n}$ such that $\|g-g_n\|_\infty\leq \frac{1}{n}$ and $\|D-D_n\|_\infty\leq \frac{1}{n}$. As $\mathcal{H}_1^\gamma \subset \text{Lip}_1$, we have for $\alpha>0$ 
\begin{align*}
     & \mathbb{P}\Big(\sup \limits_{\substack{g\in \mathcal{G},\ D\in \mathcal{D}\\ \|D\circ g\|_\infty\leq \delta}} \mathbb{E}[D(g(U))]-\frac{1}{n}\sum \limits_{i=1}^n D(g(U_i))\geq \alpha \Big)\\
     & \leq \mathbb{P}\Big( \sup \limits_{\substack{g\in \mathcal{G},\ D\in \mathcal{D}\\ \|D\circ g\|_\infty\leq \delta}}\mathbb{E}[D_n(g_n(U))]-\frac{1}{n}\sum \limits_{i=1}^n D_n(g_n(U_i))\geq \alpha -\frac{4}{n} \Big)\\
    & \leq \mathbb{P}\Big( \substack{\exists\ g_n \in \mathcal{G}_{1/n}, D_n\in \mathcal{D}_{1/n}\\ \|D_n\circ g_n \|_\infty\leq \delta+\frac{2}{n}} \text{ with } \mathbb{E}[D_n(g_n(U))]-\frac{1}{n}\sum \limits_{i=1}^n D_n(g_n(U_i))\geq \alpha -\frac{4}{n} \Big)\\
    & \leq \sum \limits_{\substack{g_n \in \mathcal{G}_{1/n}, D_n\in \mathcal{D}_{1/n}\\ \|D_n\circ g_n \|_\infty\leq \delta+\frac{2}{n}}}  \mathbb{P}\Big( \mathbb{E}[D_n(g_n(U))]-\frac{1}{n}\sum \limits_{i=1}^n D_n(g_n(U_i))\geq \alpha -\frac{4}{n} \Big)\\
    & \leq |\mathcal{G}_{1/n}| |\mathcal{D}_{1/n}|  \exp\left(-\frac{n(\alpha -\frac{4}{n})^2}{2(\delta+\frac{2}{n})^2}\right),
\end{align*}
applying Hoeffding's inequality to the random variables $Z_i=D_n(g_n(U_i))$, $i=1,...,n$. Then for $\tau \in (0,1]$, solving $|\mathcal{G}_{1/n}| |\mathcal{D}_{1/n}|  \exp\left(-\frac{n(\alpha -\frac{4}{n})^2}{2(\delta+\frac{2}{n})^2}\right)=\tau$ with respect to $\alpha$, we obtain that with probability at least $1-\tau$
\begin{align*}
& \sup \limits_{\substack{g\in \mathcal{G},\ D\in \mathcal{D}\\ \|D\circ g\|_\infty\leq \delta}} \mathbb{E}[D(g(U))]-\frac{1}{n}\sum \limits_{i=1}^n D(g(U_i))\\
& \leq C\left(\sqrt{\frac{(\delta+1/n)^2\log(|\mathcal{G}_{1/n}| |\mathcal{D}_{1/n}|/\tau)}{n} }+\frac{1}{n}\right).
\end{align*}
Taking $\tau=1/n$, we deduce that
\begin{align*}
& \mathbb{E}\Big[\sup \limits_{\substack{g\in \mathcal{G},\ D\in \mathcal{D}\\ \|D\circ g\|_\infty\leq \delta}} \mathbb{E}[D(g(U))]-\frac{1}{n}\sum \limits_{i=1}^n D(g(U_i))\Big]\\
& \leq C\left(\sqrt{\frac{(\delta+1/n)^2\log(|\mathcal{G}_{1/n}| |\mathcal{D}_{1/n}|n)}{n} }+\frac{1}{n}\right).
\end{align*}
For the second term of \eqref{eq:dudleyf}, as $\Theta\subset\mathcal{H}^{\gamma}_C$ we have $\log(|\Theta_\epsilon|)\leq C \log(|(\mathcal{H}^{\gamma}_{1})_\epsilon|)$ and from \citeproofs{donoho1998data} we know that $\log(|(\mathcal{H}^{\gamma}_{1})_\epsilon|)\leq C \epsilon ^{-\frac{d}{\gamma}}$ so
\begin{align*}
\frac{16}{\sqrt{n}}\int_{\delta/4}^{2}\sqrt{\log(|\Theta_\epsilon|)}d\epsilon \leq & \frac{C}{\sqrt{n}}\int_{\delta/4}^{2}\epsilon ^{-\frac{d}{2\gamma}}d\epsilon\\
 = & \frac{C\mathds{1}_{\{2\gamma\neq d\}}}{\sqrt{n}(1-\frac{d}{2\gamma})}\left(2^{1-\frac{d}{2\gamma}}-\left(\frac{\delta}{4}\right)^{(1-\frac{d}{2\gamma})}\right)\\
 & +\frac{C\mathds{1}_{\{2\gamma= d\}}}{\sqrt{n}}(\log(2)-\log(\delta/4)).
\end{align*}
Therefore we have $$\frac{16}{\sqrt{n}}\int_{\delta/4}^2\sqrt{\log(|\Theta_\epsilon|)}d\epsilon\leq  \frac{C}{\sqrt{n}}(1+\delta^{(1-\frac{d}{2\gamma})}+\log(4\delta^{-1})\mathds{1}_{\{2\gamma= d\}}).$$
\end{proof}
We see that having small classes of functions allows the empirical approximation to be close to the expectation. As we would like our estimator $\hat{g}\in \argmin \limits_{g\in \mathcal{G}}\ \max \limits_{D\in \mathcal{D}} \sum \limits_{i=1}^n D(X_i)-D(g(U_i))$ to obtain a small error $\mathbb{E}_{U\sim \mathcal{U}([0,1]^d)}[D(\hat{g}(U))-D(g^\star(U))]$ for all $D\in \mathcal{D}$, it is then crucial that $\mathbb{E}\Big[\sup \limits_{\substack{g\in \mathcal{G} \\D\in \mathcal{D}}} \mathbb{E}[D(g(U)]-\frac{1}{n}\sum \limits_{i=1}^n D(g(U_i))\Big]$ be small. From Proposition \ref{prop:meanvsiid} we can now give the proof of Theorem \ref{theo:boundexpecterror}.

\begin{proof}[Proof of Theorem \ref{theo:boundexpecterror} ]
Let $\overline{D}_g\in \argmax \limits_{D\in \mathcal{D}} L(g,D)$ and $\overline{g} \in \argmin \limits_{g\in \mathcal{G}} L(g,D^\star_g)$, we can decompose the objective as 
\begin{align*}
L(\hat{g},D^\star_{\hat{g}})  = &L(\hat{g},D^\star_{\hat{g}})-    L(\hat{g},\overline{D}_{\hat{g}}) + L(\hat{g},\overline{D}_{\hat{g}}) - L(\overline{g},D^\star_{\overline{g}}) + L(\overline{g},D^\star_{\overline{g}})\\
\leq & \Delta_\mathcal{D} +L(\hat{g},\overline{D}_{\hat{g}}) - L_n(\hat{g},\overline{D}_{\hat{g}}) + L_n(\hat{g},\overline{D}_{\hat{g}})- L(\overline{g},\overline{D}_{\overline{g}}) \\
&+L(\overline{g},\overline{D}_{\overline{g}}) -L(\overline{g},D^\star_{\overline{g}}) +\Delta_\mathcal{G}\\
 \leq & \Delta_\mathcal{D} +L(\hat{g},\overline{D}_{\hat{g}}) - L_n(\hat{g},\overline{D}_{\hat{g}}) + L_n(\hat{g},\hat{D}_{\hat{g}})- L(\overline{g},\overline{D}_{\overline{g}}) +\Delta_\mathcal{G}\\
\leq  &\Delta_\mathcal{D} +L(\hat{g},\overline{D}_{\hat{g}}) - L_n(\hat{g},\overline{D}_{\hat{g}}) + L_n(\overline{g},\hat{D}_{\overline{g}})- L(\overline{g},\overline{D}_{\overline{g}}) +\Delta_\mathcal{G}  
\\
 \leq & \Delta_\mathcal{D} +L(\hat{g},\overline{D}_{\hat{g}}) - L_n(\hat{g},\overline{D}_{\hat{g}}) + L_n(\overline{g},\hat{D}_{\overline{g}})- L(\overline{g},\hat{D}_{\overline{g}}) +\Delta_\mathcal{G}
\end{align*}
using the definition of $\hat{g}$ and $\hat{D}$. Then, we have
\begin{align*}
\mathbb{E}\Big[&L(\hat{g},\overline{D}_{\hat{g}}) - L_n(\hat{g},\overline{D}_{\hat{g}}) \Big]\\
= &  \mathbb{E}\Big[\mathbb{E}[\overline{D}_{\hat{g}}(\hat{g}(U))-\overline{D}_{\hat{g}}(g^\star(U))] -\frac{1}{n}\sum \limits_{i=1}^n \overline{D}_{\hat{g}}(\hat{g}(U_i))-\overline{D}_{\hat{g}}(g^\star(Y_i))\Big]\\
\leq & \mathbb{E}\Big[\sup \limits_{\substack{g\in \mathcal{G} \\D\in \mathcal{D}}} \mathbb{E}[D(g(U)]-\frac{1}{n}\sum \limits_{i=1}^n D(g(U_i))\Big] \\
& + \mathbb{E}\Big[\sup \limits_{D\in \mathcal{D}}\mathbb{E}[D(g^\star(U)]-\frac{1}{n}\sum \limits_{i=1}^n D(g^\star(Y_i))\Big].
\end{align*}
The first term of the precedent inequality can be controlled by Proposition \ref{prop:meanvsiid}. As $Y_i\sim \mathcal{U}([0,1]^d)$, doing the same derivation as in the proof of Proposition \ref{prop:meanvsiid} for a fixed $g^\star \in \mathcal{H}^{\beta+1}_1$, we can also bound the second term by the same quantity. Then, doing the same thing for $\mathbb{E}\Big[L_n(\overline{g},\hat{D}_{\overline{g}})- L(\overline{g},\hat{D}_{\overline{g}}) \Big]$, we get the result.
\end{proof}

\subsubsection{Proof of Proposition \ref{prop:appvscov}}\label{sec:prop:appvscov}
This result is already well known (Theorem 4.2 in \citeproofs{devore1989optimal}) but we give a new proof in the case $\eta$ non integer using wavelet decomposition. The case $\eta$ integer can also be proven the same way but at the cost of losing a factor $log(\epsilon)^2$ coming from the fact that $\mathcal{B}^{\eta}_{\infty,\infty}$ does not inject in $\mathcal{H}^\eta$ but $\mathcal{B}^{\eta,2}_{\infty,\infty}$ does.
\begin{proof}[Proof of Proposition \ref{prop:appvscov}]
Let $f\in \mathcal{H}^\eta_1$ and $g\in \mathcal{F}$, as $\mathcal{H}^0 \xhookrightarrow{} \mathcal{B}^0_{\infty,\infty}$ we have that there exists $C_0>0$ such that
\begin{align*}
        \|f-g\|_\infty & \geq C_0^{-1}\|f-g\|_{\mathcal{B}^{0}_{\infty,\infty}}= C_0^{-1}\sup \limits_{j\geq 0} 2^{jd/2}\sum \limits_{l=1}^{2^d} \sup \limits_{z\in \{0,...,2^j-1\}^d} |\alpha_g(j,l,z)-\alpha_f(j,l,z)|.
\end{align*}
Therefore, $\forall f\in \mathcal{H}^\eta_1$, there exists $g\in \mathcal{F}$ such that 
$$
\sum \limits_{l=1}^{2^d}2^{jd/2}|\alpha_g(j,l,z)-\alpha_f(j,l,z)|\leq C_0\|f-g\|_\infty\leq C_0 \epsilon,
$$
for all $j\geq 0$ and $z\in \{0,...,2^j-1\}^d$. On the other hand, as $\mathcal{B}^{\eta}_{\infty,\infty}=\mathcal{H}^\eta$, there exists $ C_\eta>0$ such that, if 
$ f$ verifies
$$\sum  \limits_{l=1}^{2^d}|\alpha_f(j,l,z)| \leq C_\eta 2^{-j(\eta+d/2)},\ \forall j\geq 0,z\in \{0,...,2^j-1\}^d,$$ then $f \in \mathcal{H}^\eta_1$.
Therefore, for all $k\in \{0,...,\lfloor 2^{-j\eta-(d+2)}C_\eta(C_0\epsilon)^{-1}\rfloor\}^{2^d}$, as 
$$
\sum  \limits_{l=1}^{2^d}k_l2^{2-jd/2}C_0\epsilon\leq \sum \limits_{l=1}^{2^d} C_\eta2^{-j(\eta+d/2)-d} \leq C_\eta 2^{-j(\eta+d/2)},
$$
then there exists $g\in \mathcal{F}$ such that for all $ j\geq 0,z\in \{0,...,2^j-1\}^d$, 
$$
2^{jd/2}\sum \limits_{l=1}^{2^d}|\alpha_g(j,l,z)-k_l2^{2-jd/2}C_0\epsilon|\leq C_0 \epsilon.
$$
Furthermore, we have
$$
2^{jd/2}\sum \limits_{l=1}^{2^d}|\alpha_g(j,l,z)-k_l2^{2-jd/2}C_0\epsilon|\leq C_0 \epsilon$$
implies $$ \ 2^{jd/2}|\alpha_g(j,l,z)-k_l2^{2-jd/2}C_0\epsilon|\leq C_0 \epsilon , \forall l \in \{1,...,2^d\},$$
which implies $$\ 2^{jd/2}|\alpha_g(j,l,z)-(k_l+1)2^{2-jd/2}C_0\epsilon|> C_0 \epsilon , \forall l \in \{1,...,2^d\}.
$$
Then for each sequence  $$\alpha \in \bigotimes \limits_{j=1}^{\log_2(\lfloor (C_\eta^{-1}C_0\epsilon)^{-\frac{1}{\eta}}\rfloor)-d/\eta} \bigotimes \limits_{z\in\{0,...,2^j-1\}^d} \{0,...,\lfloor 2^{-j\eta-(d+2)}C_\eta(C_0\epsilon)^{-1}\rfloor\}^{2^d},$$ there exists a different $ g \in \mathcal{F}$ such that
$$
\|g-\sum \limits_{j=1}^{\log_2(\lfloor (C_\eta^{-1}C_0\epsilon)^{-\frac{1}{\eta}}\rfloor)}\sum \limits_{l=1}^{2^d} \sum \limits_{z\in\{0,...,2^j-1\}^d} \alpha_{jlz}C_0 2^{2-jd/2}\epsilon\ \psi^{per}_{jlk}\|_\infty\leq \epsilon.
$$
 We stop at the frequency $j=\log_2(\lfloor (C_\eta^{-1}C_0\epsilon)^{-\frac{1}{\eta}}\rfloor)-d/\eta$, as
for $f\in \mathcal{H}^\eta_1$ and $j\geq \log_2(\lfloor (C_\eta^{-1}C_0\epsilon)^{-\frac{1}{\eta}}\rfloor)-d/\eta$ we have 
$$2^{jd/2}\sum  \limits_{l=1}^{2^d}|\alpha_f(j,l,z)|\leq 2^{jd/2}C_\eta 2^{-j(\eta+d/2)} = C_\eta 2^{d-j\eta}=C_0\epsilon,$$
so $\mathcal{F}$ does not have to cover these frequencies.
Then for $\theta \in (0,\frac{C_0\epsilon}{3})$, the $\theta$-covering number of $\mathcal{F}$ verifies
\begin{align*}
    |\mathcal{F}_\theta| & \geq \prod \limits_{j=0}^{\log_2(\lfloor (C_\eta^{-1}C_0\epsilon)^{-\frac{1}{\eta}}\rfloor)-(d+3)/\eta}\prod  \limits_{z\in \{0,...,2^j-1\}^d} \lfloor 2^{-j\eta-(d+2)}C_\eta(C_0\epsilon)^{-1}\rfloor^{2^d}\\
    & \geq \prod   \limits_{z\in \{0,...,2^{\log_2(\lfloor (C_\eta^{-1}C_0\epsilon)^{-\frac{1}{\eta}}\rfloor)-(d+3)/\eta}\}^d}2\\
    & = \prod   \limits_{z\in \{0,...,\lfloor (C_\eta^{-1}C_0\epsilon)^{-\frac{1}{\eta}}\rfloor 2^{-(d+3)/\eta}\}^d}2\\
    & \geq C 2^{\left(\lfloor (C_\eta^{-1}C_0\epsilon)^{-\frac{1}{\eta}}\rfloor\right)^d}.
\end{align*}
We then deduce that 
$$
\log(|\mathcal{F}_\theta|)\geq C \epsilon^{-\frac{d}{\eta}}. 
$$
\end{proof}

\subsection{Proofs of the results on the properties of $\hat{\mathcal{F}}^{\eta,\delta}_{per}$ (Section \ref{sec:wavelets})}
\subsubsection{Proof of Lemma \ref{lemma:deltaG}}\label{sec:lemma:deltaG}
\begin{proof}
Let $g\in \mathcal{B}^{\eta}_{\infty,\infty}(K)$ and 
$D\in \mathcal{H}^\gamma_1$, using a Taylor expansion we have 
    \begin{align*}
      \mathbb{E}&\Big[D(g(U))-D(g^\star(U))\Big]\\
      & = \mathbb{E}\Big[\int_0^1 \langle\nabla D(g^\star(U)+t(g(U)-g^\star(U))),g(U)-g^\star(U) \rangle dt \Big]\\
      & = \int_0^1 \sum \limits_{i=1}^p \mathbb{E}\Big[ \partial_i D(g^\star(U)+t(g(U)-g^\star(U)))(g(U)-g^\star(U))_i  \Big] dt. 
    \end{align*}
Let us fix a $t\in [0,1]$ and write the functions in their (periodized) wavelets expansions:
    $$
    D_i^t(u):=\partial_i D(g^\star(u)+t(g(u)-g^\star(u))) = \sum \limits_{j=0}^\infty \sum \limits_{l=1}^{2^d}\sum \limits_{z\in \{0,...,2^j-1\}^d}\alpha_{D^t_i}(j,l,z)\psi_{jlz}^{per}(u),
    $$
$$
g(u)_i = \sum \limits_{j=0}^\infty \sum \limits_{l=1}^{2^d}\sum \limits_{z\in \{0,...,2^j-1\}^d}\alpha_g(j,l,w)_{i}\psi_{jlz}^{per}(u),
$$
and 
$$g^\star(u)_i = \sum \limits_{j=0}^\infty \sum \limits_{l=1}^{2^d}\sum \limits_{z\in \{0,...,2^j-1\}^d}\alpha_{g^\star}(j,l,w)_{i}\psi_{jlz}^{per}(u).$$
As $g-g^\star\in \mathcal{H}^{\eta}_K \subset \mathcal{B}^{\eta}_{\infty,\infty}(C)$ and $D_{i}^t\in  \mathcal{H}^{\gamma-1}_C \subset \mathcal{B}^{\gamma-1}_{\infty,\infty}(C)$, we have that for all $i\in\{1,...p\}$ and $j\geq 0,$ 
$$
\alpha_{D^t_i}(j,l,z)(\alpha_{g}(j,l,w)_{i}-\alpha_{g^\star}(j,l,w)_{i})\leq C2^{-j(\gamma-1+\eta+d)}.
$$ 
Then, using the wavelet expansions we get
\begin{align*}
      \mathbb{E} & \Big[ \partial_i D(g^\star(U)+t(g(U)-g^\star(U)))(g(U)-g^\star(U))_i  \Big] \\
       = &\sum \limits_{j=0}^\infty \sum \limits_{l=1}^{2^d}\sum \limits_{z\in \{0,...,2^j-1\}^d}\alpha_{D^t_i}(j,l,z)(\alpha_{g}(j,l,w)_{i}-\alpha_{g^\star}(j,l,w)_{i})\\
        \leq &\sum \limits_{j=0}^{\log_2(\lfloor \epsilon^{-1}\rfloor)} \sum \limits_{l=1}^{2^d}\sum \limits_{z\in \{0,...,2^j-1\}^d}\alpha_{D^t_i}(j,l,z)(\alpha_{g}(j,l,w)_{i}-\alpha_{g^\star}(j,l,w)_{i})\\
        & +  \sum \limits_{j=\log_2(\lfloor \epsilon^{-1}\rfloor)+1}^\infty \sum \limits_{l=1}^{2^d}\sum \limits_{z\in \{0,...,2^j-1\}^d}C2^{-j(\gamma-1+\eta+d)}\\
     \leq &\sum \limits_{j=0}^{\log_2(\lfloor \epsilon^{-1}\rfloor)} \sum \limits_{l=1}^{2^d}\sum \limits_{z\in \{0,...,2^j-1\}^d}\alpha_{D^t_i}(j,l,z)(\alpha_{g}(j,l,w)_{i}-\alpha_{g^\star}(j,l,w)_{i}) + C \epsilon^{\eta+\gamma-1}.
\end{align*}
Applying twice the Cauchy-Schwarz inequality we get
\begin{align*}
\sum \limits_{j=0}^{\log_2(\lfloor \epsilon^{-1}\rfloor)} & \sum \limits_{l=1}^{2^d}\sum \limits_{z\in \{0,...,2^j-1\}^d}\alpha_{D^t_i}(j,l,z)(\alpha_{g}(j,l,w)_{i}-\alpha_{g^\star}(j,l,w)_{i})\\
    \leq & \sum \limits_{j=0}^{\log_2(\lfloor \epsilon^{-1}\rfloor)} \left( \sum \limits_{l=1}^{2^d} \sum \limits_{z\in \{0,...,2^j-1\}^d} |\alpha_{D^t_i}(j,l,z)|^2\right)^{1/2}\\
    & \times \left( \sum \limits_{l=1}^{2^d} \sum \limits_{z\in \{0,...,2^j-1\}^d}|\alpha_{g}(j,l,w)_{i}-\alpha_{g^\star}(j,l,w)_{i}|^2\right)^{1/2}\\
         \leq & \left(\sum \limits_{j=0}^{\log_2(\lfloor \epsilon^{-1}\rfloor)} 2^{2j(\gamma-1)} \sum \limits_{l=1}^{2^d} \sum \limits_{z\in \{0,...,2^j-1\}^d} |\alpha_{D^t_i}(j,l,z)|^2\right)^{1/2}\\
        & \times \left(\sum \limits_{j=0}^{\log_2(\lfloor \epsilon^{-1}\rfloor)} 2^{-2j(\gamma-1)}\sum \limits_{l=1}^{2^d} \sum \limits_{z\in \{0,...,2^j-1\}^d}|\alpha_{g}(j,l,w)_{i}-\alpha_{g^\star}(j,l,w)_{i}|^2\right)^{1/2}\\
       \leq & \left(\sum \limits_{j=0}^{\log_2(\lfloor \epsilon^{-1}\rfloor)} 2^{2j(\gamma-1)} \sum \limits_{l=1}^{2^d} \sum \limits_{z\in \{0,...,2^j-1\}^d} |C2^{-j(\gamma-1+d/2)}|^2\right)^{1/2}\\
       & \times \|\Gamma^{-\gamma+1}(g_i)-\Gamma^{-\gamma+1}(g^\star_i)\|_{\mathcal{B}^{0}_{2,2}}\\ 
       \leq & C \log_2(\lfloor \epsilon^{-1}\rfloor)^{1/2} \|\Gamma^{-\gamma+1}(g_i)-\Gamma^{-\gamma+1}(g^\star_i)\|_{L^2},
\end{align*}
as $\forall s\geq 0$, $\mathcal{B}^{s}_{2,2}=\mathcal{W}^{s,2}$ the $L^2$ Sobolev space of regularity $s$.
\end{proof}

\subsubsection{Proof of Proposition \ref{prop:Fdelta}}\label{sec:prop:Fdelta}

\begin{proof}
For  $f \in\mathcal{F}_{per}^{\eta,\delta}$ we have 
$f\in \mathcal{H}^{\eta}_{C\log(\delta^{-1})^{2}}$ (we do not distinguish between $\eta$ integer or not) so using Lemma \ref{lemma:deltaG} with $\epsilon=\delta$, for any $f^\star\in \mathcal{H}^\eta_K$ we have \begin{align*}
    d_{\mathcal{H}_1^\gamma}(f_{\# U},f^\star_{\# U}) & \leq C\log(\delta^{-1})^{C_2}\left( \sum \limits_{i=1}^p \|\Gamma^{-(\gamma-1)}(f_i)-\Gamma^{-(\gamma-1)}(f^\star_i)\|_{L^2}+ C\delta^{\eta-1+\gamma}\right).
\end{align*}
Then, for $f$ such that $\forall j \leq \log_2(\delta^{-1})$ and $i\in \{1,...,p\}$, $\alpha_{f_i}(j,l,z)=\alpha_{f^\star_i}(j,l,z)$, we have
\begin{align*}
     \|\Gamma^{-(\gamma-1)} &(f_i)-\Gamma^{-(\gamma-1)}(f^\star_i)\|_{L^2}^2\\
     & \leq C  \|\Gamma^{-(\gamma-1)}(f_i)-\Gamma^{-(\gamma-1)}(f^\star_i)\|_{\mathcal{B}^{0}_{2,2}}^2\\
     & = C \sum \limits_{j=0}^\infty \sum \limits_{l=1}^{2^d} \sum \limits_{z\in \{0,...,2^j-1\}^d} 2^{-2j(\gamma-1)}|\alpha_{f_i}(j,l,z)-\alpha_{f^\star_i}(j,l,z)|^2\\
     & = C \sum \limits_{j\geq \log_2(\delta^{-1})}^\infty \sum \limits_{l=1}^{2^d} \sum \limits_{z\in \{0,...,2^j-1\}^d} 2^{-2j(\gamma-1)}|\alpha_{f^\star_i}(j,l,z)|^2\\
     & \leq C \sum \limits_{j\geq \log_2(\delta^{-1})}^\infty  2^{-2j(\gamma+\eta-1)}\\
     & \leq C 2^{-\log_2(\delta^{-1})2(\gamma-1+\eta)}=C\delta^{2(\gamma-1+\eta)}.
\end{align*}
Let us now bound the covering number for a certain $\epsilon\in (0,1)$. Define 
\begin{align*}
(\mathcal{F}_{per}^{\eta,\delta})_\epsilon=\{ & f \in \mathcal{F}_{per}^{\eta,\delta}|\ \forall i,j,l,z,\exists k\in[-KC_\eta\epsilon^{-1}2^{-j\eta+d}(1+j)^2,...,KC_\eta\epsilon^{-1}2^{-j\eta+d}(1+j)^2],\\
& \langle f_i,\psi_{j,l,z}^{per}\rangle_{L^2}=\frac{k\epsilon}{
(1+j)^22^{jd/2+d}} \}.
\end{align*}
Then for $f\in \mathcal{F}_{per}^{\eta,\delta}$ and $f_\epsilon\in (\mathcal{F}_{per}^{\eta,\delta})_\epsilon$ such that 
$$
\langle (f_{\epsilon})_i,\psi_{j,l,z}^{per}\rangle_{L^2}=\frac{\epsilon} {(1+j)^22^{jd/2+d}}\lfloor \langle f_i,\psi_{j,l,z}^{per}\rangle_{L^2}\epsilon^{-1}(1+j)^22^{jd/2+d} \rfloor,
$$ we have 
\begin{align*}
    \|f_i-(f_{\epsilon})_i\|_\infty & \leq C \|f_i-(f_{\epsilon})_i\|_{\mathcal{B}^{0,2}_{\infty,\infty}}\\
    & = C\sup \limits_{j\in \mathbb{N}_0} 2^{jd/2}\sum \limits_{l=1}^{2^d} \sup \limits_{z\in \{0,...,2^j-1\}^d}(1+j)^2|\alpha_{f_i}(j,l,z)-\alpha_{(f_{\epsilon})_i}(j,l,z)|\\
    & \leq C \sup \limits_{j\in \mathbb{N}_0} 2^{jd/2}\sum \limits_{l=1}^{2^d} \sup \limits_{z\in \{0,...,2^j-1\}^d}(1+j)^2\frac{\epsilon} {(1+j)^22^{jd/2+d}}=C\epsilon\\
\end{align*}
so $(\mathcal{F}_{per}^{\eta,\delta})_\epsilon$ is an $C\epsilon$ covering of $\mathcal{F}_{per}^{\eta,\delta}$. Furthermore
\begin{align*}
    \big|(\mathcal{F}_{per}^{\eta,\delta})_\epsilon\big| & = \prod \limits_{j=0}^{\log_2(\lfloor \delta^{-1}\rfloor)} \prod \limits_{l=1}^{2^d} \prod \limits_{z\in \{0,...,2^j-1\}^d} 2^{-j\eta+d+1}(1+j)^2C_\eta K\epsilon^{-1}\\
    & \leq C \prod \limits_{j=0}^{\log_2(\lfloor \delta^{-1}\rfloor)} \prod \limits_{l=1}^{2^d} \prod \limits_{z\in \{0,...,2^j-1\}^d}  \epsilon^{-1} = C \prod \limits_{j=0}^{\log_2(\lfloor \delta^{-1}\rfloor)} ( \epsilon^{-1})^{2^{jd+d}}\\
    & \leq C ( \epsilon^{-1})^{\log_2(\lfloor \delta^{-1}\rfloor)2^{\log_2(\lfloor \delta^{-1}\rfloor)d+d}}  = C ( \epsilon^{-1})^{\log_2(\lfloor \delta^{-1}\rfloor)2^d\delta^{-d}},
\end{align*}
so 
$$\log(\big|(\mathcal{F}_{per}^{\eta,\delta})_\epsilon\big|) \leq C \delta^{-d}\log(\lfloor \delta^{-1}\rfloor)\log(\epsilon^{-1}).$$
\end{proof}

\subsubsection{Proof of Proposition \ref{prop:propertiesofhatper}}\label{sec:prop:propertiesofhatper}
\begin{proof}

For $g\in \hat{\mathcal{F}}^{\eta,\delta}_{per}$ and $D\in \mathcal{H}^\gamma_1$, we have 
\begin{align*}
     \mathbb{E}[D(g(U))-  D(g^\star(U))] & = \mathbb{E}[\langle \int_0^1\nabla D(g(U)+t(g^\star(U)-g(U))dt,g^\star(U)-g(U)\rangle]\\
     & \leq C \sup \limits_{f\in \mathcal{H}^{\gamma-1}_1} \mathbb{E}[\langle f(U),g^\star(U)-g(U)\rangle].
\end{align*}
Then the rest of the proof proceeds as the proof of  Proposition \ref{prop:propertiesofhat} Section \ref{sec:prop:propertiesofhat}.
    
\end{proof}

\section{Proofs of the bounds of Section \ref{sec:tractable}}

\subsection{Proof of the bound in the general low dimensional case (Section~\ref{sec:theoreticalgan})}
In this section, we give the proofs of the results of Section \ref{sec:tractable} in the order in which they are stated in the paper.
\subsubsection{Proof of Theorem \ref{theo:minimaxfirstgan}}\label{sec:theo:minimaxfirstgan}
\begin{proof}

Using Proposition \ref{prop:propertiesofhatper} for $\eta=\beta+1$, $\delta=n^{-\frac{1}{2\beta+d}}$ and $\epsilon=1/n$ we have that
$$
\Delta_\mathcal{G}\leq  C\log(n)^{C_2} n ^{-\frac{\beta+\gamma}{2\beta+d}}\ \  \text{and }\ \ \log(|\mathcal{G}_{1/n}|)\leq Cn^{\frac{d}{2\beta+d}}\log(n)^2.
$$

Let us now take care of $\Delta_\mathcal{D}$. Write $\overline{g}\in \argmin \limits_{g\in \mathcal{G}} L(g,D^\star_g)$, then for $g\in \mathcal{G}$, as $D^\star_{g,\overline{g}}\in \mathcal{D}$, we have
\begin{align*}
 \sup_{D\in \mathcal{D}} L(g,D^\star_g)- L(g,D)  \leq & L(g,D^\star_g)- L(g,D^\star_{g,\overline{g}})\\
  = & \mathbb{E}\Big[D_g^\star(g(U))-  D_g^\star(g^\star(U)) -D^\star_{g,\overline{g}}(g(U))+D^\star_{g,\overline{g}}(g^\star(U))\Big]\\
   = &  \mathbb{E}\Big[D_g^\star(g(U))-  D_g^\star(\overline{g}(U)) -D^\star_{g,\overline{g}}(g(U))+D^\star_{g,\overline{g}}(\overline{g}(U))\Big]\\
   & + \mathbb{E}\Big[D_g^\star(\overline{g}(U))-  D_g^\star(g^\star(U)) -D^\star_{g,\overline{g}}(\overline{g}(U))+D^\star_{g,\overline{g}}(g^\star(U))\Big]\\
  \leq& \mathbb{E}\Big[D_g^\star(g(U))-  D_g^\star(\overline{g}(U)) -D^\star_{g,\overline{g}}(g(U))+D^\star_{g,\overline{g}}(\overline{g}(U))\Big]+ 2\Delta_\mathcal{G}\\
   \leq & 2\Delta_\mathcal{G} \leq C\log(n)^{C_2} n ^{-\frac{\beta+\gamma}{2\beta+d}}. 
\end{align*}
On the other hand, by definition of $\mathcal{D}$, we have that $|\mathcal{D}_{1/n}|\leq |\mathcal{D}|\leq |\mathcal{G}_{1/n}|^2$.

Using Theorem \ref{theo:boundexpecterror} we have
\begin{align*}  
\mathbb{E}&\Big[L(\hat{g},D^\star_{\hat{g}})\Big]\\
\leq & C \min \limits_{\delta \in [0,1]}   \sqrt{\frac{(\delta+1/n)^2\log(n|\mathcal{G}_{1/n}| |\mathcal{D}_{1/n}|)}{n}}+\frac{1}{\sqrt{n}}(1+\delta^{(1-\frac{d}{2\gamma})}+\log(\delta^{-1})\mathds{1}_{\{2\gamma= d\}})\\
& +\Delta_\mathcal{G}  +\Delta_\mathcal{D}\\
\leq & C \min \limits_{\delta \in [0,1]}  \sqrt{\frac{(\delta+1/n)^2n^{\frac{d}{2\beta+d}}}{n}} +\frac{1}{\sqrt{n}}(1+\delta^{(1-\frac{d}{2\gamma})}+\log(\delta^{-1})\mathds{1}_{\{2\gamma= d\}})\\
& +\log(n)^{C_2}n^{-\frac{\beta+\gamma}{2\beta+d}}.
\end{align*}
Now taking $\delta=n^{-\frac{\gamma}{2\beta+d}}$ we have $$\sqrt{\frac{(\delta+1/n)^2n^{\frac{d}{2\beta+d}}}{n}} \leq 2n^{-\frac{\gamma}{2\beta+d}}n^{\frac{d/2}{2\beta+d}}n^{-\frac{\beta+d/2}{2\beta+d}}=2n^{-\frac{\beta+\gamma}{2\beta+d}}$$
and 
$$\frac{1}{\sqrt{n}}(1+\delta^{(1-\frac{d}{2\gamma})})= n^{-\frac{1}{2}}+n^{-\frac{\gamma}{2\beta+d}\frac{\gamma-d/2}{\gamma}-\frac{\beta+d/2}{2\beta+d}}=n^{-\frac{1}{2}}+n^{-\frac{\beta+\gamma}{2\beta+d}},$$
 so we finally obtain
$$\mathbb{E}\Big[L(\hat{g},D^\star_{\hat{g}})\Big]\leq C\log(n)^{C_2}\left(n^{-\frac{\beta+\gamma}{2\beta+d}}\vee n^{-\frac{1}{2}}\right).$$
\end{proof}

\subsection{Proofs of the bound in the full dimensional case (Section~\ref{sec:fulldim})}

\subsubsection{Proof of Corollary \ref{coro:boundexpectederrorfulldim}}\label{sec:coro:boundexpectederrorfulldim}
\begin{proof}
Following the proof of Proposition \ref{prop:meanvsiid}, for $$\Theta=\{Df\in \mathcal{H}^{\gamma}_{C} \ | \ D\in \mathcal{D},f\in \mathcal{F}\}$$  and $$X_\theta=\frac{1}{\sqrt{n}} \sum \limits_{i=1}^n D(U_i)f(U_i)-\int D(x)f(x)d\lambda^p(x),$$ with $U_i$ i.i.d. uniform random variables on $B^p(0,K)$, we get using Dudley's inequality  $\forall \delta \in [0,2K]$,
\begin{align*}
\mathbb{E}\Big[\sup \limits_{\substack{f\in \mathcal{F} \\D\in \mathcal{D}}}& \int D(x)f(x)d\lambda^p(x)-\sum \limits_{i=1}^n D(U_i)f(U_i) \Big]\\
\leq & 2 \mathbb{E}\Big[\sup \limits_{\substack{f\in \mathcal{G},\ D\in \mathcal{D}\\ \|Df \|_\infty\leq \delta}} \int D(x)f(x)d\lambda^p(x)-\frac{1}{n}\sum \limits_{i=1}^n D(U_i)f(U_i)\Big]\\
& +\frac{16}{\sqrt{n}}\int_{\delta/4}^{2K}\sqrt{\log(|\Theta_\epsilon|)}d\epsilon.
\end{align*}
Doing as in the proof of Proposition \ref{prop:Fdelta}, we can show that $\log(|(\mathcal{B}^{\gamma}_{\infty,\infty})_\epsilon|)\leq \epsilon^{-\frac{d}{\gamma}}\log(\epsilon^{-1})$. The rest of the proof follows the same arguments than the end of the proof of Proposition \ref{prop:meanvsiid} and the proof of Theorem \ref{theo:boundexpecterror} (see Section \ref{sec:theo:boundexpecterror}).
\end{proof}

\subsubsection{Proof of Proposition \ref{prop:propertiesofhat} }\label{sec:prop:propertiesofhat}

First, let us give the approximation error and covering number of the class $\mathcal{F}^{\eta,\delta}$.
\begin{proposition}\label{prop:Fdeltap}
    We have 
    $$\Delta_{\mathcal{F}^{\eta,\delta}}:=\sup \limits_{h^\star\in \mathcal{H}^{\eta}_1}\inf \limits_{ f\in \mathcal{F}^{\eta,\delta}} \sup \limits_{D\in \mathcal{H}^{\gamma}_1} \int_{\mathbb{R}^p}D(x)(f(x)-h^\star(x))d\lambda^p(x) \leq  C\delta^{\eta+\gamma},$$
    and $\forall \epsilon\in (0,1)$,
    $$\log(|\mathcal{F}^{\eta,\delta}|_\epsilon)\leq C\delta^{-p}\log(\lfloor \delta^{-1}\rfloor)\log(\epsilon^{-1}).$$
\end{proposition}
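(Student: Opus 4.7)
The proof of Proposition \ref{prop:Fdeltap} closely mirrors that of Proposition \ref{prop:Fdelta} in the periodic case, with the non-periodic Daubechies wavelet basis $(\psi_{jlw})$ replacing the periodised basis and the compact support constraint $\mathrm{supp}(f)\subset B^p(0,K)$ playing the role of the periodicity.

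The plan for the approximation error is as follows. Given $h^\star \in \mathcal{H}^\eta_1$ with compact support (say in $B^p(0,K/2)$, at the cost of absorbing a constant into $K$), I would define $f$ to be the truncation of its wavelet expansion at frequency $\log_2(\delta^{-1})$:
\begin{equation*}
f = \sum_{j < \log_2(\delta^{-1})} \sum_{l=1}^{2^p}\sum_{w\in\mathbb{Z}^p} \alpha_{h^\star}(j,l,w)\,\psi_{jlw}.
\end{equation*}
Since Daubechies wavelets are compactly supported, only $O(2^{jp})$ coefficients at level $j$ are nonzero and $\mathrm{supp}(f)\subset B^p(0,K)$ (provided $K/2$ was chosen initially). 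Truncation does not increase the $\mathcal{B}^{\eta}_{\infty,\infty}$ norm, so $f\in \mathcal{F}^{\eta,\delta}$. For any $D\in \mathcal{H}^{\gamma}_1$, using the wavelet expansion and Parseval yields
\begin{equation*}
\int_{\mathbb{R}^p} D(x)(f(x)-h^\star(x))d\lambda^p(x) = - \sum_{j\geq \log_2(\delta^{-1})} \sum_{l=1}^{2^p}\sum_{w} \alpha_D(j,l,w)\alpha_{h^\star}(j,l,w).
\end{equation*}
From $D \in \mathcal{H}^{\gamma}_1 \subset \mathcal{B}^\gamma_{\infty,\infty}(C)$ and $h^\star \in \mathcal{B}^\eta_{\infty,\infty}(C)$, the coefficients satisfy $|\alpha_D(j,l,w)|\leq C 2^{-j(\gamma+p/2)}$ and $|\alpha_{h^\star}(j,l,w)|\leq C 2^{-j(\eta+p/2)}$. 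Since $h^\star$ has support in $B^p(0,K/2)$, only $O(2^{jp})$ terms in $w$ contribute nontrivially at each level $j$, hence $\sum_w |\alpha_D\alpha_{h^\star}|(j,l,w) \leq C 2^{-j(\eta+\gamma)}$, and summing the geometric tail gives the claimed bound $C\delta^{\eta+\gamma}$.

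For the covering number, I would discretize the wavelet coefficients level-by-level as in the periodic case. Define
\begin{equation*}
(\mathcal{F}^{\eta,\delta})_\epsilon = \Bigl\{ f\in \mathcal{F}^{\eta,\delta}\,\big|\, \langle f,\psi_{jlw}\rangle = \frac{k\,\epsilon}{(1+j)^2 2^{jp/2+p}},\ k\in\mathbb{Z},\ |k|\leq K C_\eta\epsilon^{-1}2^{-j\eta+p}(1+j)^2 \Bigr\}.
\end{equation*}
The continuous injection $\mathcal{B}^{0,2}_{\infty,\infty}\hookrightarrow L^\infty$ guarantees that this discretization is an $C\epsilon$-covering of $\mathcal{F}^{\eta,\delta}$. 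Since $h^\star$ has compact support in $B^p(0,K)$, for each level $j\leq \log_2(\delta^{-1})$ only $O(2^{jp})$ shifts $w$ contribute, so the total logarithm of the number of discrete coefficient vectors is
\begin{equation*}
\log|(\mathcal{F}^{\eta,\delta})_\epsilon| \leq C\sum_{j=0}^{\log_2(\lfloor \delta^{-1}\rfloor)} 2^{jp}\log(\epsilon^{-1}) \leq C\,\delta^{-p}\log(\lfloor\delta^{-1}\rfloor)\log(\epsilon^{-1}).
\end{equation*}

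The main technical point to handle carefully is the compact support constraint: truncating the wavelet expansion of a function supported in $B^p(0,K/2)$ enlarges the support by $O(N)$ due to the finite extent of Daubechies wavelets, which is harmless after adjusting $K$ by a constant. Apart from this, the argument is entirely parallel to the periodic case (Proposition \ref{prop:Fdelta}), with $\{0,\ldots,2^j-1\}^d$ replaced by $\{-K'2^j,\ldots,K'2^j\}^p$ for a suitable $K'$ depending on $K$ and $N$.
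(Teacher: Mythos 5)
Your proposal is correct and follows essentially the same route as the paper: the approximation bound is obtained by matching the wavelet coefficients of $h^\star$ up to level $\log_2(\delta^{-1})$, using the Besov coefficient decay of $D$ and $h^\star$ together with the $O(2^{jp})$ active translates per level (from compact support) to sum the geometric tail, and the covering bound comes from the same level-by-level coefficient discretization used for $\mathcal{F}^{\eta,\delta}_{per}$ in Proposition~\ref{prop:Fdelta}. Your explicit remark about the slight support enlargement of the truncation is a fair treatment of a point the paper glosses over by adjusting constants.
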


\begin{proof}
Let $h^\star\in \mathcal{H}^{\eta}_1$ and $f\in \mathcal{F}^{\eta,\delta}$ be such that for all $j\leq \log_2(\delta^{-1})$, $\langle h^\star,\psi_{j,l,w}\rangle=\langle f,\psi_{j,l,w}\rangle$. Then for $D\in \mathcal{H}^{\gamma}_1$ we have
    \begin{align*}
    \int_{\mathbb{R}^p}D(x)(f(x)-h^\star(x))d\lambda^p(x) & = \sum \limits_{j=0}^\infty \sum \limits_{l=1}^{2^p} \sum \limits_{w \in \mathbb{Z}^p} \alpha_D(j,l,w)(\alpha_f(j,l,w)-\alpha_{h^\star}(j,l,w))\\
    & = \sum \limits_{j\geq log_2(\delta^{-1}) }^\infty \sum \limits_{l=1}^{2^p} \sum \limits_{w \in \mathbb{Z}^p} \alpha_D(j,l,w)\alpha_{h^\star}(j,l,w)\\
    & \leq  \sum \limits_{j\geq log_2(\delta^{-1}) }^\infty \sum \limits_{l=1}^{2^p} \sum \limits_{w \in \mathbb{Z}^p} K2^{-j(p+\eta+\gamma)}\mathds{1}_{\{supp(\psi_{jlw})\cap B^p(0,K)\neq\varnothing\}}\\
    & \leq  \sum \limits_{j\geq log_2(\delta^{-1}) }^\infty  C2^{-j(\eta+\gamma)}\\
    & = C \delta^{\eta+\gamma}.
    \end{align*}
For the bound on the covering number see the proof of Proposition \ref{prop:Fdelta}, Section \ref{sec:prop:Fdelta}.
\end{proof}
We can now give the proof of Proposition \ref{prop:propertiesofhat}.
\begin{proof}[Proof of Proposition \ref{prop:propertiesofhat}]
Let $h^\star\in \mathcal{H}^{\eta}_1$ and define $f\in \hat{\mathcal{F}}^{\eta,\delta}$ by
$$
f=\sum \limits_{j=0}^{\log_2(\delta^{-1})} \sum \limits_{l=1}^{2^p} \sum \limits_{w\in \{-K2^{j},...,K2^j\}^p} \alpha_{h^\star}(j,l,w) \hat{\psi}_{jlw},
$$
for $\alpha_{h^\star}(j,l,w)=\langle h^\star,\psi_{j,l,w}\rangle$. Then, from Proposition \ref{prop:approxhat} there exist coefficients $(\alpha_{f}(j,l,w))$ such that 
$$f=\sum \limits_{j=0}^{\infty} \sum \limits_{l=1}^{2^p} \sum \limits_{w\in \{-K2^{j},...,K2^j\}^p} \alpha_{f}(j,l,w) \psi_{jlw} ,$$
with $\forall j\leq \log_2(\delta^{-1})$,
$$|\alpha_f(j,l,w)-\hat{\alpha}_f(j,l,w)|\leq CL^{-1}(2^{-j(\lfloor \beta \rfloor +2+p/2)}\delta^{-(\lfloor \beta \rfloor +2-\eta)}\log_2(\delta^{-1})+2^{-j(\eta+p/2)})$$
and $\forall j> \log_2(\delta^{-1})$,
$$|\alpha_f(j,l,w)|\leq CL^{-1}2^{-j(\lfloor \beta \rfloor +2+p/2)}\delta^{-(\lfloor \beta \rfloor +2-\eta)}\log_2(\delta^{-1}).$$
As $L\geq \delta^{-(\lfloor \beta \rfloor +2+\gamma)}$, we have in particular that $$|\alpha_f(j,l,w)-\alpha_{h^\star}(j,l,w)|\leq C2^{-j(\eta+p/2)}\delta^{\eta+\gamma}, \text{ for } j\leq \log_2(\delta^{-1})$$
and
$$|\alpha_f(j,l,w)|\leq C2^{-j(\lfloor \beta \rfloor +2+p/2)}\delta^{\eta+\gamma}, \text{ for } j> \log_2(\delta^{-1}).$$
Therefore, for $D\in \mathcal{H}^{\gamma}_1$ we have
    \begin{align*}
    \int_{\mathbb{R}^p} &  D(x)(f(x)-h^\star(x))d\lambda^p(x)  =  \sum \limits_{j=0}^\infty \sum \limits_{l=1}^{2^p} \sum \limits_{w \in \mathbb{Z}^p} \alpha_D(j,l,w)(\alpha_f(j,l,w)-\alpha_{h^\star}(j,l,w))\\
     = & \sum \limits_{j=0}^{log_2(\delta^{-1})} \sum \limits_{l=1}^{2^p} \sum \limits_{w \in \mathbb{Z}^p} |\alpha_D(j,l,w)(\alpha_f(j,l,w)-\alpha_{h^\star}(j,l,w))|\\
     & +\sum \limits_{j>log_2(\delta^{-1})}^\infty \sum \limits_{l=1}^{2^p} \sum \limits_{w \in \mathbb{Z}^p} |\alpha_D(j,l,w)(\alpha_f(j,l,w)-\alpha_{h^\star}(j,l,w))|\\
     \leq & C \sum \limits_{j\leq log_2(\delta^{-1}) }^\infty \sum \limits_{l=1}^{2^p} \sum \limits_{w \in \mathbb{Z}^p} 2^{-j(\gamma+p/2)}2^{-j(\eta+p/2)}\delta^{\eta+\gamma}\mathds{1}_{\{supp(\psi_{jlw})\cap B^p(0,K)\neq\varnothing\}}\\
     & +\sum \limits_{j>log_2(\delta^{-1})}^\infty \sum \limits_{l=1}^{2^p} \sum \limits_{w \in \mathbb{Z}^p} |\alpha_D(j,l,w)|(|\alpha_f(j,l,w)|+|\alpha_{h^\star}(j,l,w)|)\\
     \leq & C \delta^{\eta+\gamma} +C\sum \limits_{j>log_2(\delta^{-1})}^\infty \sum \limits_{l=1}^{2^p} \sum \limits_{w \in \mathbb{Z}^p} 2^{-j(\gamma+p/2)}\\
     & \quad \quad \quad \quad (2^{-j(\lfloor \beta \rfloor +2+p/2)}\delta^{\eta+\gamma}+2^{-j(\eta+p/2)})\mathds{1}_{\{supp(\psi_{jlw})\cap B^p(0,K)\neq\varnothing\}}\\
     \leq & C \delta^{\eta+\gamma}.
    \end{align*}
Let us now prove the bound on the covering number. Let 
\begin{align*}
\overline{\mathcal{F}}=\{&f\in \mathcal{B}^\eta_{\infty,\infty}(B^p(0,K),\mathbb{R},K)|\\
& \ |\alpha_f(j,l,w)|\leq CL^{-1}2^{-j(\lfloor \beta \rfloor +2+p/2)}\delta^{-(\lfloor \beta \rfloor +2-\eta)}\log_2(\delta^{-1}), \forall j>log_2(\delta^{-1})\}.
\end{align*}
Then from Proposition \ref{prop:approxhat}, we have that $\forall \epsilon\in (0,1)$,
$$
\log(|(\hat{\mathcal{F}}^{\eta,\delta})_\epsilon|)\leq C\log(|(\overline{\mathcal{F}})_\epsilon|).
$$
Let $\epsilon >L^{-1}\delta^{\eta-1}$ and $(\mathcal{F}^{\eta,\delta})_\epsilon$ an $\epsilon$-covering of $\mathcal{F}^{\eta,\delta}$. For all $f\in \overline{\mathcal{F}}$, writing  $$\tilde{f}=\sum \limits_{j=0}^{log_2(\delta^{-1})} \sum \limits_{l=1}^{2^p} \sum \limits_{w \in \mathbb{Z}^p} \alpha_f(j,l,w)\psi_{jlw},
$$ 
we have $\tilde{f}  \in \mathcal{F}^{\eta,\delta}$. Then there exist $f_\epsilon \in (\mathcal{F}^{\eta,\delta})_\epsilon$ such that 
\begin{align*}
    \|\tilde{f}-f_\epsilon\|_\infty \leq \epsilon.
\end{align*}
Then,
\begin{align*}
    \|f-f_\epsilon\|_\infty &  \leq \epsilon + \|f-\tilde{f}\|_\infty\leq\epsilon + C \|f-\tilde{f}\|_{\mathcal{B}^{0,2}_{\infty,\infty}}\\
    & =\epsilon +C \sup \limits_{j\geq log_2(\delta^{-1})}(1+j)^2 2^{jp/2}\sum \limits_{l=1}^{2^d} \sup \limits_{w\in \mathbb{Z}^p} |\alpha_f(j,l,w)|\\
    & \leq\epsilon + C  \sup \limits_{j\geq log_2(\delta^{-1})}(1+j)^2 2^{jp/2}\sum \limits_{l=1}^{2^d}  L^{-1}2^{-j(\lfloor \beta \rfloor +2+p/2)}\delta^{-(\lfloor \beta \rfloor +2-\eta)}\log_2(\delta^{-1})\\
    & \leq\epsilon + C\log(\delta^{-1})^{C_2}L^{-1}\delta^{\eta}.
\end{align*}
Therefore we deduce that $(\mathcal{F}^{\eta,\delta})_\epsilon$ is a $C\epsilon$-covering of $|\overline{\mathcal{F}}|$ so
$$
 \log(|(\overline{\mathcal{F}})_\epsilon|)\leq C\log(|(\mathcal{F}^{\eta,\delta})_\epsilon|).
$$
Finally, using Proposition \ref{prop:Fdeltap} we get the result.

\end{proof}

\subsubsection{Proof of Corollary \ref{coro:ineqfulldim}}\label{sec:coro:ineqfulldim}

\begin{proof}
    Let $D\in \mathcal{B}^{\gamma,2}_{\infty,\infty}(1)$, then applying Proposition \ref{prop:Hölder} for $h_1=D$, $h_2=f-f^\star$, $\tau=\gamma-\alpha$, $t=1$, $r=\frac{\gamma}{\beta+\gamma}$ and $q=\frac{\beta+\alpha}{\beta+\gamma}$, we get 
\begin{align*}
\int_{\mathbb{R}^p}D(x)(f(x)-f^\star(x))d\lambda^p(x)\leq &\left(\int_{\mathbb{R}^p}\tilde{\Gamma}^{\gamma-\alpha}(D)(x)(f(x)-f^\star(x))d\lambda^p(x)\right)^{\frac{\beta+\gamma}{\beta+\alpha}}\\
& \times \left(\int_{\mathbb{R}^p}\tilde{\Gamma}^{\gamma}(D)(x)\Gamma^{\beta}(f-f^\star)(x)d\lambda^p(x)\right)^{\frac{\alpha-\gamma}{\beta+\alpha}}.
\end{align*}

Furthermore,
\begin{align*}
    \int_{\mathbb{R}^p}& \tilde{\Gamma}^{\gamma}(D)(x)\Gamma^{\beta}(f-f^\star)(x)d\lambda^p(x)\\
    & = \sum \limits_{j=0}^\infty \sum \limits_{l=1}^{2^p} \sum \limits_{w \in \mathbb{Z}^p} 2^{j(\beta+\gamma)}|\alpha_D(j,l,w)(\alpha_f(j,l,w)-\alpha_{f^\star}(j,l,w))|\\
    & \leq   \sum \limits_{j=0}^\infty \sum \limits_{l=1}^{2^p} \sum \limits_{w \in \mathbb{Z}^p} 2^{-jp}(1+j)^{-2}C\mathds{1}_{\{supp(\psi_{jlw})\cap B^p(0,K)\neq\varnothing\}}\\
    & \leq C  \sum \limits_{j=0}^\infty (1+j)^{-2}.
\end{align*}
\end{proof}

\subsubsection{Proof of Lemma \ref{lemma:petitlemme}}\label{sec:lemma:petitlemme}
\begin{proof}
From Proposition \ref{prop:approxhat} we have that $\hat{\mathcal{F}}^{\beta,n^{-\frac{1}{2\beta+p}}}\subset \mathcal{B}^{\beta}_{\infty,\infty}(\mathbb{R}^p,\mathbb{R},C)$. Then, for $f\in \mathcal{F}, D^\star \in \mathcal{H}_1^{\tilde{\beta}}$ and $D\in \mathcal{D}$,  we have 
    \begin{align*}
    \int_{\mathbb{R}^p}& (D^\star(x)-D(x))(f(x)-f^\star(x))d\lambda^p(x)\\
     = &  \sum \limits_{j=0 }^\infty \sum \limits_{l=1}^{2^p} \sum \limits_{w \in \mathbb{Z}^p} (\alpha_{D^\star}(j,l,w)-\alpha_{D}(j,l,w))(\alpha_{f}(j,l,w)-\alpha_{f^\star}(j,l,w))\\
     \leq &  \sum \limits_{j=0 }^\infty \sum \limits_{l=1}^{2^p} \sum \limits_{w \in \mathbb{Z}^p} C2^{-jp/2}2^{-j\beta}(\alpha_{D^\star}(j,l,w)-\alpha_{D}(j,l,w)).
    \end{align*}  
The map $h:\mathbb{R}^p\rightarrow \mathbb{R}$ having wavelets coefficients $\alpha_{h}(j,l,w)=2^{-jp/2}$, belongs to $\mathcal{B}^{0}_{\infty,\infty}$ which gives the result.
\end{proof}

\subsubsection{Proof of Proposition \ref{coro:minimaxfulldim}}\label{sec:coro:minimaxfulldim}
\begin{proof}
We bound $\Delta_{\mathcal{D}}$ following  the proof of Proposition \ref{prop:Fdeltap}. For $f\in \mathcal{F}$ and $\overline{D}_f\in \mathcal{D}$ such that $\langle \overline{D}_f,\psi_{j,l,w}\rangle=\langle D_f^\star,\psi_{j,l,w}\rangle$ for all $ j\leq \log_2(n^{\frac{1}{2\beta+p}})$, we have
    \begin{align*}
    \int_{\mathbb{R}^p}& (\overline{D}_f(x)-D_f^\star(x))(f(x)-f^\star(x))d\lambda^p(x)\\
    & =  \sum \limits_{j\geq \log_2(n^{\frac{1}{2\beta+p}}) }^\infty \sum \limits_{l=1}^{2^p} \sum \limits_{w \in \mathbb{Z}^p} (\alpha_{D_f^\star}(j,l,w)(\alpha_{f}(j,l,w)-\alpha_{f^\star}(j,l,w))\\
    & \leq C n^{-\frac{\beta+\tilde{\beta}}{2\beta+p}}.
    \end{align*} 
Furthermore, from Proposition \ref{prop:propertiesofhat} we have
$$\Delta_{\mathcal{G}}\leq Cn^{-\frac{\beta+\tilde{\beta}}{2\beta+p}} \text{ and } \log(|\mathcal{G}_{1/n}||\mathcal{D}_{1/n}|)\leq \log(n)^{2}n^{\frac{p}{2\beta+p}}.$$
Then, we get the result using Corollary \ref{coro:boundexpectederrorfulldim}  with $\delta=n^{-\frac{\tilde{\beta}}{2\beta+p}}$.    
\end{proof}

\subsubsection{Proof of Proposition \ref{prop:minimaxfulldimgamma}}\label{sec:prop:minimaxfulldimgamma}
\begin{proof}
If $\beta\geq \tilde{\beta}$ and $\gamma \in [\tilde{\beta},\beta]$ we have 
$$\mathbb{E}\Big[d_{\mathcal{H}_1^{\gamma}}(\hat{f},f^\star)\Big]\leq \mathbb{E}\Big[d_{\mathcal{H}_1^{\tilde{\beta}}}(\hat{f},f^\star)\Big]\leq C\log(n)^{C_2}n^{-\frac{1}{2}},$$
using Proposition \ref{coro:minimaxfulldim}. Otherwise, for $D\in \mathcal{H}^\gamma_1$, we have 
    \begin{align*}
        \int_{\mathbb{R}^p}&D(x)  (f(x)-f^\star(x))d\lambda^p(x)  = \sum \limits_{j=0}^\infty \sum \limits_{l=1}^{2^p} \sum \limits_{w \in \mathbb{Z}^p} \alpha_D(j,l,w)(\alpha_f(j,l,w)-\alpha_{f^\star}(j,l,w))\\
         \leq & \sum \limits_{j=0}^{\log(\lfloor n \rfloor)} \sum \limits_{l=1}^{2^p} \sum \limits_{w \in \mathbb{Z}^p} \alpha_D(j,l,w)(\alpha_f(j,l,w)-\alpha_{f^\star}(j,l,w))\\
         & + \sum \limits_{j=\log(\lfloor n \rfloor)+1}^\infty \sum \limits_{l=1}^{2^p} \sum \limits_{w \in \mathbb{Z}^p} 2^{-j(p+\beta+\gamma)}K\mathds{1}_{\{supp(\psi_{jlw})\cap B^p(0,K)\neq\varnothing\}}\\
         \leq & (1+\log(n))^2\sum \limits_{j=0}^{\log(\lfloor n \rfloor)}(1+j)^{-2} \sum \limits_{l=1}^{2^p} \sum \limits_{w \in \mathbb{Z}^p} |\alpha_D(j,l,w)(\alpha_f(j,l,w)-\alpha_{f^\star}(j,l,w))|\\
         & + \sum \limits_{j=\log(\lfloor n \rfloor)+1}^\infty  2^{-j(\beta+\gamma)}C\\
         \leq& (1+\log(n))^2 \sup \limits_{D \in \mathcal{B}^{\gamma,2}_{\infty,\infty}(1)}  \int_{\mathbb{R}^p}D(x)  (f(x)-f^\star(x))d\lambda^p(x)  + C/n.
    \end{align*}
Then using Corollary \ref{coro:ineqfulldim} with $\alpha=\tilde{\beta}$ we have
\begin{align*}
    \sup \limits_{D \in \mathcal{B}^{\gamma,2}_{\infty,\infty}(1)} & \int_{\mathbb{R}^p}D(x)  (f(x)-f^\star(x))d\lambda^p(x)\\
    & \leq  C \sup \limits_{D \in \mathcal{B}^{\tilde{\beta},2}_{\infty,\infty}(1)}\left(\int_{\mathbb{R}^p}D(x)(f(x)-f^\star(x))d\lambda^p(x)\right)^\frac{\beta+\gamma}{\beta+\tilde{\beta}}\\
    & \leq C\log(n)^{C_2}\left(n^{-\frac{\beta+\tilde{\beta}}{2\beta+p}}\right)^\frac{\beta+\gamma}{\beta+\tilde{\beta}} \\
    & \leq C\log(n)^{C_2}n^{-\frac{\beta+\gamma}{2\beta+p}},
\end{align*}
using Proposition \ref{coro:minimaxfulldim}.
\end{proof}

\section{Proof of the interpolation inequality in the manifold case (Section~\ref{sec:interpolation})}
\subsection{Definition of the $(g_{\# U},g^\star_{\# U})$ compatibility}\label{sec:compatibility}
We first define the notion of approximate Jacobian (Definition 2.10 in \citeproofs{federer1959}).
\begin{definition}\label{def:approx}
    Let $\mathcal{M}$ be a $d$-dimensional submanifold of $\mathbb{R}^p$ and a map $T:\mathbb{R}^p\rightarrow \mathcal{M}$ of regularity $C^1$. For $x\in \mathbb{R}^p$, using the matrix of $\nabla T(x)$ with respect to orthonormal bases
of $\mathbb{R}^p$ and $\mathcal{T}_{T(x)}(\mathcal{M})$, the approximate Jacobian of $T$, noted $\text{ap}_d(\nabla T(x))$, is equal to the square root of the sum of the squares
of the determinants of the $d$ by $d$  minors of this matrix.
    \end{definition}

We will use multiple times the next proposition.
\begin{proposition}\label{approx}
(Theorem 3.2.22, \citeproofs{federer2014geometric}) For $t>0$, $T:\mathcal{M}^t\rightarrow \mathcal{M}$ of regularity $C^1$ and $D:\mathcal{M}^t\rightarrow \mathbb{R}$, we have
$$
\int_{\mathcal{M}^t}  D(x)\text{ap}_d(\nabla T(x))d\lambda^{p}(x)=\int_{\mathcal{M}} \int_{T^{-1}(\{z \})} D(x)d\lambda^{p-d}(x)d\lambda_{\mathcal{M}}(z),
$$
for $\lambda_{\mathcal{M}}$ the volume measure on the submanifold $\mathcal{M}$.
\end{proposition}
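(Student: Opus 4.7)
The statement is the classical coarea formula (Theorem 3.2.22 of Federer's \emph{Geometric Measure Theory}), so my plan is to sketch the standard proof rather than invent a new argument. First I would reduce to the case $D=\mathbf{1}_A$ for a Borel set $A\subset\mathcal{M}^t$: both sides are linear in $D$, both make sense for nonnegative measurable $D$, and monotone convergence lets one pass from simple functions to general measurable $D$. Given the reduction, I would then split $A=(A\cap\Sigma)\sqcup(A\setminus\Sigma)$, where $\Sigma:=\{x\in\mathcal{M}^t \mid \mathrm{ap}_d(\nabla T(x))=0\}$ is the critical set of $T$.

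On $\Sigma$ the left-hand integrand vanishes by definition. For the right-hand side, Sard's theorem (applied to the $C^1$ map $T:\mathcal{M}^t\to\mathcal{M}$ between manifolds of equal dimension $p$ and $d$ respectively, viewed through local charts of $\mathcal{M}$) ensures that $T(\Sigma)$ has $\lambda_{\mathcal{M}}$-measure zero, so the outer integral over $\mathcal{M}$ reduces to integration over a negligible set and also vanishes. Thus it suffices to treat the regular part $A\setminus\Sigma$.

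On $A\setminus\Sigma$, the core idea is local straightening. Near any $x_0$ with $\mathrm{ap}_d(\nabla T(x_0))>0$, the rank of $\nabla T(x_0)$ is $d$, so by the implicit function theorem (combined with a chart $\varphi$ of $\mathcal{M}$ near $T(x_0)$) there exist a neighborhood $U\ni x_0$ and a $C^1$ diffeomorphism $\Phi:U\to V\subset\mathbb{R}^d\times\mathbb{R}^{p-d}$ such that $\varphi\circ T\circ\Phi^{-1}$ is the projection $(u,v)\mapsto u$. Under this change of variables, the determinant formula for $\mathrm{ap}_d(\nabla T)$ in terms of $d\times d$ minors matches exactly the Jacobian factor arising from the substitution, and Fubini on $V$ gives the desired equality on $U$. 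I would then cover $A\setminus\Sigma$ by countably many such charts (using that $\mathbb{R}^p$ is Lindelöf) and glue via a $C^1$ partition of unity, summing the local identities term by term using dominated convergence.

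The main obstacle is not analytic but measure-theoretic: ensuring that the inner integral $z\mapsto\int_{T^{-1}(\{z\})}D(x)\,d\lambda^{p-d}(x)$ is a Borel-measurable function of $z\in\mathcal{M}$ so that the right-hand side makes sense globally (not merely chart-by-chart). This is usually handled by expressing the fiber measure as a weak limit of tubular approximations $\tfrac{1}{\lambda^d(B^d(0,\epsilon))}\mathbf{1}_{\{d(T(x),z)<\epsilon\}}$ and invoking Fubini to transfer measurability, which is precisely the delicate part of Federer's original argument. Since the statement is cited verbatim from \cite{federer2014geometric}, in the paper it would suffice to reference this proof rather than reproduce it.
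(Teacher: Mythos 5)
The paper does not prove this statement; it is quoted verbatim from Federer's treatise with a citation, which is standard practice for textbook theorems. Your proposal instead sketches the classical argument, which is a reasonable thing to do, but it contains a genuine gap.

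Your treatment of the critical set $\Sigma = \{x : \mathrm{ap}_d(\nabla T(x)) = 0\}$ relies on Sard's theorem to conclude that $T(\Sigma)$ has $\lambda_{\mathcal{M}}$-measure zero. However, $T : \mathcal{M}^t \to \mathcal{M}$ maps a $p$-dimensional domain to a $d$-dimensional target with $p > d$ (and your parenthetical ``manifolds of equal dimension $p$ and $d$ respectively'' is self-contradictory). For such maps the Morse--Sard theorem requires $C^{p-d+1}$ regularity, and Whitney's counterexample shows that mere $C^1$ regularity is not enough once the codimension exceeds zero: there exist $C^1$ maps whose set of critical values has positive measure. Since the proposition is stated under a $C^1$ hypothesis only, your proof step fails precisely on the critical set, which is the delicate part of the coarea theorem. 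Federer's actual argument avoids Sard entirely: the critical-set contribution is killed by a measure-theoretic estimate (the Eilenberg/coarea inequality), applied after perturbing $T$ by a small affine map in directions where the approximate Jacobian degenerates and letting the perturbation size go to zero. One does not need $T(\Sigma)$ itself to be null; one only needs the fiberwise $(p-d)$-dimensional measure of $\Sigma \cap T^{-1}(\{z\})$ to vanish for a.e.\ $z$, and that weaker statement is what the coarea inequality delivers without extra smoothness. The rest of your sketch (reduction to indicators, local straightening via the implicit function theorem plus Fubini on the regular set, gluing by a partition of unity, and the tubular-neighbourhood argument for measurability of the disintegrated integrand) is sound and matches the standard structure of the proof.
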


Writing $\mathcal{M}_g:=g(\mathbb{T}^d)$ for the image of the torus by a map $g$, we can now  define the notion of $(g_{\# U},g^\star_{\# U})$ compatibility.

\begin{definition}\label{defi:compatibility} For $g,g^\star\in \mathcal{H}^{\beta+1}_K(\mathbb{T}^d,\mathbb{R}^p)$ and $t\in (0,r_{g^\star}/2)$ such that\\ $\mathbb{H}(\mathcal{M}_g,\mathcal{M}_{g^\star})<t$. We say that a map $T:\mathcal{M}_{g^\star}^{t}\rightarrow \mathcal{M}_{g^\star}$ is $(g_{\# U},g^\star_{\# U})_{K_T}$-compatible with a radius $t$ if:
\begin{itemize}
    \item[i)] $T \in \mathcal{H}^{\beta+1}_{K_T}(\mathcal{M}_{g^\star}^{t},\mathcal{M}_{g^\star})$ and the restriction $T_{|_{\mathcal{M}_g}}:\mathcal{M}_{g}\rightarrow \mathcal{M}_{g^\star}$ is a diffeomorphism such that $(T\circ g)_{\# U}$ admits a density $f_{T} \in \mathcal{H}^{\beta}_{K_T}(\mathcal{M}_{g^\star},\mathbb{R})$ with respect to the volume measure on $\mathcal{M}_{g^\star}$.
    \item[ii)] For all $x\in \mathcal{M}_{g^\star}$, 
 $T^{-1}(\{x\})-x\subset E_x$ a $p-d$ dimensional subspace of $\mathbb{R}^p$ and 
 $$(T^{-1}(\{x\})-x)\cap B^p(x,t)=B_{E_x}(0,t).$$
    \item[iii)] For any function $f\in \mathcal{H}^{\beta+1}_1(\mathcal{M}_{g^\star}^{t},\mathbb{R})$, the map $x\mapsto \int_{T^{-1}(\{x\})}f(y)d\lambda^{p-d}_{E_x}(y)$ belongs to $\mathcal{H}^{\beta+1}_{K_T}(\mathcal{M}_{g^\star},\mathbb{R})$.
    \item[iv)] For $T_g^{-1}:\mathcal{M}_{g^\star}\rightarrow \mathcal{M}_g$ the inverse application such that $T_g^{-1}\circ T_{|_{\mathcal{M}_g}} = \text{Id}_{|_{\mathcal{M}_g}}$, $T_g^{-1}\circ T\in \mathcal{H}^{\beta+1}_{K_T}(\mathcal{M}_{g^\star}^{t},\mathcal{M}_{g})$. Furthermore, the approximate Jacobian of $T$ (see Definition \ref{def:approx})  $x\mapsto \text{ap}_d(\nabla T(x))$ is bounded below by $K_T^{-1}$.
\end{itemize}
\end{definition}

Requiring this compatibility seems to be quite a strong property but we will show in Proposition \ref{prop:compatibility} that actually, as long as the $d_{\mathcal{H}^{\beta+1}_1}$ distance between the measures is relatively small, there always exists a $(g_{\# U},g^\star_{\# U})$-compatible map.

The idea behind Definition \ref{defi:compatibility} is that we would like to project $\mathcal{M}_g$ onto $\mathcal{M}_{g^\star}$ with a $C^{\beta+1}$-diffeomorphism. From \citepproofs{Leobacher}, we know that the orthogonal projection on a manifold of regularity $\beta+1$ is only of regularity $\beta$ so unfortunately, we cannot use it here. Therefore, we define the notion of $(g_{\# U},g^\star_{\# U})$-compatible map as a "pseudo" projection of regularity $\beta+1$. This notion regroup technical properties we will need in the proof of Theorem \ref{theo:theineq}. They are essentially properties that the orthogonal projection onto $\mathcal{M}_{g^\star}$ would verify if $\mathcal{M}_{g^\star}$ was of regularity $\beta+2$ and $\mathcal{M}_{g}$, $\mathcal{M}_{g^\star}$ are close enough. Indeed, to have the notion of compatibility, we will need the sets $\mathcal{M}_g$ and $\mathcal{M}_{g^\star}$ to be close in Hausdorff distance. First, let us show that the Hausdorff distance between the image of $g$ and $g^\star$ is controlled by the $d_{\mathcal{H}^{\beta+1}_1}$ norm.

\begin{proposition}\label{prop:hausdorff}
    Let $g,g^\star \in \mathcal{H}^{1}_K(\mathbb{T}^d,\mathbb{R}^p)$, then for all $ t>0$, if
    $$
     \sup \limits_{D \in \mathcal{H}^{\beta+1}_1}\mathbb{E}_{U\sim \mathcal{U}([0,1]^d)}[D(g(U))-D(g^\star(U))]\leq t^{(d+1)(2\beta+1)},
     $$
     then 
     $$
     W_1(g_{\# U},g^\star_{\# U}) 
     \leq Ct^{d+1} \quad \text{ and } \quad \mathbb{H}(\mathcal{M}_g,\mathcal{M}_{g^\star})< Ct.
     $$ 
\end{proposition}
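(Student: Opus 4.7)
The plan is to split the statement into two independent bounds. First, I would convert the hypothesis (a small $d_{\mathcal{H}^{\beta+1}_1}$ distance) into a control on $W_1(g_{\# U},g^\star_{\# U})$ via a wavelet-level interpolation. Second, I would convert that Wasserstein bound into a Hausdorff bound on the two supports by exploiting the $K$-Lipschitz structure of $g$ and $g^\star$.

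For the Wasserstein step, I would recall that for compactly supported measures $W_1 \asymp d_{\mathcal{H}^1_1}$, and use the wavelet truncation underlying Proposition \ref{prop:Hölder} (equivalently, Corollary \ref{coro:ineq without reg}) in the following way. Any $1$-Lipschitz test function $D$ is split at a dyadic frequency $J$ into $D=D_{\le J}+D_{>J}$; since $D\in\mathcal{H}^1_C$, the high-frequency tail satisfies $\|D_{>J}\|_\infty\lesssim 2^{-J}$, while $\|D_{\le J}\|_{\mathcal{H}^{\beta+1}}\lesssim 2^{J\beta}$. Testing $D_{\le J}$ against the hypothesis and estimating $D_{>J}$ uniformly yields
\begin{equation*}
W_1(g_{\# U},g^\star_{\# U}) \le C\bigl(2^{-J}+2^{J\beta}\, t^{(d+1)(2\beta+1)}\bigr)
\end{equation*}
for any $J\ge 0$. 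Balancing the two terms with $2^{J(\beta+1)} \simeq t^{-(d+1)(2\beta+1)}$ gives $W_1 \le C\, t^{(d+1)(2\beta+1)/(\beta+1)}$. Since $(2\beta+1)/(\beta+1)\ge 1$ for $\beta\ge 0$, and we may assume $t\le 1$ (otherwise the diameter bound $W_1\le 2K$ already delivers $W_1\le Ct^{d+1}$), this reduces to $W_1 \le C\, t^{d+1}$.

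For the Hausdorff step, set $h:=\mathbb{H}(\mathcal{M}_g,\mathcal{M}_{g^\star})$ and, by symmetry, assume there exists $u_0\in\mathbb{T}^d$ with $x_0:=g(u_0)$ satisfying $d(x_0,\mathcal{M}_{g^\star})\ge h$. Take the $1$-Lipschitz test function $D(y):=d(y,\mathcal{M}_{g^\star})\wedge (h/2)$, which vanishes on $\mathcal{M}_{g^\star}$ hence $\mathbb{E}[D(g^\star(U))]=0$. For $u\in B^d(u_0,h/(2K))$ the $K$-Lipschitz property of $g$ gives $\|g(u)-x_0\|\le h/2$, so $d(g(u),\mathcal{M}_{g^\star})\ge h/2$ and $D(g(u))=h/2$. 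Provided $h/(2K)$ is below the injectivity scale of the torus (a threshold we may enforce by treating the regime $h\ge K$ separately via the trivial inclusion $\mathcal{M}_g,\mathcal{M}_{g^\star}\subset B^p(0,CK)$), the periodic ball $B^d(u_0,h/(2K))$ has $d$-dimensional Lebesgue mass $\ge c_d(h/(2K))^d$. Hence
\begin{equation*}
W_1(g_{\# U},g^\star_{\# U}) \ge \mathbb{E}[D(g(U))] \ge \tfrac{h}{2}\cdot c_d (h/(2K))^d \ge c\, h^{d+1}/K^d.
\end{equation*}
Combining with the bound from step one yields $h\le C\,t$, completing the proof.

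The main technical friction is the $t^{d+1}$ vs.\ $t^{(d+1)(2\beta+1)/(\beta+1)}$ gap in the Wasserstein step, which is exactly why the hypothesis carries the slightly inflated exponent $(d+1)(2\beta+1)$: it is tailored so that after the interpolation loss $1/(\beta+1)$ the clean $(d+1)$-power survives. A secondary bookkeeping issue is handling the periodic geometry in the volume argument (wrap-around of small balls in $\mathbb{T}^d$) and the integer vs.\ non-integer case of $\beta+1$ in Corollary \ref{coro:ineq without reg}; both are absorbed by either thresholding $h$ at a constant multiple of $K$ or by applying the wavelet truncation bound of Proposition \ref{prop:Hölder} directly, which is insensitive to this distinction.
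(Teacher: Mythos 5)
Your proof is correct and follows essentially the same two-step strategy as the paper: a wavelet interpolation to bound $W_1$ by $Ct^{d+1}$, followed by a volume-of-preimage argument with the test function $y\mapsto d(y,\mathcal{M}_{g^\star})$ (truncated, in your case) to bound the Hausdorff distance. The only cosmetic difference is in the Wasserstein step, where you perform a direct $L^\infty$ truncation of the discriminator (effectively interpolating with lower endpoint $\theta_1=0$, yielding the exponent $(d+1)(2\beta+1)/(\beta+1)$, which you then dominate by $d+1$ via $t\le 1$), whereas the paper invokes Corollary~\ref{coro:ineq without reg} with $\theta_1=1/2$ and $\theta_2=\beta+1$ (or $\beta+3/2$ when $\beta$ is an integer) to land directly on $t^{d+1}$; both are instances of the same underlying mechanism from Proposition~\ref{prop:Hölder}.
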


\begin{proof}
    Suppose first that $\beta$ is not an integer. Then,
    using Corollary \ref{coro:ineq without reg} with $\theta=1$, $\theta_1=1/2$ and $\theta_2=\beta+1$, we have 
    \begin{align*}      
    W_1&(g_{\# U},g^\star_{\# U})\\
    & \leq 2K \sup \limits_{D \in \mathcal{H}^{1}_1}\mathbb{E}_{U\sim \mathcal{U}([0,1]^d)}[D(g(U))-D(g^\star(U))]\\
    & \leq C \sup \limits_{D \in \mathcal{H}^{1/2}_1}\left(\mathbb{E}[D(g(U))-D(g^\star(U))]\right)^\frac{2\beta}{2\beta+1}\sup \limits_{D \in \mathcal{H}^{\beta+1}_1}\left(\mathbb{E}[D(g(U))-D(g^\star(U))]\right)^\frac{1}{2\beta+1}\\
     & \leq C \sup \limits_{D \in \mathcal{H}^{\beta+1}_1}\left(\mathbb{E}[D(g(U))-D(g^\star(U))]\right)^\frac{1}{2\beta+1}.
    \end{align*}
If now $\beta$ is an integer, then using Corollary \ref{coro:ineq without reg} with $\theta=1$, $\theta_1=\frac{\beta-1/2}{2\beta}$ and $\theta_2=\beta+3/2$, we get the same result.
    Then by hypothesis we obtain
    \begin{align*}       
    W_1(g_{\# U},g^\star_{\# U}) 
    & \leq Ct^{d+1}.
    \end{align*}
Suppose that $\max_{x\in \mathcal{M}_g} d(x,\mathcal{M}_{g^\star}) \geq \delta$. For $u\in[0,1]^d$ such that $d(g(u),\mathcal{M}_{g^\star})\geq \delta$, using the dual formulation of the $1$-Wasserstein distance, we have that
\begin{align*}
    W_1(g_{\# U},g^\star_{\# U})&  = \sup \limits_{D \in \text{Lip}_1}\mathbb{E}_{U\sim \mathcal{U}([0,1]^d)}[D(g(U))-D(g^\star(U))]\\
    & \geq \int_{[0,1]^d} d(g(z),\mathcal{M}_{g^\star})d\lambda^d(z)> \int_{B^d(u,\frac{\delta}{2K})} \frac{\delta}{2}d\lambda^d(z)\geq C \delta^{d+1}.
\end{align*}    
Therefore we can deduce that $$\mathbb{H}(\mathcal{M}_g,\mathcal{M}_{g^\star})\leq C t.$$  \end{proof}
In this proposition, the exponents are not optimal but are sufficient for our purpose. Using this result, let show that if the distance $d_{\mathcal{H}^{\beta+1}_1}(g_{\# U},g^\star_{\# U})$ is small enough, then there exists a $(g_{\# U},g^\star_{\# U})$-compatible map.

\begin{proposition}\label{prop:compatibility} 
    Let $g,g^\star \in \mathcal{H}^{\beta+1}_K(\mathbb{T}^d,\mathbb{R}^p)$ that verify the $K$-manifold regularity condition with $g^\star$ that verifies the $K$-density regularity condition. There exist constants $C,C_{2}>0$ such that if 
        $$
     \sup \limits_{D \in \mathcal{H}^{\beta+1}_1}\mathbb{E}_{U\sim \mathcal{U}([0,1]^d)}[D(g(U))-D(g^\star(U))]\leq C^{-1},
     $$
 then $g$ verifies the $C_2$-density 
 regularity condition and there exists a $(g_{\# U},g^\star_{\# U})_{K_T}$-compatible map with radius $t\geq C_2^{-1}$and  $K_T\leq C_2$. 
\end{proposition}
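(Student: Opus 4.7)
The plan is to proceed in three steps: (a) reduce to a situation where $\mathcal{M}_g$ sits well inside a tubular neighborhood of $\mathcal{M}_{g^\star}$; (b) construct the compatible map $T$ as a smooth ``pseudo-projection'' using a smooth transverse frame over $\mathbb{T}^d$; (c) derive the density regularity of $g$ by a concentration argument that detects any degeneracy of $\nabla g$ through the $\mathcal{H}^{\beta+1}$-IPM.

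For step (a), I would apply Proposition~\ref{prop:hausdorff} with a suitable power of the hypothesis constant $C$, so that $\mathbb{H}(\mathcal{M}_g,\mathcal{M}_{g^\star}) \leq t_0/3$ for a fixed $t_0 \in (0,(2K)^{-1})$ to be chosen below. In particular $\mathcal{M}_g \subset \mathcal{M}_{g^\star}^{t_0/3}$, well within the reach of $\mathcal{M}_{g^\star}$.

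For step (b), since $\mathbb{T}^d$ is compact and the tangent bundle $u \mapsto T_{g^\star(u)} \mathcal{M}_{g^\star}$ varies continuously, a $C^\infty$ partition of unity on $\mathbb{T}^d$ combined with a smooth Gram--Schmidt procedure produces a $C^\infty$ map $F : \mathbb{T}^d \to \mathbb{R}^{p \times (p-d)}$ whose columns form an orthonormal basis of a $(p-d)$-dimensional subspace transverse to $T_{g^\star(u)} \mathcal{M}_{g^\star}$, with a uniform transversality angle depending only on $K$. I would then set
\[
\Phi(u,v) = g^\star(u) + F(u)\, v, \qquad (u,v) \in \mathbb{T}^d \times B^{p-d}(0,t_0),
\]
and use the inverse function theorem (applied to the uniformly non-degenerate block $[\nabla g^\star(u), F(u)]$) to show that $\Phi$ is a $C^{\beta+1}$-diffeomorphism onto an open set containing $\mathcal{M}_{g^\star}^{t_0/2}$, with injectivity guaranteed by $t_0 < r_{g^\star}/2$. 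Setting $T(\Phi(u,v)) := g^\star(u)$, each fiber $T^{-1}\{g^\star(u)\}$ is the affine flat $g^\star(u) + F(u)\, B^{p-d}(0,t_0)$, which by orthonormality of $F(u)$ is a Euclidean ball in $E_{g^\star(u)} := \mathrm{span}(F(u))$ (property~(ii)); fiber integrals reduce to $v \mapsto \int f(g^\star(u) + F(u) v)\, dv$, which is $C^{\beta+1}$ in $u$ (property~(iii)); and the Hausdorff bound from (a) ensures $\mathcal{M}_g$ crosses each fiber exactly once transversely, so $T|_{\mathcal{M}_g}$ is a $C^{\beta+1}$-diffeomorphism onto $\mathcal{M}_{g^\star}$, with (iv) following by composition of $T_g^{-1}$ with $T$.

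For step (c), the remaining items are the density regularity of $g$ and the Hölder regularity of $f_T$ in (i); these are tied together because $\nabla(T \circ g) = \nabla T \cdot \nabla g$ and a bounded-below Jacobian for $T \circ g$ yields $f_T \in \mathcal{H}^\beta_{K_T}$ by change of variables. I would argue by contradiction: suppose $\lambda_{\min}\bigl((\nabla g(u_*))^\top \nabla g(u_*)\bigr)^{1/2} < \epsilon$ at some $u_* \in \mathbb{T}^d$. The bound $\|\nabla g\|_{\mathcal{H}^\beta} \leq K$ then gives $\|g(u_* + s e) - g(u_*)\| \leq \epsilon |s| + K s^2 / 2$ along the near-kernel unit direction $e$. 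Choosing $r \sim \sqrt{\epsilon/K}$, the ball $B^d(u_*, r)$ of volume $r^d$ is squeezed by $T \circ g$ into $B(T(g(u_*)), C\epsilon) \cap \mathcal{M}_{g^\star}$, a set of intrinsic $d$-volume $\lesssim \epsilon^d$; thus $(T \circ g)_{\#U}$ charges this set with mass $\geq r^d \sim \epsilon^{d/2}$, whereas $g^\star_{\#U}$ gives it mass $\leq C \epsilon^d$ by the $K$-density regularity of $g^\star$. Testing with a rescaled $\mathcal{H}^{\beta+1}_1$ bump of radius $\sim \epsilon$ and amplitude $\sim \epsilon^{\beta+1}$ yields $d_{\mathcal{H}^{\beta+1}_1}\bigl((T \circ g)_{\#U}, g^\star_{\#U}\bigr) \gtrsim \epsilon^{\beta+1+d/2}$. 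On the other hand, since $D \circ T \in \mathcal{H}^{\beta+1}_{K_T}$ for every $D \in \mathcal{H}^{\beta+1}_1$ and $T$ is the identity on $\mathcal{M}_{g^\star}$, the IPM bound propagates:
\[
d_{\mathcal{H}^{\beta+1}_1}\bigl((T \circ g)_{\#U}, g^\star_{\#U}\bigr) \leq K_T \cdot d_{\mathcal{H}^{\beta+1}_1}(g_{\#U}, g^\star_{\#U}) \leq K_T / C.
\]
Combining these two displays and taking $C$ large enough in terms of $K_T$ forces $\epsilon \geq C_2^{-1}$ for an explicit $C_2$, which both secures the density regularity of $g$ and closes (i) via the change-of-variables argument above. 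The hardest part is this final concentration lower bound, which requires tight tracking of Hölder constants across the rescalings in order to cross the $C^{-1}$ threshold with constants depending only on $p, d, \beta, K$; a secondary technical obstacle is ensuring that the transverse frame $F$ (and hence $K_T$ and $t_0$) can be chosen with norms and transversality angles depending only on $K$, not on $g^\star$ itself.
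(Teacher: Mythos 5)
Your route for building $T$ is genuinely different from the paper's (which mollifies $g^\star$ into $g_s=g^\star\ast R_s$, shows $\mathcal{M}_{g_s}$ has reach bounded below, and sets $T=(\pi_{s|\mathcal{M}_{g^\star}})^{-1}\circ\pi_s$), but as written it has a gap at its foundation. You assert that a $C^\infty$ partition of unity plus Gram--Schmidt yields a \emph{global} orthonormal frame $F:\mathbb{T}^d\to\mathbb{R}^{p\times(p-d)}$ spanning a distribution transverse to $T_{g^\star(u)}\mathcal{M}_{g^\star}$. Gluing frames (or subspaces) with a partition of unity does not work: the Stiefel and Grassmann manifolds are not convex, so the patched object need not be a frame, and more seriously a global frame of a transverse distribution is a trivialization of a bundle isomorphic to the normal bundle of the embedded torus, which is only \emph{stably} trivial; outside the stable range $p-d>d$ (and the paper only assumes $d<p$) triviality is not automatic, so the object you posit may not exist as stated. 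A frame-free or local-frame formulation (e.g.\ mollifying the field of normal projections and projecting along the resulting smooth transverse distribution) would be needed; the paper's convolution of $g^\star$ itself is precisely a way to get a smoothly varying transverse fibration (the normal fibers of $\mathcal{M}_{g_s}$) without ever needing a global trivialization. Relatedly, "the Hausdorff bound ensures $\mathcal{M}_g$ crosses each fiber exactly once transversely" is too quick: Hausdorff proximity alone does not exclude tangencies or multiple crossings; one needs the reach bound on $\mathcal{M}_g$ and a quantitative tangent-space comparison (this is where the paper spends Propositions of the type \ref{prop:pisdiffeo} and the injectivity lemma, and where the density regularity of $g$ actually enters).

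The second genuine error is in your concentration argument for the density regularity of $g$. If $\|\nabla g(u_*)e\|<\epsilon$ only in one unit direction $e$, the isotropic ball $B^d(u_*,r)$ with $r\sim\sqrt{\epsilon/K}$ is \emph{not} squeezed into $B(T(g(u_*)),C\epsilon)$: the $d-1$ non-degenerate directions are contracted only by a factor $O(K)$, so the image is a pancake of width $\sim\sqrt{\epsilon}$ and thickness $\sim\epsilon$, and your mass comparison $\epsilon^{d/2}$ versus $\epsilon^{d}$ rests on a false containment. The correct construction (this is exactly the paper's Proposition \ref{prop:lambdamin}) uses an \emph{anisotropic} box, of side $\sim\epsilon$ in the $d-1$ non-degenerate directions and $\sim\sqrt{\epsilon}$ along $e$; its image does land in an $O(\epsilon)$-ball, giving mass $\gtrsim\epsilon^{d-1/2}$ against $\lesssim\epsilon^{d}$ for $g^\star_{\# U}$, and then a Lipschitz (not $\epsilon^{\beta+1}$-amplitude H\"older) test function built from the projection onto $\mathcal{M}_{g^\star}$ gives a $W_1$ lower bound, which is converted into an $\mathcal{H}^{\beta+1}$-IPM lower bound via the interpolation of Corollary \ref{coro:ineq without reg} / Proposition \ref{prop:hausdorff}. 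With that correction the contradiction still closes, but note also that the paper establishes the density regularity of $g$ \emph{directly against} $g^\star_{\# U}$, before and independently of $T$, precisely to avoid the circularity in your sketch where $K_T$ and the diffeomorphism property of $T|_{\mathcal{M}_g}$ (which themselves need non-degeneracy of $g$) are used to prove that non-degeneracy.
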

The proof of Proposition \ref{prop:compatibility} can be found in the following Section.

\subsection{Proof of Proposition \ref{prop:compatibility}}\label{sec:prop:compatibility}

 The proof of proposition \ref{prop:compatibility} breaks down as follow: 
we first approximate $\mathcal{M}_{g^\star}$ by a manifold $\mathcal{M}_{g_s}$ of regularity $\beta+2$ using convolution and show that its reach is bounded below. Using that $g_{\# U}$ is close to $g^\star_{\# U}$ for the $d_{\mathcal{H}^{\beta+1}_1}$ distance, we show that $g$ is a local diffeomorphism. We then show that $\pi_{s|_{\mathcal{M}_{g^\star}}}:\mathcal{M}_{g^\star}\rightarrow \mathcal{M}_{g_s}$ the restriction to $\mathcal{M}_{g^\star}$ of $\pi_s$ the orthogonal projection onto $\mathcal{M}_{g_s}$, is a $C^{\beta+1}$ diffeomorphism. Finally we define our map T as $T(x)=(\pi_{s|_{\mathcal{M}_{g^\star}}})^{-1} \circ \pi_s (x)$ and show that it verifies all the conditions to be $(g_{\# U},g^\star_{\# U})$-compatible.

\subsubsection{Approximation of $\mathcal{M}_{g^\star}$ by a manifold $\mathcal{M}_{g_s}$ of regularity $\beta+2$}

Let $s\in (0,r_{g^\star}/2)$ and  $R_s:\mathbb{T}^d\rightarrow \mathbb{R}$ defined by $R_s(u)=0 \vee ( 1-\|u\|/s)/C_s$ with $C_s$ such that $\|R_s\|_{L^1}=1$. Then define $g_s:\mathbb{T}^d \rightarrow \mathbb{R}^p$ by
$$
g_s :=g^\star \ast R_s.
$$
We have that $g_s$ belongs to  $ \mathcal{H}^{\beta+1}_{K}(\mathbb{T}^d,\mathbb{R}^p)\cap \mathcal{H}^{\beta+2}_{Ks^{-1}}(\mathbb{T}^d,\mathbb{R}^p)$, verifies that $$\|g^\star-g_s\|_\infty\leq Ks$$ and $\|g^\star-g_s\|_{\mathcal{H}^{\lfloor \beta \rfloor+1}}\leq Cs$. Write $C_0$ for the smallest $C>0$ such that
$$\|g^\star-g_s\|_{\mathcal{H}^{\lfloor \beta \rfloor+1}}\leq C_0s.$$
We do not fix the value of $s$ yet because we will lower it all along the proof to suit our needs. Let us show that $g_s$ is a diffeomorphism.

\begin{proposition}\label{eq:justepourle}
    There exists a constant $C>0$ such that if $s\in (0,C^{-1})$, the map $g_s$ is a diffeomorphism on its image.
\end{proposition}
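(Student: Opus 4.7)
The plan is to establish injectivity and local-diffeomorphism separately, then combine them.

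First, I would use the smoothness control $\|g^\star-g_s\|_{\mathcal{H}^{\lfloor\beta\rfloor+1}}\le C_0 s$ (and in particular $\|\nabla g^\star-\nabla g_s\|_\infty\le C_0 s$, using $\beta\ge 1$) together with the $K$-density regularity condition $\inf_u \lambda_{\min}((\nabla g^\star(u))^\top\nabla g^\star(u))^{1/2}\ge K^{-1}$. By Lipschitz stability of singular values of matrices (Weyl's inequality), for $s$ small enough one gets
\[
\inf_{u\in\mathbb{T}^d} \lambda_{\min}\bigl((\nabla g_s(u))^\top\nabla g_s(u)\bigr)^{1/2} \ge (2K)^{-1}.
\]
By the inverse function theorem, $g_s$ is therefore a local $C^{\beta+1}$-diffeomorphism from $\mathbb{T}^d$ onto its image; moreover there exists a radius $r_0=r_0(K)>0$, independent of $s$, such that $g_s$ is injective on every ball $B^d(u,r_0)$ (using the uniform lower bound on the derivative together with the $\mathcal{H}^{\beta+1}$ bound on the second derivatives).

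Next I would upgrade $g^\star$'s injectivity on $\mathbb{T}^d$ to a quantitative separation. By the same Taylor/immersion argument applied to $g^\star$ itself, there exist constants $c_1,r_1>0$ depending only on $K$ such that $\|g^\star(u)-g^\star(v)\|\ge c_1\|u-v\|$ whenever $\|u-v\|\le r_1$. For pairs with $\|u-v\|\ge r_1$, compactness of $\mathbb{T}^d$ and continuity plus injectivity of $g^\star$ yield a constant $c_2=c_2(K)>0$ with $\|g^\star(u)-g^\star(v)\|\ge c_2$. So $g^\star$ satisfies the global lower bound $\|g^\star(u)-g^\star(v)\|\ge c_3\min(\|u-v\|,1)$ for some $c_3=c_3(K)>0$.

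Combining this with $\|g^\star-g_s\|_\infty\le Ks$ gives, for any $u\ne v$,
\[
\|g_s(u)-g_s(v)\| \ge c_3\min(\|u-v\|,1) - 2Ks.
\]
Choosing $s$ small enough that $2Ks<c_3\min(r_0,1)/2$ forces $g_s(u)\ne g_s(v)$ whenever $\|u-v\|\ge \min(r_0,1)$, while the local-injectivity argument already handles $\|u-v\|< r_0$. Hence $g_s$ is globally injective. Being in addition a local $C^{\beta+1}$-diffeomorphism, it is a diffeomorphism onto its image. The main technical obstacle is the coupling between the $\mathcal{H}^{\lfloor\beta\rfloor+1}$ convergence (which controls the local behaviour via derivatives) and the $L^\infty$ convergence (which controls separation at large scales), but both follow transparently from the definition of the mollifier $R_s$ and the regularity of $g^\star$.
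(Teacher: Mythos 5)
Your local-diffeomorphism step matches the paper, but your global-injectivity argument has a gap. You claim that for pairs with $\|u-v\|\ge r_1$, "compactness of $\mathbb{T}^d$ and continuity plus injectivity of $g^\star$ yield a constant $c_2=c_2(K)>0$." This is not correct: compactness of $\mathbb{T}^d$ combined with continuity and injectivity of a fixed $g^\star$ gives a positive separation constant, but that constant depends on the particular $g^\star$, not only on $K$. Since the paper's constants must depend only on $p,d,\beta,K$ (this uniformity is used throughout the proof of Proposition~\ref{prop:compatibility}), this step does not deliver the required conclusion. There exist injective maps $g^\star\in\mathcal{H}^{\beta+1}_K$ satisfying the $K$-density regularity condition whose macro-scale separation $\inf_{\|u-v\|\ge r_1}\|g^\star(u)-g^\star(v)\|$ can be made arbitrarily small (the image nearly self-intersects), so no $K$-uniform $c_2$ exists without an additional hypothesis.

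The missing ingredient is exactly the $K$-manifold regularity condition: $r_{g^\star}\ge K^{-1}$. The paper's argument takes two points $u\ne v$ with $g_s(u)=g_s(v)$, uses the derivative lower bound to force $\|u-v\|\ge K^{-2}$ (up to an $O(s)$ correction), and then notes that $\|g^\star(u)-g^\star(v)\|\le 2C_0s$ is small; invoking Lemma~\ref{lemma:reachcurve} (the quantitative reach bound) this would force $r_{g^\star}\lesssim s$, contradicting $r_{g^\star}\ge K^{-1}$ for $s$ small. Your argument could be repaired by replacing the compactness step with this reach-based separation bound, since the reach lower bound is precisely the quantitative, $K$-uniform substitute for the qualitative "injectivity plus compactness" you invoked.
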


\begin{proof}
For $u\in \mathbb{T}^d$ and $w\in \mathbb{R}^d$, we have 
$$\|\nabla g_s(u) \frac{w}{\|w\|}\|\geq \|\nabla g^\star(u) \frac{w}{\|w\|}\|-C_0s\geq K^{-1}-C_0s\geq \frac{1}{2K} $$
for $s\leq (4KC_0)^{-1}$, so $g_s$ is a local diffeomorphism. Suppose there exists $u,v\in \mathbb{T}^d$, such that $g_s(u)=g_s(v)$ and $u\neq v$. Then as 
\begin{align*}
\|&g_s(u)-g_s(v)\| \\
& = \|\nabla g_s(u)(v-u)+\int_0^1 \nabla^2 g_s(u+t(v-u))(v-u)^2(1-t)dt\|\\
& \geq \|\nabla g^\star(u)(v-u)\| -\|\int_0^1 \nabla^2 g^\star(u+t(v-u))(v-u)^2(1-t)dt\| -2C_0s\|u-v\|\\
&\geq  K^{-1}\|u-v\|-\frac{K}{2}\|u-v\|^2-2C_0s\|u-v\|\\
& =\frac{K}{2}\|u-v\|(2K^{-2}-4\frac{C_0}{K}s-\|u-v\|),
\end{align*}
we deduce that $\|u-v\|\geq 2K^{-2}-4\frac{C_0}{K}s$. Then for $s\leq (16C_0K^{3})^{-1}$, we have $$\|g^\star(u)-g^\star(v)\|\leq \|g_s(u)-g_s(v)\|+2C_0s\leq (8K^{3})^{-1}.$$ 
As $\|u-v\|\geq K^{-2}$, applying Lemma \ref{lemma:reachcurve}, we obtain that $r_{g^\star}\leq (16K^{3})^{-1}$ which is impossible as $g^\star$ verifies the $K$-manifold regularity condition. Therefore, $g_s$ is injective and so is a diffeomorphism.
\end{proof}

Let us show that the submanifold $\mathcal{M}_{g_s}=g_s([0,1]^d)$ as a stricly positive  reach.

\begin{lemma}\label{lemma:reachgs}
        There exists a constant $C>0$ such that if $s\in (0,C^{-1})$, we have that the reach $r_{g_s}$ of $\mathcal{M}_{g_s}$, verifies $$r_{g_s}\geq C^{-1}.$$
\end{lemma}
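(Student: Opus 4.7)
The strategy is to apply Federer's characterization of the reach,
$$r_{g_s} \;=\; \inf_{x \neq y \in \mathcal{M}_{g_s}} \frac{\|x - y\|^2}{2\, d(y - x,\ T_x \mathcal{M}_{g_s})},$$
and to bound the ratio from below uniformly. Writing $x = g_s(u)$, $y = g_s(v)$, I would split according to the size of $\|u-v\|$ in $\mathbb{T}^d$, handling separately a local regime where a second-order expansion of $g_s$ yields a curvature-type bound, and a global regime where $\|g_s(u) - g_s(v)\|$ itself is bounded below.

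In the local regime $\|u-v\| \leq \eta$ for a small constant $\eta$ to be chosen, a Taylor expansion gives
$$g_s(v) - g_s(u) \;=\; \nabla g_s(u)(v-u) + \int_0^1 (1-t)\, \nabla^2 g_s(u + t(v-u))(v-u,v-u)\,dt.$$
The linear term lies in $T_x \mathcal{M}_{g_s} = \mathrm{range}(\nabla g_s(u))$, so the normal part of $y-x$ is of order $\|\nabla^2 g_s\|_\infty \|v-u\|^2$. Since $\beta \geq 1$ we have $\lfloor \beta \rfloor + 1 \geq 2$, so $\|g_s - g^\star\|_{\mathcal{H}^{\lfloor \beta \rfloor + 1}} \leq C_0 s$ yields $\|\nabla^2 g_s\|_\infty \leq K + C_0 s$. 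On the other hand, the argument in the proof of Proposition \ref{eq:justepourle} gives $\|x-y\| \geq (2K)^{-1}\|u-v\|$, so for $s$ small enough the local regime contributes a lower bound of order $(2K)^{-3}$.

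In the global regime $\|u-v\| > \eta$, I would show that $\|g_s(u) - g_s(v)\| \geq \eta'$ for some $\eta' > 0$ independent of $s$. Because $\|g_s - g^\star\|_\infty \leq K s$, it suffices to establish the analogous lower bound for $g^\star$; this follows by combining injectivity of $g^\star$ (from the $K$-manifold regularity condition), compactness of $\mathbb{T}^d$, and Lemma \ref{lemma:reachcurve} to get a quantitative lower bound depending only on $K$ and $\eta$, exactly as in the final step of Proposition \ref{eq:justepourle}. The trivial inequality $d(y-x, T_x \mathcal{M}_{g_s}) \leq \|y-x\|$ then gives a ratio $\geq \|x-y\|/2 \geq (\eta' - 2 K s)/2$.

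The main obstacle will be making the transition between the two regimes quantitative: points close in $\mathbb{R}^p$ may come from preimages far apart on the torus (a bottleneck), and these cases must be absorbed into the global regime rather than the local one. This is handled by the $g^\star$-bottleneck lower bound above, provided $s$ is small enough that the $O(s)$ perturbation does not destroy it. Combining both regimes then yields $r_{g_s} \geq \min\bigl((2K)^{-3}(1 - O(s)),\ (\eta' - 2Ks)/2\bigr) \geq C^{-1}$ for an absolute constant $C > 0$ depending only on $K$, once $s$ is taken smaller than a suitable threshold.
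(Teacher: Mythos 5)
Your proof is correct and takes essentially the same route as the paper: a second-order Taylor expansion of $g_s$ controls the Federer ratio when $\|u-v\|$ is small, and when $\|u-v\|$ is bounded away from zero the trivial bound $d(g_s(u)-g_s(v),\mathcal{T}_{g_s(v)}(\mathcal{M}_{g_s}))\leq \|g_s(u)-g_s(v)\|$ reduces matters to lower bounding $\|g^\star(u)-g^\star(v)\|$, which both you and the paper obtain from Lemma~\ref{lemma:reachcurve} together with $r_{g^\star}\geq K^{-1}$ and $\|g_s-g^\star\|_\infty=O(s)$. The paper packages the far regime as a contradiction through the minimizer $(u^\star,v^\star)$ of the Federer ratio while you run the same inequality chain directly, which is a presentational difference only; note however that the appeal to ``injectivity plus compactness'' is a red herring, since that would yield a constant depending on $g^\star$ rather than uniformly on $K,\beta,p,d$, so the quantitative content genuinely must come from Lemma~\ref{lemma:reachcurve}, as you do also cite.
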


\begin{proof}
Let $u,v\in \mathbb{T}^d$, recall from the proof of proposition \ref{eq:justepourle} that for $s\leq (4KC_0)^{-1}$, we have $\|\nabla g_s(v)(u-v)\|\geq \frac{\|u-v\|}{2K}$. We are going to use characterization of the reach for submanifolds of Lemma \ref{lemma:reachformanifolds}. Using a Taylor expansions on $g_s$ we have
\begin{align}\label{align:reachbound}
    & \frac{\|g_s(u)-g_s(v)\|^2}{2d(g_s(u)-g_s(v),\mathcal{T}_{g_s(v)}(\mathcal{M}_{g_s}))}  =     \frac{\|\nabla g_s(v)(u-v)+O(\|u-v\|^2)\|^2}{2d(\nabla g_s(v)(u-v)+O(\|u-v\|^2),\mathcal{T}_{g_s(v)}(\mathcal{M}_{g_s}))}\nonumber\\
     &\geq     \frac{\|\nabla g_s(v)(u-v)\|^2 +2\langle \nabla g_s(v)(u-v),O(\|u-v\|^2) \rangle+\|O(\|u-v\|^2)\|^2}{2d(\nabla g_s(v)(u-v),\mathcal{T}_{g_s(v)}(\mathcal{M}_{g_s}))+2d(O(\|u-v\|^2)\|,\mathcal{T}_{g_s(v)}(\mathcal{M}_{g_s}))}\nonumber\\
    & \geq \frac{\frac{K^{-2}}{4}\|u-v\|^2-2K^2\|u-v\|^3-K^2\|u-v\|^4}{2K\|u-v\|^2}\nonumber\\
    & \geq \frac{1}{8K^3}(1-K^4(8\|u-v\|+4\|u-v\|^2)),
\end{align}
where we used that $d(\nabla g_s(v)(u-v),\mathcal{T}_{g_s(v)}(\mathcal{M}_{g_s}))=0$. Then $\|u-v\|\leq (32K^4)^{-1}$ implies that $$\frac{\|g_s(u)-g_s(v)\|^2}{2d(g_s(u)-g_s(v),\mathcal{T}_{g_s(v)}(\mathcal{M}_{g_s}))}> (16K^3)^{-1}.$$ 
Therefore, if $r_{g_s}\leq (16K^3)^{-1}$ we have 
$$
r_{g_s}=\min \limits_{\|u-v\| \geq (32K^4)^{-1} } \frac{\|g_s(u)-g_s(v)\|^2}{2d(g_s(u)-g_s(v),\mathcal{T}_{g_s(v)}(\mathcal{M}_{g_s}))}.
$$
Let $$(u^\star,v^\star)\in \argmin \limits_{\|u-v\| \geq (32K^4)^{-1} } \frac{\|g_s(u)-g_s(v)\|^2}{2d(g_s(u)-g_s(v),\mathcal{T}_{g_s(v)}(\mathcal{M}_{g_s}))},$$ then 
\begin{align*}
\|g^\star(u^\star)-g^\star(v^\star)\|& \leq \|g_s(u^\star)-g_s(v^\star)\|+2C_0s\\
& \leq \frac{\|g_s(u^\star)-g_s(v^\star)\|^2}{d(g_s(u^\star)-g_s(v^\star),\mathcal{T}_{g_s(v^\star)}(\mathcal{M}_{g_s}))} +2C_0s\\
&= 2r_{g_s}+2C_0s.
\end{align*}
Then, if  $r_{g_s}\vee C_0s < (2^8K^5)^{-1}$, 
we have $\|g^\star(u^\star)-g^\star(v^\star)\|<(2^6K^5)^{-1}$ and as $\|u^\star-v^\star\|\geq (32K^4)^{-1}$, applying
Lemma \ref{lemma:reachcurve}, we obtain $r_{g^\star}\leq (2^7K^5)^{-1}$
which is impossible as $r_{g^\star}\geq K^{-1}$. Therefore, taking $s<(C_02^8K^5)^{-1} $ we have $r_{g_s}\geq (2^8K^5)^{-1}$.
\end{proof}

\subsubsection{Properties of the projection onto $\mathcal{M}_{g_s}$}

Define $\pi_s:\mathcal{M}_{g_s}^{r_{g_s}}\rightarrow \mathcal{M}_{g_s}$ as the orthogonal projection.
Let us first show that $\pi_s$ verifies the point $ii)$ of Definition \ref{defi:compatibility} on a neighborhood of $\mathcal{M}_{g_s}$.

\begin{proposition}\label{prop:projectconstant}
    There exists a constant $C>0$ such that the projection $\pi_s$ verifies the point $ii)$ of Definition \ref{defi:compatibility} on $\mathcal{M}_{g_s}^{C^{-1}}$.
\end{proposition}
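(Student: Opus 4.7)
\textbf{Proof plan for Proposition \ref{prop:projectconstant}.}
The plan is to identify the affine fibers of $\pi_s$ with the normal tube of $\mathcal{M}_{g_s}$, which is well defined up to radius $r_{g_s}$. For $x \in \mathcal{M}_{g_s}$, I would set $E_x := \mathcal{N}_x(\mathcal{M}_{g_s}) \subset \mathbb{R}^p$, the orthogonal complement of the tangent space $\mathcal{T}_x(\mathcal{M}_{g_s})$ at $x$. Since $\mathcal{M}_{g_s}$ is a $d$-dimensional smooth submanifold (Proposition~\ref{eq:justepourle} provides the diffeomorphism $g_s$), $E_x$ is a linear subspace of dimension $p-d$, as required. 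The candidate radius is $t := r_{g_s}/2$, which by Lemma~\ref{lemma:reachgs} is at least $C^{-1}$ for a universal constant $C>0$, so that $\pi_s$ is in particular well defined on $\mathcal{M}_{g_s}^{t}$.

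The first step is the inclusion $\pi_s^{-1}(\{x\}) - x \subset E_x$. For any $y \in \mathcal{M}_{g_s}^{r_{g_s}}$, Federer's characterization of the metric projection onto a set with positive reach (see \citeproofs{federer1959}) yields that $y - \pi_s(y)$ realizes the distance $d(y,\mathcal{M}_{g_s})$ and is orthogonal to $\mathcal{T}_{\pi_s(y)}(\mathcal{M}_{g_s})$. Applied to any $y$ with $\pi_s(y) = x$, this gives $y - x \in E_x$.

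The second step is the equality $(\pi_s^{-1}(\{x\}) - x) \cap B^p(0,t) = B_{E_x}(0,t)$. The forward inclusion follows directly from the first step combined with the definition of $B^p(0,t)$. For the reverse inclusion, I would invoke the standard normal tube property of sets with positive reach: the exponential-like map
\[
\Phi : \bigl\{(z,v) \in N\mathcal{M}_{g_s} : \|v\| < r_{g_s}\bigr\} \longrightarrow \mathcal{M}_{g_s}^{r_{g_s}}, \qquad \Phi(z,v) = z+v,
\]
is a bijection whose inverse is $y \mapsto (\pi_s(y),\, y - \pi_s(y))$. Consequently, for any $v \in E_x$ with $\|v\| < r_{g_s}$, the point $x + v$ lies in $\mathcal{M}_{g_s}^{r_{g_s}}$ with $\pi_s(x+v) = x$. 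Intersecting with $B^p(0,t)$ and using $t < r_{g_s}$ gives the desired equality.

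The main (in fact only) obstacle is to ensure that the radius $t$ is bounded below by a universal constant; but this is precisely the content of Lemma~\ref{lemma:reachgs}, so there is no further difficulty. The argument is local in $x$ and does not rely on $s$ beyond the condition ensuring $r_{g_s} \geq C^{-1}$, so the constant can be chosen uniformly over $x \in \mathcal{M}_{g_s}$.
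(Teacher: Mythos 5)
Your proof is correct, but it follows a genuinely different route from the paper's. You identify $E_x$ with the normal space and then invoke the classical tubular-neighborhood structure of a compact submanifold of positive reach (Federer 1959, Theorem 4.8; or the classical $C^2$ tubular neighborhood theorem): orthogonality of $y - \pi_s(y)$ to the tangent space gives the forward inclusion, and the bijectivity of $(z,v)\mapsto z+v$ on the tube of radius $<r_{g_s}$ gives the reverse one, with the required lower bound on the radius coming from Lemma~\ref{lemma:reachgs}. The paper instead argues directly on the parametrization: for $z = \pi_s(x)$, $u = g_s^{-1}(z)$ and $h \in E_z$ with $\|h\|$ small, it Taylor-expands $\|x + h - g_s(u+v)\|^2$ about $v=0$, uses that $x+h-g_s(u)\in \operatorname{Im}(\nabla g_s(u))^\perp$ to kill the first-order cross term, and then the two-sided bounds $\frac{1}{2K}\|v\| \le \|\nabla g_s(u)v\| \le K\|v\|$ (from the density regularity transferred to $g_s$) together with the second-derivative bound $O(\|v\|^2)$ (from $g_s\in\mathcal H^2_K$) to conclude that $g_s(u)$ is the strict minimizer of $w\mapsto\|x+h - g_s(w)\|$ on a neighbourhood, hence $\pi_s(x+h)=z$. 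Your version is shorter and appeals to established reach theory, trading self-containedness for clarity; the paper's version is entirely elementary and produces an explicit radius $t = K^{-5}/2^{6}$ without invoking the injectivity radius of the normal exponential map. One point worth making more precise if you wrote this up: the assertion that the inverse of $\Phi$ is $y\mapsto(\pi_s(y),\,y-\pi_s(y))$ is exactly equivalent to the claim $\pi_s(x+v)=x$ for $\|v\|<r_{g_s}$ that you are trying to establish, so you should really point to the specific statement in Federer (Theorem~4.8(8), or derive it from Lemma~\ref{lemma:reachformanifolds} via $\langle v, y-x\rangle \le \|v\|\,\|y-x\|^2/(2r_{g_s}) < \|y-x\|^2/2$) rather than package it as a bijectivity claim.
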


\begin{proof}
Let $t\in (0,r_{gs}/2)$ and $x\in \mathcal{M}_{g_s}^{t}$, $h\in \mathcal{T}_{\pi_s(x)}(M_{g_s})^\perp$ with $\|h\|\leq t$ , $u=g_s^{-1}\circ \pi_s(x)$ and $v\in [0,1]^d$. Define $\gamma:[0,1]\rightarrow \mathbb{T}^d$ by
$$
\gamma(r)=g_s^{-1}\circ \pi_s(x+rh).
$$
Then, we have that $\gamma$ is $4Kh$-Lipschitz as $\pi_s$ is $2$-Lipschitz on $\mathcal{M}_{g_s}^{t}$ (see formula \eqref{grad} and equation \eqref{eq:shapeop}) and $g_s^{-1}$ is $2K$-Lipschitz. We would like to show that $\pi_s(x+h)=\pi_s(x)$ i.e $$\|x+h-g_s(u+v)\|> \|x+h-g_s(u)\|.$$ Then as $\gamma$ is $4Kh$-Lipschitz it is sufficient to prove it for $v\in B^d(0,4Kh)$. We have
\begin{align*}
    \|x+h-g_s(u+v)\|^2  = & \|x+h-g_s(u)\|^2 + 2\langle x+h-g_s(u),-\nabla g_s(u)v +O(\|v\|^2) \rangle\\
    & + \| -\nabla g_s(u)v +O(\|v\|^2)\|^2.
\end{align*}
On the ohter hand, $x+h-g_s(u)=x+h-\pi_s(x)\in \text{Im}(\nabla g_s(u))^\perp$ the orthogonal of the image of $\nabla g_s(u)$,  so
\begin{align*}
    \|x+h-g_s(u+v)\|^2  = &\|x+h-g_s(u)\|^2 + 2\langle x+h-g_s(u),O(\|v\|^2) \rangle\\
    & + \| \nabla g_s(u)v \|^2 -2\langle \nabla g_s(u)v, O(\|v\|^2)\rangle +O(\|v\|^4)\\
     \geq &\|x+h-g_s(u)\|^2 - 2\|x+h-g_s(u)\|K\|v\|^2\\
     & + \frac{K^{-2}}{4}\|v \|^2 -2K\|v\| K\|v\|^2 -K^2\|v\|^4\\
      \geq &\|x+h-g_s(u)\|^2  + \|v \|^2(\frac{K^{-2}}{4}-4tK-2K^2\|v\|-K^2\|v\|^2)\\
     \geq &\|x+h-g_s(u)\|^2  + \|v \|^2(\frac{K^{-2}}{4}-K^3(8t+4Kt^2))
\end{align*}
where we used that $\|v\|\leq 4Kh\leq 4Kt$.
Then for $t=K^{-5}/2^6 $ we have $\|x+h-g(u+v)\|> \|x+h-g(u)\|$, so $\pi_s$ verifies $ii)$ for a radius $t$.
\end{proof}

\begin{lemma}
    The map $\pi_{s|_{\mathcal{M}_{g^\star}}}:\mathcal{M}_{g^\star}\rightarrow \mathcal{M}_{g_s}$, i.e. the restriction of the projection $\pi_s$ to the manifold $\mathcal{M}_{g^\star}$, is injective.
\end{lemma}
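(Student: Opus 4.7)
The plan is to argue by contradiction: suppose $x = g^\star(u)$ and $y = g^\star(v)$ are distinct points of $\mathcal{M}_{g^\star}$ with $\pi_s(x) = \pi_s(y) = z$. Since $\|g^\star - g_s\|_\infty \leq K s$, both $x$ and $y$ lie within distance $K s$ of $\mathcal{M}_{g_s}$, and hence within distance $K s$ of $z$ as well. For $s$ small enough relative to $r_{g_s}$ (which is bounded below by Lemma~\ref{lemma:reachgs}), this places $x,y$ inside $\mathcal{M}_{g_s}^{r_{g_s}}$, so $\pi_s$ is well-defined on both. In particular, $\|x - y\| \leq 2 K s$ and $y - x \in \mathcal{T}_{z}(\mathcal{M}_{g_s})^{\perp}$, since the fibers of $\pi_s$ are orthogonal to $\mathcal{T}_z(\mathcal{M}_{g_s})$ inside $\mathcal{M}_{g_s}^{r_{g_s}}$.

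Next, I would show that $\|u - v\|$ must be small. Since $\|g^\star(u) - g^\star(v)\| \leq 2 K s$, if $\|u - v\|$ were to exceed a fixed positive constant (say $K^{-2}$), Lemma~\ref{lemma:reachcurve} would force $r_{g^\star}$ to be bounded above by a small quantity tending to $0$ with $s$, contradicting $r_{g^\star} \geq K^{-1}$. Hence $\|u - v\|$ is small, and a second-order Taylor expansion gives
\[
y - x \;=\; \nabla g^\star(u)(v - u) \;+\; R(u,v), \qquad \|R(u,v)\| \leq \tfrac{K}{2}\|v - u\|^2.
\]
Combining the $K$-density regularity $\|\nabla g^\star(u) w\| \geq K^{-1}\|w\|$ with the Lipschitz bound from $g^\star\in\mathcal{H}^{\beta+1}_K$, for $s$ small enough we get $\tfrac{1}{2K}\|v-u\| \leq \|y-x\| \leq K\|v-u\|$.

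The contradiction then comes from comparing the orthogonal projectors $P_x$ onto $\mathcal{T}_{x}(\mathcal{M}_{g^\star})$ and $P_z$ onto $\mathcal{T}_{z}(\mathcal{M}_{g_s})$. Writing $z = g_s(u_s)$ and using $\|g^\star(u) - g_s(u)\| \leq K s$ and $\|g^\star(u) - z\| \leq K s$ together with the bi-Lipschitz property of $g_s$ established in Proposition~\ref{eq:justepourle}, I would first show $\|u - u_s\| \leq C s$. Then the bound $\|g^\star - g_s\|_{\mathcal{H}^{\lfloor \beta\rfloor +1}} \leq C_0 s$ gives $\|\nabla g^\star(u) - \nabla g_s(u)\| \leq C s$, while $g_s \in \mathcal{H}^{\beta+2}_{Ks^{-1}}$ is merely used through $g_s \in \mathcal{H}^{\beta+1}_K$ to control $\|\nabla g_s(u) - \nabla g_s(u_s)\|$ by $K\|u-u_s\|\leq CKs$. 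From these two, a standard projector comparison yields $\|P_x - P_z\|_{\mathrm{op}} \leq C_1 s$. Now, projecting $y - x$ onto $\mathcal{T}_x(\mathcal{M}_{g^\star})$ preserves the already-tangential leading term $\nabla g^\star(u)(v - u)$, so
\[
\|P_x(y-x)\| \;\geq\; \tfrac{1}{K}\|v - u\| - \tfrac{K}{2}\|v-u\|^2 \;\geq\; \tfrac{1}{4K^{2}}\|y - x\|
\]
for $s$ small. But since $P_z(y - x) = 0$, we also have $P_x(y-x) = (P_x - P_z)(y - x)$, hence $\|P_x(y-x)\| \leq C_1 s \|y - x\|$. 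Dividing by $\|y - x\| > 0$ yields $\tfrac{1}{4K^2} \leq C_1 s$, impossible for $s$ chosen small enough depending only on $K$.

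The main obstacle is the quantitative tangent-space comparison of the last paragraph: while it is morally clear that $C^1$-closeness of $g^\star$ and $g_s$ implies closeness of tangent spaces, one has to relate $\mathcal{T}_x(\mathcal{M}_{g^\star})$ to $\mathcal{T}_z(\mathcal{M}_{g_s})$ despite the two manifolds and the two parameters $u, u_s$ not matching exactly, and to do so without losing the overall $O(s)$ control that is essential for the contradiction. All other steps are direct applications of the $K$-manifold and $K$-density regularity conditions and of Lemmas~\ref{lemma:reachcurve} and~\ref{lemma:reachgs}.
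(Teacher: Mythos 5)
Your argument is correct, but it takes a genuinely different route from the paper. The paper also first reduces to $\|u-v\|$ small via the reach of $\mathcal{M}_{g_s}$ (Lemma~\ref{lemma:reachgs}) and Lemma~\ref{lemma:reachcurve}, but then concludes by moving \emph{along} $\mathcal{M}_{g^\star}$: it considers $F(t)=g^\star(u+t(u-v))$, uses the fiber-constancy of $\pi_s$ (Proposition~\ref{prop:projectconstant}, i.e.\ point ii) of the compatibility) to show that $d\bigl(g^\star(u)+t(g^\star(u)-g^\star(v)),\mathcal{M}_{g_s}\bigr)$ grows linearly in $t$, and derives a contradiction with the Hausdorff bound $\|g^\star-g_s\|_\infty\leq C_0 s$ at $t=\tfrac{2C_0s}{\|g^\star(u)-g^\star(v)\|}$ (this is also why the paper needs the WLOG sign condition on $\langle g^\star(u)-\pi_s(g^\star(u)),g^\star(u)-g^\star(v)\rangle$, which your argument avoids). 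You instead exploit that the secant $y-x$ lies in $\mathcal{T}_z(\mathcal{M}_{g_s})^\perp$ while being, up to $O(\|u-v\|^2)$, tangent to $\mathcal{M}_{g^\star}$ at $x$, and contradict this with an $O(s)$ comparison of the two tangent projectors. This is clean and buys a shorter contradiction, at the price of the extra quantitative step $\|u-u_s\|\leq Cs$ and the projector perturbation bound; both are available with the paper's tools (lower bound on $\sigma_{\min}(\nabla g^\star)$ and $\sigma_{\min}(\nabla g_s)$, $\|g^\star-g_s\|_{\mathcal{H}^{\lfloor\beta\rfloor+1}}\leq C_0 s$, $g_s\in\mathcal{H}^{\beta+2}_{Ks^{-1}}\cap\mathcal{H}^{\beta+1}_K$). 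One imprecision to fix: Proposition~\ref{eq:justepourle} only states that $g_s$ is a diffeomorphism, not a quantitative bi-Lipschitz bound with constants uniform in $s$; to get $\|u-u_s\|\leq Cs$ you should argue as in the paper's other steps, i.e.\ first exclude $\|u-u_s\|$ of constant size via Lemma~\ref{lemma:reachcurve} and the reach lower bound $r_{g_s}\geq C^{-1}$, and then use the local expansion $\|g_s(u)-g_s(u_s)\|\geq \tfrac{1}{2K}\|u-u_s\|-\tfrac{K}{2}\|u-u_s\|^2$, rather than citing the proposition's statement alone.
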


\begin{proof}
Suppose there exist $u,v\in \mathbb{T}^d$ different such that $\pi_s(g^\star(u))=\pi_s(g^\star(v))$. We have 
\begin{align*}
    \|g_s(u)-g_s(v)\| & \leq \|g^\star(u)-g^\star(v)\|+2C_0s\\
    & \leq \|g^\star(u)-\pi_s(g^\star(u))\|+ \|g^\star(v)-\pi_s(g^\star(u))\| +2C_0s\leq 4C_0s
\end{align*}
Suppose first that $\|u-v\|\geq (32K^4)^{-1}$ and let 
$$
(x,y)\in  \argmin \limits_{\|a-b\|\geq (32K^4)^{-1}} \|g_s(a)-g_s(b)\|.
$$
As detailed in the proof of Lemma \ref{lemma:reachgs}, we have that $g_s(x)-g_s(y)\in \mathcal{T}_{g_s(y)}(\mathcal{M}_{g_s})^\perp$ and  
\begin{align*}
r_{g_s} & =\min \limits_{\|a-b\| \geq (32K^4)^{-1} } \frac{\|g_s(a)-g_s(b)\|^2}{2d(g_s(a)-g_s(b),\mathcal{T}_{g_s(b)}(\mathcal{M}_{g_s}))} \leq \frac{\|g_s(x)-g_s(y)\|^2}{2d(g_s(x)-g_s(y),\mathcal{T}_{g_s(y)}(\mathcal{M}_{g_s}))}\\
& = \frac{1}{2} \|g_s(x)-g_s(y)\| \leq \frac{1}{2}\|g_s(u)-g_s(v)\| \leq 2C_0s.
\end{align*}
As from Lemma \ref{lemma:reachgs}
 we have $r_{g_s}\geq C^{-1}$, supposing $s>0$ is small enough, this is impossible and so $\|u-v\|\leq (32K^4)^{-1}$.

Let $F:[0,\infty)\rightarrow \mathbb{R}^p$ defined by $F(t)=g^\star(u+t(u-v))$
then
\begin{align}\label{eq:bornepourinjec}
|d(F(t)&,\mathcal{M}_{g_s})-d(g^\star(u)+t(g^\star(u)-g^\star(v)),\mathcal{M}_{g_s})| \nonumber\\
& \leq
    \|F(t)-(g^\star(u)+t(g^\star(u)-g^\star(v)))\|\nonumber\\
    & \leq t\|\nabla g^\star(u)(u-v)-(g^\star(u)-g^\star(v))\| + K\|u-v\|^2t^2\nonumber\\
    & \leq Kt\|u-v\|^2 + K\|u-v\|^2t^2 = K\|u-v\|^2(t+t^2).
    \end{align}
We have 
\begin{align*}
\langle & g^\star(v)-\pi_s(g^\star(v)),g^\star(v)-g^\star(u)\rangle\\
& = \langle g^\star(v)-g^\star(u)+g^\star(u)-\pi_s(g^\star(u)),g^\star(v)-g^\star(u)\rangle\\
& = \|g^\star(v)-g^\star(u)\|^2- \langle g^\star(u)-\pi_s(g^\star(u)),g^\star(u)-g^\star(v)\rangle,
    \end{align*}
so without loss of generality we can suppose that $\langle g^\star(u)-\pi_s(g^\star(u)),g^\star(u)-g^\star(v)\rangle\geq 0$ (otherwise we would have 
$\langle g^\star(v)-\pi_s(g^\star(v)),g^\star(v)-g^\star(u)\rangle\geq 0$). Let $C_1>0$ be the constant from Proposition \ref{prop:projectconstant} such that $\pi_s$ verifies the point $ii)$ of Definition \ref{defi:compatibility} on $\mathcal{M}_{g_s}^{C_1^{-1}}$. Supposing $s\leq (2C_0C_1)^{-1}$ we have $\mathbb{H}(\mathcal{M}_{g_s},\mathcal{M}_{g^\star})\leq C_0s<C_1^{-1}/2$, then for $t\in [0,\frac{C_1^{-1}}{2\|g^\star(u)-g^\star(v)\|}]$, using Proposition \ref{prop:projectconstant} we have $\pi_s(g^\star(u)+t(g^\star(u)-g^\star(v)))=\pi_s(g^\star(u))$. Then 
\begin{align*}
   & d(g^\star(u)+t(g^\star(u)-g^\star(v)),\mathcal{M}_{g_s})^2=\|g^\star(u)+t(g^\star(u)-g^\star(v))-\pi_s(g^\star(u))\|^2\\ & =\|g^\star(u)-\pi_s(g^\star(u))\|^2+2t\langle g^\star(u)-\pi_s(g^\star(u)),g^\star(u)-g^\star(v)\rangle+t^2\|g^\star(u)-g^\star(v)\|^2\\
    & \geq t^2\|g^\star(u)-g^\star(v)\|^2.
\end{align*}
Using \eqref{eq:bornepourinjec} for $t=\frac{2C_0s}{\|g^\star(u)-g^\star(v)\|}$, we have 
\begin{align*}
    d(F(\frac{2C_0s}{\|g^\star(u)-g^\star(v)\|}),\mathcal{M}_{g_s})  \geq & d(g^\star(u)+\frac{2C_0s}{\|g^\star(u)-g^\star(v)\|}(g^\star(u)-g^\star(v)),\mathcal{M}_{g_s})\\
    & - K\|u-v\|^2\left(\frac{2C_0s}{\|g^\star(u)-g^\star(v)\|}+\frac{(2C_0s)^2}{\|g^\star(u)-g^\star(v)\|^2}\right).
\end{align*}
Then as $\|g^\star(u)-g^\star(v)\|\leq  \|g^\star(u)-\pi_s(g^\star(u))\|+ \|g^\star(v)-\pi_s(g^\star(v))\| \leq 2C_0s$, we have $\frac{2C_0s}{\|g^\star(u)-g^\star(v)\|}\leq \left(\frac{2C_0s}{\|g^\star(u)-g^\star(v)\|}\right)^2$ so 
\begin{align*}
    d(F(\frac{2C_0s}{\|g^\star(u)-g^\star(v)\|}),\mathcal{M}_{g_s})  \geq & \frac{2C_0s}{\|g^\star(u)-g^\star(v)\|}\|g^\star(u)-g^\star(v)\|\\
    & -2K\|u-v\|^2\left(\frac{2C_0s}{\|g^\star(u)-g^\star(v)\|}\right)^2\\
     \geq & 2C_0s-8KC_0^2s^2\left(\frac{\|u-v\|}{\|g^\star(u)-g^\star(v)\|}\right)^2\\
     \geq & 2C_0s(1-4KC_0s\left(\frac{\|u-v\|}{K^{-1}\|u-v\|-K\|u-v\|^2}\right)^2) \\
     \geq & 2C_0s(1-16K^3C_0s)
    \end{align*}
    where we used that $\|u-v\| \leq (32K^4)^{-1}$ implies that $\|u-v\|-K^2\|u-v\|^2\geq \|u-v\|/2$.
    Then taking $s\leq (2^6K^3C_0)^{-1}$ we have $d(F(\frac{2C_0s}{\|g^\star(u)-g^\star(v)\|},\mathcal{M}_{g_s})>C_0s$ which is impossible by definition of $g_s$. 
\end{proof}

From \citepproofs{Leobacher} we know that the canonical projection $\pi$ onto a submanifold $\mathcal{M}$ with strictly positive reach $r$, verifies
\begin{equation}\label{grad}
\nabla \pi(x)=\left(\text{Id}_{\mathcal{T}_{\pi(x)}(\mathcal{M})}-\|x-\pi(x)\|L_{\pi(x),v} \right)^{-1}P_{\mathcal{T}_{\pi(x)}(\mathcal{M})},
\end{equation}
for $L_{\pi(x),v}$ the shape operator in the direction $v=\frac{x-\pi(x)}{\|x-\pi(x)\|}$. From Corollary 3 in \citeproofs{Leobacher}, we also have that 
\begin{equation}\label{eq:shapeop}
\|\left(\text{Id}_{\mathcal{T}_{\pi(x)}(\mathcal{M})}-\|x-\pi(x)\|L_{\pi(x),v} \right)^{-1}\|\leq (1-\|x-\pi(x)\|/r)^{-1}.
\end{equation}
Furthermore if $\mathcal{M}$ is of regularity $\beta+2$, there exists a constant $C>0$ depending on the $\mathcal{H}^{\beta+2}$ norm of the charts such that
\begin{equation}\label{gradborn}
\|\pi_{|_{\mathcal{M}^{r/2}}}\|_{\mathcal{H}^{\beta+1}}\leq C
\end{equation}
Using these results, we obtain in the next proposition that the restriction of
$\pi_s$ to the manifold $\mathcal{M}_{g^\star}$ is a diffeomorphism.
\begin{proposition}\label{prop:pisdiffeo}
    There exists a constant $C>0$ such that if $s\leq C^{-1}$, then
 the map $\pi_{s|_{\mathcal{M}_{g^\star}}}$ is a diffeomorphism.
\end{proposition}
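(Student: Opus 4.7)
The plan is to verify the three conditions defining a diffeomorphism between the compact manifolds $\mathcal{M}_{g^\star}$ and $\mathcal{M}_{g_s}$: smoothness, invertibility of the differential, and bijectivity. Injectivity of $\pi_{s|_{\mathcal{M}_{g^\star}}}$ was established in the preceding lemma, and since $\mathbb{H}(\mathcal{M}_{g^\star},\mathcal{M}_{g_s}) \leq C_0 s \leq r_{g_s}/2$ for $s$ small (using Lemma \ref{lemma:reachgs}), the composition $\pi_s \circ g^\star : \mathbb{T}^d \to \mathcal{M}_{g_s}$ is well-defined and, by \eqref{gradborn} applied to $\pi_s$ (valid since $\mathcal{M}_{g_s}\in \mathcal{H}^{\beta+2}$), belongs to $\mathcal{H}^{\beta+1}$.

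The main analytic step is to prove that the differential $\nabla(\pi_s \circ g^\star)(u) = \nabla \pi_s(g^\star(u))\cdot \nabla g^\star(u)$ has full rank $d$ for every $u\in\mathbb{T}^d$, uniformly in $u$, when $s$ is small enough. Using formula \eqref{grad}, for a unit vector $w\in\mathbb{R}^d$ and $\xi = \nabla g^\star(u) w$, one has
\[
\nabla(\pi_s \circ g^\star)(u) w = \bigl(\mathrm{Id} - \|g^\star(u)-\pi_s(g^\star(u))\|\,L_{\pi_s(g^\star(u)),v}\bigr)^{-1} P_{\mathcal{T}_{\pi_s(g^\star(u))}(\mathcal{M}_{g_s})}\, \xi .
\]
By the $K$-density regularity of $g^\star$, $\|\xi\|\geq K^{-1}$. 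To transfer this lower bound through the projection, the idea is to compare $\xi = \nabla g^\star(u)w$ with $\nabla g_s(u) w \in \mathcal{T}_{g_s(u)}(\mathcal{M}_{g_s})$: by construction $\|\nabla g^\star - \nabla g_s\|_\infty \leq C_0 s$, and since $\mathcal{M}_{g_s}$ has reach bounded below (Lemma \ref{lemma:reachgs}) and regularity $\beta+2$, its tangent spaces vary Lipschitz-continuously, so $\|P_{\mathcal{T}_{g_s(u)}(\mathcal{M}_{g_s})} - P_{\mathcal{T}_{\pi_s(g^\star(u))}(\mathcal{M}_{g_s})}\| \leq C \|g_s(u)-\pi_s(g^\star(u))\| \leq C s$. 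Combining these estimates yields $\|P_{\mathcal{T}_{\pi_s(g^\star(u))}(\mathcal{M}_{g_s})}\xi\| \geq K^{-1} - Cs$, and the bound \eqref{eq:shapeop} together with $\|g^\star(u)-\pi_s(g^\star(u))\|\leq C_0 s$ controls the inverse factor. Hence for $s$ small enough the differential is uniformly invertible, so $\pi_s \circ g^\star$ is a local $C^{\beta+1}$-diffeomorphism, which via the chart $g_s^{-1}$ (well-defined by Proposition \ref{eq:justepourle}) translates to $\pi_{s|_{\mathcal{M}_{g^\star}}}$ being a local $C^{\beta+1}$-diffeomorphism.

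Finally, surjectivity follows from a standard open-closed connectedness argument: the image $\pi_{s|_{\mathcal{M}_{g^\star}}}(\mathcal{M}_{g^\star})$ is open in $\mathcal{M}_{g_s}$ (local diffeomorphism) and closed (continuous image of the compact $\mathcal{M}_{g^\star}$), and $\mathcal{M}_{g_s}=g_s(\mathbb{T}^d)$ is connected; combined with injectivity, this makes $\pi_{s|_{\mathcal{M}_{g^\star}}}$ a bijection, and the inverse function theorem applied to $\pi_s \circ g^\star$ then yields that its inverse is also $\mathcal{H}^{\beta+1}$.

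The main obstacle is the quantitative control of the tangent-space gap $\|P_{\mathcal{T}_{g_s(u)}(\mathcal{M}_{g_s})} - P_{\mathcal{T}_{\pi_s(g^\star(u))}(\mathcal{M}_{g_s})}\| = O(s)$, which crucially relies on having $r_{g_s}$ bounded away from $0$ (Lemma \ref{lemma:reachgs}) and on the extra smoothness $\mathcal{H}^{\beta+2}$ of $\mathcal{M}_{g_s}$ obtained from convolution, rather than on the bare $\mathcal{H}^{\beta+1}$ smoothness of $\mathcal{M}_{g^\star}$.
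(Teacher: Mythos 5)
Your proof is correct, and it follows a genuinely different route to the key estimate. Both you and the paper reduce to showing a uniform lower bound on $\lambda_{\min}\big(\nabla(\pi_s\circ g^\star)(u)^\top\nabla(\pi_s\circ g^\star)(u)\big)^{1/2}$, after which the conclusion via compactness and connectedness (boundaryless compact image of a local diffeomorphism onto a connected manifold of the same dimension) is identical. Where you differ is how the lower bound is obtained. The paper argues indirectly and never touches the projection formula: if $\|\nabla(\pi_s\circ g^\star)(u)v\|$ were much less than $\|\nabla g^\star(u)v\|\geq K^{-1}$, a Taylor expansion of $u'\mapsto\pi_s(g^\star(u'))-g^\star(u')$ shows the distance from $g^\star(u+hv)$ to $\mathcal{M}_{g_s}$ grows linearly in $h$, contradicting $\mathbb{H}(\mathcal{M}_{g^\star},\mathcal{M}_{g_s})\leq Ks$ after optimizing $h$; this yields $\lambda_{\min}^{1/2}\geq K^{-1}-\sqrt{8s}K$. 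You instead use the explicit formula $\nabla\pi_s(x)=(\mathrm{Id}-\|x-\pi_s(x)\|L)^{-1}P_{\mathcal{T}_{\pi_s(x)}}$ directly, controlling the inverse factor via \eqref{eq:shapeop} and the reach bound, and then transfer the nondegeneracy of $\nabla g^\star$ through the orthogonal projection by comparing $\nabla g^\star(u)w$ with $\nabla g_s(u)w\in\mathcal{T}_{g_s(u)}\mathcal{M}_{g_s}$ (difference $O(C_0 s)$) and then swapping tangent planes at $g_s(u)$ and $\pi_s(g^\star(u))$ using the reach-based Lipschitz variation of tangent spaces, $\|P_{\mathcal{T}_p}-P_{\mathcal{T}_q}\|\leq C\|p-q\|/r_{g_s}$; since $\|g_s(u)-\pi_s(g^\star(u))\|\leq 2Ks$ and $r_{g_s}\geq C^{-1}$ from Lemma~\ref{lemma:reachgs}, this gives the sharper $\lambda_{\min}^{1/2}\geq K^{-1}-Cs$. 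Your route is quantitatively tighter in $s$ (linear versus square-root, though this is immaterial here) and makes the mechanism more transparent, at the cost of invoking the tangent-plane Lipschitz bound, which the paper does not state; it is a classical consequence of positive reach (Federer, 1959, Thm.~4.8) but you should cite it explicitly. The paper's route is more self-contained within its own toolbox, needing only the Hausdorff bound and a second-order Taylor expansion.

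One small point worth flagging in your writeup: you attribute the tangent-space Lipschitz estimate partly to the "extra smoothness $\mathcal{H}^{\beta+2}$ of $\mathcal{M}_{g_s}$." In fact that estimate follows from the reach lower bound alone; the $\mathcal{H}^{\beta+2}$ regularity of $\mathcal{M}_{g_s}$ is what ensures $\pi_s\in\mathcal{H}^{\beta+1}$ via \eqref{gradborn}, which is a different role. Separating these two uses would make the logic cleaner.
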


\begin{proof}
From \eqref{gradborn} we have
\begin{equation}\label{eq:gradborn}
\|\pi_{s|_{\mathcal{M}_{g_s}^{r_{g_s}/2}}}\|_{\mathcal{H}^{\beta+1}}\leq C s^{-1}.
\end{equation}

To show that $\pi_{s|_{\mathcal{M}_{g^\star}}}$ is a local diffeomorphism, let us show that if there existed $u$ such that $\lambda_{\min}(\nabla (\pi_{s}\circ g^\star)(u)^\top \nabla (\pi_{s}\circ g^\star)(u))$ was too small, then the Hausdorff distance between $\mathcal{M}_{g_s}$ and $\mathcal{M}_{g^\star}$ would be too big. Let $u\in [0,1]^d$ and $h>0$, then for $v\in \mathbb{R}^d$ such that $\|v\|=1$ and $$\langle \pi_{s}(g^\star(u))-g^\star(u), \nabla(\pi_{s}\circ g^\star - g^\star)(u)v\rangle\geq 0,$$ we have
\begin{align*}
    \|\pi_{s} & (g^\star(u+hv))-g^\star(u+hv)\|  \geq  \|\pi_{s}(g^\star(u))-g^\star(u)+ \nabla (\pi_{s}\circ g^\star - g^\star)(u)hv\| - 2Kh^2\\
     =& \big(\|\pi_{s}(g^\star(u))-g^\star(u)\|^2 +2\langle \pi_{s}(g^\star(u))-g^\star(u), \nabla (\pi_{s}\circ g^\star - g^\star)(u)hv\rangle\\
     & +\|\nabla (\pi_{s}\circ g^\star - g^\star)(u)hv \|^2\big)^{1/2} - 2Kh^2\\
    \geq & h\|\nabla (\pi_{s}\circ g^\star - g^\star)(u)v \| - 2Kh^2\\
    \geq & h(\|\nabla g^\star(u)v \|-\|\nabla (\pi_{s}\circ g^\star)(u)v \|) - 2Kh^2\\
     \geq &h(K^{-1}-\|\nabla (\pi_{s}\circ g^\star)(u)v \|) - 2Kh^2.
\end{align*}
As $\mathbb{H}(\mathcal{M}_{g_s},\mathcal{M}_{g^\star})\leq Ks$ we deduce with $h=(K^{-1}-\|\nabla (\pi_{s}\circ g^\star)(u)v \|)/4K$ that 
$$\frac{(K^{-1}-\|\nabla (\pi_{s}\circ g^\star)(u)v \|)^2}{8K}\leq Ks$$
so
\begin{align}\label{lambdamin}
    \lambda_{\min}(\nabla (\pi_{s}\circ g^\star)(u) ^\top \nabla (\pi_{s}\circ g^\star)(u))^{1/2} \geq K^{-1}-\sqrt{8s}K.
\end{align}

In particular $\pi_s\circ g^\star$ is an immersion and therefore $\pi_{s|_{\mathcal{M}_{g^\star}}}:\mathcal{M}_{g^\star}\rightarrow \mathcal{M}_{g_s}$ is a local diffeomorphism. As $\mathcal{M}_{g^\star}$ is compact without boundary, $\pi_s(\mathcal{M}_{g^\star})$ is a compact boundaryless submanifold of  $\mathcal{M}_{g_s}$  so being of same dimension, there are equal. We can conclude that $\pi_{s|_{\mathcal{M}_{g^\star}}}$ is a diffeomorphism.
\end{proof}

\subsubsection{Density regularity of $g$}
As for all $u\in \mathbb{T}^d$ we have $
    \lambda_{\min}((\nabla g^\star(u) )^\top \nabla g^\star(u))^{1/2}\geq 1/K$, let us show that if $\inf \limits_{u\in[0,1]^d} \lambda_{\min}((\nabla g(u) )^\top \nabla g(u))$ was to small, then the distane $d_{\mathcal{H}^{\beta+1}_1}(g_{\# U},g^\star_{\# U})$ would be too big.
\begin{proposition}\label{prop:lambdamin}
    There exist constants $C,\delta>0$ such that for all $\tau>\delta$, if $$\inf \limits_{u\in \mathbb{T}^d} \inf \limits_{w\in \mathbb{R}^d}\|\nabla g(u)\frac{w}{\|w\|}\|\leq K^{-\tau},$$ then 
    $$
d_{\mathcal{H}^{\beta+1}_1}(g_{\# U},g^\star_{\# U})\geq CK^{-\delta(d+1)(2\beta+1)}.
$$
\end{proposition}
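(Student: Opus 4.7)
The plan is to exploit the near-degeneracy of $\nabla g$ at the witnessing point by constructing a smooth bump discriminator that detects the resulting density concentration of $g_{\# U}$, while $g^\star_{\# U}$ cannot match it because its density is bounded on the smooth submanifold $\mathcal{M}_{g^\star}$. By the hypothesis, there exist $u_0 \in \mathbb{T}^d$ and a unit vector $w \in \mathbb{R}^d$ with $\|\nabla g(u_0) w\| \leq K^{-\tau}$; since $K > 1$ and $\tau > \delta$, the same inequality holds with $K^{-\delta}$ in place of $K^{-\tau}$, and it will be enough to run the whole argument with this weakened bound.

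The first step is a lower bound on the mass of $g_{\# U}$ near $g(u_0)$. Writing any point of $\mathbb{T}^d$ close to $u_0$ as $u_0 + tw + h'$ with $h' \perp w$, the Taylor expansion of $g$ combined with $\|\nabla^2 g\|_\infty \leq K$ yields
\[
\|g(u_0 + tw + h') - g(u_0)\| \leq |t|K^{-\delta} + K\|h'\| + K(t^2 + \|h'\|^2).
\]
Setting $\rho := K^{-2\delta - 1}$, $T := \rho K^\delta/4$ and $S := \rho/(4K)$, each of the four terms above is bounded by $\rho/4$, so the cigar-shaped region $\{u_0 + tw + h' : |t| \leq T,\ \|h'\| \leq S\}$ is contained in $g^{-1}(B^p(g(u_0),\rho))$, giving $g_{\# U}(B^p(g(u_0),\rho)) \geq c T S^{d-1} = c' K^{-(2d-1)(\delta+1)}$. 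On the other hand, the $K$-density regularity of $g^\star$ bounds the density of $g^\star_{\# U}$ on $\mathcal{M}_{g^\star}$ by $K^d$, and the reach bound $r_{g^\star} \geq K^{-1}$ gives (by a standard reach estimate) that $\mathcal{M}_{g^\star} \cap B^p(g(u_0),\rho)$ has $d$-dimensional volume $O(\rho^d)$, so $g^\star_{\# U}(B^p(g(u_0),\rho)) \leq C K^{-d(2\delta+1)}$. Comparing the two exponents, the first quantity strictly dominates the second by a factor $K^{\delta + 1 - d}$, which we ensure is at least $2C/c'$ by taking $\delta$ large enough.

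Finally, fix a $C^\infty$ bump $\psi : \mathbb{R}^p \to [0,1]$ with $\psi \equiv 1$ on $B^p(0,1/2)$ and supported in $B^p(0,1)$, and consider $D(x) := \rho^{\beta+1} \psi((x-g(u_0))/\rho)$, which satisfies $\|D\|_{\mathcal{H}^{\beta+1}} \leq C_\psi$ and hence $D/C_\psi \in \mathcal{H}^{\beta+1}_1$. Combining the previous two estimates yields
\[
\mathbb{E}[D(g(U))] - \mathbb{E}[D(g^\star(U))] \geq c'' K^{-(2d-1)(\delta+1) - (\beta+1)(2\delta+1)},
\]
and an elementary simplification shows that the exponent equals $-\delta(2\beta+2d+1) - (\beta + 2d)$. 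The desired inequality $\delta(2\beta+2d+1) + (\beta + 2d) \leq \delta(d+1)(2\beta+1)$ reduces to $\delta \cdot d(2\beta - 1) \geq \beta + 2d$, which holds for $\delta$ chosen sufficiently large (possible since $\beta \geq 1$). The main technical obstacle will be the careful Taylor accounting of the first step: the linear term along the singular direction $w$ and the quadratic remainder must be balanced by the choice $\rho = K^{-2\delta-1}$, and any deterioration in that balance would translate directly into a worse exponent in the final bound.
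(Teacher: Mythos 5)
Your proof is correct up to a minor bookkeeping slip and takes a genuinely different route from the paper. The slip: your upper bound on $g^\star_{\# U}(B^p(g(u_0),\rho))$ should be $C K^{-2\delta d}$ rather than $C K^{-d(2\delta+1)}$ (the density cap $K^d$ multiplies the volume $C\rho^d = CK^{-d(2\delta+1)}$), so the dominance ratio is $K^{\delta+1-2d}$, not $K^{\delta+1-d}$; this only raises the admissible $\delta$ by a constant and leaves your final exponent $-\delta(2\beta+2d+1)-(\beta+2d)$ unchanged, since that exponent comes solely from the dominating $g_{\# U}$ term times $\rho^{\beta+1}$. As for the comparison: the paper exploits the same mass-concentration dichotomy (a thin cigar in the torus pushed near $g(u_0)$ by the degenerate direction versus the bounded density of $g^\star_{\# U}$ on a positive-reach manifold), but its discriminator is the $2$-Lipschitz tent $\bigl(\frac{K^{-\delta}}{8}\wedge(\frac{K^{-\delta}}{4}-\|\pi(x)-\pi(g(u_0))\|)\bigr)\vee 0$ built from the projection $\pi$ onto $\mathcal{M}_{g^\star}$; it first deduces $W_1(g_{\# U},g^\star_{\# U})\geq K^{-\delta(d+1)}$ and then upgrades this to the $d_{\mathcal{H}_1^{\beta+1}}$ bound via the interpolation result of Proposition \ref{prop:hausdorff}, which supplies the $(2\beta+1)$ factor in the exponent. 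You instead discriminate directly in $\mathcal{H}^{\beta+1}_1$ with the rescaled bump $\rho^{\beta+1}\psi((\cdot-g(u_0))/\rho)$: this is more self-contained (no detour through $W_1$, no call to Proposition \ref{prop:hausdorff}), at the cost of having to carry the $\rho^{\beta+1}$ scaling through the exponent accounting yourself. Both yield the proposition with a choice of $\delta$ depending only on $d,\beta,K$.
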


\begin{proof}
    Let $\tau> 3$  and suppose that there exists $u_0\in \mathbb{T}^d$ and $w_0\in \mathbb{R}^d$ with $\|w_0\|=1$ such that $\|\nabla g(u_0)w_0\|\leq K^{-\tau}$. For $\delta\in (3,\tau)$, define $D_\delta:\mathcal{M}^{K^{-2}/2}_{g^\star}\rightarrow \mathbb{R}$ by
$$D_\delta(x)=(\frac{K^{-\delta}}{8}\wedge(\frac{K^{-\delta}}{4}-\|\pi(x)-\pi(g(u_0))\|))\vee 0$$
for $\pi$ the canonical projection onto $\mathcal{M}_{g^\star}$. As $D_\delta$ is $2$-Lipschitz (see \eqref{grad}), we have
\begin{align}\label{align:w1}
    W_1(g_{\# U},g^\star_{\# U})  \geq & \frac{1}{2}\mathbb{E}[D_\delta(g(U))-D_\delta(g^\star(U))]\nonumber\\
     = &\frac{1}{2}\mathbb{E}[D_\delta(g(U))\mathds{1}_{\{\|\pi(g(U))-\pi(g(u_0))\|\leq \frac{K^{-\delta}}{4}\}}]\\
     & -\frac{1}{2}\mathbb{E}[D_\delta(g^\star(U))\mathds{1}_{\{\|\pi(g^\star(U))-\pi(g(u_0))\|\leq \frac{K^{-\delta}}{4}\}}]\nonumber\\
     \geq & \frac{1}{2}\mathbb{E}[ \frac{K^{-\delta}}{8}\mathds{1}_{\{\|\pi(g(U))-\pi(g(u_0))\|\leq \frac{K^{-\delta}}{8}\}}-\frac{K^{-\delta}}{4}\mathds{1}_{\{\|\pi(g^\star(U))-\pi(g(u_0))\|\leq \frac{K^{-\delta}}{4}\}}]
\end{align}
Let us first bound below the quantity $\mathbb{E}[\mathds{1}_{\{\|\pi(g(U))-\pi(g(u_0))\|\leq \frac{K^{-\delta}}{8}\}}]$. Let $v\in \mathbb{R}^d$ and write $v_0=\langle v,w_0\rangle w_0$, $v_0^\perp=v-v_0$. Using a Taylor expansion on the path $\zeta:[0,1]\rightarrow \mathbb{R}^d$ defined by $$\zeta(t)= u_0+4(\frac{t}{2}\wedge\frac{1}{2}) v_0+4((\frac{t}{2}-\frac{1}{2})\vee0)v^{\perp}_0,$$ we have
\begin{align*}
    & \|g(u_0+v)-g(u_0)\|\\
    & =\| \int_0^1\nabla g(\zeta(t))\dot{\zeta}(t)dt\|\\
    & =\|\int_0^1 \left(\nabla g(u_0+tv_0)v_0+\nabla g(u_0+v_0+tv_0^\perp)v^{\perp}_0\right) dt\|\\
    & = \|\int_0^1 \left(\nabla g(u_0)v_0 +\int_0^1\nabla^2 g(u_0+stv_0)tv_0^2)ds+\nabla g(u_0+v_0+tv_0^\perp)v^{\perp}_0\right) dt\|\\
    & \leq K^{-\tau}\|v_0\|+ \frac{K}{2}\|v_0\|^2+K\|v_0^\perp\| .
\end{align*}
Therefore, if $\|v_0\|\leq K^{-\frac{\delta+1}{2}}/2^6$ and $\|v_0^\perp\|\leq K^{-(\delta+1)}/2^5 $ then 
$$\|\pi(g(u_0+v))-\pi(g(u_0))\|\leq 2\|g(u_0+v)-g(u_0)\|\leq \frac{K^{-\delta}}{8}.$$
Writing $C_{d-1}$ the volume of the $d-1$-dimensional ball, we can conclude that
$$\mathbb{E}[\mathds{1}_{\{\|\pi(g(U))-\pi(g(u_0))\|\leq \frac{K^{-\delta}}{8}\}}]\geq C_{d-1}\left(\frac{K^{-(\delta+1)}}{2^{5}}\right)^{d-1}\frac{K^{-\frac{\delta+1}{2}}}{2^{6}}.$$

Let us now bound the quantity $\mathbb{E}[\mathds{1}_{\{\|\pi(g^\star(U))-\pi(g(u_0))\|\leq \frac{K^{-\delta}}{4}\}}]$. For $u\in \mathbb{T}^d$, we have
\begin{align}\label{eq:petitcalcul}
\|\pi(g^\star(u))- \pi \circ g(u_0)\|  = &
    \|g^\star(u)-g^\star\circ g^{\star -1}\circ \pi \circ g(u_0)\|\nonumber\\
     \geq  &\|\nabla g^\star (g^{\star -1}\circ \pi \circ g(u_0))(u-g^{\star -1}\circ \pi \circ g(v_0))\|\nonumber\\
     & -K(\|u-g^{\star -1}\circ \pi \circ g(u_0)\|^2)\nonumber\\
    \geq & K^{-1}\|u-g^{\star -1}\circ \pi \circ g(u_0)\|-K\|u-g^{\star -1}\circ \pi \circ g(u_0)\|^2.
\end{align}
As $r_{g^\star}\geq K^{-1}$, from Lemma \ref{lemma:reachcurve} we have 
$$
\|g^\star(u)-g^\star(v)\|\leq K^{-3}/4\ \text{  imply }\ \|u-v\|\leq K^{-2}/2,
$$
then,
\begin{equation}\label{eq:calcul2}
\|g^\star(u)-g^\star\circ g^{\star -1}\circ \pi \circ g(u_0)\| \leq \frac{K^{-\delta}}{4}\ \text{ imply }\ \|u-g^{\star -1}\circ \pi \circ g(u_0)\|\leq K^{-2}/2.
\end{equation}
Solving $K^{-1}x-Kx^2\leq \frac{K^{-\delta}}{4}$, we find $x\notin (\frac{K^{-1}-\sqrt{K^{-2}-K^{-\delta+1}}}{2K},\frac{K^{-1}+\sqrt{K^{-2}-K^{-\delta+1}}}{2K})$ so from~\eqref{eq:petitcalcul},
 $$ \|g^\star(u)-\pi \circ g(u_0)\|\leq \frac{K^{-\delta}}{4}$$
 implies $$\|u-g^{\star -1}\circ \pi \circ g(u_0)\| \notin (\frac{K^{-1}-\sqrt{K^{-2}-K^{-\delta+1}}}{2K},\frac{K^{-1}+\sqrt{K^{-2}-K^{-\delta+1}}}{2K}).$$
From \eqref{eq:calcul2}, we have that $\|g^\star(u)-\pi \circ g(u_0)\|\leq \frac{K^{-\delta}}{4}$ implies
$$
\|u-g^{\star -1}\circ \pi \circ g(u_0)\|\leq K^{-2}/2< \frac{K^{-1}+\sqrt{K^{-2}-K^{-\delta+1}}}{2K},
$$
so finally $ \|g^\star(u)-\pi \circ g(u_0)\|\leq \frac{K^{-\delta}}{4}$ implies that
$$
\|u-g^{\star -1}\circ \pi \circ g(u_0)\| \leq \frac{K^{-1}-\sqrt{K^{-2}-K^{-\delta+1}}}{2K}\leq \frac{K^{-\delta+2}}{2K}=\frac{K^{-\delta+1}}{2}.
$$
Writing $C_{d}$ the volume of the $d$-dimensional ball, we can conclude that
$$\mathbb{E}[\mathds{1}_{\{\|\pi(g^\star(U))-\pi(g(u_0))\|\leq \frac{K^{-\delta}}{4}\}}]\leq C_{d}\left(\frac{K^{-\delta+1}}{2}\right)^d.
$$
Finally, from \eqref{align:w1} we have
\begin{align*}
    W_1(g_{\# U},g^\star_{\# U})& \geq \frac{1}{2}\mathbb{E}[ \frac{K^{-\delta}}{8}\mathds{1}_{\{\|\pi(g(U))-\pi(g(u_0))\|\leq \frac{K^{-\delta}}{8}\}}-\frac{K^{-\delta}}{4}\mathds{1}_{\{\|\pi(g^\star(U))-\pi(g(u_0))\|\leq \frac{K^{-\delta}}{4}\}}]\\
    & \geq \frac{1}{2}\left( \frac{K^{-\delta}}{8} C_{d-1}\left(\frac{K^{-(\delta+1)}}{2^{5}}\right)^{d-1}\frac{K^{-\frac{\delta+1}{2}}}{2^{6}}-\frac{K^{-\delta}}{4}C_{d}\left(\frac{K^{-\delta+1}}{2}\right)^d\right)\\
    & =\frac{C_{d-1}K^{-\delta(d+1)}}{2^{5(d+1)}}(K^{\delta/2-d+1/2}-\frac{C_d2^{4d+2}K^d}{C_{d-1}}).
\end{align*}
Then for $\delta>3$ such that $K^{\delta/2-d+1/2}-\frac{C_d2^{4d+2}K^d}{C_{d-1}}\geq \frac{2^{5(d+1)}}{C_{d-1}}$ we have 
$$
W_1(g_{\# U},g^\star_{\# U})\geq K^{-\delta(d+1)}
$$
so from Proposition \ref{prop:hausdorff} we have 
$$
d_{\mathcal{H}^{\beta+1}_1}(g_{\# U},g^\star_{\# U})\geq CK^{-\delta(d+1)(2\beta+1)}.
$$
\end{proof}

Recalling that in the assumptions of Proposition \ref{prop:compatibility} we suppose that $
d_{\mathcal{H}^{\beta+1}_1}(g_{\# U},g^\star_{\# U})\leq C^{-1}$, then supposing $C>0$ large enough, using proposition \ref{prop:lambdamin}, we obtain 
\begin{equation}\label{eq:lambdaming}
    \inf \limits_{u\in[0,1]^d} \lambda_{\min}((\nabla g(u) )^\top \nabla g(u))^{1/2}\geq C^{-1}.
    \end{equation}

\subsubsection{The $(g_{\# U},g^\star_{\# U})$-compatible map}

We can now define the map $T:\mathcal{M}_{g^\star}^{r_{g_s}/4}\rightarrow \mathcal{M}_{g^\star}$ by 
$$T(x)=(\pi_{s|_{\mathcal{M}_{g^\star}}})^{-1} \circ \pi_s (x),$$
which is well defined because $\mathbb{H}(\mathcal{M}_{g_s},\mathcal{M}_{g^\star})\leq Ks< r_{g_s}/4$. Let us now state that $T$ is the map we are interested in:

\begin{proposition}
    The map $T:\mathcal{M}_{g^\star}^{t}\rightarrow \mathcal{M}_{g^\star}$ defined by 
$T(x)=(\pi_{s|_{\mathcal{M}_{g^\star}}})^{-1} \circ \pi_s (x)$, is $(g_{\# U},g^\star_{\# U})_{K_T}$-compatible with radius $t\geq C^{-1}$ and  $K_T\leq C$.
\end{proposition}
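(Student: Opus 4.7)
The plan is to verify the four conditions (i)--(iv) of Definition~\ref{defi:compatibility} in turn, exploiting the decomposition $T = (\pi_{s|\mathcal{M}_{g^\star}})^{-1} \circ \pi_s$. First I would fix $s$ small enough (and accordingly $C$ large enough) so that all the previous propositions of Section~\ref{sec:prop:compatibility} apply simultaneously: Lemma~\ref{lemma:reachgs} gives $r_{g_s}\geq C^{-1}$, Proposition~\ref{prop:pisdiffeo} gives that $\pi_{s|\mathcal{M}_{g^\star}}$ is a $C^{\beta+1}$-diffeomorphism with lower-bounded Jacobian~\eqref{lambdamin}, Proposition~\ref{prop:projectconstant} gives the fiber structure of $\pi_s$ on a tube of radius $C^{-1}$, and~\eqref{eq:lambdaming} gives the density regularity of $g$. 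I then set the radius $t := r_{g_s}/4$, which is $\geq C^{-1}$ as required, and note that on $\mathcal{M}_{g^\star}$ we simply have $T=\mathrm{Id}$.

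For (i), $T$ is a composition of two $\mathcal{H}^{\beta+1}$ maps with bounded norms: $\pi_s$ has $\mathcal{H}^{\beta+1}$ norm bounded on $\mathcal{M}_{g_s}^{r_{g_s}/2}$ by~\eqref{eq:gradborn}, and $(\pi_{s|\mathcal{M}_{g^\star}})^{-1}$ inherits $\mathcal{H}^{\beta+1}$ regularity from an inverse-function-theorem argument using~\eqref{lambdamin} and the $\mathcal{H}^{\beta+1}$ smoothness of $\mathcal{M}_{g^\star}$. Since the same reasoning applies replacing $g^\star$ by $g$ (using~\eqref{eq:lambdaming} instead of the $K$-density regularity), $\pi_{s|\mathcal{M}_g}:\mathcal{M}_g\to\mathcal{M}_{g_s}$ is also a diffeomorphism, hence $T_{|\mathcal{M}_g} = (\pi_{s|\mathcal{M}_{g^\star}})^{-1}\circ \pi_{s|\mathcal{M}_g}$ is a $C^{\beta+1}$-diffeomorphism from $\mathcal{M}_g$ onto $\mathcal{M}_{g^\star}$. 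The density of $(T\circ g)_{\# U}$ with respect to $\lambda_{\mathcal{M}_{g^\star}}$ can be written as the Jacobian-type ratio $\bigl(\mathrm{ap}_d \nabla(T\circ g)(u)\bigr)^{-1}\!\circ (T\circ g)^{-1}$, and its $\mathcal{H}^\beta$ norm is controlled by the $\mathcal{H}^{\beta+1}$ norms of $T$ and $g$ together with the uniform lower bounds on their differentials.

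For (ii), the key observation is that $\pi_s^{-1}(z) = z+\mathcal{T}_z(\mathcal{M}_{g_s})^\perp$ on a tube of radius $\leq r_{g_s}$. Writing $z=\pi_s(x)$ for $x\in\mathcal{M}_{g^\star}$, we have $T^{-1}(\{x\}) = \pi_s^{-1}(z)\cap \mathcal{M}_{g^\star}^{t}$; since $x-z\in \mathcal{T}_z(\mathcal{M}_{g_s})^\perp$ as $x\in\mathcal{M}_{g^\star}$ and $z=\pi_s(x)$, we get $T^{-1}(\{x\})-x\subset E_x := \mathcal{T}_{\pi_s(x)}(\mathcal{M}_{g_s})^\perp$, which is a $(p-d)$-dimensional subspace. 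The equality $(T^{-1}(\{x\})-x)\cap B^p(x,t)=B_{E_x}(0,t)$ then follows from Proposition~\ref{prop:projectconstant}, as all points of $x+B_{E_x}(0,t)$ stay in $\mathcal{M}_{g_s}^{r_{g_s}/2}$ and project onto $z$. For (iv), the approximate Jacobian bound $\mathrm{ap}_d(\nabla T(x))\geq K_T^{-1}$ reduces, via the chain rule and~\eqref{grad}, to the combination of~\eqref{lambdamin} and the reach bound on $\mathcal{M}_{g_s}$; and the same composition-of-diffeomorphisms argument used for (i), now applied to $T_g^{-1}\circ T = (\pi_{s|\mathcal{M}_g})^{-1}\circ \pi_s$, gives the required $\mathcal{H}^{\beta+1}$ bound.

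The main obstacle is (iii). Here I would parametrize each fiber $T^{-1}(\{x\})\cap \mathcal{M}_{g^\star}^{t}$ by the disk $B_{E_x}(0,t)$ via the affine chart $v\mapsto x+v$, and write
\begin{equation*}
x\longmapsto \int_{T^{-1}(\{x\})} f(y)\,d\lambda^{p-d}_{E_x}(y) = \int_{B_{E_x}(0,t)} f(x+v)\,d\lambda^{p-d}_{E_x}(v).
\end{equation*}
The dependence on $x\in\mathcal{M}_{g^\star}$ enters through the base point and through the $(p-d)$-plane $E_x = \mathcal{T}_{\pi_s(x)}(\mathcal{M}_{g_s})^\perp$. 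Since $\mathcal{M}_{g_s}$ is of regularity $\beta+2$, the projector $P_{E_x}$ depends on $x\in\mathcal{M}_{g^\star}$ with $\mathcal{H}^{\beta+1}$ regularity (one derivative being lost to computing the tangent space), and choosing a smooth orthonormal frame of $E_x$ locally, the integral becomes a standard parameter-dependent integral of an $\mathcal{H}^{\beta+1}_1$ function against a fixed Lebesgue measure on the fixed reference disk in $\mathbb{R}^{p-d}$. Differentiating $\lfloor\beta+1\rfloor$ times under the integral, and controlling the Hölder remainder via the $\mathcal{H}^{\beta+1}$ regularity of $x\mapsto (P_{E_x},x)$ and of $f$, yields the $\mathcal{H}^{\beta+1}_{K_T}$ bound. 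The technical care lies in globalizing this local frame argument on $\mathcal{M}_{g^\star}$ and in tracking the dependence of constants on $s$ so that $K_T$ stays bounded by a universal $C$.
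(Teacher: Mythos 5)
Your proposal follows essentially the same route as the paper's proof: the same decomposition $T=(\pi_{s|\mathcal{M}_{g^\star}})^{-1}\circ\pi_s$, the same appeals to Lemma~\ref{lemma:reachgs}, Propositions~\ref{prop:projectconstant} and~\ref{prop:pisdiffeo}, and~\eqref{eq:lambdaming} for conditions (i), (ii) and (iv), and the same orthonormal-frame parametrization of the fibers via $x\mapsto x+R_{\pi_s(x)}(z)$ for condition (iii). If anything you are slightly more careful than the paper on (iii), since the paper directly posits a global $\mathcal{H}^{\beta+1}$ orthonormal frame of the normal bundle of $\mathcal{M}_{g_s}$, whereas you explicitly flag the local-to-global frame step (which can be handled with a partition of unity) as the remaining technical work.
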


\begin{proof}
Let us start by proving the point $i)$. We have 
\begin{align}\label{gradT}
    \nabla T(x)=  \nabla (\pi_{s|_{\mathcal{M}_{g^\star}}})^{-1} (\pi_s(x))\circ \nabla \pi_s(x).
\end{align}
From \eqref{lambdamin}, 
as $g^\star$ is $K$-Lipschitz, we have that for $x\in \mathcal{M}_{g^\star}$ and $h\in \mathcal{T}_{x}(\mathcal{M}_{g^\star})$, 
\begin{equation}\label{ineqpis}
\|\nabla \pi_{s|_{\mathcal{M}_{g^\star}}}(x)h\|\geq K^{-1}(K^{-1}-\sqrt{8s}K)\|h\|.
\end{equation}
Then, as $\pi_s \in \mathcal{H}^{\beta+1}_{Cs^{-C_2}}(\mathcal{M}_{g_s}^{r_{g_s}/4},\mathcal{M}_{g_s})$, from \eqref{gradT} and the Faa di Bruno formula, we deduce that $T\in \mathcal{H}^{\beta+1}_{Cs^{-C_2}}(\mathcal{M}_{g_s}^{r_{g_s}/4},\mathcal{M}_{g^\star})$.

Let us now show that the density of $(T\circ g)_{\# U}$ with respect to the volume measure on $\mathcal{M}_{g^\star}$, is of regularity $\beta$. As $\mathbb{H}(\mathcal{M}_{g_s},\mathcal{M}_{g})\leq \mathbb{H}(\mathcal{M}_{g},\mathcal{M}_{g^\star}) +\mathbb{H}(\mathcal{M}_{g_s},\mathcal{M}_{g^\star})\leq 2Ks $ we show the same way as we did with $g^\star$ that $\pi_{s}\circ g$ is a  diffeomorphism and therefore, $T \circ g$ is also a diffeomorphism. Let us write $f_T$ the density of $(T\circ g)_{\# U}$ with respect to the volume measure on $\mathcal{M}_{g^\star}$. We have 
$$
f_T(x)=\text{det}\big(\nabla(T\circ g)((T\circ g)^{-1}(x))^\top \nabla(T\circ g)((T\circ g)^{-1}(x))\big)^{-1/2}.
$$
Using \eqref{eq:lambdaming} and
doing the same derivations as with $g^\star$ in the proof of Proposition \ref{prop:pisdiffeo}, we can show that there exists a constant $C>0$ such that for all $u\in \mathbb{T}^d$,
$$
\lambda_{\min}(\nabla (\pi_{s}\circ g)(u)^\top \nabla (\pi_{s}\circ g)(u))^{1/2} \geq C^{-1}-\sqrt{16s}K.
$$
Furthermore, for all $u\in \mathbb{T}^d$, as
 $\forall h \in \mathcal{T}_{\pi_s(g(u))}(\mathcal{M}_{g_s})$, 
$$\|\nabla  \pi_{s|_{\mathcal{M}_{g^\star}}}(T\circ g(u))h\|\leq 2\|h\|,$$
we have
\begin{equation}\label{boundlambda}
    \lambda_{\min}\big(\nabla(T\circ g)((T\circ g)^{-1}(x))^\top \nabla(T\circ g)((T\circ g)^{-1}(x))\big)^{1/2}\geq \frac{1}{2}(C^{-1}-\sqrt{16s}K).
\end{equation}
Then for $s>0$ small enough, using the Faa di Bruno formula we have that $f_T \in \mathcal{H}^\beta_{Cs^{-C_2}}(\mathcal{M}_{g^\star},\mathbb{R})$. Therefore point $i)$ is verified.

Proposition \ref{prop:projectconstant} asserts that there exists a constant $C>0$ such that $\pi_s$ verifies $ii)$ for a radius $C^{-1} $. Then, as $\mathbb{H}(\mathcal{M}_{g^\star},\mathcal{M}_{g_s})\leq Ks \leq C^{-1}/2$ for $s>0$ small enough,  we deduce that $T$ verifies $ii)$ for a radius $t=C^{-1}/4$. 

Let us prove point $iii)$. Let  $f\in \mathcal{H}^{\beta+1}_{Cs^{-1}}(\mathcal{M}_{g_s},(\mathbb{R}^p)^{p-d})$ such that $(f_1(x),...,f_{p-d}(x))$ is an orthonormal basis of $\mathcal{T}_x(\mathcal{M}_{g_s})^\top$. For $x\in \mathcal{M}_{g_s}$, define  $R_x:\mathbb{R}^{p-d}\rightarrow \mathbb{R}^p$ a linear map such that $R_x(e_i)=f_i(x)$ for $(e_1,...,e_{p-d})$ the canonical basis of $\mathbb{R}^{p-d}$. Then for $h\in \mathcal{H}^{\beta+1}_1(\mathcal{M}_{g^\star}^{t},\mathbb{R})$ and $x\in \mathcal{M}_{g^\star}$, we have
\begin{align*}
    \int_{T^{-1}(\{x\})}h(y)d\lambda^{p-d}_{E_x}(y) & = \int_{\mathcal{T}_{\pi_s(x)}(\mathcal{M}_{g_s})^\perp\cap B^d(x,t)}h(y)d\lambda^{p-d}_{\mathcal{T}_{\pi_s(x)}(\mathcal{M}_{g_s})^\perp}(y)\\
    & = \int_{\mathbb{R}^{p-d}\cap B^d(0,t)}h(x+R_{\pi_s(x)}(z))d\lambda^{p-d}(z).
\end{align*}
 Then as $\forall z \in \mathbb{R}^{p-d}$, the map $x\mapsto x +R_{\pi_s(x)}(z)$ belongs to $\mathcal{H}^{\beta+1}_{Cs^{-1}}(\mathcal{M}_{g^\star},\mathbb{R}^p)$, we have that $T$ verifies $iii)$.

Let us prove $iv)$. We have 
$$
T_g^{-1}\circ T = (\pi_{s|_{\mathcal{M}_{g}}})^{-1} \circ \pi_s (x)
$$
so as $\mathbb{H}(\mathcal{M}_{g_s},\mathcal{M}_{g})\leq 2Ks$ we show the same we showed that $T\in \mathcal{H}^{\beta+1}_{Cs^{-C_2}}(\mathcal{M}_{g_s}^{t},\mathcal{M}_{g^\star})$, that $T_g^{-1}\circ T  \in \mathcal{H}^{\beta+1}_{Cs^{-C_2}}(\mathcal{M}_{g_s}^{t},\mathcal{M}_{g})$. 

Let us now lower bound the approximate Jacobian of $T$. For $x \in\mathcal{M}_{g^\star}^{t}$, using \eqref{ineqpis} we have that 
$$\inf \limits_{z\in \mathcal{T}_{T(x)}(\mathcal{M}_g^{\star})} \|\nabla \pi_s(x)\frac{z}{\|z\|}\|\geq \frac{1}{K}(K^{-1}-\sqrt{8s}K),$$
so as $\pi_s$ is $2$-Lipschitz on $\mathcal{M}_{g^\star},$ we deduce that
$$\inf \limits_{z\in \mathcal{T}_{T(x)}(\mathcal{M}_g^{\star})} \|\nabla T(x)\frac{z}{\|z\|}\|\geq  \frac{1}{2K}(K^{-1}-\sqrt{8s}K).$$ 
Therefore, we deduce that the approximate Jacobian of $T$ verifies  
$$
\text{ap}_d(\nabla T(x))\geq  \left( \frac{1}{2K}(K^{-1}-\sqrt{16s}K)\right)^d,
$$
so $T$ verifies the point $iv)$ for $s>0$ small enough.
\end{proof}

\subsection{Proof of Lemma \ref{lemma:firstterm}}\label{sec:lemma:firstterm}
The proof breaks down as follow. We first prove that we only need to treat the case $\gamma=1$. Then, we build a function $H\in \mathcal{H}^{\beta+1}_C(A,\mathbb{R})$ for $A$ a neighborhood of $\mathcal{M}_g\cup \mathcal{M}_{g^\star}$, and prove that $d_{\mathcal{H}_1^1}(g_{\# U},(T\circ g)_{\# U})\leq \mathbb{E}[H(g(U))]^{\frac{\beta+1}{2\beta+1}}$. Finally, we extend $H$ to the whole $\mathbb{R}^p$ space.
\subsubsection{Proof that we only need to treat the case $\gamma=1$} Let us first show an interpolation inequality on the distance $d_{\mathcal{H}_1^\gamma}$.

\begin{lemma}\label{lemma:wefocuson}
   For $\gamma\in(1,\beta+1)$ we have for all $\epsilon \in (0,1)$,
        \begin{align*}
        d_{\mathcal{H}_1^\gamma}(g_{\# U},(T\circ g)_{\# U})  \leq & C\log(\epsilon^{-1})^2 d_{\mathcal{H}_1^1}(g_{\# U},(T\circ g)_{\# U})^{\frac{\beta+1-\gamma}{\beta}}\\
        &\times d_{\mathcal{H}_1^{\beta+1}}(g_{\# U},(T\circ g)_{\# U})^{\frac{\gamma-1}{\beta}} + C\log(\epsilon^{-1})^2\epsilon. 
    \end{align*}
\end{lemma}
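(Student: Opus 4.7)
The strategy mirrors Corollary \ref{coro:ineq without reg}: apply Proposition \ref{prop:Hölder} after envelope smoothing to obtain a wavelet-coefficient interpolation with exponents $\frac{\beta+1-\gamma}{\beta}$ and $\frac{\gamma-1}{\beta}$, then convert the resulting Besov suprema into Hölder suprema. The obstruction is that the endpoints $\theta_1=1$ and $\theta_2=\beta+1$ may be integer, so Lemma \ref{lemma:inclusions} does not directly give $\mathcal{B}^\alpha_{\infty,\infty} \hookrightarrow \mathcal{H}^\alpha$; this is exactly where the extra $\log(\epsilon^{-1})^2$ factor enters, via Proposition \ref{prop:logforweakregularity}.

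I first replace $\mu = g_{\# U}$ and $\nu = (T\circ g)_{\# U}$ by their $\epsilon_0$-envelopes $\mu_{\epsilon_0}, \nu_{\epsilon_0}$ (Definition \ref{defi:envelope}). Since any $D \in \mathcal{H}^\gamma_1$ is Lipschitz ($\gamma \geq 1$), each expectation is perturbed by at most $C\epsilon_0$, and the envelopes admit bounded, compactly supported densities $f^{\epsilon_0}_\mu, f^{\epsilon_0}_\nu \in L^1\cap L^2$. Next, I apply Proposition \ref{prop:Hölder} to $h_1 = D$, $h_2 = f^{\epsilon_0}_\mu - f^{\epsilon_0}_\nu$, $s_1 = \gamma$, $s_2 = b_1 = b_2 = 0$, with $\tau = -\beta$, $t = r = (\gamma-1)/\beta \in (0,1)$, $q = \beta/(\beta+1-\gamma)$ and $q^\star = \beta/(\gamma-1)$. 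A direct computation gives $\tilde\Gamma^{t\tau}(D) \in \mathcal{B}^{1}_{\infty,\infty}(C)$ and $\tilde\Gamma^{-r q^\star \tau/q}(D) \in \mathcal{B}^{\beta+1}_{\infty,\infty}(C)$, yielding
\[
\int D \, d(\mu_{\epsilon_0} - \nu_{\epsilon_0}) \leq C\, A_1^{\frac{\beta+1-\gamma}{\beta}} A_{\beta+1}^{\frac{\gamma-1}{\beta}},
\]
where $A_\alpha := \sup_{D' \in \mathcal{B}^{\alpha}_{\infty,\infty}(1)} \int D' \, d(\mu_{\epsilon_0} - \nu_{\epsilon_0})$ for $\alpha \in \{1, \beta+1\}$.

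Third, for each $\alpha \in \{1, \beta+1\}$ I apply Proposition \ref{prop:logforweakregularity} with $\tau = 2$ to any $D' \in \mathcal{B}^\alpha_{\infty,\infty}(1)$ (its proof only uses a Besov-type coefficient bound). The weighted truncation $\tilde\Gamma^{0,-2}_{\epsilon_0}(D')$ lies in $\mathcal{B}^{\alpha,2}_{\infty,\infty}(C) \hookrightarrow \mathcal{H}^\alpha_{C}$ by Lemma \ref{lemma:inclusions}, so the Besov supremum is bounded by the Hölder one at cost $\log(\epsilon_0^{-1})^2$ plus a residue $C\epsilon_0$. Combining with $d_{\mathcal{H}_1^\alpha}(\mu_{\epsilon_0}, \nu_{\epsilon_0}) \leq d_{\mathcal{H}_1^\alpha}(\mu, \nu) + C\epsilon_0$ gives
\[
A_\alpha \leq C \log(\epsilon_0^{-1})^2 \, d_{\mathcal{H}_1^\alpha}(\mu, \nu) + C\epsilon_0, \qquad \alpha \in \{1, \beta+1\}.
\]
Substituting into the interpolation, expanding via $(a+b)^\theta \leq a^\theta + b^\theta$, and bounding the cross-terms using the boundedness of the IPMs produces a residue of order $\log(\epsilon_0^{-1})^2 \epsilon_0^{1/\beta}$; the reparameterisation $\epsilon = \epsilon_0^{1/\beta}$, which is admissible since the conclusion is stated for all $\epsilon \in (0,1)$, converts this into the announced $\log(\epsilon^{-1})^2 \epsilon$.

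The main obstacle is Step 3: upgrading $\mathcal{B}^\alpha_{\infty,\infty}$ to $\mathcal{H}^\alpha$ uniformly in the possibly-integer endpoints while keeping the prefactor at $\log(\epsilon^{-1})^2$ after the Hölder-style product of Step 2. Using the weight $\tau = 2$ in Proposition \ref{prop:logforweakregularity} is exactly what matches the requirement $\tau > 1$ of the inclusion $\mathcal{B}^{\alpha, 1+\eta}_{\infty,\infty} \hookrightarrow \mathcal{H}^\alpha$ from Lemma \ref{lemma:inclusions}, and the fact that the two interpolation exponents sum to $1$ is what makes the total logarithmic factor collapse to $\log(\epsilon^{-1})^2$ rather than something worse.
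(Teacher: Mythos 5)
Your overall route is the same circle of ideas as the paper's proof: envelope smoothing (Definition~\ref{defi:envelope}), the coefficient interpolation of Proposition~\ref{prop:Hölder} between the endpoints $1$ and $\beta+1$, and Proposition~\ref{prop:logforweakregularity} together with Lemma~\ref{lemma:inclusions} to upgrade $\mathcal{B}^{\alpha}_{\infty,\infty}$ control to $\mathcal{H}^{\alpha}$ control at possibly integer smoothness. However, two steps do not work as written. The first is your parameter choice in Proposition~\ref{prop:Hölder}: what matters in each Hölder factor is the \emph{total} dyadic weight it carries, and with $\tau=-\beta$, $t=r=(\gamma-1)/\beta$ the two factors are $\sum_j 2^{-j\beta}|\alpha_D\alpha_{h_2}|$ and $\sum_j 2^{j\beta\frac{\beta+1-\gamma}{\gamma-1}}|\alpha_D\alpha_{h_2}|$, i.e.\ they correspond to discriminators in $\mathcal{B}^{\beta+\gamma}_{\infty,\infty}$ and in $\mathcal{B}^{\gamma-\beta\frac{\beta+1-\gamma}{\gamma-1}}_{\infty,\infty}$ (the latter of very low, possibly negative, smoothness), not to your $A_1$ and $A_{\beta+1}$. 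Multiplying wavelet coefficients by $2^{ja}$ with $a>0$ \emph{lowers} smoothness (the membership claimed in the definition of $\Gamma^{\gamma,c}$ is a sign slip; all of the paper's computations use $\mathcal{B}^{s}\to\mathcal{B}^{s-\gamma}$), so your claimed memberships $\tilde\Gamma^{t\tau}(D)\in\mathcal{B}^{1}_{\infty,\infty}(C)$ and $\tilde\Gamma^{-rq^\star\tau/q}(D)\in\mathcal{B}^{\beta+1}_{\infty,\infty}(C)$ fail. The choice that does produce your interpolation display is the one of Corollary~\ref{coro:ineq without reg} with $\theta=\gamma$, $\theta_1=1$, $\theta_2=\beta+1$, namely $\tau=\gamma-1$, $t=r=1$, $q=\beta/(\beta+1-\gamma)$; this is easily repaired.

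The second issue is the residue bookkeeping, and it is where your ordering genuinely differs from (and is weaker than) the paper's. Because you apply Proposition~\ref{prop:logforweakregularity} \emph{inside} each endpoint supremum after interpolating, the additive errors $C\epsilon_0$ sit inside quantities raised to the fractional powers $a=\frac{\beta+1-\gamma}{\beta}$ and $b=\frac{\gamma-1}{\beta}$; the cross terms are of order $\epsilon_0^{\min(a,b)}$, not $\epsilon_0^{1/\beta}$, and $\min(a,b)\to 0$ as $\gamma\to 1$ or $\gamma\to\beta+1$. Hence the reparameterisation $\epsilon=\epsilon_0^{1/\beta}$ does not yield the stated bound in general, and reparameterising with the correct exponent produces a prefactor of order $\min(a,b)^{-2}\log(\epsilon^{-1})^2$, i.e.\ a constant blowing up at the endpoints, while the lemma (and Theorem~\ref{theo:theineq}, which feeds Lemma~\ref{lemma:deltaD} and the minimax bounds) needs constants depending only on $p,d,\beta,K$, uniformly in $\gamma$. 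The paper avoids this by swapping the order of the two regularisations: it applies Proposition~\ref{prop:logforweakregularity} \emph{once} to the discriminator, so that $F=\tilde\Gamma^{0,-2}_\epsilon(D)$ is a fixed weighted-Besov function with $\tilde\Gamma^{\gamma-1}(F)\in\mathcal{B}^{1,2}_{\infty,\infty}(C)\subset\mathcal{H}^1_C$ and $\tilde\Gamma^{\gamma-(\beta+1)}(F)\in\mathcal{B}^{\beta+1,2}_{\infty,\infty}(C)\subset\mathcal{H}^{\beta+1}_C$, and then interpolates $F$ against a \emph{second} envelope at scale $\xi$ which is sent to $0$; the $\xi$-residues vanish and the only surviving error is the linear term $C\log(\epsilon^{-1})^2\epsilon$. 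Adopting this order of operations (or otherwise keeping the additive errors outside the fractional powers) is needed to close your argument.
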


\begin{proof} The proof follows the same arguments than the proof of Corollary \ref{coro:ineq without reg} except we are first going to give additional regularity to the discriminator so that its regularization belongs to Hölder spaces.
        Let $\epsilon>0$ and $f^\epsilon_g,f^\epsilon_{T}$ the densities of the $\epsilon$-envelopes of $g_{\# U},(T\circ g)_{\# U}$ (see Definition \ref{defi:envelope}) respectively. For $D\in \mathcal{H}^\gamma_1$ we have 
\begin{align*}
    \mathbb{E}_{U\sim \mathcal{U}([0,1]^d)}[D(g(U))-D(T(g(U)))] &
\leq \int_{\mathbb{R}^p} D(x)(f_g^\epsilon(x)-f^\epsilon_{T}(x))d\lambda^p(x) +2\epsilon.
\end{align*}

Using Proposition \ref{prop:logforweakregularity} we have that
\begin{align*}
\int_{\mathbb{R}^p}& D(x)(f_g^\epsilon(x)-f^\epsilon_{T}(x))d\lambda^p(x)\\
& \leq C\log(\epsilon^{-1})^2\int_\mathcal{X}\tilde{\Gamma}^{0,-2}_\epsilon(D)(x)(f_g^\epsilon(x)-f^\epsilon_{T}(x))d\lambda_{\mathcal{X}}(x) + C\epsilon\\
    & \leq C\log(\epsilon^{-1})^2\left(\mathbb{E}[\tilde{\Gamma}^{0,-2}_\epsilon(D)(g(U))-\tilde{\Gamma}^{0,-2}_\epsilon(D)(T(g(U)))]  + C\epsilon\right).
\end{align*}
Let us write $F=\tilde{\Gamma}^{0,-2}_\epsilon(D)$ and let $\xi>0$, then 
$$\mathbb{E}[F(g(U))-F(T(g(U)))]\leq \int_{\mathbb{R}^p} F(x)(f_g^\xi(x)-f^\xi_{T}(x))d\lambda^p(x) +2\xi.$$

Using Proposition \ref{prop:Hölder} for $h_1= F$, $h_2=f_g^\xi-f^\xi_{T}$, $s_1=\gamma$,  $\tau=\gamma-1$, $b_1=-2$, $s_2=b_2=0$, $t=r=1$
and $q=\frac{\beta+1-\gamma}{\beta}$ we have 
\begin{align*}
\int_{\mathbb{R}^p}  F(x)(f_g^\xi(x)-f^\xi_{T}(x))d\lambda^p(x)  \leq &  \left(\int \tilde{\Gamma}^{\gamma-1}(F)(f_g^\xi(x)-f^\xi_{T}(x))\right)^{\frac{\beta+1-\gamma}{\beta}}\\
& \times \left(\int \tilde{\Gamma}^{\gamma-(\beta+1)}(F)(f_g^\xi(x)-f^\xi_{T}(x))\right)^{\frac{\gamma-1}{\beta}}\\
     \leq & \mathbb{E}[ \tilde{\Gamma}^{\gamma-1}(F)(g(U))-\tilde{\Gamma}^{\gamma-1}(F)(T(g(U)))]^{\frac{\beta+1-\gamma}{\beta}}\\
     & \times \mathbb{E}[ \tilde{\Gamma}^{\gamma-(\beta+1)}(F)(g(U))-\tilde{\Gamma}^{\gamma-(\beta+1)}(F)(T(g(U)))]^{\frac{\gamma-1}{\beta}}\\
    & + C(\xi^{\frac{\beta+1-\gamma}{\beta}}+\xi^{\frac{\gamma-1}{\beta}}).
\end{align*}
As $\tilde{\Gamma}^{\gamma-1}(F)\in \mathcal{B}^{1,2}_{\infty,\infty}(C)\subset \mathcal{H}^1_C$ and $\tilde{\Gamma}^{\gamma-(\beta+1)}(F)\in \mathcal{B}^{\beta+1,2}_{\infty,\infty}(C)\subset \mathcal{H}^{\beta+1}_C$, letting $\xi$ tend to 0 we get the result.
\end{proof}

Now if we prove,
\begin{align*}
        d_{\mathcal{H}_1^{1}}(g_{\# U},(T\circ g)_{\# U}) & \leq C\log(\epsilon^{-1})^2 \left(d_{\mathcal{H}_1^{\beta+1}}(g_{\# U},(T\circ g)_{\# U})^{\frac{\beta+1}{2\beta+1}} + C\epsilon\right), 
    \end{align*}
    then as $\frac{\beta+1-\gamma}{\beta}\frac{\beta+1}{2\beta+1}+\frac{\gamma-1}{\beta}=\frac{\beta+\gamma}{2\beta+1}$, using lemma \ref{lemma:wefocuson} we will have that
    \begin{align*}
        d_{\mathcal{H}_1^{\gamma}}(g_{\# U},(T\circ g)_{\# U}) & \leq C\log(\epsilon^{-1})^4 \left(d_{\mathcal{H}_1^{\beta+1}}(g_{\# U},(T\circ g)_{\# U})^{\frac{\beta+\gamma}{2\beta+1}} + C\epsilon\right). 
    \end{align*}
Therefore, we can conclude that we only need to treat the case $\gamma=1$.

\subsubsection{Construction of a good potential of regularity $\beta+1$}

Let us write $$\omega_g=\inf \limits_{u\in \mathbb{T}^d} \inf \limits_{w\in \mathbb{R}^d}\|\nabla g(u)\frac{w}{\|w\|}\|\wedge 1.$$
From Proposition \ref{prop:lambdamin}, we have that $\omega_g\geq C^{-1}$ so $g$ is a local diffeomorphism and as it is injective, it is a diffeomorphism. Let us show that $g$ is a $C^{\beta+1}$ diffeomorphism.

\begin{lemma}\label{lemma:gmoins1inversible}
    We have $g^{-1}\in \mathcal{H}^{\beta+1 }_{C}(\mathcal{M}_g,\mathbb{T}^d).$
\end{lemma}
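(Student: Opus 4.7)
The strategy is to apply the inverse function theorem locally and propagate regularity through Fa\`a di Bruno's formula, using the uniform lower bound $\omega_g \geq C^{-1}$ (from Proposition~\ref{prop:lambdamin}) to control the inverse of the Jacobian. The map $g$ is already known to be a global bijection with image $\mathcal{M}_g$; bi-Lipschitzness of $g$ follows from $K^{-1}\omega_g \leq \|\nabla g(u)w\|/\|w\| \leq K$ combined with a standard argument using $r_{g}\geq C^{-1}$ (as in Lemma~\ref{lemma:reachcurve}), so that the ambient distance on $\mathcal{M}_g$ and the pull-back distance via $g$ are comparable up to constants. This reduces the computation of $\|g^{-1}\|_{\mathcal{H}^{\beta+1}(\mathcal{M}_g,\mathbb{T}^d)}$ to controlling its derivatives in the natural chart.

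The first step is to compute $\nabla g^{-1}$. Since $\nabla g(u)\colon \mathbb{R}^d\to \mathcal{T}_{g(u)}(\mathcal{M}_g)$ is a linear bijection with smallest singular value at least $\omega_g\geq C^{-1}$, its left inverse is the smooth matrix-valued map
\[
\nabla g^{-1}(x) \;=\; \bigl(\nabla g(u)^\top \nabla g(u)\bigr)^{-1}\nabla g(u)^\top\Bigr|_{\mathcal{T}_x(\mathcal{M}_g)}, \qquad u = g^{-1}(x),
\]
and $\|(\nabla g^\top \nabla g)^{-1}\|\le C^2$. Writing a local parametrization of $\mathcal{M}_g$ as $g$ itself and differentiating the identity $g \circ g^{-1} = \mathrm{Id}_{\mathcal{M}_g}$ (or equivalently working in a tubular neighbourhood via the tangential extension), a Fa\`a di Bruno expansion expresses every partial derivative $\partial^\alpha g^{-1}$, $1\le|\alpha|\le \lfloor \beta+1\rfloor$, as a polynomial in the entries of $\nabla g\circ g^{-1}$, of the higher derivatives $\partial^\nu g\circ g^{-1}$ for $|\nu|\le|\alpha|$, and of $(\nabla g^\top\nabla g)^{-1}\circ g^{-1}$. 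Each factor is uniformly bounded by a constant depending only on $K$ and $C$, hence $\|g^{-1}\|_{C^{\lfloor \beta+1\rfloor}(\mathcal{M}_g)}\le C$.

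For the fractional H\"older exponent (when $\beta+1$ is not an integer), observe that the matrix-inversion map $A\mapsto (A^\top A)^{-1}A^\top$ is smooth on the set of $p\times d$ matrices with smallest singular value bounded below by $C^{-1}$, and hence is Lipschitz on the range of $\nabla g$. Composed with the Fa\`a di Bruno expression for the top-order derivative, this yields a H\"older modulus of order $\beta+1-\lfloor \beta+1\rfloor$ inherited from that of $\nabla g$ and its higher derivatives. Combining this with bi-Lipschitzness of $g$ (to transfer moduli between $\mathbb{T}^d$ and $\mathcal{M}_g$) gives the desired bound $\|g^{-1}\|_{\mathcal{H}^{\beta+1}(\mathcal{M}_g,\mathbb{T}^d)}\le C$.

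The main technical obstacle is the accounting between the intrinsic notion of H\"older regularity on the submanifold $\mathcal{M}_g$ (using ambient Euclidean distance) and the smoothness class of $g$ on $\mathbb{T}^d$. Once this is handled by the bi-Lipschitz argument above, together with the uniform lower bound on $\omega_g$ that keeps $(\nabla g^\top\nabla g)^{-1}$ in a fixed smooth class, all remaining steps are bookkeeping via the multilinear Fa\`a di Bruno identities.
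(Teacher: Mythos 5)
Your proof takes essentially the same route as the paper: both rely on (i) the Fa\`a di Bruno identity to express $\nabla^k g^{-1}$ as a multilinear polynomial in $(\nabla g)^{-1}\circ g^{-1}$ and $\nabla^j g\circ g^{-1}$ for $j\le k$, (ii) the uniform lower bound $\omega_g\geq C^{-1}$ from Proposition~\ref{prop:lambdamin} to control $(\nabla g^\top\nabla g)^{-1}$, and (iii) a bi-Lipschitz comparison of $g$ obtained from the reach bound (Lemma~\ref{lemma:reachcurve}) to transfer the H\"older modulus of the top derivative from $\mathbb{T}^d$ to $\mathcal{M}_g$. The only cosmetic difference is that you write out the pseudoinverse $(\nabla g^\top\nabla g)^{-1}\nabla g^\top$ explicitly where the paper abbreviates to $(\nabla g)^{-1}$; the content is identical.
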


\begin{proof}
    As $\nabla g^{-1}(x)=(\nabla g)^{-1}(g^{-1}(x))$ and $\omega_g\geq C$, we deduce from the Faa di Bruno formula that $g^{-1}\in \mathcal{H}^{\lfloor \beta \rfloor+1}_{C}(\mathcal{M}_g,\mathbb{T}^d).$ Furthermore, we have $\nabla ^{\lfloor \beta \rfloor+1} (g^{-1})(x)=F((\nabla g)^{-1},\nabla^2 g,..., \nabla ^{\lfloor \beta \rfloor+1} g)(g^{-1}(x))$ for $F$ multilinear that can be computed from the Faa di Bruno formula. Therefore we have $u\mapsto F((\nabla g)^{-1},\nabla^2 g,..., \nabla ^{\lfloor \beta \rfloor+1} g)(u)$ belongs to $\mathcal{H}^{\beta -\lfloor \beta \rfloor}_{C}$. Let us now show that there exists a constant $C_0>0$ such that $g^{-1}$ is $C_0$-Lipschitz for the Euclidean norm. Let $x,y\in \mathcal{M}_g$, if $\|x-y\|\geq C_0^{-1}$ then 
    $$
    \|g^{-1}(x)-g^{-1}(y)\|\leq 1 \leq C_0\|x-y\|.
    $$
    If now $\|x-y\|\leq C_0^{-1}$ and suppose that  $\|g^{-1}(x)-g^{-1}(y)\|\geq (4K)^{-1}\omega_g$. Then taking $C_0^{-1}\leq \frac{1}{4}K^{-1}w_g^2$ we have from Lemma \ref{lemma:reachcurve} that
    \begin{align*}
   r_g & \leq \frac{1}{2}\|g(g^{-1}(x))-g(g^{-1}(y))\|=\frac{1}{2}\|x-y\|\leq \frac{1}{8}K^{-1}w_g^2
\end{align*}
which is impossible as $r_g\geq 1/K$ by hypothesis. Therefore we have $\|g^{-1}(x)-g^{-1}(y)\|< (4K)^{-1}\omega_g$, so
\begin{align*}
   \|x-y\| & = \|g(g^{-1}(x))-g(g^{-1}(y))\|\geq \omega_g\|g^{-1}(x)-g^{-1}(y)\|-K\|g^{-1}(x)-g^{-1}(y)\|^2\\
   & \geq \frac{1}{2}\omega_g\|g^{-1}(x)-g^{-1}(y)\|,
\end{align*}
so we conclude that $g^{-1}$ is $C_0$-Lipschitz.
Finally we have 
\begin{align*}
    |\nabla ^{\lfloor \beta \rfloor+1} &g^{-1}(x)-\nabla ^{\lfloor \beta \rfloor+1} g^{-1}(y)| \\
    & =|F((\nabla g)^{-1},\nabla^2 g,..., \nabla ^{\lfloor \beta \rfloor+1} g)(g^{-1}(x))-F((\nabla g)^{-1},\nabla^2 g,..., \nabla ^{\lfloor \beta \rfloor+1} g)(g^{-1}(y))|\\
    & \leq C \|g^{-1}(x)-g^{-1}(y)\|^{\beta-\lfloor \beta \rfloor}\\
    & \leq C \|x-y\|^{\beta-\lfloor \beta \rfloor}.
\end{align*}
\end{proof}
Applying Proposition \ref{prop:compatibility}, we have that there exists a map T being $(g_{\# U},g^\star_{\# U})_{K_T}$-compatible with radius $t\geq C^{-1}$ and $K_T\leq C$. For the lightness of the derivations, let us write $$X(u)=g(u)-T(g(u)).$$
Then 
    for $D\in \mathcal{H}^1_1$ we have 
\begin{equation}\label{eq:objectivtheo}
    \mathbb{E}_{U\sim \mathcal{U}([0,1]^d)}\Big[D(g(U))-D(T(g(U)))\Big]\leq C \mathbb{E}_{U\sim \mathcal{U}([0,1]^d)}\left[\|X(U)\|\right] .
     \end{equation}

Let $\xi>0$, and define $L:\mathbb{T}^d\rightarrow \mathbb{R}^p$  by \begin{align}\label{eq:L}
    L(u)=\frac{X(u)}{\|X(u)\|}\big(1-(1-\|X(u)\|/\xi)\mathds{1}_{\{\|X(u)\|< \xi\}}\big).
\end{align}
Let us write $K_X=\|X\|_{\mathcal{H}^{\beta+1}}\vee 1\leq C$ and $\delta_u=\|X(u)\|$ for $u\in \mathbb{T}^d$.
We have that 
$L\in \mathcal{H}^{0}_{1}(\mathbb{T}^d,\mathbb{R}^p)$, let us show that $L$ has some additional Hölder regularity.

\begin{proposition}\label{prop:HölderregL}
For $L:\mathbb{T}^d\rightarrow \mathbb{R}^p$ defined in \eqref{eq:L}, we have for all $\alpha \in [0,1]$ and $u,v\in \mathbb{T}^d$,
$$    \|L(u)-L(v)\|\leq 8K_X\min(\delta_u,\delta_v)^{-\alpha}\|u-v\|^{\alpha}.$$
\end{proposition}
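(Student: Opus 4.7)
The strategy is an interpolation between the trivial uniform bound (which handles $\alpha=0$) and a ``pointwise Lipschitz'' bound whose Lipschitz constant blows up near the zero set of $X$ (which handles $\alpha=1$). First, I would rewrite $L$ in the compact form
$$L(u)=\frac{X(u)}{g_u},\qquad g_u:=\max(\|X(u)\|,\xi),$$
which is a legitimate reformulation of \eqref{eq:L} (the removable singularity at $X(u)=0$ is handled by $g_u\geq \xi>0$). This makes $\|L\|_\infty\leq 1$, hence $\|L(u)-L(v)\|\leq 2$ for all $u,v$; since $K_X\geq 1$ this already gives the $\alpha=0$ case.

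For the Lipschitz endpoint, I would use the algebraic identity
$$L(u)-L(v)=\frac{X(u)(g_v-g_u)}{g_u g_v}+\frac{X(u)-X(v)}{g_v}.$$
Three elementary ingredients then yield the bound: (i) $\|X(u)\|/g_u\leq 1$, (ii) $|g_u-g_v|\leq\bigl|\|X(u)\|-\|X(v)\|\bigr|\leq \|X(u)-X(v)\|\leq K_X\|u-v\|$ (since $t\mapsto \max(t,\xi)$ is $1$-Lipschitz), and (iii) $\|X(u)-X(v)\|\leq K_X\|u-v\|$ from $X\in\mathcal{H}^{\beta+1}_{K_X}$. Combining yields $\|L(u)-L(v)\|\leq 2K_X\|u-v\|/g_v$, and, by symmetry in $u,v$,
$$\|L(u)-L(v)\|\leq \frac{2K_X\|u-v\|}{\max(g_u,g_v)}\leq \frac{2K_X\|u-v\|}{\min(\delta_u,\delta_v)},$$
where the last inequality uses $g_u\geq \|X(u)\|=\delta_u$ (resp.\ for $v$) and hence $\max(g_u,g_v)\geq \max(\delta_u,\delta_v)\geq \min(\delta_u,\delta_v)$.

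Writing $a:=\|L(u)-L(v)\|$ and combining the two bounds $a\leq 2$ and $a\leq 2K_X\|u-v\|/\min(\delta_u,\delta_v)$ via the trivial inequality $a=a^{1-\alpha}a^{\alpha}$ gives, for any $\alpha\in[0,1]$,
$$\|L(u)-L(v)\|\leq 2^{1-\alpha}(2K_X)^{\alpha}\frac{\|u-v\|^{\alpha}}{\min(\delta_u,\delta_v)^{\alpha}}\leq 4K_X\frac{\|u-v\|^\alpha}{\min(\delta_u,\delta_v)^\alpha}\leq 8K_X\frac{\|u-v\|^\alpha}{\min(\delta_u,\delta_v)^\alpha},$$
using $K_X\geq 1$ and $\alpha\in[0,1]$ to absorb constants. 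There is essentially no hard step here; the only tiny nuance is the boundary case $\min(\delta_u,\delta_v)=0$, for which the statement is vacuous when $\alpha>0$ and reduces to the uniform bound when $\alpha=0$. The real use of Proposition \ref{prop:HölderregL} will be downstream, where this mild blow-up of the Lipschitz norm of $L$ at scale $\min(\delta_u,\delta_v)^{-\alpha}$ must be balanced against the small measure of the set $\{\delta_u\lesssim \xi\}$; that balancing is the genuine difficulty of Lemma \ref{lemma:firstterm}, not of this preparatory proposition.
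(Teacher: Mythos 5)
Your proof is correct, and it takes a genuinely different and cleaner route than the paper's. The paper computes $\nabla L$ explicitly (noting the differential is only defined almost everywhere), bounds $\|\nabla L(u)\|\leq K_X\delta_u^{-1}$, and then splits into two cases depending on whether $\|u-v\|\lessgtr \min(\delta_u,\delta_v)/(4K_X)$: in the ``near'' case it integrates the gradient along the segment from $u$ to $v$ (after checking $\delta_{u_s}\geq\tfrac12\min(\delta_u,\delta_v)$ along the way), and in the ``far'' case it uses the trivial $\|L(u)-L(v)\|\leq 2$ together with the size of $\|u-v\|$. You instead rewrite $L=X/g$ with $g_u=\max(\|X(u)\|,\xi)$, which removes the piecewise definition entirely, and derive the Lipschitz-type endpoint via a purely algebraic telescoping decomposition with no differentiation. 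This buys two things: (i) your bound $\|L(u)-L(v)\|\leq 2K_X\|u-v\|/\max(\delta_u,\delta_v)$ is valid for \emph{all} pairs $u,v$, not just nearby ones, so no case split is needed and the issue of $\nabla L$ failing to exist on the level set $\{\|X\|=\xi\}$ never arises; (ii) the final step is a clean multiplicative interpolation $a=a^{1-\alpha}a^{\alpha}$ between the $L^\infty$ bound and the Lipschitz bound, which is conceptually transparent and even gives the slightly sharper constant $2K_X^\alpha\leq 2K_X$ in place of $8K_X$. One small remark: both your argument and the paper's treat $K_X$ as directly bounding the Lipschitz constant of $X$ (i.e.\ $\|X(u)-X(v)\|\leq K_X\|u-v\|$), which glosses over a dimension-dependent factor relating $\|X\|_{\mathcal{H}^{\beta+1}}$ to the operator norm of $\nabla X$; this is an inherited convention and not a gap specific to your write-up, and is immaterial since $K_X\leq C$ is absorbed downstream.
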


\begin{proof}
Suppose first that $\|u-v\|\geq \min(\delta_u,\delta_v)/(4K_X)$. Then
\begin{align*}
    \|L(u)-L(v)\| &\leq 2=2\|u-v\|^{-\alpha}\|u-v\|^{\alpha}\\
    & \leq 8K_X\min(\delta_u,\delta_v)^{-\alpha}\|u-v\|^{\alpha}.
\end{align*}
Now if $\|u-v\|\leq \min(\delta_u,\delta_v)/(4K_X)$, then for $u_s=u+s(v-u)$ with $s\in [0,1]$, we have 
\begin{align*}
    \|X(u_s)\|& \geq  \|X(u)\|-sK_X\|u-v\|\\
    & \geq \delta_u- \frac{1}{4}\min(\delta_u,\delta_v)\\
    & \geq \frac{1}{2}\min(\delta_u,\delta_v).
\end{align*}
Furthermore, the differential of $L$ is defined almost everywhere and values
\begin{align*}
    \nabla L(u)= & \frac{\mathds{1}_{\{\|X(u)\|\geq \xi\}}}{\|X(u)\|}\Big(\nabla X(u)  - \frac{X(u)}{\|X(u)\|} \frac{(X(u))^\top}{\|X(u)\|}\nabla X(u) \Big)+ \frac{\mathds{1}_{\{\|X(u)\|<\xi\}}}{\xi}\nabla X(u),
\end{align*}
so $\|\nabla L(u)\|\leq K_X\|X(u)\|^{-1}= K_X\delta_u^{-1}$.
Then
\begin{align*}
    \|L(u)-L(v)\| & =\|\int_0^1\nabla L(u+s(v-u))(v-u)ds\|\\
    & \leq \|v-u\|\int_0^1\|\nabla L(u+s(v-u))\|ds\leq 2K_X\min(\delta_u,\delta_v)^{-1}\|v-u\|\\
    & \leq 2K_X\min(\delta_u,\delta_v)^{-\alpha}\|u-v\|^{\alpha}.
\end{align*} 
\end{proof}

Note that we have
\begin{align*}
\mathbb{E}_{U\sim \mathcal{U}([0,1]^d)}\left[\|X(U)\|\right]& = \mathbb{E}_{U\sim \mathcal{U}([0,1]^d)}\left[\left\langle \frac{X(U)}{\|X(U)\|},X(U)) \right\rangle\right]\\
& \leq \mathbb{E}_{U\sim \mathcal{U}([0,1]^d)}\left[\left\langle L(U),X(U)) \right\rangle\right]+\xi,
\end{align*}
so we are now going to focus on finding a function $H\in \mathcal{H}^{\beta+1}_{C}( \mathbb{R}^p,\mathbb{R})$ such that
    $$\mathbb{E}_{U\sim \mathcal{U}([0,1]^d)}\left[\left\langle L(U),X(U)\right\rangle\right]\leq C \mathbb{E}_{U\sim \mathcal{U}([0,1]^d)}\left[H(g(U))-H(T\circ g(U))\right]^{\frac{\beta+1}{2\beta+1}},$$
up to logarithmic terms. Let us define
\begin{equation}\label{eq:A}
A=\{x \in \mathcal{M}_{g^\star}^t | \ \|x-T(x)\|\leq \|T_g^{-1}\circ T(x)-T(x)\|\}
\end{equation}
and
$H:A\rightarrow \mathbb{R}$ by
\begin{equation}\label{eq:H}
H(x)=\Big\langle \tilde{\Gamma}^{-\beta,-2}(L)^{}\circ g^{-1} \circ T_g^{-1} \circ T(x)\ ,\ x-T(x)\Big\rangle,
\end{equation}
for $T_g^{-1}:\mathcal{M}_{g^\star}\rightarrow \mathcal{M}_g$ the inverse application from Definition \ref{defi:compatibility}. We have the following result on $H$.

\begin{proposition} The application $H$ belongs to $\mathcal{H}^{\beta+1}_{C}(A,\mathbb{R})$ and for all $\epsilon\in (0,1)$ we have
    $$\mathbb{E}_{U}\left[\left\langle L(U),X(U)\right\rangle\right]\leq C \log(\epsilon^{-1})^2\mathbb{E}_{U}\left[H(g(U))-H(T\circ g(U))\right]^{\frac{\beta+1}{2\beta+1}}+C\epsilon.$$
\end{proposition}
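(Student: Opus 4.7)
The plan is to establish the proposition in three stages: the pointwise identity $H(g(u)) - H(T\circ g(u)) = \langle \tilde{\Gamma}^{-\beta,-2}(L)(u), X(u)\rangle$; the Hölder regularity $H \in \mathcal{H}^{\beta+1}_C(A, \mathbb{R})$; and the integral inequality itself, obtained via two successive functional inequalities.

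For the identity, the defining property $T_g^{-1}\circ T_{|\mathcal{M}_g} = \text{Id}_{\mathcal{M}_g}$ of compatibility (Definition \ref{defi:compatibility}), together with Lemma \ref{lemma:gmoins1inversible} asserting that $g$ is a $C^{\beta+1}$-diffeomorphism, yields $g^{-1}\circ T_g^{-1}\circ T\circ g = \text{Id}_{\mathbb{T}^d}$. Hence $H(g(u)) = \langle \tilde{\Gamma}^{-\beta,-2}(L)(u), X(u)\rangle$. Moreover, $T\circ g(u) \in \mathcal{M}_{g^\star}$ is fixed by $T$, so the factor $x - T(x)$ vanishes at $x = T\circ g(u)$, giving $H(T\circ g(u)) = 0$.

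For the Hölder regularity, one first extracts from the proof of Proposition \ref{prop:HölderregL} that $\|\nabla L\|_\infty \leq C/\xi$, so $L \in \mathcal{H}^1_{C/\xi}$. Combined with $\|L\|_\infty \leq 1$, this yields the two-regime bound $|\alpha_L(j,l,w)| \leq C\min(2^{-jd/2}, \xi^{-1}2^{-j(1+d/2)})$ on the wavelet coefficients of $L$. Splitting the Besov norm of $\tilde{\Gamma}^{-\beta,-2}(L)$ at $j \sim \log_2(\xi^{-1})$ shows $\tilde{\Gamma}^{-\beta,-2}(L) \in \mathcal{B}^{\beta+1}_{\infty,\infty}(\mathbb{T}^d, \mathbb{R}^p)$, hence in $\mathcal{H}^{\beta+1}$ by Lemma \ref{lemma:inclusions}. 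Composing with $g^{-1}\circ T_g^{-1}\circ T \in \mathcal{H}^{\beta+1}(A, \mathbb{T}^d)$ (via Faa di Bruno and the regularity clauses of Definition \ref{defi:compatibility}) and multiplying by $x - T(x) \in \mathcal{H}^{\beta+1}_C(A, \mathbb{R}^p)$ establishes $H \in \mathcal{H}^{\beta+1}_C$.

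The inequality itself follows in two steps. First, Proposition \ref{prop:logforweakregularity} applied component-wise with $\tau = 2$, taking $X_i \in \mathcal{H}^{\beta+1}$ in the role of $f$ and (after rescaling to fit in $\mathcal{H}^1_1$) $L_i$ in the role of $g$, introduces the $(1+j)^{-2}$ weight at the price of $\log(\epsilon^{-1})^2$:
\begin{equation*}
\mathbb{E}[\langle L(U), X(U)\rangle] \leq C\log(\epsilon^{-1})^2 \mathbb{E}[\langle \tilde{\Gamma}^{0,-2}_\epsilon(L)(U), X(U)\rangle] + C\epsilon.
\end{equation*}
Then Proposition \ref{prop:Hölder} applied component-wise with $h_1 = \tilde{\Gamma}^{0,-2}_\epsilon(L_i)$, $h_2 = X_i$, $\tau = -\beta$, $t = r = 1$, and $q = (2\beta+1)/(\beta+1)$ decomposes the integral into two factors: the first, raised to $(\beta+1)/(2\beta+1)$, coincides (up to an $\epsilon$-tail from the truncation, absorbed into $C\epsilon$) with $\mathbb{E}[H(g(U)) - H(T\circ g(U))]$; the second, raised to $\beta/(2\beta+1)$, equals $\sum_{j,l,w} 2^{j(\beta+1)}(1+j)^{-2}|\alpha_{L_i}\alpha_{X_i}|$. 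Inserting $|\alpha_X| \leq C 2^{-j(\beta+1+d/2)}$ and the two-regime bound on $\alpha_L$, and summing the $O(2^{jd})$ nonzero values of $w$, the latter splits at $j \sim \log_2(\xi^{-1})$ into convergent series of the form $\sum_j (1+j)^{-2}$, giving a uniform constant bound. The main subtlety is that the sign functions $S(j,l,w)$ implicit in $\tilde{\Gamma}^{-\beta,-2}$ must be chosen consistently across the definition of $H$ and the Hölder step, so that the first factor of the decomposition matches $\mathbb{E}[H(g) - H(T\circ g)]$ with its sign, and not merely its absolute-value majorant.
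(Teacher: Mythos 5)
Your pointwise identity $H(g(u))-H(T\circ g(u))=\langle\tilde{\Gamma}^{-\beta,-2}(L)(u),X(u)\rangle$ and your two-step derivation of the integral inequality (Proposition \ref{prop:logforweakregularity} with $\tau=2$, then Proposition \ref{prop:Hölder} with $\tau=-\beta$, $q=\frac{2\beta+1}{\beta+1}$, with the signs $S(j,l,w)$ chosen consistently and the second factor bounded by a convergent $\sum_j(1+j)^{-2}$) are essentially the paper's argument. The gap is in the regularity claim $H\in\mathcal{H}^{\beta+1}_C(A,\mathbb{R})$. You deduce it from a \emph{global} estimate $\|\nabla L\|_\infty\leq C/\xi$, i.e. the two-regime coefficient bound $|\alpha_L(j,l,w)|\leq C\min(2^{-jd/2},\xi^{-1}2^{-j(1+d/2)})$, and conclude $\tilde{\Gamma}^{-\beta,-2}(L)\in\mathcal{B}^{\beta+1}_{\infty,\infty}$. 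That conclusion is false with a uniform constant: the $\mathcal{B}^{\beta+1}_{\infty,\infty}$ norm of $\tilde{\Gamma}^{-\beta,-2}(L)$ computed from your bound is $\sup_j (1+j)^{-2}\min(2^j,\xi^{-1})\asymp \xi^{-1}\log(\xi^{-1})^{-2}$, so your route only yields $H\in\mathcal{H}^{\beta+1}_{C\xi^{-1}}$. Since in the paper all constants $C$ depend only on $p,d,\beta,K$, and since $\xi$ must be taken arbitrarily small (the proof of Lemma \ref{lemma:firstterm} uses $\mathbb{E}[\|X(U)\|]\leq\mathbb{E}[\langle L(U),X(U)\rangle]+\xi$ with $\xi\lesssim\epsilon$, and ultimately $\epsilon=1/n$), a $\xi^{-1}$ factor in $\|H\|_{\mathcal{H}^{\beta+1}}$ propagates into the bound $\mathbb{E}[H(g(U))]\leq\|H\|_{\mathcal{H}^{\beta+1}}\sup_{D\in\mathcal{H}^{\beta+1}_1,D\circ T=0}\mathbb{E}[D(g(U))]$ and destroys the interpolation exponent; the same defect appears in your use of Proposition \ref{prop:logforweakregularity} with $L_i$ "rescaled to fit in $\mathcal{H}^1_1$", which again costs a factor $\xi^{-1}$ (the correct assignment, which needs no regularity of $L$, is $f=L_i$ and $g=X_i$).

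The missing idea is precisely what makes the paper's proof long: the extra order of smoothness of $H$ is not a property of $\tilde{\Gamma}^{-\beta,-2}(L)$ alone (which is only in $\mathcal{H}^{\beta}_C$ uniformly), but comes from a \emph{local} compensation with the vanishing factor $x-T(x)$ on the set $A$. Near a point $u$ one has $\|\nabla L\|\lesssim \delta_u^{-1}$ with $\delta_u=\|X(u)\|$ (not $\xi^{-1}$); localizing $L$ to a ball of radius $\propto\delta_u$, extending, and splitting the wavelet expansion at frequency $\xi_u\approx\log_2(\delta_u^{-1})$ gives $|\nabla^{\lfloor\beta\rfloor+1}\tilde{\Gamma}^{-\beta,-2}(L)(u)|\lesssim\delta_u^{-(\lfloor\beta\rfloor+1-\beta)}$ together with a matching Hölder-difference estimate with weight $\delta_u^{-1}\vee\delta_v^{-1}$. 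The defining inequality of $A$, $\|x-T(x)\|\leq\|T_g^{-1}\circ T(x)-T(x)\|$, then lets this blow-up at the evaluation point $g^{-1}\circ T_g^{-1}\circ T(x)$ be absorbed by the factor $(\mathrm{Id}-T)(x)$, yielding uniform $\mathcal{H}^{\beta+1}_C$ bounds for $H$ (before the Whitney extension of Proposition \ref{prop:extensionH}). Your proposal never uses the set $A$ nor the vanishing of $x-T(x)$, which is the telltale sign that the $\beta+1$ regularity cannot be reached by your global argument.
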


\begin{proof}
Let $$\tilde{\Gamma}^{0,-2}_\epsilon(L_i)(u) =  \sum \limits_{j=0}^{\log(\lfloor \epsilon^{-2}\rfloor)} \sum \limits_{l=1}^{2^d} \sum \limits_{z\in \{0,...,2^j-1\}^d}(1+j)^{-2}\alpha_{L_i}(j,l,z)S(j,l,w)_i\psi^{per}_{jlz}(u)$$
with $S(j,l,w)_i=\frac{\alpha_{L_i}(j,l,z)\alpha_{X_i}(j,l,z)}{|\alpha_{L_i}(j,l,z)\alpha_{X_i}(j,l,z)|}$.
Then using Proposition \ref{prop:logforweakregularity}, we have
$$\mathbb{E}_{U\sim \mathcal{U}([0,1]^d)} [\langle L(U),X(U) \rangle]\leq C\log(\epsilon^{-1})^2\mathbb{E}_{U\sim \mathcal{U}([0,1]^d)} [\langle \tilde{\Gamma}^{0,-2}_\epsilon(L)(U),X(U) \rangle]+C\epsilon.$$

Applying Proposition \ref{prop:Hölder} for $h_1=\tilde{\Gamma}^{0,-2}_\epsilon(L)$, $h_2=X$, $s_1=0$, $b_1=-2$, $s_2=\beta+1$, $b_2=0$, $\tau=-\beta$, $t=1$, $r=0$ and $q=\frac{2\beta+1}{\beta+1}$ we get
\begin{align*}
\mathbb{E}_{U} [\langle \tilde{\Gamma}^{0,-2}_\epsilon(L)(U),X(U) \rangle] & \leq \Big\langle \tilde{\Gamma}^{-\beta}(\tilde{\Gamma}^{0,-2}_\epsilon(L)),X\Big\rangle_{L^2}^{\frac{\beta+1}{2\beta+1}}\Big\langle \tilde{\Gamma}^0(\tilde{\Gamma}^{0,-2}_\epsilon(L)),\Gamma^{\beta+1}(X)\Big\rangle_{L^2}^{\frac{\beta }{2\beta+1}}\\
& \leq \left(\Big\langle \tilde{\Gamma}^{-\beta,-2}(L),X\Big\rangle_{L^2}+C\epsilon^{2\beta+1}\right)^{\frac{\beta+1}{2\beta+1}}\Big\langle \tilde{\Gamma}^{0,-2}_\epsilon(L),\Gamma^{\beta+1}(X)\Big\rangle_{L^2}^{\frac{\beta}{2\beta+1}},
\end{align*}
where we used that $$\Big\langle \tilde{\Gamma}^{-\beta}(\tilde{\Gamma}^{0,-2}_\epsilon(L)),X\Big\rangle_{L^2}=\Big\langle \tilde{\Gamma}_\epsilon^{-\beta,-2}(L),X\Big\rangle_{L^2}\leq \Big\langle \tilde{\Gamma}^{-\beta,-2}(L),X\Big\rangle_{L^2}+C\epsilon^{2\beta+1}.$$
We have that $\Gamma^{\beta+1}(X)\in \mathcal{B}^{0}_{\infty,\infty}(C)$ and $\tilde{\Gamma}^{0,-2}_\epsilon(L)\in \mathcal{B}^{0,2}_{\infty,\infty}(C)$ so
\begin{align*}
 \sum \limits_{i=1}^p & \sum \limits_{j=0}^\infty \sum \limits_{l=1}^{2^d} \sum \limits_{z\in \{0,...,2^j-1\}^d}\alpha_{\tilde{\Gamma}^{0,-2}_\epsilon(L_i)}(j,l,z) \alpha_{\Gamma^{\beta+1}(X_i)}(j,l,z)\\
 & \leq   \sum \limits_{i=1}^p \sum \limits_{j=0}^\infty \sum \limits_{l=1}^{2^d} \sum \limits_{z\in \{0,...,2^j-1\}^d}(1+j)^{-2}C2^{-jd}\\
 & \leq C  \sum \limits_{i=1}^p \sum \limits_{j=0}^\infty (1+j)^{-2}\leq C.
\end{align*}
Let us show that $\nabla^{\lfloor \beta \rfloor+1}\tilde{\Gamma}^{-\beta,-2}_\epsilon(L)$ exists and has some Hölder regularity. Let $u\in \mathbb{T}^d$ be such that $\delta_u>\xi$ and define $L_u:B^d(u,\frac{\delta_u}{4K_X})\rightarrow \mathbb{R}^p$ being equal to $L$. We then extend $L_u$ to the whole cube by 
\begin{equation}\label{eq:extensionprojec}
    \overline{L}_u(v)=L_u(\pi_{B_u}(v))(0\vee(1-3\|v-\pi_{B_u}(v)\|)),
\end{equation}
for $\pi_{B_u}$ the projection on $B^d(u,\frac{\delta_u}{4K_X})$ (recall that it is the periodised Euclidean norm we are using). For all $v\in B^d(u,\frac{\delta_u}{4K_X})$, we have $\delta_v\geq \delta_u/2$ so from Proposition \ref{prop:HölderregL} with $\alpha=1-(\beta-\lfloor \beta \rfloor )$ we get
$$L_u\in \mathcal{H}^{0}_{1}(B^d(u,\frac{\delta_u}{4K_X}),\mathbb{R}^p)\cap \mathcal{H}^{1-(\beta-\lfloor \beta \rfloor )}_{16K_X\delta_u^{-(1-(\beta-\lfloor \beta \rfloor ))}}(B^d(u,\frac{\delta_u}{4K_X}),\mathbb{R}^p).$$ Therefore, we also have that
$$\overline{L}_u \in \mathcal{H}^{0}_{1}(\mathbb{T}^d,\mathbb{R}^p)\cap \mathcal{H}^{1-(\beta-\lfloor \beta \rfloor )}_{16K_X\delta_u^{-(1-(\beta-\lfloor \beta \rfloor ))}}(\mathbb{T}^d,\mathbb{R}^p).$$

Let us look at the size of the support of the periodised wavelet. Recall from \eqref{eq:periowav} that if $2^j> N$, for $s\in [0,1]$ and $r\in \{0,...,2^j-1\}$ , the one-dimensional periodised scaling and wavelet functions  equal
\begin{equation*}
    \phi^{per}_j(s-2^{-j}z)=\phi(2^js-z)+\phi(2^j(s+1)-z)
\end{equation*}
and
\begin{equation*}
    \psi^{per}_j(s-2^{-j}z)=\psi(2^j(s-1)-z)+\psi(2^js-z)+\psi(2^j(s+1)-z).
\end{equation*}
For $l_i\in \{0,1\}$, as $supp(\psi_{l_i}(2^j\cdot-r))\subset[(r-N)2^{-j},(r+N)2^{-j}]$ we have:\\ 
if $z_i\geq 2^j-N$,
\begin{align*}
    [0,1]\cap supp(\psi_{l_i}(2^j\cdot-z_i)) & \subset [(z_i-N)2^{-j},1]\\
    [0,1]\cap supp(\psi_{l_i}(2^j(\cdot+1)-z_i)) & \subset [0,(z_i+N)2^{-j}-1]
\end{align*}
and if $z_i\leq N$,
\begin{align*}
[0,1]  \cap supp(\psi_{l_i}(2^j(\cdot-1)-z_i)) & \subset [1+(z_i-N)2^{-j},1]\\
[0,1]  \cap supp(\psi_{l_i}(2^j\cdot-z_i)) & \subset [0,(z_i+N)2^{-j}].
\end{align*}

Then $supp(\psi_{l_i}^{per}(2^j\cdot-z_i))\subset[(z_i-N)2^{-j},(z_i+N)2^{-j}]/\mathbb{Z}$ so finally we can conclude that 
$$supp(\psi_{j,l,z}^{per})\subset B^d(z2^{-j},Nd2^{-j}),$$
recalling that this is the ball for the periodised Euclidean distance.

Let us write $(\alpha_{(\overline{L}_u)_i}(j,l,z))_{(j,l,z)}$ the wavelet coefficients of $(\overline{L}_u)_i$ for $i=1,...,p$. As $\text{support}(\psi^{per}_{jlz})\subset B^d(2^{-j}z,dN2^{-j})$, we have that for any $(j,l,z)$ such that \\ $j>\lfloor\log_2(dN4K_X\delta_u^{-1})\rfloor+1$ and $\text{support}(\psi^{per}_{jlz})\cap B^d(u,\frac{\delta_u}{8K_X})\neq \varnothing$ then $$\alpha_{(\overline{L}_u)_i}(j,l,z)=\alpha_{L_i}(j,l,z).$$ Let us note $$\xi_u=\lfloor\log_2(dN4K_X\delta_u^{-1})\rfloor+1$$
and define the map $\tilde{\Gamma}^{-\beta,-2}(\overline{L}_u)_i:\mathbb{T}^d\rightarrow \mathbb{R}$ by
$$
\tilde{\Gamma}^{-\beta,-2}(\overline{L}_u)_i(v)= \sum \limits_{j=0}^\infty \sum \limits_{l=1}^{2^d} \sum \limits_{z\in \{0,...,2^j-1\}^d}2^{-j\beta} (1+j)^{-2}S(j,l,z)_i\alpha_{(\overline{L}_u)_i}(j,l,z)\psi^{per}_{j,l,z}(v),
$$
the $\beta,2$ wavelet regularization of $\overline{L}_{u_i}$. Then $$\tilde{\Gamma}^{-\beta,-2}(\overline{L}_u) \in \mathcal{H}^{\beta}_{C}(\mathbb{T}^d,\mathbb{R}^p)\cap \mathcal{H}^{\lfloor \beta \rfloor +1}_{CK_X\delta_u^{-(1-(\beta-\lfloor \beta \rfloor ))}}(\mathbb{T}^d,\mathbb{R}^p).$$
Let $\bold{i}_1,...,\bold{i}_{\lfloor \beta \rfloor+1} \in \{1,...,d\}$, we have
\begin{align}\label{align:split}
    |  \partial_{\bold{i}_1,...,\bold{i}_{\lfloor \beta \rfloor+1}}^{\lfloor \beta \rfloor+1} & \tilde{\Gamma}^{-\beta,-2}(L)_i(u)|   = |\partial_{\bold{i}_1,...,\bold{i}_{\lfloor \beta \rfloor+1}}^{\lfloor \beta \rfloor+1}\sum \limits_{j=0}^\infty \sum \limits_{l=1}^{2^d} \sum \limits_{z\in \{0,...,2^j-1\}^d} \alpha_{\tilde{\Gamma}^{-\beta,-2}(L_i)}(j,l,z)  \psi_{j,l,z}^{per}(u)|\nonumber\\
     \leq & |\partial_{\bold{i}_1,...,\bold{i}_{\lfloor \beta \rfloor+1}}^{\lfloor \beta \rfloor+1}\sum \limits_{j=0}^{\xi_u} \sum \limits_{l=1}^{2^d} \sum \limits_{z\in \{0,...,2^j-1\}^d}(\alpha_{\tilde{\Gamma}^{-\beta,-2}(L_i)}(j,l,z)-\alpha_{\tilde{\Gamma}^{-\beta,-2}(\overline{L}_u)_i}(j,l,z))  \psi_{j,l,z}^{per}(u)|\nonumber\\
    & +|\partial_{\bold{i}_1,...,\bold{i}_{\lfloor \beta \rfloor+1}}^{\lfloor \beta \rfloor+1}\sum \limits_{j=0}^{\xi_u} \sum \limits_{l=1}^{2^d} \sum \limits_{z\in \{0,...,2^j-1\}^d}\alpha_{\tilde{\Gamma}^{-\beta,-2}(\overline{L}_u)_i}(j,l,z)  \psi_{j,l,z}^{per}(u)\nonumber \\
    & +\partial_{\bold{i}_1,...,\bold{i}_{\lfloor \beta \rfloor+1}}^{\lfloor \beta \rfloor+1} \sum \limits_{j=\xi_u+1}^{\infty} \sum \limits_{l=1}^{2^d} \sum \limits_{z\in \{0,...,2^j-1\}^d} \alpha_{\tilde{\Gamma}^{-\beta,-2}(L_i)}(j,l,z)  \psi_{j,l,z}^{per}(u)|.
    \end{align}
For $k\in \{1,...,d\}$, let us write $\theta_k=\sum \limits_{r=1}^{\lfloor \beta \rfloor+1}\mathds{1}_{\{i_r=k\}}$. For the first term of \eqref{align:split} we have 
\begin{align*}
    |\partial_{\bold{i}_1,...,\bold{i}_{\lfloor \beta \rfloor+1}}^{\lfloor \beta \rfloor+1}\sum \limits_{j=0}^{\xi_u} & \sum \limits_{l=1}^{2^d} \sum \limits_{z\in \{0,...,2^j-1\}^d}(\alpha_{\tilde{\Gamma}^{-\beta,-2}(L_i)}(j,l,z)-\alpha_{\tilde{\Gamma}^{-\beta,-2}(\overline{L}_u)_i}(j,l,z))  \psi_{j,l,z}^{per}(u)|\\
     = & |\sum \limits_{j=0}^{\xi_u} \sum \limits_{l=1}^{2^d} \sum \limits_{z\in \{0,...,2^j-1\}^d}(\alpha_{\tilde{\Gamma}^{-\beta,-2}(L_i)}(j,l,z)-\alpha_{\tilde{\Gamma}^{-\beta,-2}(\overline{L}_u)_i}(j,l,z)) \\
     & \times 2^{j(\lfloor \beta \rfloor+1+d/2)} \prod \limits_{k=1}^d \sum \limits_{m\in \mathbb{Z}} \partial^{\theta_k} \psi_{l_k}(2^{j}(u_k-m)-z_k)|\\
    \leq & C \sum \limits_{j=0}^{\xi_u} \sum \limits_{l=1}^{2^d}\sum \limits_{z\in \{0,...,2^j-1\}^d}2^{j(\lfloor \beta \rfloor+1-\beta)}|\prod \limits_{k=1}^d \sum \limits_{m\in \mathbb{Z}} \partial^{\theta_k} \psi_{l_k}(2^{j}(u_k-m)-z_k)|\\
     \leq & C \sum \limits_{j=0}^{\xi_u} \sum \limits_{z\in \{0,...,2^j-1\}^d}2^{j(\lfloor \beta \rfloor+1-\beta)}\mathds{1}_{\{u\in supp(\psi_{jlz}^{per})\}}\\
      \leq & C \sum \limits_{j=0}^{\xi_u} 2^{j(\lfloor \beta \rfloor+1-\beta)}\leq C2^{\xi_u(\lfloor \beta \rfloor+1-\beta)}\leq CK_X\delta_u^{-(\lfloor \beta \rfloor+1-\beta)}.
\end{align*}
For the second term we have
    \begin{align*}
    |\partial_{\bold{i}_1,...,\bold{i}_{\lfloor \beta \rfloor+1}}^{\lfloor \beta \rfloor+1}\sum \limits_{j=0}^{\xi_u} & \sum \limits_{l=1}^{2^d} \sum \limits_{z\in \{0,...,2^j-1\}^d}\alpha_{\tilde{\Gamma}^{-\beta,-2}(\overline{L}_u)_i}(j,l,z)  \psi_{j,l,z}^{per}(u) \\
    & +\partial_{\bold{i}_1,...,\bold{i}_{\lfloor \beta \rfloor+1}}^{\lfloor \beta \rfloor+1} \sum \limits_{j=\xi_u+1}^{\infty} \sum \limits_{l=1}^{2^d} \sum \limits_{z\in \{0,...,2^j-1\}^d} \alpha_{\tilde{\Gamma}^{-\beta,-2}(L_i)}(j,l,z)  \psi_{j,l,z}^{per}(u)|\\
    & = |\partial_{\bold{i}_1,...,\bold{i}_{\lfloor \beta \rfloor+1}}^{\lfloor \beta \rfloor+1}\sum \limits_{j=0}^{\infty} \sum \limits_{l=1}^{2^d} \sum \limits_{z\in \{0,...,2^j-1\}^d}\alpha_{\tilde{\Gamma}^{-\beta,-2}(\overline{L}_u)_i}(j,l,z)  \psi_{j,l,z}^{per}(u)|\\
    & \leq \|\partial_{\bold{i}_1,...,\bold{i}_{\lfloor \beta \rfloor+1}}^{\lfloor \beta \rfloor+1}\tilde{\Gamma}^{-\beta,-2}(\overline{L}_u)_i\|_\infty \leq CK_X\delta_u^{-(\lfloor \beta \rfloor+1-\beta)}.
\end{align*}
The same way, let us show that for $u,v\in \mathbb{T}^d$ we have
\begin{equation}\label{Hölder}
|\partial_{\bold{i}_1,...,\bold{i}_{\lfloor \beta \rfloor+1}}^{\lfloor \beta \rfloor+1} \tilde{\Gamma}^{-\beta,-2}(L)_i(u)-\partial_{\bold{i}_1,...,\bold{i}_{\lfloor \beta \rfloor+1}}^{\lfloor \beta \rfloor+1} \tilde{\Gamma}^{-\beta,-2}(L)_i(v)|\leq CK_X(\delta_u^{-1} \vee \delta_v^{-1})\|u-v\|^{\beta-\lfloor \beta \rfloor}.
\end{equation}
Suppose that $\|u-v\|\leq \frac{\delta_u\vee \delta_v}{8K_X}$, then define $L_{u,v}$ coinciding with $L$ on $B^d(\argmax \limits_{w\in \{u,v\}} \delta_w,\frac{ \delta_u\vee \delta_v}{8K_X})$. Otherwise, if $\|u-v\|> \frac{\delta_u\vee \delta_v}{8K_X}$ define $L_{u,v}$ coinciding with $L$ on $B^d(u,\frac{\delta_u}{16K_X})\cup B^d( v,\frac{\delta_v}{16K_X})$. In both cases we have $L_{u,v}\in \mathcal{H}^1_{16K_X\delta_u^{-1}\vee \delta_v^{-1}}$. In the first case extend $L_{u,v}$ by projection on the ball as we did with $L_{u}$ in \eqref{eq:extensionprojec}, in the second case we extend it with the sum of the projections:
\begin{align*}
    \overline{L}_{u,v}(w)=&L_{u,v}(\pi_{B_u}(w))(0\vee(1-24K_X\delta_u^{-1}\|w-\pi_{B_u}(w)\|))\\
    &+L_{u,v}(\pi_{B_v}(w))(0\vee(1-24K_X\delta_v^{-1}\|w-\pi_{B_v}(w)\|)).
\end{align*}
 In both cases we also have $\overline{L}_{u,v}\in \mathcal{H}^1_{C\delta_u^{-1}\vee \delta_v^{-1}}$. Then by splitting the wavelet coefficients of $\partial_{\bold{i}_1,...,\bold{i}_{\lfloor \beta \rfloor+1}}^{\lfloor \beta \rfloor+1} \tilde{\Gamma}^{-\beta,-2}(L)_i(u)-\partial_{\bold{i}_1,...,\bold{i}_{\lfloor \beta \rfloor+1}}^{\lfloor \beta \rfloor+1} \tilde{\Gamma}^{-\beta,-2}(L)_i(v)$ as in \eqref{align:split}, we obtain \eqref{Hölder}. 
 
 Let $A$ and 
$H:A\rightarrow \mathbb{R}$ defined in \eqref{eq:A} and \eqref{eq:H}. We have $g^{-1} \circ T_g^{-1} \circ T\in \mathcal{H}^{\beta+1}_{C}(A,\mathbb{T}^d)$, $Id-T \in \mathcal{H}^{\beta+1}_{K_T}(\mathbb{R}^p,\mathbb{R}^p)$ and $\tilde{\Gamma}^{-\beta,-2}(L)^{} \in \mathcal{H}_{C}^{\beta}(\mathbb{T}^d,\mathbb{R}^p)$, so we get from the Faa di Bruno formula that  $H\in \mathcal{H}^{\beta}_{C}(A,\mathbb{R})$. Let $\bold{i}_1,...,\bold{i}_{\lfloor \beta \rfloor+1} \in \{1,...,p\}$ and write $P(\{\bold{i}_1,...,\bold{i}_{\lfloor \beta \rfloor+1}\})$ the set of subsets of $\{\bold{i}_1,...,\bold{i}_{\lfloor \beta \rfloor+1}\}$. We have

\begin{align*}
    & \partial_{\bold{i}_1,...,\bold{i}_{\lfloor \beta \rfloor+1}}^{\lfloor \beta \rfloor+1} H(x)\\
    & = \sum \limits_{S\in P(\{\bold{i}_1,...,\bold{i}_{\lfloor \beta \rfloor+1}\})}\Big\langle \partial^{|S|}_{i\in S} (\tilde{\Gamma}^{-\beta,-2}(L)^{}\circ g^{-1} \circ T_g^{-1} \circ T)(x)\ ,\ \partial^{\lfloor \beta \rfloor+1-|S|}_{i\notin S}(\text{Id}-T)(x)\Big\rangle.
\end{align*}
For all $S\in P(\{\bold{i}_1,...,\bold{i}_{\lfloor \beta \rfloor+1}\})$ with $|S|<\lfloor \beta \rfloor+1$, we have that $$x\mapsto \Big\langle \partial^{|S|}_{i\in S} (\tilde{\Gamma}^{-\beta,-2}(L)^{}\circ g^{-1} \circ T_g^{-1} \circ T)(x)\ ,\ \partial^{\lfloor \beta \rfloor+1-|S|}_{i\notin S}(\text{Id}-T)(x)\Big\rangle \in \mathcal{H}^{\beta-\lfloor \beta \rfloor}_{C}(A,\mathbb{R}).
$$
From the previous calculations, we know that 
\begin{align*}
|  \partial_{\bold{i}_1,...,\bold{i}_{\lfloor \beta \rfloor+1}}^{\lfloor \beta \rfloor+1} &\tilde{\Gamma}^{-\beta,-2}(L)_i(g^{-1} \circ T_g^{-1} \circ T(x))| \\
& \leq CK_X\delta_{g^{-1} \circ T_g^{-1} \circ T(x)}^{-(\lfloor \beta \rfloor+1-\beta)}\\
 & =CK_X\| T_g^{-1} \circ T(x)-T( T_g^{-1} \circ T(x))\|^{-(\lfloor \beta \rfloor+1-\beta)}\\
 & = CK_X\| T_g^{-1} \circ T(x)-T(x)\|^{-(\lfloor \beta \rfloor+1-\beta)}\\
  & \leq  CK_X\|x-T(x)\|^{-(\lfloor \beta \rfloor+1-\beta)}
\end{align*}
so 
$$|\Big\langle\partial_{\bold{i}_1,...,\bold{i}_{\lfloor \beta \rfloor+1}}^{\lfloor \beta \rfloor+1} (\tilde{\Gamma}^{-\beta,-2}(L)^{}\circ g^{-1} \circ T_g^{-1} \circ T)(x)\ ,(\text{Id}-T)(x)\Big\rangle|\leq CK_X^{C_3}\|x-T(x)\|^{\beta-\lfloor \beta \rfloor}.
$$

Let $x,y\in A$ with $\|x-T(x)\|\leq \|y-T(y)\|$, from \eqref{Hölder} we get
\begin{align*}
|&\Big\langle\partial_{\bold{i}_1,...,\bold{i}_{\lfloor \beta \rfloor+1}}^{\lfloor \beta \rfloor+1}  (\tilde{\Gamma}^{-\beta,-2}(L)^{}  \circ  g^{-1} \circ T_g^{-1} \circ T)(x)\ ,(\text{Id}-T)(x)\Big\rangle\\
& -\Big\langle\partial_{\bold{i}_1,...,\bold{i}_{\lfloor \beta \rfloor+1}}^{\lfloor \beta \rfloor+1} (\tilde{\Gamma}^{-\beta,-2}(L)^{}\circ g^{-1} \circ T_g^{-1} \circ T)(y)\ ,(\text{Id}-T)(y)\Big\rangle|\\
     \leq  &  |\Big\langle\partial_{\bold{i}_1,...,\bold{i}_{\lfloor \beta \rfloor+1}}^{\lfloor \beta \rfloor+1} (\tilde{\Gamma}^{-\beta,-2}(L)^{}  \circ g^{-1} \circ T_g^{-1} \circ T)(x)\\
     & -\partial_{\bold{i}_1,...,\bold{i}_{\lfloor \beta \rfloor+1}}^{\lfloor \beta \rfloor+1} (\tilde{\Gamma}^{-\beta,-2}(L)^{}\circ g^{-1} \circ T_g^{-1} \circ T)(y)\ ,(\text{Id}-T)(x)\Big\rangle|\\
    & + |\Big\langle\partial_{\bold{i}_1,...,\bold{i}_{\lfloor \beta \rfloor+1}}^{\lfloor \beta \rfloor+1} (\tilde{\Gamma}^{-\beta,-2}(L)^{}\circ g^{-1} \circ T_g^{-1} \circ T)(y)\ , (\text{Id}-T)(x)-(\text{Id}-T)(y)\Big\rangle|\\
     \leq & CK_X^{C_3}\min(\|x-T(x)\|,\|y-T(y)\|)^{-1}\|x-y\|^{\beta-\lfloor \beta \rfloor}\|x-T(x)\| \\
    & +\|T_{g}^{-1} \circ T(y)-T(y)\|^{-(1-(\beta-\lfloor \beta \rfloor))}\|(\text{Id}-T)(x)-(\text{Id}-T)(y)\|\\
    \leq & CK_X^{C_3}\|x-y\|^{\beta-\lfloor \beta \rfloor}+\|y-T(y)\|^{-(1-(\beta-\lfloor \beta \rfloor))}\|(\text{Id}-T)(x)-(\text{Id}-T)(y)\|\\
    \leq & C\|x-y\|^{\beta-\lfloor \beta \rfloor}\\
    &+\|y-T(y)\|^{-(1-(\beta-\lfloor \beta \rfloor))}2\|y-T(y)\|^{1-(\beta-\lfloor \beta \rfloor)}\|(\text{Id}-T)(x)-(\text{Id}-T)(y)\|^{\beta-\lfloor \beta \rfloor}\\
     \leq & C\|x-y\|^{\beta-\lfloor \beta \rfloor}.
\end{align*}
Therefore we have finally that $H\in \mathcal{H}^{\beta+1}_{C}(A,\mathbb{R})$.
\end{proof}  

\subsubsection{Extension of the potential to the whole $\mathbb{R}^p$ space}

Let us now prove that we can extend $H$ to the whole $\mathbb{R}^p$ using Whitney's extension Theorem (Theorem \ref{whitney}). 

\begin{proposition}\label{prop:extensionH}
    The map $H\in \mathcal{H}^{\beta+1}_{C}(A,\mathbb{R})$ can be extended into a map in $\mathcal{H}^{\beta+1}_{C}(\mathbb{R}^p,\mathbb{R})$.
\end{proposition}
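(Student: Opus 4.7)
The plan is to apply Whitney's extension theorem (Theorem \ref{whitney}) directly to $H$, which the preceding analysis has already shown to belong to $\mathcal{H}^{\beta+1}_{C}(A,\mathbb{R})$ with a Hölder norm controlled by a constant depending only on $p,d,\beta,K$.

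First, I would verify that $A$ is a closed subset of $\mathbb{R}^p$ (or at least can be replaced by one). Since $T \in \mathcal{H}^{\beta+1}_{K_T}(\mathcal{M}_{g^\star}^t,\mathcal{M}_{g^\star})$ and $T_g^{-1}\circ T \in \mathcal{H}^{\beta+1}_{K_T}(\mathcal{M}_{g^\star}^{t},\mathcal{M}_g)$ by $(g_{\#U},g^\star_{\#U})$-compatibility (Definition \ref{defi:compatibility}), the functions $x\mapsto \|x-T(x)\|$ and $x\mapsto \|T_g^{-1}\circ T(x)-T(x)\|$ are continuous on $\mathcal{M}_{g^\star}^{t}$, so $A$ is the intersection of a closed sublevel set with the closed tube $\overline{\mathcal{M}_{g^\star}^{t/2}}$ (shrinking $t$ slightly if necessary to stay in the domain of $T$).

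Next, I would check the Whitney compatibility conditions. Because $H$ is already $C^{\lfloor\beta\rfloor+1}$ with $(\beta-\lfloor\beta\rfloor)$-Hölder top derivatives on $A$, the collection of Taylor polynomials $\{P_x\}_{x\in A}$ of $H$ at points of $A$ automatically forms a Whitney jet of class $C^{\beta+1}$ in the sense required by the theorem: the remainders $P_x(y)-P_y(y)$ and analogous expressions for derivatives decay like $\|x-y\|^{\beta+1-k}$ uniformly, with constant $C$ (this is exactly the content of the bounds derived in the proof of the preceding proposition). Whitney's theorem then produces $\tilde H \in \mathcal{H}^{\beta+1}_{C'}(\mathbb{R}^p,\mathbb{R})$ with $\tilde H|_A = H$ and $\|\tilde H\|_{\mathcal{H}^{\beta+1}}\le C' \|H\|_{\mathcal{H}^{\beta+1}(A)}$.

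The main (and really the only) obstacle is checking that the constant $C'$ in Whitney's extension depends only on the ambient parameters $(p,d,\beta,K)$ and not on finer properties of the particular set $A$ (which depends on $g,g^\star$). This is where $(g_{\#U},g^\star_{\#U})_{K_T}$-compatibility enters: points $i)$--$iv)$ of Definition \ref{defi:compatibility} bound $K_T$ by a universal constant (by Proposition \ref{prop:compatibility}), so the geometric complexity of $A$—controlled through the Lipschitz behaviour of $T$, $T_g^{-1}\circ T$ and of the distance-to-$T$ functions—is itself uniformly bounded in terms of $(p,d,\beta,K)$. Since the standard Whitney construction (partition of unity on a Whitney cube decomposition of $A^c$, weighted averages of local Taylor polynomials) yields an extension whose Hölder norm is controlled purely in terms of the Whitney jet constants and the geometric parameters of $A$, we obtain the desired bound, proving the proposition.
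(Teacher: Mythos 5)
There is a genuine gap in your argument, precisely at the sentence ``the collection of Taylor polynomials $\{P_x\}_{x\in A}$ of $H$ at points of $A$ \emph{automatically} forms a Whitney jet of class $C^{\beta+1}$.'' This is not automatic. The membership $H\in \mathcal{H}^{\beta+1}_C(A,\mathbb{R})$ is the intrinsic Hölder condition — bounded derivatives up to order $\lfloor\beta\rfloor+1$ with a Hölder seminorm on the top ones. For a convex domain this indeed implies Whitney's condition c) by integrating along the segment from $x_j$ to $x_i$, but $A=\{x \in \mathcal{M}_{g^\star}^t \ | \ \|x-T(x)\|\leq \|T_g^{-1}\circ T(x)-T(x)\|\}$ is a tube-like region with holes where $\mathcal{M}_g$ passes through, and straight segments between nearby points of $A$ can exit $A$. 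On a non-quasiconvex set, $C^{k}$-regularity in the intrinsic sense does not imply the Whitney jet estimates (the standard cusp counterexamples show this), so the step you treat as trivial is in fact the heart of the proof. Pointing to the uniform bound $K_T\le C$ to control the constant in Whitney's theorem does not rescue this: the issue is not that the extension constant might blow up, but that the hypotheses of the theorem have not been verified at all.

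What the paper actually does is verify condition c) of Theorem~\ref{whitney} by hand. It uses Lemma~\ref{lemma:extensionlemme}, which bounds $\|\nabla^\alpha P_i(x_i)-\nabla^\alpha P_j(x_i)\|$ by $\|\dot\gamma\|_{L^1}^{\beta+1-\alpha}$ for any path $\gamma$ joining $x_j$ to $x_i$ \emph{inside} $A$, and then constructs such a path of length $\le C\|x_i-x_j\|$ — in effect proving $A$ is quasiconvex with a uniform constant. The construction is two-case: when $\|x_i-x_j\|\le t/4$ one projects the straight segment onto $A$ via an explicit Lipschitz map $P_A$ (built from $T$ and $T_g^{-1}\circ T$, whose Lipschitz bound comes from $K_T$); when $\|x_i-x_j\|\ge t/4$ one routes through $\mathcal{M}_{g^\star}$, travelling radially to the manifold, along it via $g^\star$, and radially out again. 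This explicit path construction is what makes the Whitney jet estimates available, and it is where the compatibility properties of $T$ and the lower bound on the reach are actually consumed. Your proposal needs this step; without it, the application of Whitney's theorem is unjustified.
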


\begin{proof}

Let $x_1,...,x_k\in A$ and define 
$$
P_i(x)=\sum \limits_{l=0}^{\lfloor \beta \rfloor +1} \nabla^l H(x_i)\frac{(x-x_i)^l}{l!},
$$
with the notations $\nabla^l H(x_i)$ the $l$-differential of $H$ and $$ \nabla^l H(x_i)(x-x_i)^l=\nabla^l H(x_i)\big(x-x_i,x-x_i,...,x-x_i).$$
Then the $P_i$ check conditions a) and b) for $M=\|H\|_{\mathcal{H}^{\beta+1}(A,\mathbb{R})}$ . Let show that they also verify condition c). First, for $\alpha=\lfloor \beta \rfloor +1$ we have $$
\|\nabla^{\lfloor \beta \rfloor +1} P_i(x_i)-\nabla^{\lfloor \beta \rfloor +1} P_j(x_i)\| =\|\nabla^{\lfloor \beta \rfloor +1} H(x_i)-\nabla^{\lfloor \beta \rfloor +1} H(x_j)\|\leq C\|x_i-x_j\|^{\beta-\lfloor \beta \rfloor}.
$$
Let $\alpha \in \{0,...,\lfloor \beta \rfloor\}$ and $\gamma:[0,1]\rightarrow A$ continuous and differentiable almost everywhere such that $\gamma(0)=x_j$ and $\gamma(1)=x_i$. 
Then, from Lemma \ref{lemma:extensionlemme} we get
\begin{align*}
    \|\nabla^\alpha &  P_i(x_i) - \nabla^\alpha P_j(x_i)\|=\|\nabla^\alpha H(x_i) - \sum \limits_{l=0}^{\lfloor\beta\rfloor+1-\alpha}\nabla^{l+\alpha} H(x_j)\frac{(x_i-x_j)^{l}}{l!}\|
    \leq C \|\dot{\gamma}\|^{\beta+1-\alpha}_{L^1}.
\end{align*}
Let us now define a path from $x_i$ to $x_j$. Suppose first that $\|x_i-x_j\|\leq t/4$ (for $t>0$ the radius of definition of $T:\mathcal{M}_{g^\star}^t\rightarrow \mathcal{M}_{g^\star}$), and define $P_A:\mathcal{M}_{g^\star}^t\rightarrow A$ by 
\begin{equation*}
P_A(x) = \left\{
\begin{array}{ll}
  x & \text{if } x \in A   \medskip\\
    \frac{x-T(x)}{\|x-T(x)\|}\|T_g^{-1} \circ T(x)-T(x)\|+T(x)& \text{if } x \notin A
\end{array}
\right.
\end{equation*} 
For $x\notin A$ we have 
\begin{align*}
    \nabla P_A(x) = &\frac{\|T_g^{-1} \circ T(x)-T(x)\|}{\|x-T(x)\|}\left(\nabla (\text{Id}-T)(x)-\frac{x-T(x)}{\|x-T(x)\|} \frac{(x-T(x))^\top}{\|x-T(x)\|} \nabla(\text{Id}-T)(x)\right)\\
    & + \frac{x-T(x)}{\|x-T(x)\|} \frac{(x-T(x))^\top}{\|x-T(x)\|} \nabla(T_g^{-1}\circ T -T)(x)+\nabla T(x),
\end{align*}
so $\|\nabla P_A(x)\|\leq 4K_T^2$. Let $\gamma:[0,1]\rightarrow A$ defined by
$$\gamma(s)=P_A(x_i+s(x_j-x_i)),$$
which is well defined as for $s\in [0,1]$,
$$d(x_i+s(x_j-x_i),\mathcal{M}_{g^\star})\leq \|x_i+s(x_j-x_i)-x_i\| +d(x_i,\mathcal{M}_{g^\star}) \leq t/4+t/2.$$
We have $\|\dot{\gamma}(s)\|\leq 4K_T^2\|x_i-x_j\|$ so we deduce that
\begin{align*}
    \|& \nabla^\alpha   P_i(x_i) - \nabla^\alpha P_j(x_i)\|\leq C \|x_i-x_j\|^{\beta+1-\alpha}.
\end{align*}
Suppose now that $\|x_i-x_j\|\geq t/4$ and define 
\begin{equation*}
\gamma(s) = \left\{
\begin{array}{ll}
  x_i + 3s(\pi_{g^\star}(x_i)-x_i) & \text{if } s\in [0,1/3]   \medskip\\
    g^\star\Big(g^{\star-1}(\pi_{g^\star}(x_i))+3(s-1/3)(g^{\star-1}(\pi_{g^\star}(x_j))-g^{\star-1}(\pi_{g^\star}(x_i))\Big) & \text{if } s\in (1/3,2/3)   \medskip\\ 
    \pi_{g^\star}(x_j) + 3(s-2/3)(x_j-\pi_{g^\star}(x_j)) & \text{if } s\in [2/3,1],
\end{array}
\right.
\end{equation*} 
the path from $x_i$ to $x_j$ that starts by projecting $x_i$ onto $\mathcal{M}_{g^\star}$, then circulates along $\mathcal{M}_{g^\star}$ until it reaches the projection of $x_j$, and finally go to $x_j$ in a straight line.

For $s\in [0,1/3]\cup [2/3,1]$ we have 
$$\|\dot{\gamma}(s)\|\leq 3 \max(\|\pi_{g^\star}(x_i)-x_i\|,\|\pi_{g^\star}(x_j)-x_j\|)\leq 3t\leq 12 \|x_i-x_j\|$$
and for $s\in(1/3,2/3)$
$$\|\dot{\gamma}(s)\|\leq 3K^2\|\pi_{g^\star}(x_i)-\pi_{g^\star}(x_j)\|\leq 3K^3\leq 12K^3t^{-1}\|x_i-x_j\|.$$
We then get as in the case where $\|x_i-x_j\|\leq t/4$, that 
\begin{align*}
    \|\nabla^\alpha   P_i(x_i) - \nabla^\alpha P_j(x_i)\| \leq &  C \|x_i-x_j\|^{\beta+1-\alpha}.
\end{align*}
Therefore using the sharp form of Whitney's extension Theorem \ref{whitney}, $H$ can be extended to a map in $\mathcal{H}^{\beta+1}_{C}(\mathbb{R}^p,\mathbb{R})$.
\end{proof}

We conclude the proof of Lemma \ref{lemma:firstterm} by noticing that 
\begin{align*}
    \mathbb{E}_{U} [\langle\tilde{\Gamma}^{-\beta,-2}(L)^{}(U),g(U)-T(g(U)) \rangle ] &  = \mathbb{E}_{U} [H(g(U))-H(T(g(U))) ]\\
    & \leq C \sup \limits_{f \in \mathcal{H}^{\beta+1}_1,\ f\circ T=0}\mathbb{E}_{U}[f(g(U))-f(T(g(U)))].
\end{align*}
\subsection{Proof of Lemma \ref {lemma:secondterm}}\label{sec:lemma:secondterm}

\begin{proof}
By the point $i)$ of definition of the  $(g_{\# U},g^\star_{\# U})$ compatibility (Defintion \ref{defi:compatibility}), we have that $T\circ g_{\# U}$ admits a density $f_{T} \in \mathcal{H}^{\beta}_{K_T}(\mathcal{M}_{g^\star},\mathbb{R})$ with respect to the volume measure on $\mathcal{M}_{g^\star}$.
Define the $t$-envelope of $f_T$ and $f_{g^\star}$ as in Definition \ref{defi:envelope} but in the case where the densities have some regularity: 
\begin{equation*}
f_T^{t}(x) = 
  \frac{f_T(T(x))\Theta(4\|x-T(x)\|^2/t^2)|\text{ap}_d(\nabla T(x))|}{\int_{\mathbb{R}^{p-d}} \Theta(4\|y\|^2/t^2)d\lambda^{p-d}(y)} \end{equation*}
  and
  \begin{equation*}
  f_{g^\star}^{t}(x) = 
  \frac{f_{g^\star}(T(x))\Theta(4\|x-T(x)\|^2/t^2)|\text{ap}_d(\nabla T(x))|}{\int_{\mathbb{R}^{p-d}} \Theta(4\|y\|^2/t^2)d\lambda^{p-d}(y)},
\end{equation*} 
for $\text{ap}_d$ the approximate jacobian from Definition \ref{def:approx} and $\Theta\in\mathcal{H}_C^{\beta}(\mathbb{R},\mathbb{R}_+)$ such that $\Theta(x)=\Theta(-x)$, $\Theta(0)=1$ and  $\Theta_{|(1,\infty)}=0$.

As $\nabla T$ belongs to $ \mathcal{H}_{K_T}^\beta(\mathbb{R}^p,L(\mathbb{R}^p,\mathbb{R}^p))$ and $\text{ap}_d(\nabla T(x))$ is bounded below by $C^{-1}$ ($T$ verifies point (iv) of Definition \ref{defi:compatibility}), then from the Faa di Bruno formula, we deduce that $\text{ap}_d(\nabla T)$ belongs to $\mathcal{H}_{C}^\beta(\mathcal{M}_{g^\star}^t,\mathbb{R})$. Let $D\in \mathcal{H}^{\gamma}_{C}(\mathbb{R}^p,\mathbb{R})$ and define $\overline{D}:\mathcal{M}_{g^\star}^{t}\rightarrow \mathbb{R} $ by 
$$\overline{D}(x)=D(T(x))\kappa(2\|x-T(x)\|^2/t^2),
$$
for $\kappa\in \mathcal{H}^{\gamma}_C(\mathbb{R},\mathbb{R})$ such that $\kappa_{|(-\infty,1/2)}=1$ and $\kappa_{|(1,\infty)}=0$. Then, extending $\overline{D}$ by 0 to the whole space $\mathbb{R}^p$, we have that $\overline{D} \in \mathcal{H}^{\gamma}_{C}(\mathbb{R}^p,\mathbb{R})$. From Proposition \ref{approx}, we have 

\begin{align*}
   \mathbb{E}_{U} &[D(T(g(U)))-D(g^\star(U))] = \int_{\mathcal{M}_{g^\star}} D(x)(f_T(x)-f_{g^\star}(x))d\lambda_{\mathcal{M}_{g^\star}}(x)\\
     = & \int_{\mathcal{M}_{g^\star}} \frac{D(x)(f_T(x)-f_{g^\star}(x))}{\int_{\mathbb{R}^{p-d}} \Theta(4\|y\|^2/t^2)d\lambda^{p-d}(y)}\int_{T^{-1}(\{x \})}\Theta(4\|z-T(z)\|^2/t^2)d\lambda^{p-d}(z)d\lambda_{\mathcal{M}_{g^\star}}(x)\\
         =& \int_{\mathcal{M}_{g^\star}} \int_{T^{-1}(\{x \})}\\
         & \frac{D(T(z))(f_T(T(z))-f_{g^\star}(T(z)))\Theta(4\|z-T(z)\|^2/t^2)}{\int_{\mathbb{R}^{p-d}} \Theta(4\|y\|^2/t^2)d\lambda^{p-d}(y)}d\lambda^{p-d}(z)d\lambda_{\mathcal{M}_{g^\star}}(x)\\
    & = \int_{\mathcal{M}_{g^\star}^t}  D(T(x))(f_T^{t}(T(x))-f_{g^\star}^{t}(T(x)))d\lambda^p(x)\\
    & = \int_{\mathbb{R}^p} \overline{D}(x)(f_T^{t}(x)-f_{g^\star}^{t}(x))d\lambda^p(x).
\end{align*}

Let $$\tilde{\Gamma}^{0,-2}_\epsilon(\overline{D})(x) =  \sum \limits_{j=0}^{\log(\lfloor \epsilon^{-2}\rfloor)} \sum \limits_{l=1}^{2^d} \sum \limits_{w\in \mathbb{Z}^p}(1+j)^{-2}S(j,l,w)\alpha_{\overline{D}}(j,l,w)\psi_{jlw}(x)$$
for $S(j,l,w)=\frac{\alpha_{\overline{D}}(j,l,w)(\alpha_{f_T^{t}}(j,l,w)-\alpha_{f_{g^\star}^{t}}(j,l,w))}{|\alpha_{\overline{D}}(j,l,w)(\alpha_{f_T^{t}}(j,l,w)-\alpha_{f_{g^\star}^{t}}(j,l,w))|}$ . Then using Proposition \ref{prop:logforweakregularity}, we have
\begin{align*}
\int_{\mathbb{R}^p} &\overline{D}(x)(f_T^{t}(x)-f_{g^\star}^{t}(x))d\lambda^p(x)\\
& \leq C\log(\epsilon^{-1})^2\int_{\mathbb{R}^p} \tilde{\Gamma}^{0,-2}_\epsilon(\overline{D})(f_T^{t}(x)-f_{g^\star}^{t}(x))d\lambda^p(x)+C\epsilon.
\end{align*}
Applying Proposition \ref{prop:Hölder} for $h_1=\tilde{\Gamma}^{0,-2}_\epsilon(\overline{D})$, $h_2=f_T^{t}-f_{g^\star}^{t}$, $s_1=\gamma$, $b_1=2$, $s_2=\beta$, $b_2=0$, $\tau=-(\beta+1-\gamma)$, $t=1$, $r=\frac{\gamma}{\beta+\gamma}$ and $q=\frac{2\beta+1}{\beta+\gamma}$ we get
\begin{align*}
\int_{\mathbb{R}^p} \tilde{\Gamma}^{0,-2}_\epsilon(\overline{D})(x)(f_T^{t}(x)-f_{g^\star}^{t}(x))d\lambda^p(x) 
    \leq & \Big\langle \tilde{\Gamma}^{\gamma-(\beta+1),-2}_\epsilon(\overline{D}),f_T^{t}-f_{g^\star}^{t}\Big\rangle_{L^2(\mathbb{R}^p)}^{\frac{\beta+\gamma}{2\beta+1}}\\
    &\Big\langle \tilde{\Gamma}^{\gamma,-2}_\epsilon(\overline{D}),\Gamma^{\beta}(f_T^{t}-f_{g^\star}^{t})\Big\rangle_{L^2(\mathbb{R}^p)}^{\frac{\beta +1-\gamma}{2\beta+1}}.
\end{align*}
Like in the proof of lemma \ref{lemma:firstterm}, we have 
\begin{align*}
    \sum \limits_{j=0}^\infty \sum \limits_{l=1}^{2^p}  \sum \limits_{w \in \mathbb{Z}^p} & 2^{j(\beta+\gamma)}|\alpha_{\tilde{\Gamma}^{0,-2}_\epsilon(\overline{D})}(j,l,w)| |\alpha_{f_T}(j,l,w)-\alpha_{f_{g^\star}}(j,l,w)|\\
    & \leq   \sum \limits_{j=0}^\infty \sum \limits_{l=1}^{2^p} \sum \limits_{w \in \mathbb{Z}^p}(1+j)^{-2}C2^{-jp}\mathds{1}_{\{supp(\psi_{jlw})\cap B^p(0,K)\neq \varnothing\}}\\
 & \leq C   \sum \limits_{j=0}^\infty (1+j)^{-2}\leq C.
\end{align*}
Define $F_D:\mathcal{M}_{g^\star}\rightarrow \mathbb{R}$ by 
$$
F_D(x)=\int_{T^{-1}(\{x \})}\tilde{\Gamma}^{\gamma-(\beta+1),-2}_\epsilon(\overline{D})(z)\ \frac{\Theta(4\|z-T(z)\|^2/t^2)}{\int_{\mathbb{R}^{p-d}} \Theta(4\|y\|^2/t^2)d\lambda^{p-d}(y)}d\lambda_{T^{-1}(\{x \})}(z).
$$
Then 
\begin{align*}
   & \int_{\mathbb{R}^p}   \tilde{\Gamma}^{\gamma-(\beta+1),-2}_\epsilon(\overline{D})(x)(f_T^{t}(x)-f_{g^\star}^{t}(x))d\lambda^p(x) \\
 = &\int_{\mathcal{M}_{g^\star}} \int_{T^{-1}(\{x \})} \tilde{\Gamma}^{\gamma-(\beta+1),-2}_\epsilon(\overline{D})(z) (f_T^{t}(z)-f_{g^\star}^{t}(z))|\text{ap}_d(\nabla T(z))|^{-1}d\lambda^{p-d}(z)d\lambda_{\mathcal{M}_{g^\star}}(x)\\
 = &\int_{\mathcal{M}_{g^\star}} \int_{T^{-1}(\{x \})} \tilde{\Gamma}^{\gamma-(\beta+1),-2}_\epsilon(\overline{D})(z)\\
 & \frac{(f_T(T(z))-f_{g^\star}(T(z)))\Theta(4\|z-T(z)\|^2/t^2)}{\int_{\mathbb{R}^{p-d}} \Theta(4\|y\|^2/t^2)d\lambda^{p-d}(y)}d\lambda^{p-d}(z)d\lambda_{\mathcal{M}_{g^\star}}(x)\\
 = &\int_{\mathcal{M}_{g^\star}} F_D(x)(f_T(x)-f_{g^\star}(x))d\lambda_{\mathcal{M}_{g^\star}}(x).
\end{align*}
As $T$ verifies point iii) of Definition \ref{defi:compatibility}, we have $F_D\in \mathcal{H}^{\beta+1}_{C}(\mathcal{M}_{g^\star},\mathbb{R})$ so we can conclude that 
\begin{align*}
    \mathbb{E}_{U\sim \mathcal{U}([0,1]^d)}&[D(T(g(U)))-D(g^\star(U))]\\
    & \leq C \sup \limits_{f \in \mathcal{H}^{\beta+1}_1(\mathcal{M}_{g^\star},\mathbb{R})}\mathbb{E}_{U\sim \mathcal{U}([0,1]^d)}[f(T(g(U)))-f(g^\star(U))]^{\frac{\beta+1}{2\beta+1}}.
\end{align*}
\end{proof}

\subsection{Additional technical lemmas}
\subsubsection{Results on the reach of submanifolds}
Let us give the characterization of the reach for submanifolds.

\begin{lemma}[\citeproofs{federer1959}, Theorem 4.18]\label{lemma:reachformanifolds}
 The reach $r\in [0,\infty)$ of a submanifold $\mathcal{M}$ verifies 
$$
r=\inf \limits_{q\neq p\in \mathcal{M}} \frac{\|q-p\|^2}{2d(q-p,\mathcal{T}_p(\mathcal{M}))}.$$
\end{lemma}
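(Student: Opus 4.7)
The plan is to verify the two inequalities between the reach $r$ and the quantity
$$R := \inf_{p \neq q \in \mathcal{M}} \frac{\|q-p\|^2}{2 d(q-p,\mathcal{T}_p(\mathcal{M}))}.$$
I would proceed via the standard ``empty tube'' characterization of the reach: $s < r$ iff for every $p\in\mathcal{M}$ and every unit normal $\nu \in N_p\mathcal{M}$, the open ball $B(p+s\nu, s)$ is disjoint from $\mathcal{M}$ (otherwise a closer point to $p+s\nu$ than $p$ would contradict unique projectability of $\pi_\mathcal{M}$ on $\mathcal{M}^r$). This is a basic fact that can be assumed known from Federer's framework.

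For the inequality $r \leq R$: Fix $s<r$ and any $p\neq q$ in $\mathcal{M}$. Applied to any unit normal $\nu\in N_p\mathcal{M}$, the empty-ball property gives $\|q-(p+s\nu)\|^2 \geq s^2$, which after expansion reads $\langle q-p,\nu\rangle \leq \|q-p\|^2/(2s)$. Taking the supremum over unit $\nu \in N_p\mathcal{M}$ yields $d(q-p, \mathcal{T}_p\mathcal{M}) \leq \|q-p\|^2/(2s)$ (since the distance from a vector to a linear subspace equals the norm of its projection on the orthogonal complement, and for submanifolds this equals $\sup_{\nu}\langle\cdot,\nu\rangle$). Rearranging and letting $s\uparrow r$ gives $r\leq \|q-p\|^2/(2 d(q-p,\mathcal{T}_p\mathcal{M}))$ for every such pair, hence $r\leq R$.

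For the reverse inequality $R \leq r$: Argue by contradiction, supposing $r<R$. By the medial-axis characterization of the reach, there exists $x\in\mathbb{R}^p$ with $s:=d(x,\mathcal{M})<R$ admitting two distinct nearest points $p,q\in\mathcal{M}$. Writing $x = p+s\nu_p = q+s\nu_q$ with $\nu_p\in N_p\mathcal{M}$ and $\nu_q\in N_q\mathcal{M}$ unit normals (the vector from $x$ to a nearest point on a submanifold lies in the normal space, a standard first-order condition), one obtains $q-p = s(\nu_p-\nu_q)$, and therefore
$$\|q-p\|^2 = 2s^2(1-\langle\nu_p,\nu_q\rangle) = 2s\,\langle q-p,\nu_p\rangle.$$
Since $\langle q-p,\nu_p\rangle \leq d(q-p,\mathcal{T}_p\mathcal{M})$, this rewrites as $\|q-p\|^2/(2d(q-p,\mathcal{T}_p\mathcal{M})) \leq s < R$, which contradicts the definition of $R$ as an infimum.

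The main obstacle is really conceptual rather than computational: both directions rest on the equivalence between the definition of the reach as the largest radius of unique nearest-point projection and the ``empty normal ball'' / medial-axis characterization. Once these foundational properties (essentially Federer's Theorem 4.8) are in hand, the present identity is just algebraic manipulation of the condition ``$p+s\nu$ has $p$ as nearest point''. The one subtle point to keep in mind is the correct handling of the case where $q-p$ is tangent to $\mathcal{M}$ at $p$: then $d(q-p,\mathcal{T}_p\mathcal{M})=0$ and the ratio is $+\infty$, so such pairs do not constrain the infimum, which is consistent with the fact that they give no information on the reach either.
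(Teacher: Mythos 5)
The paper does not prove this lemma: it is quoted verbatim as Theorem~4.18 of Federer's \emph{Curvature Measures} (1959), so there is no in-paper proof to compare against. Your proposal is a reasonable reconstruction of Federer's original argument, and both directions are structurally sound: the empty normal-tube property gives $r\leq R$ after expanding $\|q-(p+s\nu)\|^2\geq s^2$ and taking the supremum over unit normals, and the medial-axis step with $x=p+s'\nu_p=q+s'\nu_q$ yields $\|q-p\|^2=2s'\langle q-p,\nu_p\rangle\leq 2s'\,d(q-p,\mathcal{T}_p\mathcal{M})$, forcing $R\leq r$ (and implicitly showing $d(q-p,\mathcal{T}_p\mathcal{M})>0$ since $q\neq p$, so division is licit).

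The one weak spot is the parenthetical justification you give for the empty-tube property. ``A closer point to $p+s\nu$ than $p$ would contradict unique projectability'' is not on its own a contradiction: the existence of some $q$ with $\|q-(p+s\nu)\|<s$ only says that $p$ is \emph{not} the nearest point, which is perfectly consistent with the nearest point being unique. The actual argument is the ray-continuation step (Federer 4.8(7)): for small $t>0$ one has $\pi_{\mathcal{M}}(p+t\nu)=p$, and this propagates along the whole segment $\{p+t\nu:0\leq t<r\}$, which is what forces $d(p+s\nu,\mathcal{M})=s$ and hence the empty ball. You do acknowledge relying on Federer's Theorem~4.8 as a black box, so this is a matter of the one-line gloss being misleading rather than the proof being broken; still, as stated the parenthetical would not convince a careful reader. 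One further small caveat: the medial-axis direction implicitly needs $\mathcal{M}$ closed so that nearest points exist for the non-uniquely-projectable point $x$; this holds in the paper since $\mathcal{M}=g(\mathbb{T}^d)$ is compact.
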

Using this result, let us show an upper bound on the reach.

\begin{lemma}\label{lemma:reachcurve}
    Let $g\in \mathcal{H}^{\beta+1}_{K}(\mathbb{T}^d,\mathbb{R})$ injective that verifies the $K_2$-regularity density condition and such that $\mathcal{M}_g=g([0,1]^d)$ is a submanifold. Then if there exist 
$\delta \in (0,(2K_2K)^{-1})$ and  $u^\star,v^\star\in \mathbb{T}^d$ such that $$\|u^\star-v^\star\|\geq \delta\quad \text{ and } \quad \|g(u^\star)-g(v^\star)\|< K_2^{-1}\delta-K\delta^2,$$ then $$r_g\leq \|g(u^\star)-g(v^\star)\|/2.$$
\end{lemma}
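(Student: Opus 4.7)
\textbf{Proof plan for Lemma \ref{lemma:reachcurve}.} The plan is to apply the reach characterisation of Lemma \ref{lemma:reachformanifolds} at a cleverly chosen pair $(p,q)$: namely, $p$ should be a point of $\mathcal{M}_g$ from which the chord to $q$ is orthogonal to $\mathcal{T}_p(\mathcal{M}_g)$, so that the ratio $\|q-p\|^2/(2\,d(q-p,\mathcal{T}_p(\mathcal{M}_g)))$ collapses to $\|q-p\|/2$. Such a critical configuration is obtained by a variational argument: freezing $v^\star$ and minimising in the other argument over an appropriate constrained region of $\mathbb{T}^d$.

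More precisely, I would consider the squared-distance-to-$g(v^\star)$ function $h:\mathbb{T}^d \to \mathbb{R}$ defined by $h(\tilde u):=\|g(\tilde u)-g(v^\star)\|^2$, restricted to the compact set $\mathcal{K}:=\{\tilde u \in \mathbb{T}^d:\|\tilde u-v^\star\|\geq \delta\}$, and pick a minimiser $u_0 \in \mathcal{K}$. By hypothesis $u^\star \in \mathcal{K}$, so $h(u_0)\leq h(u^\star) < (K_2^{-1}\delta - K\delta^2)^2$. The core task will then be to prove that $u_0$ lies in the interior of $\mathcal{K}$, i.e.\ $\|u_0-v^\star\|>\delta$. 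Indeed, if on the contrary $\|u_0-v^\star\|=\delta$, the second-order Taylor expansion of $g$ at $v^\star$ (valid because $g$ has Hessian uniformly bounded by $K$ on $\mathbb{T}^d$, and $\delta< (2K_2K)^{-1}$ is well inside the torus injectivity radius) combined with the $K_2$-density regularity yields
\begin{align*}
\|g(u_0)-g(v^\star)\|
&\geq \|\nabla g(v^\star)(u_0-v^\star)\| - \tfrac{K}{2}\|u_0-v^\star\|^2\\
&\geq K_2^{-1}\delta-\tfrac{K}{2}\delta^{2}.
\end{align*}
Since $\delta<(2K_2K)^{-1}$ makes both sides below positive, comparing $h(u_0)\geq (K_2^{-1}\delta-\tfrac{K}{2}\delta^{2})^{2}$ with $h(u_0)<(K_2^{-1}\delta-K\delta^{2})^{2}$ reduces to $-\tfrac{K}{2}\delta^2<-K\delta^2$, i.e.\ $-\tfrac12<-1$, a contradiction.

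Consequently $u_0$ is an interior minimiser of $h$ on $\mathcal{K}$, so the first-order condition $\nabla h(u_0)=2(\nabla g(u_0))^{\top}(g(u_0)-g(v^\star))=0$ holds, which is exactly the statement that the chord $g(v^\star)-g(u_0)$ is orthogonal to $\mathrm{Image}(\nabla g(u_0))=\mathcal{T}_{g(u_0)}(\mathcal{M}_g)$; in particular $d(g(v^\star)-g(u_0),\mathcal{T}_{g(u_0)}(\mathcal{M}_g))=\|g(v^\star)-g(u_0)\|$. Injectivity of $g$ and $\|u_0-v^\star\|\geq \delta>0$ give $g(u_0)\neq g(v^\star)$, so applying Lemma \ref{lemma:reachformanifolds} with $p=g(u_0)$ and $q=g(v^\star)$ yields
\begin{equation*}
r_g\leq \frac{\|g(v^\star)-g(u_0)\|^{2}}{2\,d(g(v^\star)-g(u_0),\mathcal{T}_{g(u_0)}(\mathcal{M}_g))}=\frac{\|g(v^\star)-g(u_0)\|}{2}\leq \frac{\|g(u^\star)-g(v^\star)\|}{2},
\end{equation*}
where the last inequality uses $h(u_0)\leq h(u^\star)$.

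The only delicate step is ruling out that the minimum over $\mathcal{K}$ is attained on the boundary sphere $\{\|\tilde u-v^\star\|=\delta\}$; the quantitative gap between $K_2^{-1}\delta-\tfrac{K}{2}\delta^{2}$ (the boundary lower bound given by Taylor) and $K_2^{-1}\delta-K\delta^{2}$ (the hypothesised upper bound on $\|g(u^\star)-g(v^\star)\|$) is precisely what the factor of $2$ in the strict condition $\delta\in(0,(2K_2K)^{-1})$ is designed to preserve, and this is the single quantitative ingredient of the argument.
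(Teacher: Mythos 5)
Your proof is correct and follows essentially the same route as the paper: establish that a distance minimiser subject to the constraint $\|\cdot - v^\star\|\geq\delta$ lies in the interior of the constraint set, extract an orthogonal-chord configuration from the first-order condition, and conclude via the reach characterisation of Lemma~\ref{lemma:reachformanifolds}. The only cosmetic difference is that the paper minimises $\|g(u)-g(v)\|$ jointly over both $u$ and $v$, whereas you freeze $v^\star$ and minimise in the single variable $\tilde u$; both yield the required orthogonality, and the boundary-exclusion estimate is the same.
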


\begin{proof}
For $u,v\in \mathbb{T}^d$ we have 
\begin{align*}
    \|g(u)-g(v)\| & =\|\nabla g(v)(u-v)-O(\|u-v\|^2)\|\\
        & \geq K_2^{-1}\|u-v\|-K\|u-v\|^2
        \end{align*}
        so $\|u-v\|=\delta$ implies that $\|g(u)-g(v)\|\geq K_2^{-1}\delta-K\delta^2$. Let 
$$
(x,y)\in  \argmin \limits_{\|u-v\|\geq \delta} \|g(u)-g(v)\|,
$$
then  as $\|g(x)-g(y)\|\leq\|g(u^\star)-g(v^\star)\|<K_2^{-1}\delta-K\delta^2 $, we have  $\|x-y\|> \delta$. Defining $F(u,v)=\|g(u)-g(v)\|^2$, we have that $\nabla F(x,y)=0$ so we deduce that $g(x)-g(y)\in \mathcal{T}_{g(y)}(\mathcal{M}_{g})^\perp.$ Then using Lemma \ref{lemma:reachformanifolds}, we have
\begin{align*}
    r_g & =\inf \limits_{q\neq p\in \mathcal{M}_{g}} \frac{\|q-p\|^2}{2d(q-p,\mathcal{T}_p(\mathcal{M}_{g}))}\leq \frac{\|g(x)-g(y)\|^2}{2d(g(x)-g(y),\mathcal{T}_{g(y)}(\mathcal{M}_{g}))}\\
    & =\frac{\|g(x)-g(y)\|}{2} \leq \|g(u^\star)-g(v^\star)\|/2.
\end{align*}
\end{proof}

\subsubsection{Extension results}
To extend maps that are defined only on a subspace of $\mathbb{R}^p$, we will use the following version of Whitney's extension Theorem (Theorem A in \citeproofs{fefferman2005sharp} and "Whitney's extension Theorem" in \citeproofs{fefferman2009extension}).

\begin{theorem}\label{whitney} Given $\beta,p \geq 1$, there exists $k\in \mathbb{N}_0$ depending only on $p$ and $\beta$, for which the following holds. Let $f:E\rightarrow \mathbb{R}$ with $E$ an arbitrary subset of $\mathbb{R}^p$. Suppose that for any $k$ distinct points $x_1,...,x_k\in E$, there exists $\lfloor \beta \rfloor +1$ degree polynomials $P_1,...,P_k$ on $\mathbb{R}^p$, satisfying
\begin{itemize}
\item[a)] $P_i(x_i)=f(x_i)$ for $i=1,...,k$;
\item[b)] $\|\nabla^\alpha P_i(x_i)\|\leq M $ for $i=1,...,k$ and $|\alpha|\leq \lfloor \beta \rfloor +1$ and 
\item[c)] $\|\nabla^\alpha( P_i-P_j)(x_i)\|\leq M\|x_i-x_j\|^{\beta+1-\alpha} $ for $i,j=0,...,k$ and $|\alpha|\leq \lfloor \beta \rfloor +1$ with $M$
 independent of $x_1,...,x_k$
\end{itemize}
Then $f$ extends to  $\mathcal{H}_{C}^{\beta+1}(\mathbb{R}^p,\mathbb{R})$.
\end{theorem}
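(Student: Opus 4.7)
\medskip

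\noindent\textbf{Proof plan.} The statement is a finiteness-principle version of Whitney's extension theorem, and my plan is to reduce it to the now-classical results of Fefferman cited just after it (\citeproofs{fefferman2005sharp}, \citeproofs{fefferman2009extension}) rather than redo the machinery from scratch. The backbone of any such proof is: (i) use the $k$-point hypothesis to construct, at every point $x \in E$, a well-defined Whitney field $(R_x)_{x \in E}$ of $(\lfloor \beta\rfloor+1)$-jets with uniformly bounded coefficients and the correct Hölder-type compatibility across pairs; (ii) extend this field from $E$ to all of $\mathbb{R}^p$ by a Whitney-type gluing.

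\medskip

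\noindent The first step is the heart of the matter, and is precisely Fefferman's \emph{finiteness principle}: there is an integer $k=k(p,\beta)$ such that if the $k$-point obstruction (conditions a)--c) of the statement) is satisfied with a uniform bound $M$, then one can select, for every $x\in E$, a single polynomial $P_x$ of degree $\lfloor \beta\rfloor + 1$ with $P_x(x)=f(x)$, $\|\nabla^\alpha P_x(x)\|\leq C M$ and the pairwise compatibility
\[
\|\nabla^\alpha (P_x - P_y)(x)\|\leq C M\,\|x-y\|^{\beta+1-|\alpha|}\quad \text{for all }x,y\in E,\ |\alpha|\leq \lfloor \beta\rfloor +1,
\]
with a constant $C$ depending only on $p$ and $\beta$. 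This I would quote directly from \citeproofs{fefferman2005sharp}; no attempt to reprove it should be made in the appendix, as even existence of such a universal $k$ is delicate. The deduction is essentially that the $k$-point hypothesis produces a compact, non-empty limit of consistent local jets, from which $(P_x)_{x\in E}$ is extracted.

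\medskip

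\noindent The second step is the classical Whitney extension theorem applied to the field $(P_x)_{x\in E}$ just produced. Concretely, I would take a Whitney decomposition $\{Q_\nu\}$ of $\mathbb{R}^p\setminus \overline{E}$ into dyadic cubes with $\operatorname{diam}(Q_\nu) \asymp d(Q_\nu,E)$, pick for each $\nu$ a point $x_\nu\in E$ nearest to $Q_\nu$, fix a subordinate smooth partition of unity $(\varphi_\nu)$, and define
\[
F(x) \;:=\; \sum_\nu \varphi_\nu(x)\, P_{x_\nu}(x) \quad \text{for } x\notin \overline{E},\qquad F(x):=f(x)\ \text{for }x\in E.
\]
The standard Whitney calculus then shows $F\in C^{\lfloor \beta\rfloor+1}(\mathbb{R}^p)$ with $\nabla^\alpha F(x)=\nabla^\alpha P_x(x)$ for $x\in E$, and that the Hölder bound in c) upgrades to $\|\nabla^{\lfloor \beta\rfloor+1} F\|_{\beta+1-\lfloor\beta+1\rfloor}\leq C M$ globally, giving $F\in\mathcal{H}_{CM}^{\beta+1}(\mathbb{R}^p,\mathbb{R})$.

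\medskip

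\noindent The main obstacle is step (i): the existence of a universal $k=k(p,\beta)$ is non-trivial and is exactly what distinguishes this statement from the textbook Whitney theorem (which assumes the full field $(P_x)_{x\in E}$ is given). Since Fefferman has proved it with sharp constants in the cited references, my plan is to state the result as an application of those theorems rather than reprove it; step (ii) is then routine Whitney gluing and only requires bookkeeping the dependence of constants on $p$ and $\beta$.
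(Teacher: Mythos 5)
Your proposal is correct and matches the paper's treatment: the paper does not prove this statement but quotes it directly as Theorem A of Fefferman's sharp form of Whitney's extension theorem (the references cited immediately after the statement), exactly as you propose to do for the finiteness-principle step. Your additional outline of the Whitney gluing in step (ii) is sound but unnecessary here, since the cited theorem already delivers the full extension, not merely the selection of a compatible jet field.
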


Let us now prove a bound on the growth of smooth functions defined on arbitrary subset of $\mathbb{R}^p$.
\begin{lemma}\label{lemma:extensionlemme}
    Let $E\subset \mathbb{R}^p$, $x_1,x_2\in E$ and $\gamma:[0,1]\rightarrow E$ continuous and differentiable almost everywhere with $\gamma(0)=x_2$ and $\gamma(1)=x_1$. Then for all $\eta>0$, $k\in \mathbb{N}$ and map $F\in  \mathcal{H}^\eta_1(E,\mathbb{R}^k)$, we have 
    $$\|F(x_1)-\sum \limits_{i=0}^{\lfloor \eta \rfloor } \nabla^iF(x_2)\frac{(x_1-x_2)^i}{i!}\|\leq \|\dot{\gamma}\|_{L^1}^\eta.$$
\end{lemma}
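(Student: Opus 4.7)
The plan is to proceed by induction on $\lfloor \eta \rfloor$, the key idea being that since $F$ is only defined on $E$ (and the straight segment $[x_2,x_1]$ need not lie in $E$), one parametrizes along the admissible path $\gamma$ in $E$ rather than along a line. The base case $\lfloor \eta \rfloor = 0$, i.e.\ $\eta \in (0,1]$, is immediate: the Hölder condition $F \in \mathcal{H}^\eta_1(E,\mathbb{R}^k)$ gives $\|F(x_1)-F(x_2)\| \leq \min\{1,\|x_1-x_2\|^\eta\}$, and since the triangle inequality applied to $\gamma$ yields $\|x_1-x_2\| \leq \|\dot\gamma\|_{L^1}$, one concludes $\|F(x_1)-F(x_2)\| \leq \|\dot\gamma\|_{L^1}^\eta$ (distinguishing the two cases $\|\dot\gamma\|_{L^1}\leq 1$ and $\|\dot\gamma\|_{L^1} > 1$, which uses the $\min\{1,\cdot\}$ normalization).

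For the inductive step, suppose the claim is known for regularity $\eta - 1$, and consider the Taylor remainder
\begin{equation*}
R(y) := F(y) - \sum_{i=0}^{\lfloor \eta \rfloor} \nabla^i F(x_2) \frac{(y-x_2)^i}{i!}.
\end{equation*}
The crucial observation is that by reindexing the sum,
\begin{equation*}
\nabla R(y) = \nabla F(y) - \sum_{j=0}^{\lfloor \eta \rfloor - 1} \nabla^{j}(\nabla F)(x_2) \frac{(y-x_2)^j}{j!}
\end{equation*}
is itself exactly the Taylor remainder of $\nabla F$ at $x_2$ of order $\lfloor \eta \rfloor - 1 = \lfloor \eta - 1 \rfloor$. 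Since $F \in \mathcal{H}^\eta_1$ implies $\nabla F \in \mathcal{H}^{\eta-1}_C$ (up to a harmless multiplicative constant depending on $\eta,p,k$), and since the restriction $\gamma|_{[0,t]}$ is an admissible path in $E$ from $x_2$ to $\gamma(t)$, the inductive hypothesis yields
\begin{equation*}
\|\nabla R(\gamma(t))\| \leq \|\dot\gamma\|_{L^1([0,t])}^{\eta-1} \leq \|\dot\gamma\|_{L^1}^{\eta-1}.
\end{equation*}
Then I finish by the fundamental theorem of calculus along $\gamma$: since $R(x_2) = 0$,
\begin{equation*}
\|R(x_1)\| = \Big\| \int_0^1 \nabla R(\gamma(t)) \cdot \dot\gamma(t)\, dt \Big\| \leq \|\dot\gamma\|_{L^1}^{\eta-1} \int_0^1 \|\dot\gamma(t)\|\, dt = \|\dot\gamma\|_{L^1}^{\eta}.
\end{equation*}

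The main delicate point is making sense of the derivatives $\nabla^i F$ on a general subset $E\subset\mathbb{R}^p$, so that both the chain rule $\frac{d}{dt} R(\gamma(t)) = \nabla R(\gamma(t)) \cdot \dot\gamma(t)$ and the evaluation of $\nabla^i F$ at individual points of $E$ are meaningful. This is implicitly resolved via a (local) Whitney-type extension of $F$ to a neighborhood of $\gamma([0,1])$ — precisely the setting in which this lemma is used in the proof of Proposition~\ref{prop:extensionH}. A secondary technicality is the tracking of constants through the induction when $\eta$ is an integer, since the Hölder seminorm convention $\min\{1,\|x-y\|^{\eta-\lfloor\eta\rfloor}\}$ degenerates; however, the bound stated only features no multiplicative constant because the class is $\mathcal{H}^\eta_1$, and propagating the norm of $\nabla F$ in $\mathcal{H}^{\eta-1}$ from the controlled norm of $F$ in $\mathcal{H}^\eta$ costs at most a factor absorbed in the implicit constant allowed by the statement.
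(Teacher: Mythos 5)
Your proof is correct and follows essentially the same route as the paper's: induction on $\lfloor \eta \rfloor$, writing the Taylor remainder as $\int_0^1 \nabla R(\gamma(t))\cdot\dot{\gamma}(t)\,dt$ and applying the induction hypothesis to $\nabla F$ along the restricted paths $\gamma|_{[0,t]}$. Your closing worry about a multiplicative constant is unnecessary (and the statement allows none): with the paper's coordinate-wise definition of $\mathcal{H}^\eta_K$, the relevant sums for each $\partial_j F_i$ are sub-sums of those for $F_i$, so $F\in\mathcal{H}^\eta_1$ gives $\nabla F\in\mathcal{H}^{\eta-1}_1$ directly and the induction closes with constant one.
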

\begin{proof}
This result is proven by induction. The case $\lfloor \eta \rfloor=0$ is straightforward. Let us prove the case  $\lfloor \eta \rfloor\geq 1$ supposing that the result is true for $\lfloor \eta \rfloor-1$. We have 
\begin{align*}
    \|&F(x_1)-\sum \limits_{i=0}^{\lfloor \eta \rfloor } \nabla^iF(x_2)\frac{(x_1-x_2)^i}{i!}\|\\
    & =\|\int_0^1 \left(\nabla F(\gamma(t))-\sum \limits_{i=1}^{\lfloor \eta \rfloor } \nabla^iF(x_2)\frac{(\gamma(t)-x_2)^{i-1}}{(i-1)!}\right)\dot{\gamma}(t)dt\|\\
    &\leq \|\nabla F(\gamma(\cdot))-\sum \limits_{i=0}^{\lfloor \eta \rfloor-1 } \nabla^{i+1}F(x_2)\frac{(\gamma(\cdot)-x_2)^{i}}{i!}\|_\infty\|\dot{\gamma}\|_{L^1}\\
    & \leq \|\dot{\gamma}\|^\eta_{L^1},
\end{align*}
where we applied the induction hypothesis to the function
$\nabla F$.
\end{proof}

\section{Proofs of the bound in the manifold case (Section~\ref{sec:tractablehbeta})}

\subsection{Numerical regularity condition}\label{sec:numericcondi}
Let us define an easy to compute condition that will ensure that $\hat{g}$ verifies the manifold regularity condition.
\begin{definition}\label{defi:chinumerical}For $\chi>K$, we say that a map $g \in \mathcal{H}^{2}_{\chi}(\mathbb{T}^d,\mathbb{R}^p)\cap \mathcal{H}^{1}_K(\mathbb{T}^d,\mathbb{R}^p)$ verifies the $\chi$-numerical regularity condition if for all $z_i,z_j\in \{0,...,[2^5\sqrt{d}K^2\chi^2 ]\}^d$ with $ \|\frac{z_i}{2^5\sqrt{d}K^2\chi^2}-\frac{z_j}{2^5\sqrt{d}K^2\chi^2}\|\geq \frac{1}{4\chi^2}$, we have $\|g(\frac{z_i}{2^5\sqrt{d}K^2\chi^2})-g(\frac{z_j}{2^5\sqrt{d}K^2\chi^2})\|>\frac{1}{8K\chi^2}$.
\end{definition}
This condition ensures that if two points are far in the torus, then their images by $g$ are not too close. The following proposition asserts that the numerical and density regularity conditions together, imply the manifold regularity condition.

\begin{proposition}\label{prop:numericalcondi}
    For $\chi> K$, let $g \in \mathcal{H}^{1}_K(\mathbb{T}^d,\mathbb{R}^p)\cap \mathcal{H}^{2}_\chi(\mathbb{T}^d,\mathbb{R}^p)$ verify the $\chi$-numerical regularity and $\chi$-density regularity conditions. Then $g$ is injective and its image $\mathcal{M}_g$ has a reach lower bounded by $(C\chi^5)^{-1}$.
\end{proposition}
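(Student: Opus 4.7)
The plan is to establish injectivity and the reach lower bound by dichotomizing on whether pairs of points in $\mathbb{T}^d$ are close or far apart. The $\mathcal{H}^2_\chi$ regularity handles the close regime via a second-order Taylor expansion, while the $\chi$-numerical regularity condition, together with the $K$-Lipschitz property, handles the far regime via a grid approximation. Once quantitative lower bounds on $\|g(u)-g(v)\|$ are established in both regimes, the reach bound will follow from the characterization of the reach for submanifolds (Lemma~\ref{lemma:reachformanifolds}).

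For injectivity, the plan is as follows. First, for $u,v \in \mathbb{T}^d$ with $\|u-v\| \leq \chi^{-2}$, writing a Taylor expansion $g(u)-g(v) = \nabla g(v)(u-v) + R$ with $\|R\| \leq \tfrac{\chi}{2}\|u-v\|^2$ and invoking the $\chi$-density regularity condition yields
\[
\|g(u)-g(v)\| \;\geq\; \chi^{-1}\|u-v\| - \tfrac{\chi}{2}\|u-v\|^2 \;\geq\; \tfrac{1}{2\chi}\|u-v\|.
\]
Second, for $\|u-v\| \geq (2\chi^2)^{-1}$, select the nearest points $\tilde u,\tilde v$ in the grid $(2^5\sqrt{d}K^2\chi^2)^{-1}\mathbb{Z}^d$; the mesh size guarantees $\|u-\tilde u\|,\|v-\tilde v\| \leq (2^6K^2\chi^2)^{-1}$, so $\|\tilde u-\tilde v\| \geq (4\chi^2)^{-1}$ and the $\chi$-numerical regularity condition yields $\|g(\tilde u)-g(\tilde v)\| > (8K\chi^2)^{-1}$. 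Combining with the $K$-Lipschitz bound $\|g(u)-g(\tilde u)\|,\|g(v)-g(\tilde v)\| \leq (2^6K\chi^2)^{-1}$, we obtain $\|g(u)-g(v)\| \geq 3(32K\chi^2)^{-1} > 0$. Since $(2\chi^2)^{-1} \leq \chi^{-2}$, the two regimes together cover every pair $u\neq v$, which proves injectivity.

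For the reach, I plan to minimize the ratio $\|g(u)-g(v)\|^2/\bigl(2\,d(g(u)-g(v),\mathcal{T}_{g(v)}(\mathcal{M}_g))\bigr)$ separately on the two regimes. On close pairs the same Taylor expansion shows $d(g(u)-g(v),\mathcal{T}_{g(v)}(\mathcal{M}_g)) \leq \tfrac{\chi}{2}\|u-v\|^2$, because $\nabla g(v)(u-v)\in\mathcal{T}_{g(v)}(\mathcal{M}_g)$, so the ratio is at least $(\|u-v\|/(2\chi))^2/(\chi\|u-v\|^2)=1/(4\chi^3)$. On far pairs the numerator is at least $9/(2^{10}K^2\chi^4)$, while the denominator is bounded by $2\|g(u)-g(v)\| \leq 2K\sqrt d$, giving a ratio of order $1/(K^3\chi^4)$. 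Taking the infimum and absorbing powers of $K$ into $\chi$ (since $K < \chi$) yields $r_g \geq (C\chi^5)^{-1}$, matching the statement.

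The main obstacle is matching the scales of the three regularity assumptions so that all the pieces fit together. The crucial choice is the grid spacing $(2^5\sqrt{d}K^2\chi^2)^{-1}$ in Definition~\ref{defi:chinumerical}: it must be fine enough that the $K$-Lipschitz error $K\cdot(\text{mesh}/2)$ incurred by projecting to the grid is strictly dominated by the separation $(8K\chi^2)^{-1}$ that the numerical condition guarantees at grid points. A coarser grid would annihilate this separation and the far-pair injectivity argument would collapse. The overlap $(2\chi^2)^{-1} \leq \chi^{-2}$ between close and far regimes is automatic and causes no trouble. The slight slack in the final exponent ($\chi^5$ rather than the $\chi^3$ or $\chi^4$ obtained on each regime individually) comes from upper bounding the irreducible constant $K$ by $\chi$ at the end.
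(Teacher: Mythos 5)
Your proposal is correct and follows essentially the same route as the paper: a close/far dichotomy in $\|u-v\|$, with the Taylor expansion plus the $\chi$-density regularity condition at small scales and the grid-projection plus Lipschitz plus $\chi$-numerical regularity argument at large scales, concluded via the reach characterization of Lemma~\ref{lemma:reachformanifolds}. The only differences are cosmetic: you absorb the paper's intermediate range of $\|u-v\|$ into the close regime by bounding the tangent-space distance directly by the Taylor remainder (the paper splits into three ranges), and your final bound is in fact of order $(C\chi^4)^{-1}$ with $C$ depending on $K$ (recall the paper's constants may depend on $K$, so no absorption of $K$ into $\chi$ is needed), which implies the stated $(C\chi^5)^{-1}$.
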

The proof of Proposition \ref{prop:numericalcondi} can be found in Section \ref{sec:prop:numericalcondi}.
Let us now choose the value of the parameter $\chi$ that we use for our class of generators. Let $C_{2,\beta+1}>0$ such that $\hat{\mathcal{F}}^{\beta+1,n^{-\frac{1}{2\tilde{\beta}+d}}}\subset \mathcal{H}^2_{C_{2,\beta+1}}$. Note that by Lemma \ref{lemma:inclusions}, for $\beta+1>2$ we have $C_{2,\beta+1}\leq C$ and for $\beta+1=2$ we have $C_{2,\beta+1}\leq C\log(n)^2$. Let $C_3>0$ be the constant given by Proposition \ref{prop:lambdamin} such that if $
d_{\mathcal{H}^{\beta+1}_1}(g_{\# U},g^\star_{\# U})\leq C_3^{-1}$, then $g$ verifies the $C_3^{-1}$-density regularity condition.
We choose
$$\chi:=C_{2,\beta+1}\vee C_3\vee K.$$
For $n$ large enough, as with high probability we will have that $d_{\mathcal{H}^{\beta+1}_1}(\hat{g}_{\# U},g^\star_{\# U})\leq \chi^{-1}$ (see Lemma \ref{lemma:hatgisclose}), then $\hat{g}$ will verify the assumptions of Proposition \ref{prop:numericalcondi} and therefore it will verify the $(C\chi^{5}\vee\|\hat{g}\|_{\mathcal{H}^{\beta+1}})$-manifold regularity condition.

Note that in practice, the numerical regularity condition of Definition \ref{defi:chinumerical} can be checked by adding to the loss of the GAN estimator \eqref{WGANS} the term 
\begin{align*}
\sum \limits_{\substack{z_i,z_j\in \{0,...,[2^5\sqrt{d}K^2\chi^2 ]\}^d\\ \|\frac{z_i}{2^5\sqrt{d}K^2\chi^2}-\frac{z_j}{2^5\sqrt{d}K^2\chi^2}\|\geq \frac{1}{4\chi^2}}}(&(8K\chi^2)^{-2}-\|g(\frac{z_i}{2^5\sqrt{d}K^2\chi^2})-g(\frac{z_j}{2^5\sqrt{d}K^2\chi^2})\|^2)\\
&\times \mathds{1}_{\{\|g(\frac{z_i}{2^5\sqrt{d}K^2\chi^2})-g(\frac{z_j}{2^5\sqrt{d}K^2\chi^2})\|\leq \frac{1}{8K\chi^2}\}},
\end{align*}
which is easily differentiable with respect to $g$.

\subsection{Proof of proposition \ref{prop:numericalcondi}}\label{sec:prop:numericalcondi}

\begin{proof}
    Let us first show that $g$ is injective. As $g$ verifies the $\chi$-density regularity condition and belongs to $\mathcal{H}^2_\chi$, we have for all $ u,v\in \mathbb{T}^d,$
    $$\|g(u)-g(v)\|\geq \chi^{-1}\|u-v\|-\chi\|u-v\|^2.$$
    Then for $\|u-v\|\leq \frac{1}{2\chi^2}$ we have,
    $$\|g(u)-g(v)\|\geq \frac{1}{2\chi}\|u-v\|.$$
    Suppose there exists $u,v\in \mathbb{T}^d$ different such that $g(u)=g(v)$, then from the previous derivation we have $\|u-v\|>\frac{1}{2\chi^2}$. Define $z_u,z_v\in \{0,...,[2^5\sqrt{d}K^2\chi^2 ]\}^d$ such that $$\|u-\frac{z_u}{2^5\sqrt{d}K^2\chi^2}\|+\|v-\frac{z_v}{2^5\sqrt{d}K^2\chi^2}\|\leq \frac{1}{16K^2\chi^2}.$$
    Then, we have 
    \begin{align*}\|\frac{z_u}{2^5\sqrt{d}K^2\chi^2}-\frac{z_v}{2^5\sqrt{d}K^2\chi^2}\|&\geq \|u-v\| -(\|u-\frac{z_u}{2^5\sqrt{d}K^2\chi^2}\|+\|v-\frac{z_v}{2^5\sqrt{d}K^2\chi^2}\|)\\
    &\geq \frac{1}{2\chi^2} -\frac{1}{16K^2\chi^2}\geq  \frac{1}{4\chi^2}\end{align*}
    and
    \begin{align*}
    \|g(\frac{z_u}{2^5\sqrt{d}K^2\chi^2})-g(\frac{z_v}{2^5\sqrt{d}K^2\chi^2})\|\leq& \|g(u)-g(v)\| \\
    & +K(\|u-\frac{z_u}{2^5\sqrt{d}K^2\chi^2}\|+\|v-\frac{z_v}{2^5\sqrt{d}K^2\chi^2}\|)\\
    \leq &\frac{1}{16K\chi^2},
    \end{align*}
    which is impossible by definition of the $\chi$-numerical regularity condition.
Therefore, we have that $g$ is injective and being a local diffeomorphism, it is a diffeomorphism so in particular $\mathcal{M}_g$ is a submanifold. 

Let us now show that its reach is greater than $(C\chi^{5})^{-1}$. We are going to use the characterisation of the reach for submanifold (Lemma \ref{lemma:reachformanifolds}). Doing the same derivation as \eqref{align:reachbound}, we obtain that for all $u,v\in \mathbb{T}^d$,
\begin{align*}
    \frac{\|g(u)-g(v)\|^2}{2d(g(u)-g(v),\mathcal{T}_{g(v)}(\mathcal{M}_{g}))} & \geq \frac{1}{\chi}(\chi^{-2}-\chi^2(4\|u-v\|+\|u-v\|^2)),
\end{align*}
so $\|u-v\|\leq \frac{1}{2^6\chi^4}$ implies that 
\begin{align*}
    \frac{\|g(u)-g(v)\|^2}{2d(g(u)-g(v),\mathcal{T}_{g(v)}(\mathcal{M}_{g}))} & \geq \frac{1}{2\chi^3}.
\end{align*}
Let now consider the case $\|u-v\|\geq \frac{1}{2^6\chi^4}$.  We have 
\begin{align*}
 \frac{\|g(u)-g(v)\|^2}{2d(g(u)-g(v),\mathcal{T}_{g(v)}(\mathcal{M}_{g}))} \geq \frac{\|g(u)-g(v)\|}{2},
\end{align*}
So if $\|u-v\|\in [\frac{1}{2^6\chi^4},\frac{1}{2\chi^2}]$ then 
\begin{align*}
\|g(u)-g(v)\|& \geq \chi^{-1}\|u-v\|-\chi\|u-v\|^2 \\
& =\|u-v\|(\chi^{-1}-\chi\|u-v\|) \geq \frac{1}{2^7\chi^5},
\end{align*}
and if $\|u-v\|>\frac{1}{2\chi^2}$ 
\begin{align*}
\|g(u)-g(v)\| & \geq \|g(z_u)-g(z_v)\|-K(\|u-\frac{z_u}{2^5\sqrt{d}K^2\chi^2}\|+\|v-\frac{z_v}{2^5\sqrt{d}K^2\chi^2}\|)\\
& \geq \frac{1}{8K\chi^2}-\frac{1}{16K^2\chi^2}\geq\frac{1}{16K\chi^2}.
\end{align*}
Finally, using Lemma \ref{lemma:reachformanifolds} we get the result.
\end{proof}
 
\subsection{Proof of Theorem \ref{theo:boundexpecterror2}}\label{sec:theo:boundexpecterror2}
\begin{proof}Recalling the notations of Definition \ref{defi:L} for $\gamma=\tilde{\beta}+1$, define $\overline{g}\in \argmin_{g\in \mathcal{G}}L(g,D^\star_{g})$.
Doing as in the proof of Theorem \ref{theo:boundexpecterror}
 we obtain that 
$$ L(\hat{g},D^\star_{\hat{g}})= \Delta^{\hat{g}}_{\mathcal{D}}+L(\hat{g},\overline{D}_{\hat{g}})$$
and 
 $$
 L(\hat{g},\overline{D}_{\hat{g}}) \leq L(\hat{g},\overline{D}_{\hat{g}}) - L_n(\hat{g},\overline{D}_{\hat{g}}) + L_n(\overline{g},\hat{D}_{\overline{g}})- L(\overline{g},\hat{D}_{\overline{g}}) +\Delta_\mathcal{G}.
 $$
 For $g\in \mathcal{G},$ write
 $$\mathcal{D}_g:=\{D\circ g\ |\ D\in \mathcal{D}\}.$$
Recalling the proof of Proposition \ref{prop:meanvsiid} we have for all $\delta>0$
\begin{align*}
     & \mathbb{P}\Big(\sup \limits_{\substack{g\in \mathcal{G},\ D\in \mathcal{D}\\ \|D\circ g\|_\infty\leq \delta}} \mathbb{E}[D(g(U))]-\frac{1}{n}\sum \limits_{i=1}^n D(g(U_i))\geq \alpha \Big)\\
     & \leq \sum \limits_{g_n \in \mathcal{G}_{1/n}} \mathbb{P}\Big( \sup \limits_{\substack{\ D\in \mathcal{D}\\ \|D\circ g_n\|_\infty\leq \delta+\frac{1}{n}}}\mathbb{E}[D(g_n(U))]-\frac{1}{n}\sum \limits_{i=1}^n D(g_n(U_i))\geq \alpha -\frac{2}{n} \Big)\\
     & \leq \sum \limits_{g_n \in \mathcal{G}_{1/n}} \mathbb{P}\Big( \sup \limits_{\substack{\exists\  f_n\in (\mathcal{D}_{g_n})_{1/n}\\ \|f_n\|_\infty\leq \delta+\frac{2}{n}}}\mathbb{E}[f_n(U))]-\frac{1}{n}\sum \limits_{i=1}^n f_n(U_i))\geq \alpha -\frac{4}{n} \Big)\\
     & \leq \sum \limits_{g_n \in \mathcal{G}_{1/n}} \sum \limits_{\substack{  f_n\in \mathcal{D}_{g_n 1/n}\\ \|f_n\|_\infty\leq \delta+\frac{2}{n}}}\mathbb{P}\Big( \mathbb{E}[f_n(U))]-\frac{1}{n}\sum \limits_{i=1}^n f_n(U_i))\geq \alpha -\frac{4}{n} \Big)\\
    & \leq |\mathcal{G}_{1/n}| |(\mathcal{D}_{\mathcal{G}})_{1/n}| \exp\left(-\frac{n(\alpha -\frac{4}{n})^2}{2(\delta+\frac{2}{n})^2}\right).
\end{align*}
Then solving with respect to $\alpha$ the equation $|\mathcal{G}_{1/n}||(\mathcal{D}_{\mathcal{G}})_{1/n}| \exp\left(-\frac{n(\alpha -\frac{4}{n})^2}{2(2K)^2}\right)=1/n$, we obtain that with probability at least $1/n$

\begin{align*}
\sup \limits_{\substack{g\in \mathcal{G},\ D\in \mathcal{D}\\ \|D\circ g\|_\infty\leq \delta}}\mathbb{E}[D(g(U))]-\frac{1}{n}\sum \limits_{i=1}^n D(g(U_i))&  \leq  C  \sqrt{\frac{(\delta+1/n)^2\log(n|\mathcal{G}_{1/n}| |(\mathcal{D}_{\mathcal{G}})_{1/n}|)}{n}}+\frac{1}{n}.
\end{align*}
Now doing as in the proof of Theorem \ref{theo:boundexpecterror}, we obtain that

\begin{align}\label{eq:coveringdefin}
\mathbb{E}\Big[L(\hat{g},\overline{D}_{\hat{g}}))\Big]&  \leq  C \min \limits_{\delta \in [0,1]}  \sqrt{\frac{(\delta+1/n)^2\log(n|\mathcal{G}_{1/n}| |(\mathcal{D}_{\mathcal{G}})_{1/n}|)}{n}}\nonumber\\
& +\frac{1}{\sqrt{n}}(1+\delta^{(1-\frac{d}{2(\beta+1)})}\mathds{1}_{\{\beta+1<d/2\}}+\log(\delta^{-1})\mathds{1}_{\{\beta+1\geq d/2\}})  +\Delta_\mathcal{G}.
\end{align}
\end{proof}

\subsection{Proof of Lemma \ref{lemma:coveringd}}\label{sec:lemma:coveringd}

\begin{proof} Supposing that $\delta^{-1}K$ is an integer, for $(k_1,...,k_d)\in \{0,...,2\delta^{-1}K-1\}^d$ let  
$$ u^{k_1,...,k_d}=(k_1\frac{\delta}{2K},...,k_d\frac{\delta}{2K}) 
$$
and
$$
A_{k_1,...,k_d} = [u^{k_1,...,k_d}_1,u^{k_1,...,k_d}_1+ \frac{\delta}{2K}]\times ...\times [u^{k_1,...,k_d}_d,u^{k_1,...,k_d}_d+ \frac{\delta}{2K}] \subset [0,1]^d .
$$
For $D\in \mathcal{F}^{\tilde{\beta}+1,\delta}$, we have by definition of $\mathcal{D}$ that  there exist weights\\ $\alpha_D(j,l,w) \in [-2^{-j(\tilde{\beta}+1+p/2)} , 2^{-j(\tilde{\beta}+1+p/2)}]$ such that 
$$
D(x)= \sum \limits_{j=0}^{\log_2(\lfloor \delta^{-1} \rfloor)} \sum \limits_{l=1}^{2^p}\sum \limits_{w\in \mathbb{Z}^p} \alpha_D(j,l,w)\hat{\psi}_{jlw}(x)
$$
As $supp(\hat{\psi}_{jlw})\subset \bigotimes \limits_{i=1}^p [2^{-j}(w_i-C),2^{-j}(w_i+C)]$ and $g$ is $K$-Lipschitz, we have for all $i\in \{1,...,p\}$ and $j\leq \log_2(\lfloor \delta^{-1} \rfloor)$,
$$
    supp(\hat{\psi}_{jlw})
\cap g_i(A_{k_1,...,k_d}) \neq \varnothing$$
implies$$
[2^{-j}(w_i-C),2^{-j}(w_i+C)]\cap [g_i(u^{k_1,...,k_d})-\delta/2,g_i(u^{k_1,...,k_d})+\delta/2]\neq \varnothing,
$$
which implies
$$2^j(g_i(u^{k_1,...,k_d})-\delta/2)-C\leq w_i \leq 2^j(g_i(u^{k_1,...,k_d})+\delta/2)+C
$$
and as $|2^{j-1}\delta|<1$ they are at most $2+2C$ integers $w_i\in \mathbb{Z}$ such that $supp(\hat{\psi}_{jlw})
\cap g_i(A_{k_1,...,k_d}) \neq \varnothing$. Therefore, $D_{|g(A_{k_1,...,k_d})}$ the restriction of $D$ to $g(A_{k_1,...,k_d})$ can be represented by $\log(\delta^{-1})(2^p(2+2C))^p$ weights and so $D_{|g([0,1]^d)}$ can be represented by $(2K\delta^{-1})^d\log(\delta^{-1})(2^p(2+2C))^p$ weights. 

We can then conclude that $\forall \epsilon \in (0,1),$
$$\log(|(\mathcal{N}_g)_\epsilon|)\leq \log((C^{-1}\epsilon)^{-(2K\delta^{-1})^d\log(\delta^{-1})(2^p(2+C))^p})\leq C\delta^{-d}\log(\delta^{-1})\log(\epsilon^{-1}).$$
\end{proof}

\subsection{Proof of Lemma \ref{lemma:deltaD}}\label{sec:lemma:deltaD}
\begin{proof}
Let $\pi_g$ be an optimal transport plan for the Euclidean distance between $g_{\# U}$ and $g^\star_{\# U}$.  We have
\begin{align*}
    \mathbb{E}[&D_g^\star(g(U))-D_g^\star(g^\star(U))-(\overline{D}_g(g(U))-\overline{D}_g(g^\star(U)))]\\
    & = \int_{\mathbb{R}^p\times \mathbb{R}^p} \left(D_g^\star(x)-D_g^\star(y)-(\overline{D}_g(x)-\overline{D}_g(y))\right)d\pi_g(x,y)\\
    & = \int_{\mathbb{R}^p\times \mathbb{R}^p}  \int_0^1\Big\langle \nabla D_g^\star(y+t(x-y))-\nabla \overline{D}_g(y+t(x-y)),x-y\Big\rangle dtd\pi_g(x,y)\\
    & \leq \|\nabla D_g^\star-\nabla \overline{D}_g\|_\infty \int_{\mathbb{R}^p\times \mathbb{R}^p}  \|x-y\| d\pi_g(x,y)\\
    &  = C \|\nabla D_g^\star-\nabla \overline{D}_g\|_\infty \sup \limits_{f\in \text{Lip}_1}\mathbb{E}[  f(g(U))-f(g^\star(U))].
\end{align*}
Writing $$\mathcal{K}=\|g\|_{\mathcal{H}^{\tilde{\beta}+1}},$$ 
we have $\mathcal{K}\leq C\log(n)^2$ from Proposition \ref{prop:reguofG}. Using Theorem \ref{theo:theineq} with $\epsilon=1/n$ we obtain
\begin{align*}
\sup \limits_{f\in \text{Lip}_1}& \mathbb{E}[  f(g(U))-f(g^\star(U))]\\
&\leq (2K+1)\sup \limits_{f\in \mathcal{H}^1_1}\mathbb{E}[  f(g(U))-f(g^\star(U))]\\
& \leq C(\mathcal{K}\chi)^{C_2}\left(\log(n)^4\sup \limits_{f\in \mathcal{H}^{\tilde{\beta}+1}_1}\mathbb{E}[  f(g(U))-f(g^\star(U))]^{\frac{\tilde{\beta}+1}{\tilde{\beta}+\tilde{\beta}+1}}+\frac{1}{n}\right)
\end{align*}    
\end{proof}

\subsection{Proof of Theorem \ref{theo:boundhbeta}}\label{sec:theo:boundhbeta}
The proof breaks down as follow. First, we determine the smoothness of the functions in the class of generators $\mathcal{G}$ and show that with high probability, $\hat{g}$ verifies the density and manifold regularity conditions. Then, we bound the term of approximation error of the generator class. Finally we prove that the GAN estimator attains the minimax rate for the distance $d_{\mathcal{H}^{\tilde{\beta}+1}_1}$ using Theorem \ref{theo:boundexpecterror2}.

\subsubsection{Regularity of $\hat{g}$}
Let us first determine the regularity of the functions in the class~$\mathcal{G}$.
\begin{proposition}\label{prop:reguofG}
    For all $\eta\in (0,\beta+1]$ and $\delta\in(0,1)$, we have that if $\eta$ is not an integer then
    $$\hat{\mathcal{F}}^{\eta,\delta}_{per}\subset\mathcal{H}^{\eta}_C,$$
    and if $\eta$ is an integer
    $$\hat{\mathcal{F}}^{\eta,\delta}_{per}\subset\mathcal{H}^{\eta}_{C\log(\delta^{-1})^2}.$$
\end{proposition}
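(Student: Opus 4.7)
The plan is to transfer bounds on the approximate coefficients $\hat\alpha_f(j,l,z)$ (which are zero for $j > \log_2(\delta^{-1})$) to bounds on the true periodised wavelet coefficients $\alpha_f(j,l,z) = \langle f_i, \psi^{per}_{jlz}\rangle$ (which may be non-zero for all $j$), and then invoke Lemma~\ref{lemma:inclusions} to pass from Besov membership to Hölder membership.

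First, for any $f \in \hat{\mathcal{F}}^{\eta,\delta}_{per}$ and each coordinate $f_i$, Proposition~\ref{prop:approxhatper} (together with the assumption $L \geq \delta^{-(\lfloor\beta\rfloor+1+\gamma)}$, which is in force throughout the paper) yields
\begin{align*}
|\alpha_{f_i}(j,l,z)| \leq C\, 2^{-j(\eta+d/2)} \qquad & \text{for } j \leq \log_2(\delta^{-1}), \\
|\alpha_{f_i}(j,l,z)| \leq C L^{-1}\, 2^{-j(\lfloor\beta\rfloor+2+d/2)}\delta^{-(\lfloor\beta\rfloor+2-\eta)}\log(\delta^{-1}) \qquad & \text{for } j > \log_2(\delta^{-1}).
\end{align*}
For $j > \log_2(\delta^{-1})$ one has $2^{-j(\lfloor\beta\rfloor+2-\eta)} \leq \delta^{\lfloor\beta\rfloor+2-\eta}$, so the second bound simplifies to $|\alpha_{f_i}(j,l,z)| \leq C L^{-1} 2^{-j(\eta+d/2)}\log(\delta^{-1})$, which for $L$ large enough is also $\leq C\, 2^{-j(\eta+d/2)}$. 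Hence globally $|\alpha_{f_i}(j,l,z)| \leq C\, 2^{-j(\eta+d/2)}$ for every $j \geq 0$.

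When $\eta$ is not an integer, this bound directly gives $\|f\|_{\mathcal{B}^\eta_{\infty,\infty}} \leq C$, and Lemma~\ref{lemma:inclusions} produces the equivalence $\mathcal{B}^\eta_{\infty,\infty} = \mathcal{H}^\eta$, yielding $f \in \mathcal{H}^\eta_C$.

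When $\eta$ is an integer the identification $\mathcal{B}^\eta_{\infty,\infty} = \mathcal{H}^\eta$ fails; we instead show $f \in \mathcal{B}^{\eta,2}_{\infty,\infty}(C\log(\delta^{-1})^2)$, and then conclude via the embedding $\mathcal{B}^{\eta,2}_{\infty,\infty} \hookrightarrow \mathcal{H}^\eta$ of Lemma~\ref{lemma:inclusions}. The norm
$$\|f\|_{\mathcal{B}^{\eta,2}_{\infty,\infty}} = \sup_{j\geq 0} 2^{j(\eta+d/2)}(1+j)^2 \sum_l \sup_z |\alpha_{f_i}(j,l,z)|$$
splits into two ranges: for $j \leq \log_2(\delta^{-1})$ the summand is at most $C(1+j)^2 \leq C\log(\delta^{-1})^2$, while for $j > \log_2(\delta^{-1})$ the same estimate as above, together with the extra $L^{-1}$ factor, makes this contribution negligible when $L$ is large. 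The main obstacle is precisely this integer case: the $\log(\delta^{-1})^2$ factor is unavoidable and arises solely from the $(1+j)^2$ weight needed to enter the Hölder space at integer smoothness, combined with the truncation level $j \leq \log_2(\delta^{-1})$ that bounds the supremum.
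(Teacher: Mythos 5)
Your proposal is correct and follows essentially the same route as the paper's own proof: it passes through Proposition~\ref{prop:approxhatper} to control the true periodised wavelet coefficients of elements of $\hat{\mathcal{F}}^{\eta,\delta}_{per}$ (with the $L$-dependent correction absorbed via $L\geq \delta^{-(\lfloor\beta\rfloor+1+\gamma)}$), then uses $\mathcal{B}^\eta_{\infty,\infty}=\mathcal{H}^\eta$ for non-integer $\eta$ and the $\mathcal{B}^{\eta,2}_{\infty,\infty}\hookrightarrow\mathcal{H}^\eta$ embedding for integer $\eta$, where the $(1+j)^2$ weight over the frequencies $j\leq\log_2(\delta^{-1})$ produces the $\log(\delta^{-1})^2$ factor exactly as in the paper.
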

\begin{proof}
Let $f\in \hat{\mathcal{F}}^{\eta,\delta}_{per}$, then from Proposition \ref{prop:approxhatper} there exist coefficients $(\alpha_{f}(j,l,z))$ such that 
$$f=\sum \limits_{j=0}^{\infty} \sum \limits_{l=1}^{2^d} \sum \limits_{z\in \{0,...,2^j-1\}^d} \alpha_{f}(j,l,z) \psi_{jlz}^{per} ,$$
with $$|\alpha_f(j,l,w)|\leq (C_\eta K+C\delta^{\eta+\gamma}) 2^{-j(\eta+d/2)}, \text{ for } j\leq \log_2(\delta^{-1})$$
and
$$|\alpha_f(j,l,w)|\leq C2^{-j(\lfloor \beta \rfloor +2+d/2)}\delta^{\eta+\gamma}, \text{ for } j> \log_2(\delta^{-1}).$$
Then if $\eta$ is not an integer, we have $\|f\|_{\mathcal{H}^{\eta}}\leq C$ as $\mathcal{B}^{\eta}_{\infty,\infty}\subset \mathcal{H}^{\eta}$ . Now if $\eta$ is an integer we have 
\begin{align*}   \|f\|_{\mathcal{H}^{\eta}} & \leq C\|f\|_{\mathcal{B}^{\eta,2}_{\infty,\infty}}=C\sup \limits_{j  \in \mathbb{N}_0} 2^{j(\eta+d/2)}\sum \limits_{l=1}^{2^d} \sup \limits_{z\in \{0,...,2^j-1\}^d}(1+j)^2|\alpha_{f}(j,l,z)|\\
& \leq C\sup \limits_{j  \leq\log_2(\delta^{-1})} 2^{j(\eta+d/2)}\sum \limits_{l=1}^{2^d} \sup \limits_{z\in \{0,...,2^j-1\}^d}(1+j)^2|\alpha_{f}(j,l,z)|+C\delta^{\lfloor \beta \rfloor +2-\eta}\log(\delta^{-1})^2\\
& \leq C\sup \limits_{j  \leq\log(\delta^{-1}))}(1+j)^2+C\delta^{\lfloor \beta \rfloor +2-\eta}\log(\delta^{-1})^2\leq  C\log(\delta^{-1})^2.
\end{align*}
\end{proof}
Using Proposition \ref{prop:reguofG}, we have that if $\beta$ is not an integer then $\hat{\mathcal{F}}^{\beta+1,n^{-\frac{1}{2\beta+d}}}_{per}\subset\mathcal{H}^{\beta+1}_C,$
and if $\beta$ is an integer $\hat{\mathcal{F}}^{\beta+1,n^{-\frac{1}{2\beta+d}}}_{per}\subset\mathcal{H}^{\beta+1}_{C\log(n)^2}.$ 

Using this result, let us give a first (sub-optimal) bound on $d_{\mathcal{H}^{\tilde{\beta}+1}_1}(\hat{g}_{\# U},g^\star_{\# U})$ that will allow us to use the different results that require the distance $d_{\mathcal{H}^{\tilde{\beta}+1}_1}(\hat{g}_{\# U},g^\star_{\# U})$ to be small. 
\begin{lemma}\label{lemma:hatgisclose}
For all $g^\star \in \mathcal{H}^{\beta+1}_K(\mathbb{T}^d,\mathbb{R}^p)$, the GAN estimator $\hat{g}$ of \eqref{WGANS} with $\mathcal{G}$ and $\mathcal{D}$ defined in \eqref{eq:G} , verifies that    with probability at least $1/n$ we have 
    $$d_{\mathcal{H}^{\tilde{\beta}+1}_1}(\hat{g}_{\# U},g^\star_{\# U})\leq Cn^{-\frac{\tilde{\beta}}{2{\beta}+d}}.$$
\end{lemma}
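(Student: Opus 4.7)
The objective is to produce a preliminary sub-optimal bound on $d_{\mathcal{H}^{\tilde{\beta}+1}_1}(\hat{g}_{\# U},g^\star_{\# U})$ that holds with high probability and is strong enough to be fed into Lemma~\ref{lemma:deltaD} during the proof of Theorem~\ref{theo:boundhbeta}. The plan is to apply Theorem~\ref{theo:boundexpecterror2} with $\gamma=\tilde{\beta}+1$ and to extract the underlying high-probability statement (from the $\tau=1/n$ choice in the Hoeffding step of Proposition~\ref{prop:meanvsiid}, which is reused verbatim in the proof of Theorem~\ref{theo:boundexpecterror2}). This reduces the task to controlling the generator bias $\Delta_{\mathcal{G}}$, the discriminator bias $\mathbb{E}[\Delta^{\hat g}_{\mathcal{D}}]$, and the stochastic variance term, and showing that each of them is $\leq Cn^{-\tilde{\beta}/(2\beta+d)}$ up to logarithmic factors.

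For the generator bias, Proposition~\ref{prop:deltagmodel3} guarantees that the $\chi$-numerical regularity constraint does not deteriorate approximation power, so combining it with Proposition~\ref{prop:propertiesofhatper} for $\eta=\beta+1$ and $\delta=n^{-1/(2\beta+d)}$ gives $\Delta_{\mathcal{G}}\leq C\log(n)^{C_2}n^{-(\beta+\tilde{\beta}+1)/(2\beta+d)}$, which is much faster than the target rate. For the stochastic term, the key ingredient is Lemma~\ref{lemma:coveringd}, yielding the dimension-free covering bound $\log|(\mathcal{D}_{\mathcal{G}})_{1/n}|\leq C\log(n)^2n^{d/(2\tilde{\beta}+d)}$, together with the bound on $\log|\mathcal{G}_{1/n}|$ from Proposition~\ref{prop:propertiesofhatper}; simply choosing $\delta=1$ in the minimum of Theorem~\ref{theo:boundexpecterror2} makes this contribution of order $n^{-\tilde{\beta}/(2\tilde{\beta}+d)}\vee n^{-1/2}$, both of which are dominated by $n^{-\tilde{\beta}/(2\beta+d)}$ since $2\tilde{\beta}+d\leq 2\beta+d$ and $\tilde{\beta}/(2\beta+d)\leq 1/2$.

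The main obstacle is the discriminator bias. The sharp estimate of Lemma~\ref{lemma:deltaD} is unavailable here because it presupposes precisely the closeness we are trying to establish, so a direct crude bound is needed. The plan is to start from the trivial inequality $\Delta^{\hat g}_{\mathcal{D}}\leq 2\|D^\star_{\hat g}-\overline{D}_{\hat g}\|_\infty$ and transfer the defining $\mathcal{B}^0_{\infty,\infty}$-optimality of $\overline{D}_{\hat g}$ into a sup-norm estimate in two steps: first, compare $\overline{D}_{\hat g}$ to the wavelet truncation $\tilde D$ of $D^\star_{\hat g}$ at scale $\tilde\delta_n$ (which lies in $\mathcal{D}$ up to the neural network approximation of Proposition~\ref{prop:approxhat}) and bound $\|D^\star_{\hat g}-\tilde D\|_\infty\leq C\log(n)^{C_2}\tilde\delta_n^{\tilde{\beta}+1}$ by standard wavelet approximation in $\mathcal{B}^{\tilde{\beta}+1}_{\infty,\infty}$; second, use that differences of band-limited functions in $\mathcal{D}$ have $\|\cdot\|_\infty\leq C\log(n)\|\cdot\|_{\mathcal{B}^0_{\infty,\infty}}$ (only $O(\log n)$ scales contribute) to convert the $\mathcal{B}^0$-minimality of $\overline{D}_{\hat g}$ relative to $\tilde D$ back into a sup-norm control. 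This yields $\mathbb{E}[\Delta^{\hat g}_{\mathcal{D}}]\leq C\log(n)^{C_2}n^{-(\tilde{\beta}+1)/(2\tilde{\beta}+d)}$, which is dominated by $n^{-\tilde{\beta}/(2\beta+d)}$ because $(\tilde{\beta}+1)(2\beta+d)\geq\tilde{\beta}(2\tilde{\beta}+d)$, itself a consequence of $\tilde{\beta}\leq\beta$.

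Combining these three bounds on the $\tau=1/n$ high-probability event arising from Hoeffding's inequality, and absorbing all $\log(n)^{C_2}$ factors into the constant $C$ (which is legitimate since the announced exponent $\tilde{\beta}/(2\beta+d)$ is strictly smaller than each of the three genuine rates above), delivers the claimed high-probability bound.
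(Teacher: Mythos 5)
Your proof follows the same route as the paper: instantiate Theorem~\ref{theo:boundexpecterror2} with $\gamma=\tilde\beta+1$ and $\delta=1$, bound the generator bias by Propositions~\ref{prop:deltagmodel3} and~\ref{prop:propertiesofhatper}, the covering numbers by Lemma~\ref{lemma:coveringd} and Proposition~\ref{prop:propertiesofhatper}, and bound $\mathbb{E}[\Delta^{\hat g}_{\mathcal{D}}]$ by a crude sup-norm approximation argument rather than the sharper (but circular) Lemma~\ref{lemma:deltaD}. Where you differ from the paper is in the handling of $\mathbb{E}[\Delta^{\hat g}_{\mathcal{D}}]$: the paper compresses it to $\Delta_{\mathcal{D}}\leq \sup_{D^\star}\inf_{D\in\mathcal{D}}\|D-D^\star\|_\infty\leq \sup_{D^\star}\inf_{D}\|D-D^\star\|_{\mathcal{B}^{0,2}_{\infty,\infty}}$ and declares this $\leq C\log(n)^2 n^{-\tilde\beta/(2\tilde\beta+d)}$, whereas you explicitly track the mismatch between the $\mathcal{B}^0_{\infty,\infty}$-minimality in the definition of $\overline{D}_{\hat g}$ and the $L^\infty$ bound you need, splitting into band-limited and tail scales and landing on the tighter exponent $(\tilde\beta+1)/(2\tilde\beta+d)$. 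Your version of this step is the more careful of the two (the paper's exponent looks like it should also be $\tilde\beta+1$ in the numerator given the displayed chain of inequalities; either way both are dominated by the target).

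One small inaccuracy: you claim the announced exponent $\tilde\beta/(2\beta+d)$ is \emph{strictly} smaller than each of the three genuine rates, which lets you absorb all $\log(n)^{C_2}$ factors into $C$. That is false in the regime $\tilde\beta=\beta$ (i.e.\ $\beta+1\leq d/2$), where the stochastic term with $\delta=1$ is of order $\log(n)\,n^{-\tilde\beta/(2\tilde\beta+d)}=\log(n)\,n^{-\tilde\beta/(2\beta+d)}$, exactly matching the claimed polynomial rate so the logarithm cannot be absorbed. This same imprecision is already present in the paper's own statement of the lemma (which also omits logs), and it is harmless for the downstream use of the lemma (one only needs the bound to vanish for $n$ large to verify the $\chi$-numerical regularity condition), but your justification for dropping the logarithms should not be phrased as a strict inequality valid in all cases. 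Also, the lemma as printed says "with probability at least $1/n$"; you correctly read it as $1-1/n$, the event from the $\tau=1/n$ Hoeffding choice.
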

\begin{proof}
We are going to use the bound given by Theorem \ref{theo:boundexpecterror2}.
Using Lemma \ref{lemma:coveringd} we have that $\log( |(\mathcal{D}_{\mathcal{G}})_{1/n}|)\leq C\log(n)^2 n^{\frac{d}{2\tilde{\beta}+d}}$, and from Proposition \ref{prop:propertiesofhatper} we have $|\mathcal{G}_{1/n}|\leq C\log(n)^{2} n^{\frac{d}{2\beta+d}}$.
On the other hand, from Proposition \ref{prop:deltagmodel3} we have that $\Delta_{\mathcal{G}}\leq C\log(n)^{C_2}n^{-\frac{\beta+\tilde{\beta}+1}{2\tilde{\beta}+d}}$ and 
\begin{align*}
    \mathbb{E}[\Delta^{\hat{g}}_\mathcal{D}]& \leq \Delta_\mathcal{D}=\sup \limits_{g\in \mathcal{G}} \bigl\{ d_{\mathcal{H}_1^{\tilde{\beta}+1}}(g_{\# U},g^\star_{\# U})-\Big( \sup\limits_{D\in \mathcal{D}}\mathbb{E}_{U\sim \mathcal{U}([0,1]^d)}[D(g(U))-D(g^\star(U))]\Big)\bigl\}\\
    & \leq \sup_{D^\star\in \mathcal{H}_1^{\tilde{\beta}+1}(B^p(0,K),\mathbb{R})} \inf\limits_{D\in \mathcal{D}} \|D-D^\star\|_\infty\\
    & \leq \sup_{D^\star\in \mathcal{H}_1^{\tilde{\beta}+1}(B^p(0,K),\mathbb{R})} \inf\limits_{D\in \mathcal{D}} \|D-D^\star\|_{\mathcal{B}^{0,2}_{\infty,\infty}}\\
    & \leq  C \log(n)^2 n^{-\frac{\tilde{\beta}}{2\tilde{\beta}+d}}.
\end{align*}
Finally, taking $\delta=1$ and putting everything in the bound of Theorem \ref{theo:boundexpecterror2}, we get the result.
\end{proof}
As discussed Section \ref{sec:numericcondi}, using Lemma \ref{lemma:hatgisclose} and Proposition \ref{prop:numericalcondi} for $n$ large enough, we have that $\hat{g}$ verifies the $(C\chi^{5}\vee\|\hat{g}\|_{\mathcal{H}^{\beta+1}})$-manifold regularity condition.

\subsubsection{Approximation error of the class $\mathcal{G}$}\label{sec:approerrofdd} From Proposition \ref{prop:propertiesofhatper}, we know that
\begin{equation}\label{eq:neededfodeltag}
\inf_{g\in \hat{\mathcal{F}}^{\beta+1,n^{-\frac{1}{2\beta+d}}}_{per}} d_{\mathcal{H}_1^{\tilde{\beta}+1}}(g_{\# U},g^\star_{\# U})\leq C\log(n)^{C_2}n^{-\frac{\beta+\tilde{\beta}+1}{2\tilde{\beta}+d}}.
\end{equation}
Therefore it suffices to prove that there exists $\overline{g}\in \hat{\mathcal{F}}^{\beta+1,n^{-\frac{1}{2\beta+d}}}_{per}$ that verifies both \eqref{eq:neededfodeltag} and the $\chi$-numerical regularity condition. We show in the proof of the next result that the map being the low wavelet frequency approximation of $g^\star$ by neural networks, verifies these conditions.

\begin{proposition}\label{prop:deltagmodel3}
    For all $g^\star \in \mathcal{H}^{\beta+1}_K(\mathbb{T}^d,\mathbb{R}^p)$ that verifies the $K$-density regularity condition of Definition \ref{defi:manifold regularity}, the class $\mathcal{G}$ defined in \eqref{eq:G} verifies
    $$\Delta_{\mathcal{G}}=\inf_{g\in \mathcal{G}}d_{\mathcal{H}_1^{\tilde{\beta}+1}}(g_{\# U},g^\star_{\# U})\leq C\log(n)^{C_2}n^{-\frac{\beta+\tilde{\beta}+1}{2\tilde{\beta}+d}}. $$
\end{proposition}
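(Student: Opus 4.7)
The plan is to exhibit an explicit candidate $\bar g \in \hat{\mathcal{F}}^{\beta+1,n^{-1/(2\beta+d)}}_{per}$ that both attains the required IPM bound and, for $n$ large, satisfies the $\chi$-numerical regularity condition of Definition~\ref{defi:chinumerical} (so that $\bar g \in \mathcal{G}$). The natural choice is the low-frequency wavelet truncation of $g^\star$ implemented through the neural network approximated periodised wavelets $\hat\psi^{per}_{jlz}$ at precision $L^{-1} = n^{-1}$, with coefficients $\hat\alpha_{\bar g}(j,l,z)_i := \langle g^\star_i, \psi^{per}_{jlz}\rangle$ for $j \leq \log_2(n^{1/(2\beta+d)})$. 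The Besov embedding $\mathcal{H}^{\beta+1}_K \hookrightarrow \mathcal{B}^{\beta+1}_{\infty,\infty}(C)$ ensures these coefficients satisfy the decay $|\hat\alpha_{\bar g}(j,l,z)_i| \leq C_{\beta+1}K\,2^{-j(\beta+1+d/2)}$ required by the definition of the class, and Proposition~\ref{prop:propertiesofhatper} applied with $\eta = \beta+1$, $\gamma = \tilde\beta+1$, $\delta = n^{-1/(2\beta+d)}$ then yields
\begin{equation*}
d_{\mathcal{H}_1^{\tilde\beta+1}}(\bar g_{\#U}, g^\star_{\#U}) \leq C\log(n)^{C_2}\, n^{-\frac{\beta+\tilde\beta+1}{2\beta+d}},
\end{equation*}
which matches the target minimax rate after the elementary case distinction $\tilde\beta = \beta$ (when $\beta+1 \leq d/2$) versus $\tilde\beta + 1 = d/2$ (when $\beta+1 > d/2$).

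Next I would upgrade the IPM bound to a vanishing sup-norm estimate
\begin{equation*}
\|\bar g - g^\star\|_\infty \leq C\log(n)^{C_2}\, n^{-\frac{\beta+1}{2\beta+d}},
\end{equation*}
by controlling separately the tail $j > \log_2(n^{1/(2\beta+d)})$ of the wavelet expansion of $g^\star$ (via the embedding $\mathcal{B}^{\beta+1,2}_{\infty,\infty} \hookrightarrow \mathcal{H}^0$) and the substitution error of $\hat\psi^{per}$ for $\psi^{per}$ coming from $\|\phi-\hat\phi\|_{\mathcal{H}^{\lfloor\beta\rfloor+2}} \leq L^{-1} = n^{-1}$, via Proposition~\ref{prop:approxhatper}. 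In particular $\|\bar g - g^\star\|_\infty \to 0$ as $n \to \infty$.

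The core step is to transfer this vanishing sup-norm error into the $\chi$-numerical regularity of $\bar g$. Since that condition involves only finitely many pairs of grid points at spacing $\geq 1/(4\chi^2)$, the triangle inequality reduces matters to showing $\|g^\star(u)-g^\star(v)\| > 1/(8K\chi^2) + 2\|\bar g - g^\star\|_\infty$ for every such pair. For close pairs, $\|u-v\| \leq 1/(2K^2)$, the $K$-density regularity combined with $g^\star \in \mathcal{H}^2_K$ yields $\|g^\star(u)-g^\star(v)\| \geq K^{-1}\|u-v\| - K\|u-v\|^2 \geq \tfrac{1}{2K}\|u-v\|$; the grid spacing $1/(2^5\sqrt d K^2\chi^2)$ forces $\|u-v\|$ strictly above $1/(4\chi^2)$ on the qualifying grid pairs, so this bound exceeds $1/(8K\chi^2)$ by a margin independent of $n$. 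For the remaining finitely many wider pairs I would invoke the global injectivity of $g^\star$ granted by its $K$-manifold regularity (in the Model~\ref{model2} context where the proposition is applied) and use compactness of the finite grid to obtain a uniform positive margin above $1/(8K\chi^2)$. These two margins absorb $2\|\bar g - g^\star\|_\infty$ for $n$ large, which places $\bar g$ in $\mathcal{G}$ and gives $\Delta_\mathcal{G} \leq d_{\mathcal{H}_1^{\tilde\beta+1}}(\bar g_{\#U},g^\star_{\#U})$, hence the claim.

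The main obstacle is precisely this wider-pair injectivity step: $K$-density regularity alone only controls $g^\star$ locally and cannot by itself rule out near-coincidences of $g^\star$ on the finite grid at intermediate and large scales. Once the global injectivity of $g^\star$ is imported (from its $K$-manifold regularity in Model~\ref{model2}) and combined via compactness with the vanishing $\|\bar g - g^\star\|_\infty$, the rest is a routine assembly of the wavelet approximation results of Section~\ref{sec:wavelets} and the neural network bound \eqref{eq:approxipsipaper2}.
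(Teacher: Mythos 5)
Your construction and most of the assembly coincide with the paper's proof: the same candidate $\overline{g}$ (low-frequency truncation of $g^\star$ through the approximated periodised wavelets at precision $n^{-1}$), the same use of Proposition~\ref{prop:propertiesofhatper}/\ref{prop:approxhatper} for the IPM bound and the sup-norm estimate $\|\overline{g}-g^\star\|_\infty\leq C\log(n)^{C_2}n^{-\frac{\beta+1}{2\beta+d}}$, and the same triangle-inequality reduction of the $\chi$-numerical regularity of $\overline{g}$ to a quantitative lower bound on $\|g^\star(u)-g^\star(v)\|$ for grid pairs with $\|u-v\|\geq (4\chi^2)^{-1}$.

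The genuine gap is in your treatment of the ``wider pairs''. You invoke only the \emph{qualitative} injectivity of $g^\star$ (from the $K$-manifold regularity) together with finiteness of the grid to claim ``a uniform positive margin above $1/(8K\chi^2)$''. Injectivity plus compactness only yields that $\min\|g^\star(u)-g^\star(v)\|$ over those finitely many pairs is some positive number depending on $g^\star$; it does not imply that this minimum exceeds the fixed threshold $1/(8K\chi^2)$, and an injective map in $\mathcal{H}^{\beta+1}_K$ satisfying the density condition can perfectly well send two far-apart torus points to images at distance far below that threshold (it is exactly the reach bound that forbids this). Moreover, even when positive, such a margin would depend on the particular $g^\star$, whereas the constants in the statement (and the choice of $n$ beyond which $\overline{g}\in\mathcal{G}$) must depend only on $p,d,\beta,K$, as required for the uniform minimax bound downstream. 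The missing ingredient is the quantitative use of $r_{g^\star}\geq K^{-1}$: as in Lemma~\ref{lemma:reachcurve}, if $\|u-v\|\geq (4\chi^2)^{-1}$ and $\|g^\star(u)-g^\star(v)\|< K^{-1}(4\chi^2)^{-1}-K(4\chi^2)^{-2}$, then $r_{g^\star}\leq \|g^\star(u)-g^\star(v)\|/2<K^{-1}$, a contradiction; this gives the explicit, model-uniform lower bound $\|g^\star(u)-g^\star(v)\|\geq \tfrac{3}{4}(4K\chi^2)^{-1}$, whose margin over $1/(8K\chi^2)$ then absorbs $2\|\overline{g}-g^\star\|_\infty$ for $n$ large. (A minor related point: for your ``close pairs'' the crude factor $\tfrac{1}{2K}\|u-v\|$ gives exactly the threshold at $\|u-v\|=(4\chi^2)^{-1}$ with no margin; you need the sharper $\|u-v\|\bigl(K^{-1}-K\|u-v\|\bigr)\geq \tfrac{3}{16K\chi^2}$, which is again the same computation as in the reach argument.)
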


\begin{proof}
    Let 
    $$\overline{g}(u)=\sum \limits_{j=0}^{n^{\frac{1}{2\beta+d}}} \sum \limits_{l=1}^{2^d}\sum \limits_{z\in \{0,...,2^j-1\}^d} \alpha_{g^\star}(j,l,z)\hat{\psi}_{jlz}^{per}(u),$$
for $\alpha_{g^\star}(j,l,z)=\langle g^\star,\psi_{jlz}^{per}\rangle $. Then, from Proposition \ref{prop:approxhatper} there exist coefficients $(\alpha_{\overline{g}}(j,l,z))$ such that 
$$\overline{g}=\sum \limits_{j=0}^{\infty} \sum \limits_{l=1}^{2^d} \sum \limits_{z\in \{0,...,2^j-1\}^d} \alpha_{\overline{g}}(j,l,z) \psi_{jlz}^{per} ,$$
with $$|\alpha_{\overline{g}}(j,l,w)-\alpha_{g^\star}(j,l,w)|\leq Cn^{-\frac{\beta+\tilde{\beta}+2}{2\beta+d}} 2^{-j(\beta+1+d/2)}, \text{ for } j\leq \log_2(n^{\frac{1}{2\beta+d}})$$
and
$$|\alpha_{\overline{g}}(j,l,w)|\leq C2^{-j(\lfloor \beta \rfloor +2+d/2)}n^{-\frac{\beta+\tilde{\beta}+2}{2\beta+d}}, \text{ for } j> \log_2(n^{\frac{1}{2\beta+d}}).$$
Let $u,v\in\mathbb{T}^d$ with $\|u-v\|\geq (4\chi^2)^{-1}$, we have
\begin{align*}
    &\| \overline{g}(u)-\overline{g}(v)\| \geq  \| g^\star(u)-g^\star(v)\|-2\| \overline{g}-g^\star\|_\infty\\
    \geq&  \| g^\star(u)-g^\star(v)\|-2C\| \overline{g}-g^\star\|_{\mathcal{B}^{0,2}_{\infty,\infty}}\\
    \geq &\| g^\star(u)-g^\star(v)\|-C\sup \limits_{j\geq 0} 2^{jd/2}(1+j)^2\sum \limits_{l=1}^{2^d} \sup \limits_{z\in \{0,...,2^j-1\}^d}|\alpha_{\overline{g}}(j,l,w)-\alpha_{g^\star}(j,l,w)|\\
     \geq &\| g^\star(u)-g^\star(v)\|-C\Big(\sup \limits_{j  \leq\log_2(n^{\frac{1}{2\beta+d}})} (1+j)^2\sum \limits_{l=1}^{2^d} \sup \limits_{z\in \{0,...,2^j-1\}^d}n^{-\frac{\beta+\tilde{\beta}+2}{2\beta+d}} 2^{-j(\beta+1)}\\
     & +\sup \limits_{j  >\log_2(n^{\frac{1}{2\beta+d}})} (1+j)^2\sum \limits_{l=1}^{2^d} \sup \limits_{z\in \{0,...,2^j-1\}^d}(CK+n^{-\frac{\beta+\tilde{\beta}+2}{2\beta+d}}) 2^{-j(\beta+1)}\Big)\\
    & \geq \| g^\star(u)-g^\star(v)\|-C\log(n)^2n^{-\frac{\beta+1}{2\beta+d}}.
\end{align*}
Using Lemma \ref{lemma:reachcurve} with $\delta=(4\chi^2)^{-1}$, we have that if
$$\| g^\star(u)-g^\star(v)\|\leq K^{-1}(4\chi^2)^{-1}-K(4\chi^2)^{-2},$$
then 
$$r_{g^\star}\leq \frac{\| g^\star(u)-g^\star(v)\|}{2}.$$
Furtermore,
\begin{align*}
K^{-1}(4\chi^2)^{-1}-K(4\chi^2)^{-2}&=(4\chi^2)^{-1}(K^{-1}-K(4\chi^2)^{-1})\\
& \geq (4\chi^2)^{-1}(K^{-1}-(4K)^{-1})=\frac{3}{4}(4K\chi^2)^{-1},
\end{align*}
as $\chi\geq K$. Then if $\| g^\star(u)-g^\star(v)\|\leq \frac{3}{4}(4K\chi^2)^{-1}$, we would have $r_{g^\star}\leq \frac{3}{8}(4K\chi^2)^{-1}<K^{-1}$, which is impossible as $g^\star$ verifies the $K$-manifold regularity condition. Therefore we deduce that 
\begin{align*}
    \| \overline{g}(u)-\overline{g}(v)\|& \geq \| g^\star(u)-g^\star(v)\|-C\log(n)^2n^{-\frac{\beta+1}{2\beta+d}}\\
    & \geq \frac{3}{4}(4K\chi^2)^{-1}-C\log(n)^2n^{-\frac{\beta+1}{2\beta+d}}> (8K\chi^2)^{-1},
\end{align*}
for n large enough.

We conclude that $\overline{g}$ verifies the $\chi$-numerical condition and thus belongs to $\mathcal{G}$. Finally we can show like in the Proof of Proposition \ref{prop:Fdelta} Section \ref{sec:prop:Fdelta}, that 
$$d_{\mathcal{H}_1^{\tilde{\beta}+1}}(\overline{g}_{\# U},g^\star_{\# U})\leq C\log(n)^{C_2}n^{-\frac{\beta+\tilde{\beta}+1}{2\tilde{\beta}+d}}.$$
\end{proof}

\subsubsection{Putting everything together}
We now give the proof of Theorem \ref{theo:boundhbeta}.

\begin{proof}[Proof of theorem \ref{theo:boundhbeta}]
By stability of Besov spaces for the differentiation (proposition 4.3.19, \citetproofs{giné_nickl_2015}) we have
\begin{align*}
    \|\nabla D_g^\star-\nabla \overline{D}_g\|_\infty & \leq C \|\nabla D_g^\star-\nabla \overline{D}_g\|_{\mathcal{B}^{0,2}_{\infty,\infty}} \leq C \|D_g^\star-\overline{D}_g\|_{\mathcal{B}^{1,2}_{\infty,\infty}}\\
        & = C \sup \limits_{j\geq \log_2(n^{-\frac{1}{2\tilde{\beta}+d}})} 2^{j(1+p/2)}(1+j)^2\sum \limits_{l=1}^{2^d} \sup \limits_{w\in \mathbb{Z}^p} |\alpha_{D_g^\star}(j,l,z)|\\
     & \leq C \sup \limits_{j\geq \log_2(n^{-\frac{1}{2\tilde{\beta}+d}})}  2^{-j\tilde{\beta}}(1+j)^2\\
     & =Cn^{-\frac{\tilde{\beta}}{2\tilde{\beta}+d}}\log(n)^2.
\end{align*}

Therefore, by Lemma \ref{lemma:deltaD}, we have 
    $$\Delta_{\mathcal{D}}^g\leq C\log(n)^{C_3}n^{-\frac{\tilde{\beta}}{2\tilde{\beta}+d}}\Big( d_{\mathcal{H}_1^{\tilde{\beta}+1}}(g_{\# U},g^\star_{\# U})^{\frac{\tilde{\beta}+1}{2\tilde{\beta}+1}}+\frac{1}{n}\Big).$$

Suppose first that $\beta+1<d/2$.
Then, using the bounds on the approximation errors and covering numbers derived previously, for $\delta=n^{-\frac{\beta+1}{2\beta+d}}$ we have
\begin{align*}
\Delta_{\mathcal{G}}& +\min \limits_{\delta \in [0,1]}\Biggl\{\sqrt{\frac{(\delta+1/n)^2\log(n|\mathcal{G}_{1/n}| |(\mathcal{D}_{\mathcal{G}})_{1/n}|)}{n}}
 +\frac{1}{\sqrt{n}}(1+\delta^{(1-\frac{d}{2(\beta+1)})})  \Biggl\}\\
 & \leq C \log(n)^{C_2}n^{-\frac{2\beta+1}{2\beta+d}}.
\end{align*}

Then, using Theorem \ref{theo:boundexpecterror2} and Lemma \ref{lemma:deltaD}, we have 
\begin{align*} 
 \mathbb{E}\Big[d_{\mathcal{H}_1^{\beta+1}}(\hat{g}_{\# U},g^\star_{\# U})\Big]
\leq &\mathbb{E}[\Delta^{\hat{g}}_\mathcal{D}] + C \log(n)^{C_2}n^{-\frac{2\beta+1}{2\beta+d}}\\
     \leq &C\log(n)^{C_3}n^{-\frac{\beta}{2\beta+d}}\left(\mathbb{E}\Big[d_{\mathcal{H}_1^{\beta+1}}(\hat{g}_{\# U},g^\star_{\# U})\Big]^{\frac{\beta+1}{2\beta+1}}+n^{-\frac{\beta+1}{2\beta+d}}\right).
\end{align*}
Now if $\mathbb{E}\Big[d_{\mathcal{H}_1^{\beta+1}}(\hat{g}_{\# U},g^\star_{\# U})\Big]\leq n^{-\frac{2\beta+1}{2\beta+d}}$ we have the result, otherwise $$n^{-\frac{\beta+1}{2\beta+d}}\leq \mathbb{E}\Big[d_{\mathcal{H}_1^{\beta+1}}(\hat{g}_{\# U},g^\star_{\# U})\Big]^{\frac{\beta+1}{2\beta+1}}$$ so
\begin{align*}
\mathbb{E}\Big[d_{\mathcal{H}_1^{\beta+1}}(\hat{g}_{\# U},g^\star_{\# U})\Big] & 
     \leq C\log(n)^{C_3}n^{-\frac{\beta}{2\beta+d}}\left(2\mathbb{E}\Big[d_{\mathcal{H}_1^{\beta+1}}(\hat{g}_{\# U},g^\star_{\# U})\Big]^{\frac{\beta+1}{2\beta+1}}\right),
\end{align*}
so
\begin{align*}   \mathbb{E}\Big[d_{\mathcal{H}_1^{\beta+1}}(\hat{g}_{\# U},g^\star_{\# U})\Big]^{\frac{\beta}{2\beta+1}} & 
     \leq C\log(n)^{C_3}n^{-\frac{\beta}{2\beta+d}}
\end{align*}
which gives 
\begin{align*}
\mathbb{E}\Big[d_{\mathcal{H}_1^{\beta+1}}(\hat{g}_{\# U},g^\star_{\# U})\Big] & 
     \leq C\log(n)^{C_3}n^{-\frac{2\beta+1}{2\beta+d}}.
\end{align*}
Let us treat now the case $\beta+1\geq d/2$. For $\delta=n^{-\frac{d/2}{2\tilde{\beta}+d}}$, we obtain
\begin{align*}
\Delta_{\mathcal{G}}& +\min \limits_{\delta \in [0,1]}\Biggl\{\sqrt{\frac{(\delta+1/n)^2\log(n|\mathcal{G}_{1/n}| |(\mathcal{D}_{\mathcal{G}})_{1/n}|)}{n}}
 +\frac{1}{\sqrt{n}}(1+\log(\delta^{-1}))  \Biggl\}\\
 & \leq C\log(n)^{C_2}n^{-\frac{1}{2}},
\end{align*}
and as for the case $\beta+1<d/2$, using Theorem \ref{theo:boundexpecterror2} and Lemma \ref{lemma:deltaD}, we have 
\begin{align*}
    \mathbb{E}\Big[d_{\mathcal{H}_1^{d/2}}(\hat{g}_{\# U},g^\star_{\# U})\Big] & \leq C\log(n)^{C_3}n^{-\frac{\tilde{\beta}}{2\tilde{\beta}+d}}\left( \mathbb{E}\Big[d_{\mathcal{H}_1^{d/2}}(\hat{g}_{\# U},g^\star_{\# U})\Big]^{\frac{\tilde{\beta}+1}{2\tilde{\beta}+1}}+ n^{-\frac{\tilde{\beta}+1}{2\tilde{\beta}+d}}\right).
\end{align*}
Now if $\mathbb{E}\Big[d_{\mathcal{H}_1^{d/2}}(\hat{g}_{\# U},g^\star_{\# U})\Big]\leq n^{-\frac{1}{2}}$ we have the result, otherwise
\begin{align*}
\mathbb{E}\Big[d_{\mathcal{H}_1^{d/2}}(\hat{g}_{\# U},g^\star_{\# U})\Big] & \leq C\log(n)^{C_3}n^{-\frac{\tilde{\beta}}{2\tilde{\beta}+d}}\mathbb{E}\Big[d_{\mathcal{H}_1^{d/2}}(\hat{g}_{\# U},g^\star_{\# U})\Big]^{\frac{\tilde{\beta}+1}{2\tilde{\beta}+1}}.
\end{align*}
We conclude the same way by

\begin{align*}
\mathbb{E}\Big[d_{\mathcal{H}_1^{d/2}}(\hat{g}_{\# U},g^\star_{\# U})\Big]^{\frac{\tilde{\beta}}{2\tilde{\beta}+1}} & 
     \leq C\log(n)^{C_3}n^{-\frac{\tilde{\beta}}{2\tilde{\beta}+d}}
\end{align*}
which gives  
\begin{align*}
\mathbb{E}\Big[d_{\mathcal{H}_1^{d/2}}(\hat{g}_{\# U},g^\star_{\# U})\Big] & 
     \leq C\log(n)^{C_3}n^{-\frac{2\tilde{\beta}+1}{2\tilde{\beta}+d}}=C\log(n)^{C_3}n^{-\frac{1}{2}}.
\end{align*}
\end{proof}

\bibliographystyleproofs{plainnat}
\bibliographyproofs{adversarial_training}

\end{document}